\setlist[enumerate]{leftmargin=*,widest=0}
\theoremstyle{plain}
\newtheorem{thm}{Theorem}[section]
\newtheorem*{lem*}{Lemma}
\newtheorem{prop}[thm]{Proposition}
\newtheorem*{cor*}{Corollary}
\newtheorem*{claim*}{Claim}
\newtheorem*{thm*}{Theorem}
\newtheorem{question}[thm]{Question}
\theoremstyle{definition}
\newtheorem{definition}[thm]{Definition}
\theoremstyle{remark}
\newtheorem{obs*}[thm]{Observation}
\newtheorem{rmk*}[thm]{Remark}
\theoremstyle{plain}
\theoremstyle{definition}
\newtheorem{DEF}[thm]{Definition}
\theoremstyle{plain}
\newtheorem{theorem}[thm]{Theorem}
\newtheorem{lemma}[thm]{Lemma}
\newtheorem*{lemma*}{Lemma}
\newtheorem{proposition}[thm]{Proposition}
\newtheorem{corollary}[thm]{Corollary}
\newtheorem*{corollary*}{Corollary}
\newtheorem*{theorem*}{Theorem}
\newtheoremstyle{named}{}{}{\itshape}{}{\bfseries}{.}{.5em}{\thmnote{#3}}
\theoremstyle{named}
\newtheorem*{namedtheorem}{Theorem}
\theoremstyle{definition}
\theoremstyle{remark}
\newtheorem*{remark}{Remark}
\newtheorem{remark*}[thm]{Remark}
\DeclareMathOperator{\Aut}{Aut}
\DeclareMathOperator{\Out}{Out}
\DeclareMathOperator{\Map}{Map}
\DeclareMathOperator{\Homeo}{Homeo}
\newcommand{\Asterisk}{\mathop{\scalebox{1.5}{\raisebox{-0.2ex}{$\ast$}}}}
\renewcommand{\a}{a}
\renewcommand{\b}{b}
\renewcommand{\c}{c}
\renewcommand{\d}{d}
\newcommand{\F}{\mathbb{F}}
\newcommand{\R}{\mathbb{R}}
\newcommand{\lk}{\mathrm{link}}
\DeclareMathOperator{\Ends}{Ends}
\newcommand{\ie}{\textit{i.e.}}
\newcommand{\G}{\Gamma}
\newcommand{\sph}{\mathcal{S}}
\DeclareMathOperator{\nbd}{nbd}
\title{Automorphisms of the sphere complex of an infinite graph}
\author[Hill, Kopreski, Rechkin, Shaji, \& Udall]{Thomas Hill, Michael C.\ Kopreski, Rebecca Rechkin, George
Shaji, and Brian Udall}
\begin{document}

\begin{abstract}
  For a locally finite, connected graph $\Gamma$, let
$\operatorname{Map}(\Gamma)$ denote the group of proper homotopy equivalences of
$\Gamma$ up to proper homotopy.  
Excluding
sporadic cases, we show
$\operatorname{Aut}(\mathcal{S}(M_\Gamma)) \cong \operatorname{Map}(\Gamma)$, where $\mathcal{S}(M_\Gamma)$ is
the sphere complex of the doubled handlebody $M_\Gamma$ associated
to $\Gamma$.  We also construct an exhaustion of $\mathcal{S}(M_\Gamma)$
by finite strongly rigid sets when $\Gamma$ has finite rank and
finitely many rays, and an
appropriate generalization otherwise. 
\end{abstract}
  
\maketitle

\section{Introduction}
Let $\Gamma$ be a locally finite, connected graph, 
and let $\Map(\Gamma)$ denote its \emph{mapping class group},
defined by Algom-Kfir--Bestvina 
to be the group of proper homotopy equivalences of
$\Gamma$ up to proper homotopy. 
Let $M_\Gamma$ denote the doubled handlebody associated to $\Gamma$ (see \Cref{def:DoubHandle0}).
We prove in that the sphere complex $\sph(M_\Gamma)$ 
(see \Cref{def:sphcpx}) satisfies an Ivanov-type theorem.   

\begin{theorem} \label{thm:IvanovRigidity0}
    Let $\Gamma,\Gamma'$ 
    be two locally finite connected graphs. 
  Suppose $f: \sph(M_{\G})\to \sph(M_{\G'})$ is an
isomorphism. Then $f$ is induced by a diffeomorphism $h:M_{\G}
\to M_{\G'}$. In particular, 
\begin{enumerate*}[label=(\roman*)]
  \item $\G$ and $\G'$ are proper homotopy equivalent and
  \item when $\Gamma$ is not a graph of rank $r$ with $s$ rays such that $2r+s<4$ or $(r,s)\in \{(0,4), (2,0)\}$,  $\Aut(\sph(M_{\G}))\cong
    \Map(\G)$ as topological groups. 
\end{enumerate*}
\end{theorem}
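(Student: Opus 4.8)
The plan is to deduce both (i) and (ii) from the single assertion that an isomorphism $f\colon \sph(M_\Gamma)\to\sph(M_{\Gamma'})$ is geometric, and to prove that by a rigid-exhaustion argument in the style of the now-standard arguments computing automorphism groups of infinite-type curve and arc complexes. I would first treat the sporadic cases listed in (ii), together with any degenerate cases where $\sph(M_\Gamma)$ is empty or a single vertex, by direct inspection; these are exactly where $\sph(M_\Gamma)$ is too small to reconstruct $\Gamma$ or carries automorphisms not coming from $\Map(\Gamma)$, which is what forces the exclusions. In the remaining range I would begin with the \emph{finite-type base case}: when $\Gamma$ has finite rank $r$ and finitely many rays $s$, any isomorphism between the finite-dimensional complexes $\sph(M_\Gamma)$ and $\sph(M_{\Gamma'})$ is induced by a diffeomorphism, unique up to proper isotopy. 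The engine here is a combinatorial recognition dictionary on $\sph(M_\Gamma)$: a purely simplicial characterization of (a) separating versus non-separating sphere classes, (b) separating spheres cutting off a single handle and those cutting off a single end/ray, and (c) maximal sphere systems together with the combinatorics of their dual graphs (pairs of pants and once-punctured copies of $S^1\times S^2$, in the Laudenbach/Hatcher normal form). Any isomorphism transports this data, hence matches $r$ with $r'$ and $s$ with $s'$, and a change-of-coordinates / Alexander-method argument along complementary pieces then builds the diffeomorphism; uniqueness is the faithfulness of the $\Map(\Gamma)$-action on $\sph(M_\Gamma)$, i.e.\ that a proper homotopy equivalence fixing every sphere class is properly homotopic to the identity. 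This finite-type statement I would either extract from the existing literature on the sphere/free-splitting complex in the closed case, adding the routine modifications for boundary spheres created by rays, or prove directly as a preliminary lemma.

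For a general locally finite connected $\Gamma$ I would invoke the exhaustion $X_1\subset X_2\subset\cdots$ of $\sph(M_\Gamma)$ by finite strongly rigid sets promised in the abstract (and, when $\Gamma$ has infinite rank or infinitely many rays, its stated generalization: an exhaustion by rigid, though no longer finite, subcomplexes, each spanned by the sphere classes supported in a core). Here $X_n$ is spanned by the classes supported in a compact core $C_n\cong M_{\Gamma_n}$, with $\Gamma_n\nearrow\Gamma$ a finite-subgraph exhaustion carrying the correct end/boundary data. Given the isomorphism $f$, the subcomplexes $f(X_n)$ increase to $\sph(M_{\Gamma'})$; the recognition dictionary forces each $f(X_n)$ to again be the subcomplex spanned by a core $C_n'\cong M_{\Gamma_n'}$ of $M_{\Gamma'}$, and strong rigidity promotes $f|_{X_n}$ to a diffeomorphism $h_n\colon C_n\to C_n'$ inducing it, unique up to proper isotopy fixing $\partial C_n$. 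Uniqueness is exactly what makes the $h_n$ coherent: $h_{n+1}|_{C_n}$ is properly isotopic to $h_n$, so after adjusting within isotopy classes the $h_n$ assemble to a proper homeomorphism $h=\bigcup_n h_n\colon M_\Gamma\to M_{\Gamma'}$, which one smooths in dimension three. By construction $h$ induces $f$ on $\sph$, and the matchings $\Gamma_n\simeq\Gamma_n'$ pass to the limit to give the proper homotopy equivalence $\Gamma\simeq\Gamma'$ of (i).

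For (ii) I would specialize to $\Gamma'=\Gamma$ in the non-sporadic range. The preceding gives a surjection $\pi_0\Diffeo(M_\Gamma)\twoheadrightarrow\Aut(\sph(M_\Gamma))$; sphere twists act trivially on $\sph(M_\Gamma)$ and are proper-homotopically trivial, so this descends to a surjection from $\Map(\Gamma)=\pi_0\Diffeo(M_\Gamma)/\langle\text{sphere twists}\rangle$ — precisely why $\Map(\Gamma)$ in the sense of Algom-Kfir--Bestvina, rather than the full diffeotopy group, is the correct object. Injectivity is again faithfulness: a mapping class fixing every sphere class fixes each $X_n$ pointwise, hence by strong rigidity is properly isotopic to the identity on each $C_n$, compatibly, hence is properly trivial. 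Finally, to upgrade the bijection $\Map(\Gamma)\cong\Aut(\sph(M_\Gamma))$ to an isomorphism of topological groups, I would note that the pointwise stabilizers $\mathrm{Stab}(X_n)$ form a neighborhood basis of the identity in $\Aut(\sph(M_\Gamma))$ with its natural permutation topology, while the subgroups of $\Map(\Gamma)$ supported off $\Gamma_n$ form a neighborhood basis of the identity in $\Map(\Gamma)$; the exhaustion identifies the two systems, giving continuity in both directions.

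The main obstacle, as in every Ivanov-type theorem, is the combinatorial recognition dictionary — one needs a simplicial characterization of the subcomplexes spanned by cores (equivalently, of which finite subcomplexes are the relevant rigid sets) robust enough that $f$ is forced to preserve it, and this must correctly track the boundary spheres created by rays so that the end space and the rank are \emph{both} read off. A secondary difficulty is the gluing step: the proper isotopies realizing $h_{n+1}|_{C_n}\simeq h_n$ must be chosen fixing the boundary spheres of $C_n$, so that the $h_n$ genuinely concatenate to a \emph{proper} homeomorphism of the noncompact manifold rather than merely a homeomorphism piece by piece; and the infinite-rank or infinitely-many-rays case requires the generalized non-finite rigid exhaustion together with a corresponding version of strong rigidity even to get the $h_n$ started.
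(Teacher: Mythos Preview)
Your outline is essentially correct and matches the paper's \emph{second} proof (Section~5 and \Cref{thm:IvanovRigidity}): exhaust $M_\Gamma$ by compact cores $K_i$, recognize $f(\sph(K_i))$ as the sphere complex of a core $K_i'\subset M_{\Gamma'}$, apply the finite-type rigidity (which the paper proves in Section~4 by extending Bering--Leininger to $M_{n,s}$ with boundary), and glue.  Two points of comparison are worth noting.  First, your ``recognition dictionary'' is handled in the paper by a single clean device: for a sphere system $\sigma$, declare $a\sim b$ in $\lk(\sigma)$ iff some $c\in\lk(\sigma)$ is non-adjacent to both; one shows (\Cref{prop:equivequiv}) that the $\sim$-classes are exactly the complementary components of $\sigma$, and then the core side is the \emph{unique} class with finite clique number.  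This replaces your separating/non-separating and handle/ray detection in one stroke.  Second, faithfulness of the $\Map(\Gamma)$-action on $\sph(M_\Gamma)$ is taken as an input (\Cref{thm:GraphMCGAction}, from \cite{udall2024spherecomplexlocallyfinite}), not re-derived from strong rigidity; your argument for injectivity only shows that a mapping class fixing every sphere acts trivially on $\sph$, which is tautological---you still need the cited result to pass from ``acts trivially on $\sph$'' to ``trivial in $\Map(\Gamma)$''.

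The paper also gives a genuinely different \emph{first} proof (Section~3) which avoids the finite-type strongly rigid exhaustion entirely.  One fixes a single maximal sphere system $\sigma$, observes that $f|_\sigma$ is an edge bijection between the dual graphs $\Delta_\sigma$ and $\Delta_{f\sigma}$ of the pants decompositions, and verifies via link computations that it is an edge isomorphism with no $K_3,K_{1,3}$-pair, so Whitney's theorem (in its multigraph form, \Cref{thm:whitney_multi}) promotes it to a graph isomorphism $\Delta_\sigma\cong\Delta_{f\sigma}$.  This already yields a diffeomorphism $h$ with $h_*=f$ on $\sigma$; one then checks $h_*=f$ on the slightly larger set $X_\sigma=\bigcup_{a\in\sigma}\lk(\sigma\setminus a)$ and propagates through all maximal systems by showing agreement on $X_\sigma$ is stable under flip moves (\Cref{lem:compatibility_flip}) together with pants-graph connectivity.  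This route is more elementary---no strongly rigid sets, no inductive exhaustion of $\sph$, and it works uniformly for finite- and infinite-type $\Gamma$---at the cost of a somewhat delicate case analysis on links of two- and three-sphere subsystems.  Your approach, by contrast, front-loads the work into the finite-type rigidity theorem and then reaps a clean direct-limit argument.
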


Our proof adapts an observation from 
Bavard--Dowdall--Rafi
\cite{bavard2020isomorphisms} that the link of a sphere system 
$\sigma \subset \sph(M_\Gamma)$ is isomorphic to the join of the
$\sph(M_i)$ for components $M_i \subset M_\Gamma \setminus \sigma$. 
For a maximal
sphere system $\sigma$, we obtain an isomorphism of dual graphs
$\Delta_\sigma \to \Delta_{f\sigma}$ from which we construct
the diffeomorphism $h$.

\medskip
In addition, we generalize the results of Bering--Leininger
\cite{bering2024finite} to $\sph(M_\Gamma)$.

\begin{theorem}\label{thm:geom_rigidity}
  Suppose $\Gamma$ has rank $r$ and $s$ rays such that $6 \leq 2r +
  s < \infty$.  Then there exists an exhaustion of $\sph(M_\Gamma)$
  by finite strongly rigid subcomplexes.  
\end{theorem}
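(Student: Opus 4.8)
The plan is to adapt the finite-rigidity argument of Bering--Leininger \cite{bering2024finite} for $\#^{n}(S^1\times S^2)$ to the doubled handlebody, carrying the $s$ rays of $\Gamma$ as peripheral data throughout. Since $r,s<\infty$, the manifold $M_\Gamma$ is diffeomorphic to a finite-type $M_{r,s}$, the connected sum of $r$ copies of $S^1\times S^2$ with $s$ points removed, and $\sph(M_\Gamma)=\sph(M_{r,s})$ is a finite-dimensional, locally infinite flag complex. Fix a maximal sphere system $\Sigma\subset M_{r,s}$; its complement has finitely many pieces of bounded complexity (thrice-holed spheres, a hole possibly replaced by a puncture at one of the deleted points). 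For $k\ge 0$ let $X_k$ be the full subcomplex of $\sph(M_\Gamma)$ spanned by the isotopy classes of spheres meeting $\Sigma$ in at most $k$ circles. By the uniqueness of Hatcher-style normal forms for spheres in such manifolds, a normalized sphere is recorded by a finite pattern of disks among the finitely many pieces of $M_{r,s}\setminus\Sigma$, and only finitely many patterns have bounded total size; hence each $X_k$ is finite, the $X_k$ are nested, and $\bigcup_k X_k=\sph(M_\Gamma)$. It therefore suffices to find $k_0$ so that $X_k$ is strongly rigid for all $k\ge k_0$.

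Fix such a $k$ and let $\phi\colon X_k\to\sph(M_\Gamma)$ be a locally injective simplicial map; the argument has three stages. (i) \emph{Type rigidity.} We show $\phi$ preserves the topological type of each vertex of $X_k$ --- separating versus non-separating, and for a separating sphere the homeomorphism type of its complement together with its peripheral pattern --- and that $\phi$ carries each maximal sphere system lying in $X_k$ (in particular $\Sigma$) to a maximal sphere system. The content is that each such assertion is certified by the isomorphism type of a \emph{finite} subconfiguration of $X_k$ --- a star, or a swap polygon relating two maximal systems that differ by a single sphere --- which a locally injective (but possibly non-injective, non-surjective) simplicial map cannot degenerate. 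This is established by induction on $2r+s$: the four minimal cases $2r+s=6$, namely $(r,s)\in\{(3,0),(2,2),(1,4),(0,6)\}$, are treated by a direct combinatorial analysis in the style of the small-surface base cases of curve-complex rigidity (Aramayona--Leininger), and the inductive step adds one handle or one ray, enlarging the certifying configurations using the Bavard--Dowdall--Rafi description of the link of a sphere system as the join of the sphere complexes of the complementary pieces, recalled after \Cref{thm:IvanovRigidity0}. (ii) \emph{Reconstruction.} Having matched $\Sigma$ with the maximal system $\phi(\Sigma)$, the images under $\phi$ of the vertices of $X_k$ meeting $\Sigma$ in a single circle determine an isomorphism $\Delta_\Sigma\to\Delta_{\phi(\Sigma)}$ of dual graphs with their peripheral structure, exactly as in the proof of \Cref{thm:IvanovRigidity0}, which is realized by a diffeomorphism $h\colon M_\Gamma\to M_\Gamma$. (iii) \emph{Propagation.} One shows $\phi(S)=h(S)$ for every $S\in X_k$ by induction on $|S\cap\Sigma|$: a sphere meeting $\Sigma$ in $m+1$ circles is obtained from spheres meeting it in at most $m$ by an elementary surgery whose witnessing configuration lies in $X_k$ (after enlarging $k_0$), on which $h$ and $\phi$ agree. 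Finally $h$ is unique up to the relevant equivalence because, enlarging $k_0$ if necessary, the pointwise stabilizer of $X_{k_0}$ in $\Aut(\sph(M_\Gamma))\cong\Map(\Gamma)$ --- the isomorphism being \Cref{thm:IvanovRigidity0}, available since $2r+s\ge 6$ lies outside the sporadic range --- is trivial. Running this for all $k\ge k_0$ gives the exhaustion.

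The expected main obstacle is stage (i). In proving \Cref{thm:IvanovRigidity0} one has the entire infinite complex at hand, so topological types of spheres and maximality of sphere systems are legible from the global combinatorics of links; here every such fact must be forced by a bounded subconfiguration, under a merely locally injective simplicial map. This is precisely where $2r+s\ge 6$ is needed: for smaller $(r,s)$ the relevant subcomplexes become too flexible --- for $(0,4)$ and $(2,0)$ they are essentially the Farey graph, which admits no finite rigid set. Two further points demand care that does not arise in the pure-handle setting of \cite{bering2024finite}: first, the combinatorial interplay in stage (i) between the ``handle'' spheres (as in Bering--Leininger) and the ``punctured-sphere'' spheres (as in Aramayona--Leininger), which is what dictates the four separate base cases; and second, that the reconstructed diffeomorphism must respect the ends of $M_\Gamma$, i.e.\ send rays to rays --- handled by keeping the punctured pieces of $M_{r,s}\setminus\Sigma$ marked throughout, which is also why the symmetry group in play is exactly $\Map(\Gamma)$.
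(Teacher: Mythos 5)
Your proposal takes a genuinely different route from the paper's, and the crux of it — stage (i) — is asserted rather than proved, leaving a real gap.

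The paper's proof of this theorem (Section~4) closely follows Bering--Leininger. One fixes a \emph{non-separating} system $Y$ of $r$ spheres (not a pants decomposition), cuts along it to obtain $N\cong M_{0,2r+s}$, and uses genus-zero rigidity (\Cref{lem:GenusZeroRigidity}) to control maps on $X_0=\langle Y\cup Z\rangle$. The central obstacle is that $X_0$ is \emph{not} rigid: it is a join $Y*Z$ with $Y$ a clique, so one can permute $Y$ freely (\Cref{fig:NotRigid}). This is overcome by adjoining a ``good pair'' of spheres for each $A\in Y$, which pins down which pairs of boundary spheres of $N$ correspond to the same handle (\Cref{prop:RigidityOnY}) — and in the $s\neq 0$ setting also distinguishes $Y$-boundary spheres from genuine $\partial M_{r,s}$ spheres. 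The resulting $X$ is geometrically rigid; the exhaustion is then built by adding \emph{split spheres} one flip move at a time (using \Cref{prop:FlipMoveConnectivity}), and strong rigidity is upgraded from geometric rigidity via \Cref{cor:uniqueGeoRigid}. None of this machinery — non-separating $Y$, the cut manifold $N$, good pairs, split spheres, flip-move propagation — appears in your sketch.

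Your plan instead exhausts by $X_k=\{$spheres meeting a fixed pants decomposition $\Sigma$ in $\le k$ circles$\}$ and a three-stage argument. Finiteness of $X_k$ via Hatcher normal form is plausible (sphere twists are trivial, so the combinatorial piece pattern determines the isotopy class, unlike for curves on surfaces, and this is worth proving carefully rather than asserting). The serious problem is stage (i), which you describe as establishing that a merely locally injective map on $X_k$ preserves topological type and maximality ``by induction on $2r+s$'' with a ``direct combinatorial analysis'' of base cases. This is the entire content of the theorem and you give no argument for it: you do not say what bounded certifying configurations force type-preservation, nor how the inductive step actually runs, nor why the join ambiguity (which is exactly the failure the paper must repair with good spheres) cannot occur inside your $X_k$. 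Without (i), stages (ii) and (iii) cannot begin. Note also that strong rigidity of a subcomplex does not pass to supercomplexes, so even if you could show some $X_{k_0}$ contains a rigid set, you would not immediately get rigidity of $X_k$ for $k\ge k_0$; the paper's inductive split-sphere extension is precisely designed to handle this. Your stage (iii) (surgery induction on $|S\cap\Sigma|$) is a reasonable intuition but again unproven and is replaced in the paper by flip-move connectivity together with \Cref{lem:compatibility_flip}-style propagation along split pairs.

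In short: you have correctly identified the general shape (build a finite seed, reconstruct a diffeomorphism from a dual-graph isomorphism, propagate), and the finiteness observation is sound, but the argument for rigidity of your $X_k$ is missing, and your approach does not engage with the join obstruction that the paper's ``good spheres'' are specifically there to resolve.
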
 

\noindent
From \Cref{thm:geom_rigidity} and techniques from
\cite{bavard2020isomorphisms}, we obtain another proof of
\Cref{thm:IvanovRigidity0}. Finally, 
we extend \Cref{thm:geom_rigidity} to the infinite-type setting 
using techniques from the first proof of 
\Cref{thm:IvanovRigidity0}. We say a graph $\Gamma$ is of
\textbf{infinite-type} if it is not proper homotopy equivalent to a
finite rank graph with finitely many ends.

\begin{theorem} \label{thm:sr_exhaustion}
  For $\Gamma$ 
  infinite-type, there exists an exhaustion of $\sph(M_\Gamma)$
  by topologically locally finite subcomplexes 
  that are strongly rigid over maximal maps.
  \end{theorem}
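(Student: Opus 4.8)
The plan is to reduce to the finite‑rank case of \Cref{thm:geom_rigidity} by cutting $M_\Gamma$ along an exhausting family of finite sphere systems, and then to reassemble global diffeomorphisms from the finite strongly rigid pieces using the Bavard--Dowdall--Rafi link--join structure together with the dual‑trivalent‑graph reconstruction underlying the first proof of \Cref{thm:IvanovRigidity0}. Fix a maximal sphere system $\mu\subset M_\Gamma$; dually it is a proper trivalent spine $T$ of $\Gamma$, and $M_\Gamma\setminus\mu$ is a disjoint union of pair‑of‑pants blocks. Choose an exhaustion $T_1\subset T_2\subset\cdots$ of $T$ by finite connected subgraphs with every component of $T\setminus T_n$ infinite, and let $\Sigma_n\subset\mu$ be the finite subsystem of co‑core spheres of the edges leaving $T_n$. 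Then $M_\Gamma\setminus\Sigma_n$ has a compact component $M_n$, a doubled handlebody of rank $r_n$ with $s_n$ boundary spheres, together with finitely many non‑compact components, each of which is either a ray block --- contributing no new sphere, since its one essential sphere is already a co‑core sphere in $\Sigma_n$ --- or an infinite‑type block $N_1^n,\dots,N_{k_n}^n$. Since $\Gamma$ is infinite‑type we may pass to a cofinal subsequence with $6\le 2r_n+s_n<\infty$ for all $n$, so that \Cref{thm:geom_rigidity} applies to every $M_n$.

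Using \Cref{thm:geom_rigidity}, fix a nested exhaustion of $\sph(M_n)$ by finite strongly rigid subcomplexes and let $Y_n$ be a term large enough to contain $\mu\cap M_n$ and the core part of $X_{n-1}$. Via the identification $\lk_{\sph(M_\Gamma)}(\Sigma_n)\cong\sph(M_n)\ast\Asterisk_i\sph(N_i^n)$, define
\[
  X_n \;=\; \Sigma_n \,\ast\, Y_n \,\ast\, \Asterisk_{i=1}^{k_n}\,(\mu\cap N_i^n),
\]
the join inside $\sph(M_\Gamma)$ of $\Sigma_n$, the core rigid set, and the fixed pants system $\mu$ restricted to each infinite‑type peripheral block. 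One then checks $X_{n-1}\subset X_n$ (each sphere of $X_{n-1}$ is either still a co‑core sphere, or has moved into the core and hence lies in $\mu\cap M_n\subset Y_n$, or is an unchanged peripheral $\mu$‑sphere) and $\bigcup_n X_n=\sph(M_\Gamma)$ by a diagonal argument, since any finite sphere system can be isotoped off $\Sigma_N$ for $N$ large and then lands in $Y_{N'}$ for $N'\gg N$. That each $X_n$ is topologically locally finite should be read off the construction: outside the fixed peripheral systems it is the finite complex $\Sigma_n\ast Y_n$, and the peripheral systems contribute only their prescribed, combinatorially bounded families of simplices.

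For strong rigidity over maximal maps, let $\phi\colon X_n\to\sph(M_\Gamma)$ be locally injective and simplicial, carrying a maximal simplex of $X_n$ to a maximal sphere system of $M_\Gamma$. First, $\Sigma_n$ is recognizable inside $X_n$ as the vertex set common to all maximal simplices whose link is a nontrivial join, so $\phi(\Sigma_n)$ has the same combinatorial type and $\phi$ descends through the link--join decompositions at $\Sigma_n$ and $\phi(\Sigma_n)$. On the core factor, $\phi|_{Y_n}$ is a locally injective simplicial map of a finite strongly rigid set into $\sph(M_n)$, hence is induced by a diffeomorphism $h_n$ of $M_n$. On each peripheral factor, maximality forces $\phi(\mu\cap N_i^n)$ to be a maximal sphere system of the block that $\phi$ carves off; as in the first proof of \Cref{thm:IvanovRigidity0} this determines an isomorphism of dual trivalent graphs, hence a diffeomorphism $h_i^n$ of $N_i^n$, well defined up to a boundary‑fixing diffeomorphism that is pinned down by matching with $h_n$ and $\phi|_{\Sigma_n}$. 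Gluing along $\phi(\Sigma_n)$ produces $h\in\Diffeo(M_\Gamma)$ with $h_\ast|_{X_n}=\phi$, and $h$ is unique because its restriction to $X_n$ already determines it on every block.

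The main obstacle is the peripheral step: the blocks $N_i^n$ carry no finite rigid set, so the only control on $\phi$ there comes from local injectivity together with the maximality constraint, and one must show that $\phi$ cannot reroute the fixed pants system $\mu\cap N_i^n$ into a combinatorially distinct maximal system --- this is the infinite‑graph version of the dual‑graph reconstruction of \Cref{thm:IvanovRigidity0}, and the work is in propagating it outward through the infinitely many pants blocks while it is only visible to $X_n$ through $\mu$. A secondary point, handled by the design of the exhaustion, is guaranteeing that each core $M_n$ satisfies $2r_n+s_n\ge 6$ and that the only "small" complementary blocks are rays (which add nothing), and checking that the diagonal choice of $Y_n$ is compatible both with the inclusions $X_{n-1}\subset X_n$ and with the rigidity argument as $n\to\infty$.
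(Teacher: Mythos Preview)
Your peripheral step is a genuine gap, and it cannot be filled as stated.  On each infinite-type block $N_i^n$ your subcomplex $X_n$ contains only the clique $\mu\cap N_i^n$.  Local injectivity restricted to a clique is just injectivity, and maximality of $\phi$ only tells you that $\phi(\mu)$ is \emph{some} maximal sphere system; neither gives any constraint on how $\phi$ permutes the spheres of $\mu\cap N_i^n$.  Concretely, pick two spheres $a,b\in\mu\cap N_i^n$ whose dual edges $e_a,e_b$ are non-adjacent and not in symmetric position in $\Delta_\mu$, and let $\phi$ be the identity on $X_n\setminus\{a,b\}$ and swap $a,b$.  This $\phi$ is simplicial (the swapped vertices are joined to the same set in $X_n$), locally injective, and maximal (since $\phi(\mu)=\mu$), but it does not induce an edge isomorphism $\Delta_\mu\to\Delta_{\phi(\mu)}$ and cannot extend to an automorphism of $\sph(M_\Gamma)$.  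The dual-graph reconstruction from \Cref{sec:geom_rigid} that you invoke does not apply here: it depends on having the links $\lk(\sigma\setminus a)$, $\lk(\sigma\setminus\{a,b\})$ available and preserved, which requires $f$ to be defined on those links---information your $X_n$ does not contain on the periphery.

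The paper's construction is organized quite differently to avoid exactly this problem.  Rather than concentrating the rigidity in a compact core and leaving only $\mu$ outside, it fixes a maximal system $\sigma$ and, for \emph{every} connected $5$-tuple $\eta\subset\sigma$ (hence throughout $M_\Gamma$, not just in a core), inserts a finite strongly rigid set $Y_\eta\subset\lk(\sigma\setminus\eta)$ from \Cref{thm:geom_rigidity}; the subcomplex $Z_\sigma=\langle Y_\eta\rangle_\eta$ then contains enough of each local link to force $f|_\sigma$ to be an edge isomorphism without a $K_3,K_{1,3}$-pair (via non-embedding lemmas for the small complexes $\sph(M_{0,5}),\sph(M_{1,2}),\sph(M_{0,6}),\sph(M_{1,3})$), so that $\Delta_\sigma\cong\Delta_{f\sigma}$ globally.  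The exhaustion then comes not from enlarging a core but from taking unions $Z_{\mathcal P}=\bigcup_{\rho\in\mathcal P}Z_\rho$ over growing finite flip-connected families $\mathcal P$ of maximal systems.  If you want to repair your approach, you must add to each peripheral block at least the analogues of $\lk(\mu\setminus a)$ and $\lk(\mu\setminus\{a,b\})$ (or the $Y_\eta$'s) so that the dual-graph argument goes through there---at which point you have essentially reproduced the paper's $Z_\sigma$.
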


  A subcomplex $X\subset \sph(M_\Gamma)$ is 
\emph{strongly rigid (\textit{resp.}\ over maximal maps)} 
if any locally
injective simplicial map $X \to \sph(M_{n,s})$ 
(\textit{resp.}\ preserving
the maximality of sphere systems) extends uniquely to an
automorphism of
$\sph(M_\Gamma)$. The subcomplex $X$ is \emph{topologically locally
finite} if every compact set $K \subset M_\Gamma$ intersects
finitely many vertices of $X$.

\subsection{Motivation}

Recall that $\Out(\F_n)$ is the group of outer automorphisms of the
free group $\F_n$, defined as $\Out(\F_n) \coloneq 
\operatorname{Aut}(\F_n) / \operatorname{Inn}(\F_n)$.  From a
topological perspective, $\Out(\F_n)$ can be thought of as
$$\Out(\F_n) =\set{ \text{homotopy equivalences } \Gamma \to
\Gamma}/\:\text{homotopy}$$ where $\Gamma$ is a finite graph of rank
$n$ (see \cite[Proposition 1B.9]{hatcher2002algebraic}).  There is a
rich dictionary between mapping class groups of surfaces, $\Map(S)$,
and $\Out(\F_n)$ (see \cite{bestvina2019notes}).  Analogies between
the two include:

\smallskip
\begin{center}
\begin{tabular}{ r c l  } 
  $\Map(S)$ & $\longleftrightarrow$ &
  $\Out(\F_n)$  \\ 
  $\operatorname{Teich}(S)$ &
  $\longleftrightarrow$& Outer space,
 $\operatorname{CV}_n$\\ 
 curve complex & $\longleftrightarrow$ &
 $\left\lbrace\phantom{\parbox{0cm}{a\\b\\c}}\right.$\parbox{3.5cm}{sphere complex \\
   free factor complex \\
   free splitting complex}
 \begin{tabular}{l}
    \end{tabular}
\end{tabular}
\end{center}
\smallskip

Within the last decade, there has been a surge of interest in
\emph{big} mapping class groups of surfaces, that is,
$\mathrm{Map}(S)$ for a surface $S$ whose fundamental group is
not finitely generated. Motivated by the parallels between graphs
and surfaces, a natural question is: what is the big version of
$\mathrm{Out}(\mathbb{F}_n)$? Generalizing the interpretation of
$\mathrm{Out}(\mathbb{F}_n)$ as homotopy equivalences
of a graph up to homotopy, \cite{AB2021} propose the definition of
the mapping class group of a locally finite, infinite graph $\Gamma$
to be:
$$\Map(\Gamma) := \{\text{proper homotopy equivalences } \Gamma
\to \Gamma\}/\text{proper homotopy}.$$ The group $\Map(\Gamma)$ is
sometimes referred to as ``big $\Out(\F_n)$.''  When $\Gamma$ is a
finite graph of rank $n$, these definitions coincide:
$\mathrm{Out}(\mathbb{F}_n) \cong \Map(\Gamma)$.
\cite{AB2021,domat2023coarse,domat2023generating,
udall2024spherecomplexlocallyfinite} have demonstrated that
$\Map(\Gamma)$ exhibits many similarities with big mapping class
groups of surfaces. 

The curve complex $\mathcal{C}(S)$ is one of the most important
tools for studying mapping class groups of surfaces. A celebrated
theorem of Ivanov states that $\Aut(\mathcal{C}(S)) \cong
\Map^{\pm}(S)$, illustrating an underlying connection between the
curve complex and the surface mapping class group
\cite{ivanov1997automorphisms, luo1999automorphisms}. Ivanov's
theorem has inspired a broader metaconjecture: ``every object
naturally associated to a surface $S$ and possessing a sufficiently
rich structure has $\Map^{\pm}(S)$ as its group of automorphisms''
\cite{ivanov2006problems}. There have been many subsequent results
supporting this metaconjecture (see \cite{brendle2019normal} for
further discussion).  

When $S$ is an infinite-type surface, 
the curve complex is geometrically uninteresting 
(it has diameter 2); however, it is sufficiently rich
\textit{combinatorially} that
Ivanov's theorem still holds 
\cite{hernandez2018isomorphisms, bavard2020isomorphisms}. 
The aim of this project is to introduce a complex that plays a
similar role in the setting of big $\Out(\F_n)$. Specifically, we
will show that the sphere complex of a 3-manifold associated to a
locally finite graph $\Gamma$, denoted by
$\sph(M_\Gamma)$, satisfies an analog of Ivanov's theorem (see
\Cref{thm:IvanovRigidity0}).  When $\Gamma$ has rank zero,
$\Map(\Gamma) \cong \Homeo(\Ends(\Gamma))$ and 
\Cref{thm:IvanovRigidity0}
coincides with a recent result of Branman--Lyman, who show the
homeomorphism group of a Stone space is isomorphic to the
automorphism group of its complex of cuts \cite[Theorem
A]{branman2024complex}. Our proofs provide an alternate perspective
on this result.

Bavard--Dowdall--Rafi \cite{bavard2020isomorphisms} ultimately 
apply Ivanov's theorem 
to prove algebraic rigidity of the mapping class groups of
infinite-type surfaces, showing that 
two infinite-type surfaces $S$ and $S'$ are homeomorphic if and only
if $\Map^\pm(S)\cong\Map^\pm(S')$.  In contrast, algebraic rigidity
fails in general for locally finite infinite graphs, though all
known examples are when both graphs are trees. For example, if
$\Gamma$ has end space a Cantor set, and $\Gamma'$ has end space a
Cantor set with one extra isolated point, then $\Map(\G)\cong
\Map(\G')$ (see \Cref{thm:ADMQ}). A natural next question asks 
what conditions are needed to guarantee algebraic
rigidity in the big $\Out(\mathbb{F}_n)$ setting.  

\subsection{Outline of paper.}  
In \Cref{sec:Prelim}, we define the sphere complex of the doubled
handlebody associated to a locally finite infinite graph $\Gamma$,
discuss some general tools for studying spheres in 3-manifolds, and
recall relevant results from \cite{bering2024finite}.  In
\Cref{sec:geom_rigid}, we prove \Cref{thm:IvanovRigidity0} by first
showing that sphere graph isomorphisms induce isomorphisms of the
dual graphs of pants decompositions of the doubled handlebody
$M_\Gamma$.  These graph isomorphisms define diffeomorphisms
of $M_\Gamma$, which are compatible with pants decompositions that
differ by a flip move. 
In \Cref{sec:finite_rigidity} we generalize the results of
\cite{bering2024finite} to finite-type doubled handlebodies with
$S^2$ boundaries by proving \Cref{thm:geom_rigidity}, with the aim
of giving an alternative proof of \Cref{thm:IvanovRigidity0} in
\Cref{sec:mainthm} in the style of \cite{bavard2020isomorphisms}.
In \Cref{sec:loc_strongly_rigid} we prove \Cref{thm:sr_exhaustion},
generalizing a finite-type result of \cite{bering2024finite}.
Finally, in \Cref{sec:sporadic}, we consider the existence of finite
rigid sets in the low complexity cases not covered by
\Cref{thm:geom_rigidity}. 

\subsection*{Acknowledgements} 

The authors express their gratitude to Sanghoon Kwak, Vivian He, and
Hannah Hoganson
for their insightful conversations, which initially sparked our
interest in this project. We also extend our appreciation to George
Domat and Chris Leininger for valuable discussions, 
and special thanks
to Mladen Bestvina, whose thoughtful input greatly contributed to
the project's development.  We would also like to thank the
organizers of the 2023 Wasatch Topology Conference and 2024 Log
Cabin Workshop for providing a collaborative environment that helped
shape the direction of this work.

The first, second, and third authors acknowledge support from RTG
DMS-1840190. The third author also acknowledges support from NSF
grant DMS–2046889. 
The second and fourth authors acknowledge support from NSF
grant DMS-2304774. The fifth author acknowledges support from NSF
grant DMS-1745670.

\section{Preliminaries}\label{sec:Prelim}

Let $\Gamma$ be a locally finite graph.  A proper map $f : \Gamma
\to \Gamma$ is a \emph{proper homotopy
equivalence} if there exists a proper map $g: \G \to
\G$ such that $fg$ and $gf$ are properly homotopic to the identity.

We denote by $\Ends(\Gamma)$ the (Freudenthal) \emph{end space} of
$\Gamma$. We recall that given a compact
exhaustion $K_1 \subset K_2 \subset \dots$ of $\Gamma$, with maps
$\pi_0(\Gamma \setminus K_i) \to \pi_0(\Gamma \setminus K_{i-1})$
induced by inclusion, $\Ends(\Gamma)$ is the inverse limit 
$\varprojlim_i \:  \pi_0 (\Gamma \setminus K_i)$.
Like for surfaces, ends of graphs come in two flavors: those
accumulated by loops (also called unstable) and those that are not
accumulated by loops (also called stable ends).  Heuristically, 
if you always see loops as you move out toward an end, then it is
accumulated by loops.  The (possibly empty) 
subset of all ends accumulated by loops
is a closed subset of $\Ends(\Gamma)$, denoted $\Ends_\ell(\Gamma)$.

The rank and end space classify locally finite, infinite graphs up
to proper homotopy equivalence. 

\begin{theorem}[\cite{ayala1990proper}]\label{thm:ADMQ} 
Let $\G$ and $\G'$ be two locally finite, infinite graphs.  Then
$\Gamma$ is \emph{properly homotopy equivalent} to $ \Gamma'$ if and
only if $$(\mathrm{rk}(\Gamma), \Ends(\Gamma), 
\Ends_\ell(\Gamma)) \cong
(\mathrm{rk}(\Gamma'), \Ends(\Gamma'), \Ends_\ell(\Gamma')).$$ 
By $``\cong$'' we mean that $\operatorname{rk}(\Gamma) =
\operatorname{rk}(\Gamma')$ 
and there exists a homeomorphism of pairs $f :
(\Ends(\Gamma),\Ends_\ell(\Gamma)) \to
(\Ends(\Gamma),\Ends_\ell(\Gamma))$.
The tuple $(\operatorname{rk}(\Gamma), \Ends(\Gamma), 
 \Ends_\ell(\Gamma))$ is called the \emph{charateristic triple.} 
\end{theorem}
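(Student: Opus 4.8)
The plan is to prove the two implications separately: that the characteristic triple is a proper homotopy invariant, and that it is a complete one, the latter by comparing an arbitrary $\Gamma$ to a standard model built directly from its triple.

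For the invariance direction, rank is the rank of the free group $\pi_1(\Gamma)$ (equivalently $\dim_{\F_2}H_1(\Gamma;\F_2)$ when infinite), so it is a homotopy invariant and unchanged by any proper homotopy equivalence $f\colon\Gamma\to\Gamma'$ with proper inverse $g$. Next, $\Ends(-)$ is a functor on the proper homotopy category: a proper map induces a continuous map of the inverse limits $\varprojlim_i\pi_0(\Gamma\setminus K_i)$ compatibly with proper homotopies, so $\Ends(f)$ is a homeomorphism with inverse $\Ends(g)$. It remains to check that $\Ends(f)$ carries $\Ends_\ell(\Gamma)$ onto $\Ends_\ell(\Gamma')$. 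Recall $e\in\Ends_\ell(\Gamma)$ exactly when every neighborhood of $e$ contains an embedded circuit, i.e.\ fails to be a tree. If $e\in\Ends_\ell(\Gamma)$ while some neighborhood $V$ of $e'\coloneq\Ends(f)(e)$ is a tree, pick a neighborhood $U$ of $e$ with $f(U)\subseteq V$ and an embedded loop $\gamma\subseteq U$; then $f\gamma\subseteq V$ is null-homotopic, forcing $f_*[\gamma]=0$, which contradicts $[\gamma]\neq 0$ and the injectivity of $f_*$ on $\pi_1$. Hence $\Ends(f)(\Ends_\ell(\Gamma))\subseteq\Ends_\ell(\Gamma')$, and the reverse inclusion is the same argument applied to $g$.

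For the converse, I would first record the constraints an abstract triple $(r,E,E_\ell)$ must satisfy to be realized: $E$ is a nonempty second-countable Stone space, $E_\ell\subseteq E$ is closed, and $r=\infty$ if and only if $E_\ell\neq\emptyset$ (if $e\in\Ends_\ell$ one extracts infinitely many disjoint embedded loops converging to $e$, while if $r=\infty$ a K\"onig-type argument on a compact exhaustion produces a nested sequence of infinite-rank complementary components, hence an end all of whose neighborhoods have infinite rank). Given such a triple, build a model $G(r,E,E_\ell)$ as follows: realize $E$ as $\Ends(T)$ for a locally finite tree $T$ rooted at a vertex; if $r<\infty$ wedge on $r$ loops at the root; if $r=\infty$, attach a single loop at each vertex $v\in T$ such that the descendant subtree below $v$ has end space meeting $E_\ell$, which makes the ends accumulated by loops precisely $E_\ell$ and the rank infinite. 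Then I would show any $\Gamma$ with triple $(r,E,E_\ell)$ is properly homotopy equivalent to $G(r,E,E_\ell)$: choose compact exhaustions $K_1\subset K_2\subset\cdots$ of $\Gamma$ and $L_1\subset L_2\subset\cdots$ of $G$ aligned by the homeomorphism $(\Ends\Gamma,\Ends_\ell\Gamma)\cong(\Ends G,\Ends_\ell G)$, so that there are compatible bijections between the components of the successive shells $\Gamma\setminus K_i$ and $G\setminus L_i$ matching the rank and end data; then build $f$ and its inverse $g$ one shell at a time, using that a finite-rank graph with fixed boundary deformation retracts rel boundary to a wedge of circles, and keeping each homotopy supported in a bounded band of shells so that everything assembled is proper.

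The main obstacle is this last inductive construction in the converse. One must simultaneously align the branching structure of the two end spaces \emph{together with} the closed subset $E_\ell$ — so that loops accumulate to exactly the prescribed ends and nowhere else — and arrange at every stage that the partial maps already built on $K_i$ extend over $K_{i+1}$ with homotopies whose supports exhaust the graph, so that the assembled maps and the assembled homotopies $fg\simeq\id$, $gf\simeq\id$ are genuinely proper rather than merely continuous. Setting up the two exhaustions so that these requirements are compatible, and symmetric in $\Gamma$ and $G$ to yield a two-sided proper homotopy inverse, is the technical heart; once the exhaustions are chosen correctly the rank matches up automatically, being detected by $H_1$ of large end-neighborhoods together with a bounded global correction term carried in the compact cores.
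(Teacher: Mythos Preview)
The paper does not prove this statement: \Cref{thm:ADMQ} is quoted from \cite{ayala1990proper} as a background classification result, with no argument given. So there is nothing in the paper to compare against, and your proposal is an attempt to reconstruct a proof of a cited theorem rather than of a result the authors establish.

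On its merits, your sketch follows the standard line. The invariance direction is essentially correct; one small point worth tightening is the $\Ends_\ell$ step: your contradiction uses that $f_*$ is injective on $\pi_1$, but $[\gamma]$ and $f_*[\gamma]$ are really conjugacy classes (the loop $\gamma$ is unbased in $U$). This is harmless here because $\pi_1$ is free and the only element in the trivial conjugacy class is the identity, but you should say so explicitly. For the converse, your model $G(r,E,E_\ell)$ is fine and your description of attaching loops at vertices whose descendant cone meets $E_\ell$ does realize the correct $\Ends_\ell$, since $E_\ell$ is closed. You are also right that the real content is the shell-by-shell construction of mutually inverse proper homotopy equivalences with properly supported homotopies; this is exactly the technical core of \cite{ayala1990proper}, and your identification of the obstacle (simultaneously aligning exhaustions so that ranks, end-partitions, and the $E_\ell$ marking all match at every stage, symmetrically in $\Gamma$ and $G$) is accurate. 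What you have written is a plan rather than a proof of that step, but it is the correct plan.
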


Let $\text{PHE}(\Gamma)$ denote the group of proper homotopy
equivalences of $\Gamma$, equipped with the compact-open topology.

\begin{DEF}
    The \emph{mapping class group} of $\G$, denoted
    $\Map(\G)$, is defined as
\begin{equation*}
    \Map(\G)\coloneq\text{PHE}(\G)/\text{proper homotopy}.
\end{equation*}
\end{DEF}

\noindent
$\Map(\Gamma)$ is a topological group with the quotient topology.
Not every homotopy equivalence of $\Gamma$ that is proper is a
proper homotopy equivalence as defined above (see
\cite[Example 4.1]{AB2021}).

We now construct the doubled handlebody associated to $\Gamma$,
whose study will be the main focus of this paper.  
\begin{definition}\label{def:DoubHandle0}
  Let $N_\Gamma$ be
the $3$-manifold with $0$-handles and $1$-handles glued according to
the vertices and edges, respectively, in $\Gamma$.  The
\emph{doubled handlebody $M_\Gamma$ associated to $\Gamma$} 
is the double of $N_\Gamma$, obtained by gluing two copies of
$N_\Gamma$ along $\partial N_\Gamma$.
\end{definition}

One can think of $N_{\G}$ as a regular
neighborhood of the image of a proper embedding of $\G$ in $\R^3$
(see \Cref{fig:DoubHandle}).  When $\Gamma$ is finite of rank $n$,
$M_\Gamma \cong \mathop{\#}_n (S^2 \times S^1)$, 
the connect sum of $n$ copies of $S^2
\times S^1$.  Let $M_{n,s}$ denote $\mathop{\#}_n (S^2 \times S^1)$
with $s$ disjoint open balls removed; note that $M_{n,s}\setminus
\partial M_{n,s} \cong M_\Gamma$ for $\Gamma$ rank $n$ with $s$
rays. 

\begin{figure}[ht!]
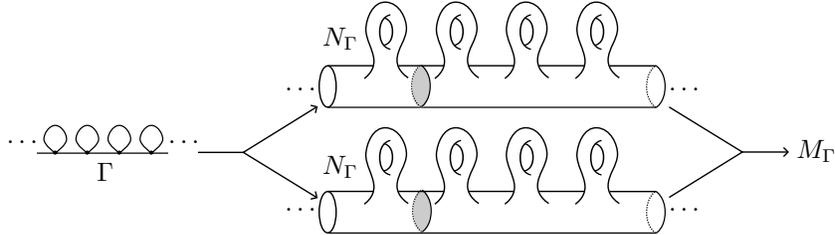

    \centering
    \begin{overpic}[width=10cm]{figs/DoubHandle.png}
        \put(-4,12){\dots}
        \put(17.5,12){\dots}
        \put(8,7){$\Gamma$}
        \put(38,25){$N_\Gamma$}
        \put(38,8){$N_\Gamma$}
        \put(33,19){\dots}
        \put(33,3){\dots}
        \put(101,10){$M_\Gamma$}
        \put(84,19){\dots}
        \put(84,3){\dots}
    \end{overpic}
    \caption{The construction of $M_\Gamma$ for the 
    ladder graph. 
  Disk cross-sections in distinct copies of $N_\Gamma$ 
form hemispheres of a 2-sphere in $M_\Gamma$.  } 
    \label{fig:DoubHandle}
\end{figure}

As usual, we define the \emph{mapping
class group} of $M_{\G}$ to be 
\begin{equation*}
    \text{Map}(M_{\G})=\text{Diff}^+(M_{\G})/\text{isotopy}
\end{equation*}
where $\text{Diff}^+(M_{\G})$ denotes the orientation preserving
diffeomorphisms of $M_{\G}$.  

\begin{remark}
  We will generally work in the smooth
category: submanifolds are assumed to be smoothly embedded, and homotopies and isotopies are smooth.
\end{remark}

\subsection{Spheres in $3$-manifolds}\label{sec:spheres}

The \emph{sphere complex} on $M_\Gamma$ is analogous to the curve
complex on surfaces.  An embedded sphere in a $3$-manifold $M$ is
\emph{essential} if it does not bound a ball and is not peripheral,
\textit{i.e.}\ 
not isotopic into a small neighborhood of a boundary sphere or
puncture of $M$.  

\begin{DEF}
\label{def:sphcpx}
The \emph{sphere complex} associated with a 3-manifold $M$ 
is denoted by $\sph(M)$.  It is a simplicial complex with
\begin{itemize}
    \item \textbf{vertices} corresponding to isotopy classes of essential embedded 2-spheres in $M$; 
    \item \textbf{$k$-cells} spanned by vertices $\a_0,
      \dots, \a_k$ if these spheres can be isotoped to be pairwise
      disjoint.  
\end{itemize}
\end{DEF}

We equip the automorphism group $\Aut(\sph(M_{\Gamma}))$ of 
$\sph(M_{\G})$ with the \emph{permutation topology}. This topology
is defined by the subbasis $\{U_a\}$ at the identity, where $a$
ranges over the vertices of $\sph(M_{\G})$ and $$U_a=\{\phi \in
\Aut(\sph(M_{\G})) \ | \ \phi(a)=a\}.$$
\par 
Note that $\text{Map}(M_{\G})$ acts naturally on $\sph(M_{\G})$, as
$\text{Diff}^+(M_{\G})$ acts on the collection of essential embedded
spheres of $M_{\G}$, preserving disjointness between pairs of
spheres. 
We have the following
from \cite{udall2024spherecomplexlocallyfinite}, which is
the first step in proving \Cref{thm:IvanovRigidity0}:

\begin{theorem}[{\cite[Theorem
  1.1]{udall2024spherecomplexlocallyfinite}}]\label{thm:GraphMCGAction}
  There is a surjective continuous map $\Psi:\mathrm{Map}(M_{\G})\to
  \mathrm{Map}(\G)$ with $\ker(\Psi)$ contained in the kernel $K$ 
  of the
  action of $\mathrm{Map}(M_{\G})$ on $\sph(M_{\G})$, hence inducing
  an action of $\Map(\Gamma)$ on $\sph(M_{\Gamma})$.  If $\Gamma$
  is not a graph with rank $r$ and $s$ rays such that $2r + s < 4$
  or $(r,s) \in \{(0,4),(2,0)\}$, then 
  $K =\ker(\Psi)$ and the induced action is faithful.
\end{theorem}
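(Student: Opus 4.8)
The plan is to build $\Psi$ from the action of a diffeomorphism on the dual graphs of maximal sphere systems, then to identify $\ker\Psi$ via sphere rigidity in $3$--manifolds and compare it with $K$. Fix a maximal sphere system $\sigma_0$ in $M_\G$; its complementary pieces are $3$--holed spheres, so its dual graph $\Delta_{\sigma_0}$ is trivalent, and a direct computation of the rank, the ends, and the ends accumulated by loops shows via \Cref{thm:ADMQ} that the abstract graph $\Delta_{\sigma_0}$ is properly homotopy equivalent to $\G$; fix such an equivalence. A diffeomorphism $h$ of $M_\G$ carries $\sigma_0$ to another maximal sphere system $h\sigma_0$ and hence induces a simplicial isomorphism $\Delta_{\sigma_0}\to\Delta_{h\sigma_0}$, while connectedness and simple--connectedness of $\sph(M_\G)$ (the analogue of Hatcher's contractibility theorem) furnish a path--independent ``change of marking'' $\Delta_{h\sigma_0}\simeq\Delta_{\sigma_0}$ up to proper homotopy. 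Composing, $h$ yields a proper homotopy self--equivalence of $\G$, whose class $\Psi([h])\in\Map(\G)$ is independent of all choices by simple--connectedness; informally, $\Psi$ records the induced action of $h$ on $\pi_1(M_\G)\cong\pi_1(\G)$ and on $\Ends(M_\G)\cong\Ends(\G)$. That $\Psi$ is a homomorphism is the coherence of these identifications, and continuity for the quotient topologies is routine since $\Psi$ is computed from finitely much combinatorial data at a time.

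For surjectivity I would show that every $[f]\in\Map(\G)$ is properly homotopic to the restriction of a diffeomorphism of $M_\G$. It is enough to do this on a topologically generating set of $\Map(\G)$: automorphisms of graphs properly homotopy equivalent to $\G$, which double to diffeomorphisms of $M_\G$; elementary Nielsen transformations and handle slides, which are visibly diffeomorphisms of the doubled handlebody; and homeomorphisms supported near a neighbourhood of an end, which act on the corresponding end of $M_\G$. Since $\Psi$ is continuous and its image is a subgroup containing these generators, surjectivity follows once one checks the image is closed; alternatively, one thickens a proper homotopy equivalence $\G\to\G$ to a proper self--map of $M_\G$ and straightens it to a diffeomorphism, using that $M_\G$ is $3$--dimensional.

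Next, $[h]\in\ker\Psi$ exactly when $h$ can be isotoped to fix a maximal sphere system $\sigma_0$ together with each complementary piece and to act on each piece by a product of sphere twists --- this is the Laudenbach structure theorem for $M_\G$, obtained by running Laudenbach's compact argument along an exhaustion. A sphere twist fixes every isotopy class of essential sphere, since any such sphere can be isotoped to meet the twisting region $S^2\times[-1,1]$ in standard position; hence $\ker\Psi\subseteq K$. Therefore the action of $\Map(M_\G)$ on $\sph(M_\G)$, whose kernel $K$ contains $\ker\Psi$, descends to an action of $\Map(M_\G)/\ker\Psi\cong\Map(\G)$ on $\sph(M_\G)$, continuous for the permutation topology. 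When $\G$ is not one of the excluded graphs I would prove the reverse inclusion $K\subseteq\ker\Psi$: if $[h]\in K$ then $h$ fixes each sphere of $\sigma_0$ up to isotopy, so after an isotopy $h(\sigma_0)=\sigma_0$ with every sphere fixed; the hypothesis supplies an essential sphere confined to each side of each sphere of $\sigma_0$ (this is precisely what fails in the sporadic range), so $h$ cannot interchange the two sides of any such sphere, hence fixes every complementary piece and induces the identity on $\Delta_{\sigma_0}$, forcing $\Psi([h])=\id$. Thus $K=\ker\Psi$ and the induced action of $\Map(\G)$ on $\sph(M_\G)$ is faithful.

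The steps I expect to be hardest are the two places where compact $3$--manifold rigidity must be pushed to the non--compact $M_\G$: Laudenbach's ``homotopy implies isotopy'' for spheres together with the description of the sphere--fixing subgroup, which need an exhaustion/properness argument; and surjectivity of $\Psi$, i.e.\ realizing proper homotopy equivalences by diffeomorphisms while controlling the possibly infinitely many ends. The excluded cases are then a short separate check of why the ``confined sphere on each side'' input fails: when $2r+s<4$ the complex $\sph(M_\G)$ is empty or a single vertex; when $(r,s)=(0,4)$ the spheres separating one end from the other three are peripheral, so $\sph(M_\G)$ has only the three vertices on which $\Map(\G)\cong S_4$ acts through $S_3$; and when $(r,s)=(2,0)$ the doubled handlebody carries an involution inducing $-\id$ on $\pi_1(M_\G)\cong F_2$ that acts trivially on $\sph(M_\G)$ --- in each case $\ker\Psi\subsetneq K$.
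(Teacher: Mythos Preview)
This theorem is not proved in the present paper: it is quoted verbatim from \cite{udall2024spherecomplexlocallyfinite}, so there is no argument here against which to compare your proposal.

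That said, your sketch is a reasonable outline of how such a result could be established, but two of the load-bearing steps lean on inputs the paper does not supply and would need independent work. First, you define $\Psi$ through a ``change of marking'' made path-independent by the simple-connectedness of $\sph(M_\Gamma)$; the paper never proves (or uses) this for infinite-type $\Gamma$, and it is in any case unnecessary, since---as you yourself note parenthetically---$\Gamma$ sits in $M_\Gamma$ as a proper deformation retract, so a diffeomorphism induces a proper homotopy equivalence of $\Gamma$ directly, without passing through the sphere complex at all. Second, your argument for $K\subseteq\ker\Psi$ asserts that $h\in K$ cannot interchange the two sides of any $a\in\sigma_0$ because ``the hypothesis supplies an essential sphere confined to each side''; this is fine when $a$ is separating, but when $a$ is non-separating $M_\Gamma\setminus a$ is connected and there are no ``sides'' in your sense to confine a sphere to. One must instead argue, for instance, that the half-twist exchanging the two boundary spheres of a regular neighbourhood of $a$ acts nontrivially on $\lk(\sigma_0\setminus a)\cong\sph(M_{0,4})$ (compare the paper's \Cref{lem:compatibility_sigma}), which $h\in K$ cannot do. Both gaps are repairable, but as written the proposal is a plan rather than a proof.
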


In the next two sections, we present tools used 
to characterize the links of simplices in
$\sph(M_\Gamma)$.  In particular, we give a method for constructing
intersecting spheres and describe certain full subcomplexes of
links. 

\subsubsection{Intersecting spheres}\label{sec:int_spheres}

Henceforth, let $M$ be an oriented $3$-manifold.  
A submanifold $N$ is \emph{essential} if it is not null-homotopic or peripheral, and we say two submanifolds \emph{intersect essentially} if they cannot be made disjoint up to
homotopy.  
Given transverse oriented
submanifolds $S, T \subset M$ of complementary dimension, the \emph{signed intersection number} of $S$ and $T$ is the sum 
\[\hat \iota(S,T) \coloneq \sum_{x \in S \cap T} \epsilon(x)\]
where
$\epsilon(x)= \pm 1$ according to the orientation induced by $S, T$
and $M$. The signed intersection number $\hat\iota$ is invariant up to homotopy in the following sense: 

\begin{lemma}\label{lem:int_invar}
  Let $S \subset M$ be an oriented submanifold and $T$ 
  an oriented
  manifold such that $\dim S + \dim T = \dim M$.  Let $\Psi : I
  \times T \to M$ be a homotopy transverse to $S$ such that 
  $\Psi^{-1}(S)$ is compact and $\Psi(I \times \partial T) \cap S 
  = \Psi(I \times T)\cap \partial S = \varnothing$. Let $\psi_t(x) =
  \Psi(t,x)$.  If 
  $\psi_0, \psi_1$ are embeddings transverse to $S$, then $\hat
  \iota(\psi_0(T),S) = \hat\iota(\psi_1(T),S)$.
\end{lemma}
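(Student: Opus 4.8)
The plan is the standard cobordism argument for homotopy invariance of intersection numbers, with the compact preimage $W \coloneq \Psi^{-1}(S)$ playing the role of the cobordism. First I would check that $W$ is a compact $1$-manifold with boundary and no corners. Since $\Psi$ is transverse to $S$ and $\dim(I \times T) = 1 + \dim T = 1 + (\dim M - \dim S)$, the preimage $W$ is a submanifold of $I \times T$ of dimension $1$, and it is compact by hypothesis. Because $\psi_0$ and $\psi_1$ are transverse to $S$, the restriction of $\Psi$ to $\{0,1\} \times T$ is transverse to $S$ as well, while $\Psi(I \times \partial T) \cap S = \varnothing$ and $\Psi(I \times T) \cap \partial S = \varnothing$; hence $W$ avoids the face $I \times \partial T$ (and the corner $\{0,1\}\times \partial T$) entirely and meets $\{0,1\}\times T$ cleanly, so $W$ is a neat submanifold with $\partial W = W \cap \partial(I \times T)$. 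Since $\Psi(I \times \partial T)$ misses $S$, this boundary is contained in $\{0,1\} \times T$, so
\[
  \partial W = \bigl(\{0\} \times \psi_0^{-1}(S)\bigr) \ \sqcup\ \bigl(\{1\} \times \psi_1^{-1}(S)\bigr),
\]
a finite set of points (each $\psi_i^{-1}(S)$ is closed in the compact set $\Psi^{-1}(S)$, hence a compact $0$-manifold).

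Next I would orient $W$ and read off signs at $\partial W$. The orientations of $S$ and $M$ co-orient the normal bundle $\nu(S \subset M)$; pulling back along $\Psi$ co-orients $\nu(W \subset I \times T)$, and together with the product orientation on $I \times T$ this orients $W$. Each $p \in \partial W$ then acquires a sign $\delta(p) \in \{\pm 1\}$ from the boundary orientation of the oriented $1$-manifold $W$. A local computation at a single transverse intersection point — performed in an oriented chart in which $\Psi$ is the inclusion of a coordinate slice and $S$ a complementary coordinate subspace — shows that $\delta(p) = c\,\epsilon(x)$ for $p = (1,x)$ and $\delta(p) = -c\,\epsilon(x)$ for $p = (0,x)$, where $\epsilon$ is the sign appearing in the definition of $\hat\iota$ and $c \in \{\pm 1\}$ depends only on the fixed sign conventions; the reversal between the two ends of $I$ is precisely the reversal of the outward-normal direction of the interval at its two endpoints. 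Summing over $\partial W$ therefore gives
\[
  \sum_{p \in \partial W} \delta(p) = c\,\bigl(\hat\iota(\psi_1(T), S) - \hat\iota(\psi_0(T), S)\bigr).
\]

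To conclude, I would invoke the classification of compact $1$-manifolds: $W$ is a disjoint union of circles and closed arcs. Circles contribute no boundary points, and the two endpoints of each arc carry opposite boundary-orientation signs, so $\sum_{p \in \partial W}\delta(p) = 0$. Comparing with the previous display and dividing by $c$ gives $\hat\iota(\psi_0(T), S) = \hat\iota(\psi_1(T), S)$.

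The only genuine subtlety is the orientation bookkeeping in the middle step: one must verify that the discrepancy between the boundary orientation of $W$ and the intersection sign $\epsilon$ is constant along each of $\{0\}\times T$ and $\{1\}\times T$ and flips precisely between them. This is a routine linear-algebra check once a consistent convention — product orientation, ``outward normal first'' for boundaries, and the induced co-orientation of a normal bundle — is fixed, but it is the step where care is required; everything else (transversality, compactness, the $1$-manifold classification) is standard. A minor point worth recording is that the transversality of $\psi_0$ and $\psi_1$ to $S$, not merely that of $\Psi$, is exactly what guarantees $W$ meets $\{0,1\}\times T$ transversally, so that $W$ is a manifold with boundary rather than with corners.
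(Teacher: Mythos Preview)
Your proof is correct and follows the same cobordism argument as the paper: both take $W = \Psi^{-1}(S)$ as an oriented compact $1$-manifold with boundary $-\psi_0^{-1}(S) \sqcup \psi_1^{-1}(S)$ and conclude from the vanishing of the signed count of its boundary points. The paper's version is considerably terser, simply asserting the oriented-boundary identification and writing $\hat\iota(\partial Z, Z) = 0$, whereas you spell out the neatness, the orientation bookkeeping, and the appeal to the classification of compact $1$-manifolds.
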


\begin{proof}  
  Fix the usual orientation for $I$ and 
  endow $I \times T$ with the product orientation. 
  $Z = \Psi^{-1}(S)$ is
  then an oriented compact $1$-manifold with (oriented) 
  boundary $- \psi_0^{-1}(S) \sqcup
  \psi_1^{-1}(S)$. 
  In particular,
  $\hat\iota(\psi_1^{-1}(S),Z) - \hat\iota(\psi_0^{-1}(S),Z) =
  \hat\iota(\partial Z,Z) = 0$, from which the claim follows.
\end{proof}

By \cite{laudenbach_sur_1973}, if two
essential spheres in $M$ are homotopic, 
then they are isotopic, and this
isotopy extends to an ambient isotopy of $M\setminus \partial M$.  
In particular, if two spheres 
are disjoint up to homotopy, then there exists an isotopy of
\textit{one}
sphere realizing their disjointness. Likewise, if a sphere and an
arc are disjoint up to homotopy, then we may homotope the arc (rel
ends) to be disjoint.

\begin{corollary}\label{cor:int_arc}
  Let $\a \subset M\setminus \partial M$ be 
  an embedded non-peripheral sphere and $\gamma \subset M$ 
  a transverse simple arc.
  If $\hat\iota(\a,\gamma) \neq 0$, then $\a, \gamma$ are essential
  and intersect essentially. 
\end{corollary}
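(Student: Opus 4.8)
The plan is to derive all three assertions from a single principle: the signed intersection number $\hat\iota(\alpha,\gamma)$ depends only on the homotopy class of $\gamma$ rel $\partial\gamma$, hence vanishes whenever $\alpha$ and $\gamma$ can be pulled apart. First I would record, as a consequence of \Cref{lem:int_invar}, that $\hat\iota(\alpha,\gamma)=\hat\iota(\alpha,\gamma')$ whenever $\gamma'$ is homotopic to $\gamma$ rel endpoints. To apply \Cref{lem:int_invar} one takes $S=\alpha$ — which is closed, so $\partial S=\varnothing$ — and $T=\gamma$: the endpoints of $\gamma$ miss $\alpha$ by transversality and are fixed along the homotopy, so $\Psi(I\times\partial\gamma)\cap\alpha=\varnothing$, while $\Psi^{-1}(\alpha)$ is closed in the compact set $I\times\gamma$, hence compact. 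In particular, if $\gamma$ can be homotoped rel ends off $\alpha$, then $\hat\iota(\alpha,\gamma)=0$.

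Next I would invoke the Laudenbach-type fact recalled just before the statement (from \cite{laudenbach_sur_1973}): a sphere and an arc that are disjoint up to homotopy can be made disjoint by a homotopy of the arc alone, rel ends. Combined with the previous paragraph, this gives the crux: \emph{if $\alpha$ and $\gamma$ are disjoint up to homotopy, then $\hat\iota(\alpha,\gamma)=0$}. Since $\hat\iota(\alpha,\gamma)\neq 0$ by hypothesis, it follows at once that $\alpha$ and $\gamma$ intersect essentially.

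It then remains only to observe that essential intersection forces $\alpha$ and $\gamma$ themselves to be essential. If $\gamma$ were inessential it would be homotopic rel endpoints into $\partial M$ (or to a point, if null-homotopic), hence rel ends disjoint from $\alpha\subset M\setminus\partial M$, contradicting essential intersection; so $\gamma$ is essential. For $\alpha$: it is non-peripheral by hypothesis, so if it were inessential it would be null-homotopic, hence freely homotopic to a constant map whose image is a single point off the compact arc $\gamma$; thus $\alpha$ and $\gamma$ would be disjoint up to homotopy, a contradiction. Hence $\alpha$ is essential as well.

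The one delicate point is the application of \Cref{lem:int_invar} in the first paragraph: the homotopy of the arc produced by the Laudenbach-type result need not pass through \emph{embedded} arcs, whereas \Cref{lem:int_invar} is stated for embeddings. I expect this to be the main (minor) obstacle, resolved by noting that the proof of \Cref{lem:int_invar} — viewing $\Psi^{-1}(\alpha)$ as an oriented compact $1$-manifold with boundary $-\psi_0^{-1}(\alpha)\sqcup\psi_1^{-1}(\alpha)$ — uses only transversality of $\psi_0,\psi_1$ and not injectivity, so after a small perturbation of $\Psi$ making it transverse to $\alpha$ (while fixing $\partial\gamma$) the conclusion applies verbatim. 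Everything else is a direct unwinding of the definitions of \emph{essential} and \emph{intersect essentially}.
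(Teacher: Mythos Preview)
Your proof is correct and follows essentially the same route as the paper: argue by contrapositive, use the Laudenbach fact to produce an arc $\gamma'$ homotopic to $\gamma$ rel ends and disjoint from $\alpha$, then apply \Cref{lem:int_invar} to conclude $\hat\iota(\alpha,\gamma)=0$. The paper's proof is terser---it leaves the essentiality of $\alpha$ and $\gamma$ implicit (it follows, as you observe, from the fact that an inessential sphere or arc can be homotoped off anything), and it handles the one genuine technicality, transversality of the homotopy $\Psi$ to $\alpha$, by a one-line perturbation remark. Your fourth-paragraph worry about embeddings is slightly misplaced: \Cref{lem:int_invar} only asks that $\psi_0,\psi_1$ be embeddings, not the intermediate stages, and both $\gamma$ and the Laudenbach output $\gamma'$ are simple arcs; the actual delicate point is making $\Psi$ transverse, which both you and the paper patch by perturbation.
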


\begin{proof}
  By the above, if $\a,
  \gamma$ do not intersect essentially, then there exists 
  some arc $\gamma'$ homotopic
  to $\gamma$ (rel ends) and disjoint from $\a$. 
  In fact, $\gamma'$ is then properly homotopic to $\gamma$, 
  and up to a small
  homotopy leaving $\hat \iota$ unchanged we assume this
  homotopy is transverse to $\a$. As $\a$ is compact
  and disjoint from $\partial M$, we satisfy the
  hypotheses of \Cref{lem:int_invar}, and  $\hat\iota(\a,\gamma) =
  \hat\iota(\a,\gamma') = 0$. 
\end{proof}

\begin{lemma}\label{lem:int_pop}
  Let $\a \subset M\setminus \partial M$ 
  be a non-peripheral sphere and 
  let $\gamma$ be a simple arc between distinct punctures.
  Let $\b \cong S^2$
  be the boundary of a regular neighborhood of $\gamma$.  
  If $\gamma$ intersects $\a$ essentially, then $\b$ must also intersect $\a$ essentially.
\end{lemma}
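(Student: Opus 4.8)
The plan is to argue by contradiction: suppose $\b$ can be made disjoint from $\a$ up to homotopy. Since $\a$ is essential and $\b$ is a sphere, by the Laudenbach result cited above, after an ambient isotopy of $M \setminus \partial M$ we may assume $\a$ and $\b$ are genuinely disjoint, with $\b$ still the boundary of a regular neighborhood $\nbd(\gamma')$ of an arc $\gamma'$ properly homotopic to $\gamma$. The key point is that the arc $\gamma'$ can be taken inside the ball-with-two-punctures region $B$ that $\b$ cuts off (i.e., the component of $M \setminus \b$ containing the two punctures that are the endpoints of $\gamma$, which is a twice-punctured $3$-ball). Now $\a$ is disjoint from $\b$, so $\a$ lies entirely in $B$ or entirely in the other component $M \setminus B$. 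If $\a \subset M \setminus B$, then $\a$ is disjoint from $B \supset \gamma'$, so $\a$ and $\gamma$ are disjoint up to homotopy, contradicting the hypothesis that they intersect essentially. If $\a \subset B$, then $\a$ is a sphere inside a twice-punctured ball; such a sphere either bounds a ball or is isotopic to a peripheral sphere (around one of the two punctures or around $\b$ itself), hence is inessential or peripheral in $M$ after accounting for how $B$ sits in $M$ — in either case $\a$ is not essential, contradicting the hypothesis.

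The main step to make precise is the claim that a disjointness homotopy can be arranged so that $\gamma'$ stays inside $B$. I would handle this by first using \Cref{cor:int_arc}-style reasoning with signed intersection numbers: since $\gamma$ runs between the two distinct punctures cut off by $\b$, a small push-off of $\gamma$ meets $\b$ in exactly one transverse point essentially, but that is a statement about $\gamma$ and $\b$, not what is needed here. Instead, the cleaner route is: once $\a$ and $\b$ are disjoint with $\a \subset B$ or $\a \subset M\setminus B$, I do not even need to control $\gamma'$ — the dichotomy above already yields the contradiction directly from where $\a$ sits relative to $\b$, using only that $\gamma$ is homotopic into $B$ (which is immediate, as $\gamma \subset \nbd(\gamma) \subset B$ by construction of $\b$) and that essentiality of $\a$ rules out $\a$ living in a twice-punctured ball. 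So the real content is the classification of spheres in a twice-punctured $3$-ball: every embedded sphere there separates, and each complementary region is either a ball, a once-punctured ball, or a twice-punctured ball, forcing $\a$ to bound a ball or be peripheral.

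The expected main obstacle is verifying that $\a$ being essential and non-peripheral in the ambient $M$ is genuinely incompatible with $\a \subset B$: I must check that a sphere peripheral in $B$ (say, cutting off a neighborhood of one puncture, or parallel to $\b$) is also inessential or peripheral in $M$. A sphere parallel to $\b$ in $B$ is isotopic in $M$ to $\b$ itself, which bounds the ball-part of $\nbd(\gamma)$ on the other side — wait, $\b$ is not a priori inessential, so I should instead observe that if $\a$ is parallel to $\b$ then $\a$ separates the two punctures, and I can then replay the argument with $\a$ in the role of $\b$; but more efficiently, a sphere in $B$ parallel to $\b$ is isotopic to $\b$, and since we assumed $\a \ne \b$ (they are disjoint, distinct vertices — if $\a = \b$ there is nothing to prove since $\b$ trivially intersects... actually if $\a=\b$ the conclusion is that $\b$ intersects $\a$ essentially, false, so this case needs the hypothesis to already fail, i.e. $\gamma$ would be disjoint from $\a=\b$ up to homotopy, contradiction) we get $\a$ parallel to $\b$ forces $\a$ to be non-peripheral only if $\b$ is, reducing to the same question. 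The clean fix: enumerate that in a twice-punctured ball every essential sphere is isotopic to one of exactly two peripheral spheres (the two puncture-boundaries) or to $\b$, and peripheral spheres of $B$ that are punctures of $B$ are punctures of $M$, hence $\a$ peripheral in $M$; the only remaining case $\a \simeq \b$ is excluded as above. I would write this as a short lemma on spheres in punctured balls and cite it.
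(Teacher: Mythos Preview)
Your approach is essentially the paper's: argue the contrapositive, isotope $a$ off $b$, and split on which complementary component of $b$ contains $a$. The paper phrases the inside component as a thrice-punctured $S^3$ (your twice-punctured ball $B$), and the two cases match yours exactly.

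Where you get tangled is the subcase $a \simeq b$ inside $B$. The clean resolution, which is what the paper does, is simply: if $a'$ is homotopic to $b$, then since $b$ itself is disjoint from $\gamma$, so is $a$ up to homotopy, contradicting the hypothesis that $\gamma$ intersects $a$ essentially. You do not need $b$ to be inessential, you do not need to treat ``$a=b$'' separately, and you do not need to ``replay the argument with $a$ in the role of $b$.'' Also, there is no need to carry $\gamma$ along an ambient isotopy to some $\gamma'$: just isotope $a$ to $a'$ disjoint from $b$ and leave $\gamma$ where it is, sitting inside $B$. This removes the entire paragraph of hedging about controlling $\gamma'$.

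One small wording issue: when $a' \subset B$ bounds a ball, you say this contradicts ``$a$ essential,'' but the stated hypothesis is only that $a$ is non-peripheral. The actual contradiction is again with essential intersection: a sphere bounding a ball can be homotoped off any arc.
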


\begin{proof}
  We prove the contrapositive.  Suppose that $\a,\b$ do not
  intersect essentially. Then $\a$ is isotopic to a sphere $\a'$
  disjoint from $\b$.  Let $M' \sqcup M'' = M \setminus \b$; $M'$ is
  disjoint from $\gamma$ and $M''$ is a thrice-punctured $3$-sphere.
  If $\a' \subset M'$ then $\a$ does not intersect $\gamma$
  essentially.  If $\a' \subset M''$, then $\a'$  is peripheral in
  $M''$; since $\a$ is not peripheral in $M$, $\a'$ is homotopic 
  to $\b$ which is disjoint from $\gamma$.
\end{proof}

\subsubsection{Full subcomplexes of
$\sph(M_\Gamma)$}\label{sec:full_subcpx}

A \emph{sphere system} is a (possibly infinite) collection of
distinct pairwise disjoint essential spheres.
We show that the sphere complex of
a submanifold obtained by cutting along a sphere system 
is a full subcomplex of $\sph(M_\Gamma)$:  

\begin{proposition}\label{prop:full_subcpx}
  Let $Z \subset M_\Gamma \setminus \sigma$ 
  be a complementary component of a
  sphere system $\sigma$.  Then $Z \hookrightarrow M_\Gamma$ 
  induces an injective full simplicial map 
  $\sph(Z) \hookrightarrow \sph(M_\Gamma)$.
\end{proposition}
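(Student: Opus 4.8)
The plan is to establish three things about the map $\iota_* : \sph(Z) \to \sph(M_\Gamma)$ induced by the inclusion $Z \hookrightarrow M_\Gamma$: that it is well-defined (sends essential spheres in $Z$ to essential spheres in $M_\Gamma$), that it is injective on vertices, and that it is \emph{full}, meaning that whenever the images of $a_0, \dots, a_k \in \sph(Z)$ span a simplex in $\sph(M_\Gamma)$ — i.e. can be made pairwise disjoint in $M_\Gamma$ — they already span a simplex in $\sph(Z)$. Simpliciality in the forward direction is immediate: disjoint spheres in $Z$ remain disjoint in $M_\Gamma$. So the content is the other three points, and fullness is where the real work lies.

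\medskip
\textbf{Well-definedness and injectivity.} First I would check that an essential sphere $S \subset Z$ remains essential in $M_\Gamma$. If $S$ bounded a ball $B$ in $M_\Gamma$, then since $\partial B = S \subset Z$ and the components of $M_\Gamma \setminus \sigma$ are separated by the spheres of $\sigma$, the ball $B$ must lie entirely in $Z$ (a ball is connected and cannot contain any sphere of $\sigma$, as those are essential hence non-nullhomotopic in $M_\Gamma$ — here one uses that an essential sphere of $\sigma$ cannot bound a ball in $B \subset M_\Gamma$). This contradicts essentiality of $S$ in $Z$. Similarly, if $S$ were peripheral in $M_\Gamma$, it would be isotopic into a neighborhood of a boundary sphere or puncture of $M_\Gamma$; by the same separation argument, and using Laudenbach's theorem (quoted above: homotopic essential spheres are isotopic via an ambient isotopy of $M_\Gamma \setminus \partial M_\Gamma$) together with the fact that a neighborhood of a peripheral sphere is a product region disjoint from $\sigma$, this forces $S$ to be peripheral or inessential in $Z$. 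For injectivity: if $S, S' \subset Z$ are essential and isotopic in $M_\Gamma$, then they are homotopic in $M_\Gamma$, hence homotopic in $Z$ (as $\pi_2$ injects — or more simply, by lifting the homotopy and using that $Z$ is $\pi_1$-injective / incompressible, but cleanest is to use Laudenbach: isotopic essential spheres in $M_\Gamma$ are homotopic, and a homotopy between two spheres in $Z$ avoiding $\sigma$ can be found by general position and an innermost-disk argument on the intersection of the homotopy with $\sigma$, since $\sigma$ consists of spheres), so $S, S'$ are isotopic in $Z$ again by Laudenbach applied within $Z$.

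\medskip
\textbf{Fullness — the main obstacle.} Suppose $a_0, \dots, a_k$ are essential spheres in $Z$ whose images can be isotoped to be pairwise disjoint in $M_\Gamma$. I want to realize that disjointness inside $Z$. The key tool is that disjointness-up-to-homotopy of essential spheres can be realized by isotoping \emph{one} sphere at a time (stated in the excerpt just before Corollary~\ref{cor:int_arc}). Proceeding by induction on pairs: it suffices to show that if $a, b$ are essential spheres in $Z$ that are disjoint up to isotopy in $M_\Gamma$, then they are disjoint up to isotopy in $Z$. By Laudenbach, pick an ambient isotopy of $M_\Gamma \setminus \partial M_\Gamma$ carrying $a$ off $b$; equivalently, $a$ is homotopic in $M_\Gamma$ to a sphere $a'$ disjoint from $b$. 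Now put the homotopy $H : S^2 \times I \to M_\Gamma$ (from $a$ to $a'$) transverse to the sphere system $\sigma$. The preimage $H^{-1}(\sigma)$ is a compact $1$-submanifold of $S^2 \times I$, and on an innermost disk/circle argument one removes intersections of the homotopy with $\sigma$: an innermost circle of $H^{-1}(\sigma)$ in $S^2 \times \{t\}$ (or an innermost component of $H^{-1}(\sigma)$) bounds a disk mapping into some complementary region, and since the spheres of $\sigma$ are $\pi_2$-essential but any embedded circle on a sphere bounds, one can surger the homotopy to reduce $|H \cap \sigma|$. Care must be taken because $a'$ might not lie in $Z$ — but $a$ does, and what we actually want is a sphere in $Z$ isotopic to $a$ and disjoint from $b$; running the surgery argument relative to the endpoint $a \subset Z$, and keeping track that $b$ stays fixed, produces an isotopy of $a$ within $Z$ to a position disjoint from $b$. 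The delicate points are (i) making all the general-position and innermost-disk arguments respect the fixed sphere $b$ simultaneously, and (ii) handling the case where a component of $a \cap \sigma$ is essential on $a$ — which cannot happen, since every circle on $S^2$ bounds a disk, so all intersection circles are trivial and surgery always applies. This is the standard "spheres in $3$-manifolds behave well under cutting" package (Hatcher's normal form / Laudenbach), and I expect the writeup to amount to carefully invoking it rather than reproving it; the one genuine subtlety is doing the induction over the whole simplex $\{a_0, \dots, a_k\}$ at once, which is handled by the "isotope one sphere at a time" principle so that already-disjoint pairs are never disturbed.
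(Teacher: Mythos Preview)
Your outline correctly identifies the structure and where the content lies, but the key step --- pushing a homotopy between spheres in $Z$ off of $\sigma$ --- is not correctly executed, and the same gap infects your injectivity and peripheral-case arguments. The dimension count is off: for $H : S^2 \times I \to M_\Gamma$ transverse to $\sigma$, the preimage $H^{-1}(\sigma)$ is a closed \emph{surface} in the $3$-manifold $S^2 \times I$, not a $1$-submanifold, so the innermost-circle surgery you sketch does not apply. One might instead try to compress surface components of $H^{-1}(\sigma)$, but $S^2 \times I$ is not irreducible (the slice $S^2 \times \{t\}$ does not bound a ball), so a naive compression argument stalls exactly when the homotopy genuinely crosses $\sigma$; some substantive input is needed here, not just more careful bookkeeping.

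The paper supplies that input via Hatcher normal form (\Cref{prop:hnf}, \Cref{prop:hnf_equiv}). After passing to a compact $N \cong M_{n,s} \subset M_\Gamma$ containing the image of the homotopy (Hatcher's results are stated in finite type), one extends $\{b\} \cup \sigma$ to a maximal system $\sigma'$ in $N$, homotopes $a$ into normal form with $\sigma'$ by a homotopy disjoint from $\sigma$ (\Cref{prop:hnf}), and then invokes uniqueness of normal form (\Cref{prop:hnf_equiv}): since the resulting sphere and $b$ are both in normal form and disjoint from $\sigma \subset \sigma'$, the homotopy between them is disjoint from $\sigma$ throughout. Fullness and injectivity both follow from this. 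For well-definedness the paper takes a separate route, proving $\pi_2$-injectivity of $Z \hookrightarrow M_\Gamma$ via a homology computation in the universal cover (\Cref{lem:pi2_inj}), and explicitly remarks that $\pi_2$-injectivity alone is \emph{not} enough for injectivity of $\sph(Z) \to \sph(M_\Gamma)$, since free homotopy classes of spheres are $\pi_1$-orbits in $\pi_2$ and $M_\Gamma$ is not simply connected --- hence the normal-form argument is still required.
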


The proof of \Cref{prop:full_subcpx} will make use of the following
lemma and two results of \cite{hatcher}. 

\begin{lemma}\label{lem:pi2_inj}
  $Z \hookrightarrow M_\Gamma$ is $\pi_2$-injective.  In particular,
  any essential sphere in $Z$ is essential in $M_\Gamma$.
\end{lemma}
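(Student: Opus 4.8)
The plan is to exhibit $Z$ as a subset of $M_\Gamma$ whose $\pi_2$ is carried faithfully. Recall $M_\Gamma$ is the double of the handlebody-like manifold $N_\Gamma$, and $Z$ is obtained by cutting $M_\Gamma$ along the sphere system $\sigma$. The key structural fact is that each sphere of $\sigma$ either separates $M_\Gamma$ or not, but in any case cutting along a single essential sphere $s$ produces either one or two pieces, each of which is again (homotopy equivalent to) a connected sum of $S^2 \times S^1$'s with some punctures — equivalently, each complementary piece $M_i$ of $M_\Gamma \setminus \sigma$ has free fundamental group and $\pi_2$ generated (as a $\pi_1$-module) by embedded spheres, by the sphere theorem together with the prime decomposition. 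So first I would record that $Z$, being such a complementary component, has $\pi_1(Z)$ free and $\pi_2(Z)$ a free $\pi_1(Z)$-module generated by embedded spheres.

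The core of the argument is the following. Pick a basepoint in $Z$ (away from $\sigma$), and let $\iota : Z \hookrightarrow M_\Gamma$ be the inclusion. I want to show $\iota_* : \pi_2(Z) \to \pi_2(M_\Gamma)$ is injective. The cleanest route is via a retraction-type argument on the universal covers, or more simply: the inclusion $Z \hookrightarrow M_\Gamma$ is $\pi_1$-injective (each spanning sphere of $\sigma$ is two-sided and essential, so by Van Kampen / the graph-of-spaces decomposition of $M_\Gamma$ along $\sigma$, the edge groups are trivial and the vertex groups inject), hence lifts to an embedding $\widetilde{Z} \hookrightarrow \widetilde{M_\Gamma}$ of universal covers — or rather, $\widetilde{Z}$ is a component of the preimage of $Z$. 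Then $\pi_2(Z) = H_2(\widetilde Z)$ and $\pi_2(M_\Gamma) = H_2(\widetilde{M_\Gamma})$ by Hurewicz (both universal covers being simply connected). The preimage $p^{-1}(Z) \subset \widetilde{M_\Gamma}$ is a disjoint union of copies of $\widetilde Z$, and $\widetilde{M_\Gamma}$ is built from these by gluing along copies of $\widetilde{\sigma} = $ (disjoint 2-spheres, since $\sigma$ is $\pi_1$-injective with trivial edge groups). A Mayer--Vietoris argument for this decomposition of $\widetilde{M_\Gamma}$ — using that each gluing sphere $\widetilde s$ satisfies $H_2(\widetilde s) \to H_2(\widetilde{M_\Gamma})$ having image accounted for separately, and $H_1$ of each piece vanishing — shows $H_2(\widetilde Z) \to H_2(\widetilde{M_\Gamma})$ is injective. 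Concretely: if a class $c \in H_2(\widetilde Z)$ dies in $\widetilde{M_\Gamma}$, it bounds a $3$-chain; cut that chain along the gluing spheres, and the piece living in $\widetilde Z$ (plus peripheral sphere corrections, which are themselves nullhomologous in $\widetilde Z$ or represent the boundary classes) exhibits $c$ as a boundary in $\widetilde Z$ already, using $H_3(\widetilde Z, \partial) $ and $H_2(\partial \widetilde Z) \to H_2(\widetilde Z)$ control.

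For the final sentence of the lemma: an essential sphere $\alpha$ in $Z$ represents a nonzero element of $\pi_2(Z)$ (an inessential sphere bounds a ball or is peripheral — peripheral spheres in $Z$ are either boundary spheres of $M_\Gamma$, which are excluded, or copies of spheres in $\sigma$, and those are nonzero in $\pi_2$ but we only claim essentiality, and an essential $\alpha$ in $Z$ is by definition not peripheral in $Z$, so it is nontrivial and non-peripheral). By $\pi_2$-injectivity, $\iota_*[\alpha] \neq 0$ in $\pi_2(M_\Gamma)$, so $\alpha$ does not bound a ball in $M_\Gamma$; and it is not peripheral in $M_\Gamma$ because it is disjoint from $\partial M_\Gamma$ and not parallel to any boundary sphere (any such parallelism would be detected inside $Z$ after isotopy, using Laudenbach again, since the parallelism region is a product $S^2 \times I$ which — if $\alpha$ is non-peripheral in $Z$ — must meet $\sigma$, contradicting that pieces of $\sigma \times I$-type products were already excised). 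Hence $\alpha$ is essential in $M_\Gamma$.

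The main obstacle I anticipate is the Mayer--Vietoris bookkeeping on the universal cover when $\sigma$ is infinite: the decomposition of $\widetilde{M_\Gamma}$ along the infinitely many lifts of $\sigma$ is an infinite graph of spaces, so I need to take care that homology commutes with the relevant (direct) limits, and that $H_3$ and $H_1$ vanishing hold for the (possibly infinite-type) pieces $\widetilde Z$. Since each $\widetilde Z$ is an open $3$-manifold (non-compact, as $\Gamma$ is infinite), $H_3(\widetilde Z) = 0$ automatically, which is actually helpful; and $H_1(\widetilde Z) = 0$ since $\widetilde Z$ is simply connected, being a cover of the aspherical-on-$\pi_1$... — more precisely $\widetilde Z$ is a component of $p^{-1}(Z)$ with $p$ the universal cover of $M_\Gamma$, and since $Z \hookrightarrow M_\Gamma$ is $\pi_1$-injective this component is the universal cover of $Z$, hence simply connected. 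So the bookkeeping reduces to checking exactness of a Mayer--Vietoris sequence assembled over a (possibly infinite) tree of pieces, which goes through by a standard colimit argument. The only genuinely delicate point is establishing $\pi_1$-injectivity of $Z \hookrightarrow M_\Gamma$ in the infinite setting, but this follows from writing $M_\Gamma$ as a graph of spaces dual to $\sigma$ with trivial edge groups ($\pi_1 S^2 = 1$) and invoking the normal form for graphs of groups.
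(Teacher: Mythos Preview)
Your overall skeleton matches the paper's exactly: establish $\pi_1$-injectivity of $Z \hookrightarrow M_\Gamma$ via van Kampen, lift to universal covers so that $\tilde Z$ is simply connected, and reduce $\pi_2$-injectivity to $H_2$-injectivity via Hurewicz. The difference is in how you prove $H_2(\tilde Z) \hookrightarrow H_2(\tilde M_\Gamma)$. The paper does this in one line using the long exact sequence of the pair $(\tilde M_\Gamma,\tilde Z)$: by excision $H_3(\tilde M_\Gamma,\tilde Z)\cong H_3(\tilde M_\Gamma\setminus\tilde Z,\partial\tilde Z)$, and this vanishes because each component of $\tilde M_\Gamma\setminus\tilde Z$ is a non-compact $3$-manifold. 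Your Mayer--Vietoris route can be made to work, but the ``peripheral sphere corrections'' you invoke are not automatically nullhomologous in $\tilde Z$; what actually forces them to vanish is precisely $H_3(W,\partial W)=0$ for the complementary pieces $W$, i.e.\ the same non-compactness fact. So the long exact sequence of the pair is strictly simpler and avoids the infinite-gluing bookkeeping you flag as an obstacle.

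Your handling of the ``essential'' clause has a genuine gap. First, $M_\Gamma$ has no boundary (it is the double of $N_\Gamma$), so ``peripheral'' refers to punctures/ends, and your sentence about $\partial M_\Gamma$ is off. More substantively, your product-region argument for non-peripherality does not go through: if $\alpha\subset Z$ is peripheral in $M_\Gamma$ to a puncture $e$, the putative $S^2\times I$ region need not lie in $Z$ nor contain any sphere of $\sigma$, so there is nothing to contradict. The paper treats this carefully in the universal cover: if $e\in\Ends(\tilde Z)$, then a small peripheral sphere about $e$ lies in $\tilde Z$ and the already-established $\pi_2$-injectivity gives the homotopy inside $\tilde Z$; if $e\notin\Ends(\tilde Z)$, one fills $e$ with a ball (in a neighborhood disjoint from $\tilde Z$), discards any now-inessential spheres of $\tilde\sigma$, and applies the $\pi_2$-injectivity argument to the filled manifold to conclude $\alpha$ is nullhomotopic in $\tilde Z$. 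You should replace your product-region sketch with this case analysis.
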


\begin{proof}
  Let $p : \tilde M_\Gamma \to M_\Gamma$ be the universal covering of 
  $M_\Gamma$ and fix
  $\tilde Z \subset \tilde M_\Gamma$ a connected component of $p^{-1}(Z)$;
  observe $\tilde Z$ is a complementary component of the sphere
  system $\tilde \sigma = p^{-1}(\sigma)$.  
  We show the inclusion $\tilde Z
  \hookrightarrow \tilde M_\Gamma$ is $\pi_2$-injective and
  preserves essentiality, which suffices: two spheres are homotopic
  if and only if they have a pair of homotopic lifts, and
  neighborhoods of punctures lift homeomorphically. 
  
  An application of van Kampen's theorem shows that $Z$
  is $\pi_1$-injective, hence $\tilde Z$ is simply connected.  
  The pair $(\tilde M_\Gamma, \tilde
Z)$ induces the exact sequence
\[
H_3(\tilde M_\Gamma, \tilde Z) \to H_2(\tilde Z)
\xrightarrow{\iota_\natural} H_2(\tilde
M_\Gamma)\:;
\]
since each component of $\tilde M_\Gamma \setminus \tilde Z$ is
non-compact, $H_3(\tilde M_\Gamma,\tilde Z) \cong H_3(\tilde
M_\Gamma \setminus \tilde Z, \partial \tilde Z) = 0$.  Hence
$\iota_\natural$, which is induced by the inclusion $\iota : \tilde
Z \hookrightarrow \tilde M_\Gamma$, is injective.  Since
$\tilde Z, \tilde M_\Gamma$ are simply connected, the  
Hurewicz (natural) isomorphism implies that 
$\iota_*: \pi_2(\tilde Z) \to \pi_2(\tilde M_\Gamma)$ is 
likewise injective.   
It follows that two spheres $a,a'\subset
\tilde Z$ are  homotopic in $\tilde M_\Gamma$ only if they are
likewise in $\tilde Z$, and in particular $a \subset \tilde Z$ is
null-homotopic in $\tilde M_\Gamma$ only if it is likewise in 
$\tilde Z$.  Extending along paths to a basepoint 
$z_0 \in \tilde Z$, we
obtain homotopic based spheres $\bar a,\bar a' \subset \tilde
M_\Gamma$ also homotopic to $a,a'$ respectively.  Since $\tilde
M_\Gamma$ is simply connected, $\bar a,\bar a'$ can be chosen to be
homotopic to $a,a'$ via homotopies in $\tilde Z$, 
and $\pi_2$-injectivity 
implies that $\bar a,\bar a'$ are homotopic in $\tilde Z$ as well.

It remains to show that if $\a \subset \tilde Z$ is peripheral in 
$\tilde M_\Gamma$, then it is null-homotopic or peripheral
in $\tilde Z$.  Suppose
that $\a$ is peripheral to an (isolated) 
puncture $e \in \Ends(\tilde M_\Gamma)$. If $e \in
\Ends(\tilde Z)$, then $\a$ is peripheral in $\tilde Z$. Else, let
$M^+$ be obtained by replacing a neighborhood of $e$ disjoint from
$\tilde Z$ with a ball and
$\sigma^+ \subset M^+$ by removing components (in fact, at most one)
of $\tilde \sigma$ peripheral in $M^+$. Then 
$\tilde Z \hookrightarrow M^+$ is isotopic to a 
component of $M^+ \setminus \sigma^+$, hence $\pi_2$-injective 
by the
above: since $\a$ is null-homotopic in $M^+$, it is also 
null-homotopic in $\tilde Z$.
\end{proof}

We briefly review Hatcher normal form. 
Given a maximal sphere system $\sigma \subset M \cong M_{n,s}$ and a
transverse sphere $\a \subset M$, $\a$ is in \emph{normal form} with
$\sigma$ if either $\a$ is equal to a component of $\sigma$ or if (the closure of) each component of $\a \setminus \sigma$ 
meets each sphere in $\sigma$ at most once and no component
is homotopic rel boundary to a disk in $\sigma$. The intersection of $\a$ with the closure of a component of $M\setminus \sigma$ is called a \emph{piece} of $\a$.

\begin{proposition}[{\cite[Proposition
  1.1]{hatcher}}]\label{prop:hnf} Let $\a \subset M_{n,s}$ be an
  essential sphere and $\sigma \subset M_{n,s}$ a maximal sphere
  system.  Then $\a$ is homotopic to a sphere $\a'$ in normal
  position with $\sigma$, and if $\a \cap \sigma_0 = \varnothing$
  for some subsystem $\sigma_0 \subset \sigma$, then there exists a
  homotopy disjoint from $\sigma_0$.  
\end{proposition}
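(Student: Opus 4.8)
This is \cite[Proposition 1.1]{hatcher}; I outline the standard innermost-disk proof. Since $\sigma$ is maximal, every complementary component $P$ of $M_{n,s}\setminus\sigma$ is a $3$-holed $3$-sphere: a component admitting a further essential sphere would contradict maximality, and a one- or two-holed component would force some component of $\sigma$ to bound a ball or to be isotopic to another component. In such a $P$ one has $\pi_2(P)\cong\Z^2$, no boundary sphere of $P$ is null-homotopic in $P$, and every null-homotopic $2$-sphere in $P$ bounds a ball in $P$ (separate $S^3$ along it; the side containing no removed balls is a ball in $P$). Moreover $P\hookrightarrow M_{n,s}$ is $\pi_2$-injective by \Cref{lem:pi2_inj} and \Cref{prop:full_subcpx}, and essential spheres that are homotopic are isotopic by \cite{laudenbach_sur_1973}.

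The plan is to argue by reduction of $|\a\cap\sigma|$. Homotope $\a$ to be transverse to $\sigma$, so that $\a\cap\sigma$ is a finite disjoint union of circles, each lying on a single component of $\sigma$; if $\a$ is homotopic to a component of $\sigma$ we are done, so assume not. I claim that whenever a transverse $\a$ is not in normal form with $\sigma$, there is a homotopy of $\a$, within its homotopy class, strictly decreasing $|\a\cap\sigma|$; iterating then yields a normal form. A failure of normal form means some piece $Q$ of $\a\setminus\sigma$ is either a disk homotopic rel $\partial$ into $\partial P$, or meets some boundary sphere $S_j$ of its piece $P$ in at least two circles. In the first case the disk $D\subset S_j$ with $\partial D=\partial Q$ for which $Q\cup D$ is null-homotopic in $P$ bounds a ball in $P$, and pushing $\a$ across this ball and slightly past $S_j$ removes the circle $\partial Q$ without creating new intersections. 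In the second case one takes a circle $c\subset\a\cap S_j$ innermost on $S_j$, cuts $\a$ along the disk $D\subset S_j$ it bounds (whose interior misses $\a$) to obtain two embedded spheres, and then, using that inessential summands bound balls (the $\pi_2$ computation above) together with \cite{laudenbach_sur_1973}, produces a representative of the class of $\a$ meeting $\sigma$ in fewer circles; the $\pi_2$-injectivity of the pieces ensures no essentiality is lost in the complementary components. Either way $|\a\cap\sigma|$ drops.

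For the refinement, I would run the entire argument using only transverse representatives disjoint from the prescribed subsystem $\sigma_0\subseteq\sigma$ — possible since $\a$ itself is disjoint from $\sigma_0$ — and observe that every reducing move above is supported inside a single complementary component $P$ of $M_{n,s}\setminus\sigma$, hence disjoint from $\sigma_0$. Thus the resulting normal-form representative is disjoint from $\sigma_0$ and the homotopy realizing it misses $\sigma_0$.

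The step I expect to be the main obstacle is the second reduction: decreasing the geometric count by surgery is immediate, but one must keep the smaller-intersection sphere in the homotopy class of $\a$ rather than merely producing \emph{some} essential sphere, and one must handle summands that are boundary-parallel rather than null-homotopic. This is exactly where the structure of the complementary pieces as $3$-holed $3$-spheres, the computation of $\pi_2$ of the pieces and of $M_{n,s}$, \Cref{prop:full_subcpx}, and the homotopy-implies-isotopy theorem of \cite{laudenbach_sur_1973} are all used. Once that is in hand, the relative statement in \Cref{prop:hnf} is essentially automatic, since every move lives in a complementary component and can be chosen to avoid $\sigma_0$.
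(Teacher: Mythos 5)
The paper does not reprove this statement: it is quoted directly from \cite{hatcher} (Proposition 1.1), and the only added content is the remark that the relative refinement (a homotopy disjoint from $\sigma_0$ when $\a\cap\sigma_0=\varnothing$) is immediate from the construction in Hatcher's proof. So your sketch should be judged as a standalone argument, and as such it has a genuine gap exactly at the step you flag. In your second reduction you surger $\a$ along an innermost disk $D\subset S_j$ and assert that, ``using that inessential summands bound balls together with \cite{laudenbach_sur_1973},'' one obtains a sphere in the class of $\a$ with fewer intersection circles. But surgery along $D$ replaces $\a$ (up to homotopy) by the tubed connected sum of the two resulting spheres; $\a$ is homotopic to one of them only when the other summand is null-homotopic or peripheral, and nothing in your argument forces that to happen. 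When both summands are essential the induction simply stalls, and this is precisely the nontrivial content of Hatcher's proposition: the reduction there is carried out by homotopies (not surgeries followed by discarding a summand), analyzed via the simply connected pieces in the universal cover, so that the homotopy class is preserved at every stage. The $\pi_2$-computation for the pants pieces, $\pi_2$-injectivity, and Laudenbach's homotopy-implies-isotopy theorem do not by themselves repair this step. (A smaller issue of the same flavor: in your first move the disk $D$ may contain further circles of $\a\cap\sigma$ in its interior, so the pushed-off copy of $Q$ can fail to be embedded unless you work with an innermost configuration.)

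The relative statement inherits the same problem. Your justification is that ``every reducing move above is supported inside a single complementary component,'' but this is false for the surgery move as you describe it: the ball bounded by a trivial summand, and hence the homotopy from $\a$ to the surviving summand, can sweep across components of $\sigma_0$. It is true for the disk-exchange move, and it is the locality of the moves in Hatcher's actual construction that makes the refinement ``immediate,'' which is all the paper claims. If you want a self-contained proof here, you need either to reproduce Hatcher's construction (or one of the later treatments of normal form) with the observation that each modification takes place in a neighborhood of a single piece and a single wall not in $\sigma_0$, or to replace your surgery step by a genuine homotopy move (e.g.\ tubing two intersection circles of a piece with the same wall across $\sigma$) and prove that it reduces intersections while staying in the homotopy class.
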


\begin{remark}
  Our statement is slightly stronger than that in \cite{hatcher},
  but immediate from the construction in its proof.
\end{remark}

\begin{proposition}[{\cite[Proposition 1.2]{hatcher}}]\label{prop:hnf_equiv}
  Let $\a,\a' \subset M_{n,s}$ be essential homotopic spheres in
  normal form with a maximal sphere system $\sigma \subset M_{n,s}$.
  Then $\a,\a'$ are homotopic via a homotopy which restricts to an
  isotopy on intersections with each sphere in $\sigma$. 
\end{proposition}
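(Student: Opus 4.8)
The plan is to run the classical ``innermost disk plus irreducibility'' argument for the preimage of $\sigma$ under a homotopy. Fix a smooth homotopy $F\colon S^2\times I\to M_{n,s}$ with $F_0=\a$ and $F_1=\a'$; since $\a,\a'$ meet $\sigma$ transversely (being in normal form), after a small perturbation rel $S^2\times\{0,1\}$ we may assume $F$ is transverse to $\sigma$. Then $\Sigma\coloneqq F^{-1}(\sigma)$ is a compact surface properly embedded in $S^2\times I$ with $\partial\Sigma=(\a\cap\sigma)\times\{0\}\sqcup(\a'\cap\sigma)\times\{1\}$, each component of $\Sigma$ is carried by $F$ into a single sphere of $\sigma$, and each region of $(S^2\times I)\setminus\Sigma$ is carried into a single component of $M_{n,s}\setminus\sigma$. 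Because $\sigma$ is maximal, every such component is a thrice-punctured $3$-sphere (or a punctured ball), hence simply connected and irreducible. It suffices to modify $F$, rel $S^2\times\{0,1\}$, so that $\Sigma$ becomes a disjoint union of vertical annuli $c_j\times I$: then $F|_\Sigma$ is precisely an isotopy of circle systems inside $\sigma$ carrying $\a\cap\sigma$ to $\a'\cap\sigma$, which is the assertion. We may also assume $\a,\a'$ are not themselves spheres of $\sigma$, as otherwise normal form forces $\a=\a'$ and the claim is trivial.

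First I would eliminate the ``inessential'' components of $\Sigma$ and pass to minimal position. A closed component $C\subset\Sigma$ is carried into some sphere $s\subset\sigma$ and, being embedded in $S^2\times I$, either bounds a ball or is parallel to $S^2\times\{\mathrm{pt}\}$; a homology computation (using $[s]\neq 0$ in $H_2(M_{n,s})$, together with the reduction above to rule out the parallel case) shows $F|_C\colon C\to s$ has degree zero, hence is null-homotopic in $s$, so an innermost such $C$ may be removed by pushing a neighborhood of the ball it bounds off of $s$. Likewise, a component of $\Sigma$ with all of its boundary on a single end of $S^2\times I$ cuts off an innermost region $R$ whose image lies in one (irreducible, simply connected) complementary piece of $\sigma$; there $F|_R$ can be compressed away, reducing $\Sigma$. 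Iterating, we reach a homotopy $F$ for which $\Sigma$ has no closed components and every component meets both ends; among all such, choose $F$ with $|\Sigma|$ minimal.

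The heart of the matter is to show every surviving component of $\Sigma$ is a vertical annulus. If a component $C$ admitted a compressing disk $D\subset S^2\times I$, then $F(\partial D)$ would be a circle in the target sphere $s$, bounding a disk in $s$; an innermost such disk realizes either a compression of $\a\cap s$ or $\a'\cap s$, or a homotopy of a piece of $\a$ (or $\a'$) across $\sigma$ into a disk of $\sigma$ — in each case contradicting normality of $\a,\a'$ or the minimality of $|\Sigma|$. Hence $C$ is incompressible, so planar, with its boundary circles distributed between the two ends. A further innermost/minimality argument — using that in normal form each piece of $\a$ and of $\a'$ meets every sphere of $\sigma$ at most once and that no piece is homotopic rel $\partial$ to a disk in $\sigma$ — rules out $C$ having two or more boundary circles on one end, or separating off a region that is not a product. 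Therefore $C$ is a vertical annulus $c\times I$ joining a circle of $\a\cap\sigma$ to a circle of $\a'\cap\sigma$ inside a common sphere of $\sigma$, and $F|_C$ is an isotopy between them. Running over all components of $\Sigma$ gives the desired homotopy, whose restriction to $\Sigma$ is an isotopy. (Here \Cref{prop:hnf} and the Laudenbach theorem \cite{laudenbach_sur_1973} are used implicitly only to justify working with essential spheres and homotopies in $M_{n,s}$.)

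The main obstacle is the final step: converting ``no bad component survives in minimal position'' into a rigorous argument requires carefully bookkeeping how each compression or push-off alters $\Sigma$ and checking that minimality genuinely forces the annular conclusion — this is the delicate core of Hatcher's normal-form machinery in \cite{hatcher}. A technically cleaner route, which I would ultimately write up, is to first pass to the universal cover $\widetilde M_{n,s}$: there $\widetilde\sigma$ has a dual tree, every embedded sphere (in particular every lift of $\a$ or $\a'$) is separating, the pieces of a normal lift are organized along a finite subtree, homotopic lifts carry the same tree data, and within each simply connected irreducible complementary piece the pieces of the two lifts are parallel. One then verifies the resulting isotopy is $\pi_1$-equivariant and descends to $M_{n,s}$.
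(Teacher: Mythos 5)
First, a point of comparison: the paper does not prove \Cref{prop:hnf_equiv} at all --- it is imported verbatim as \cite[Proposition 1.2]{hatcher} and used as a black box in the proof of \Cref{prop:full_subcpx} --- so what you have written is an attempt to reprove Hatcher's normal-form uniqueness rather than an alternative to an in-paper argument. Judged as such, it has a genuine gap at exactly the decisive point. The step that makes the proposition true --- showing that, in minimal position, every surviving component of $\Sigma = F^{-1}(\sigma)$ is a vertical annulus (equivalently, in your covering-space variant, that homotopic normal-form lifts ``carry the same tree data'' and have parallel pieces in each complementary component) --- is precisely the content of Hatcher's proposition, and your text asserts it and then explicitly defers it (``the delicate core of Hatcher's normal-form machinery'') rather than proving it. A write-up that black-boxes the statement being proven is an outline, not a proof; the honest options are to cite \cite{hatcher}, as the paper does, or to carry out the universal-cover argument in full, including the identification of the combinatorial data of the two lifts and the equivariance of the resulting isotopy.

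Several supporting claims in the first sketch are also false as stated. A closed component of $\Sigma$ need not be a sphere (only an embedded closed surface in $S^2 \times I$), so ``bounds a ball or is parallel to $S^2\times\{\mathrm{pt}\}$'' is not available. The degree-zero claim rests on $[s]\neq 0$ in $H_2(M_{n,s})$, which fails for every separating sphere of $\sigma$, and the proposition quantifies over all maximal systems, many of which contain separating spheres; for such $s$ the homological computation says nothing about $\deg(F|_C)$. The complementary pieces of a maximal system are thrice-punctured $3$-spheres, which are simply connected but not irreducible (boundary-parallel spheres bound no balls there), so the ``compress $F|_R$ away'' and ``push off'' steps require an actual $\pi_2$ argument, made more delicate by the fact that $F$ is merely a map, not an embedding, on these regions. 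None of this invalidates the overall strategy --- it is, after all, Hatcher's strategy --- but as written the proposal leaves the crucial lemma unproved and rests intermediate reductions on incorrect statements, so it cannot stand in for the citation.
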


\begin{proof}[Proof of \Cref{prop:full_subcpx}]
 The map is well defined by \Cref{lem:pi2_inj}: 
  a sphere is essential in $Z$ only if 
  it is essential in $M_\Gamma$. 
  To show injectivity, we verify that two
  spheres in $Z$ 
  are isotopic in $M_\Gamma$ only if they likewise are in
  $Z$ (\textit{n.b.}\ $\pi_2$-injectivity is insufficient since
  $M_\Gamma$ is not simply connected). To show fullness, we prove
  that two spheres intersect essentially in $Z$ only if they
  likewise do in $M_\Gamma$; the converse is immediate, implying 
  that the map is simplicial.  

  First suppose that $\a,\b \subset M_\Gamma \setminus \sigma$ 
  are essential spheres homotopic in $M_\Gamma$, and
  fix some $N \cong M_{n,s} \subset M_\Gamma$ containing the image
  of the homotopy such that the components of $\partial N$  are
  essential spheres disjoint from $\sigma$.  
  Let $\sigma' \supset \{\b\} \cup \sigma$ be a
  maximal sphere system on $N$. By \Cref{prop:hnf}, $\a$
  is homotopic to some $\a'$ in normal form with $\sigma'$
  disjointly from $\sigma$; since $\b$ is likewise in normal form
  with $\sigma'$, disjoint from $\sigma$, and homotopic with $\a$
  (hence $\a'$), $\a',\b$ are homotopic disjoint from $\sigma$ by
  \Cref{prop:hnf_equiv}. Then, $\a,\b$ are likewise
  homotopic disjointly from $\sigma$.

  If $\a,\b \subset Z$ are essential spheres homotopic in
  $M_\Gamma$, then they are disjoint from $\sigma$ and thus
  homotopic in $Z$ by the above.
  If instead $\a,\b \subset Z$ are essential spheres that 
  do not intersect essentially
  in $M_\Gamma$, then fix an embedded sphere $\a'$ homotopic to $\a$
  in $M_\Gamma$ and simultaneously 
  disjoint from $\b, \sigma$.  Again by the
  above, $\a,\a'$ are homotopic in $Z$ 
  and hence $\a,\b$ do not intersect essentially in $Z$.
\end{proof}

\begin{remark*}\label{rmk:full_in_link} 
  The image of $\sph(Z)$ in
  \Cref{prop:full_subcpx} is exactly the subcomplex in
  $\sph(M_\Gamma)$ spanned by spheres in $\lk(\sigma) \subset
  \sph(M_\Gamma)$ contained in $Z$. 
  We show that a sphere $\a \in \lk(\sigma)$ contained in
  $Z$ is essential in $Z$, which suffices. If $\a\subset Z$ 
  is not essential
  in $Z$, then $\a$ bounds a ball in $Z$ thus
  likewise in $M_\Gamma$, or $\a$ is peripheral in $Z$ hence  
  either $\a \in \sigma$ or it is peripheral in $M_\Gamma$: in all
  cases, $\a \notin \lk(\sigma)$.
\end{remark*}

\begin{corollary}\label{cor:link_join}
  Let $Z_i\subset M_\Gamma \setminus \sigma$ denote the
  complementary components of a sphere system $\sigma$.  Then
  $\lk(\sigma)$ is isomorphic to the join 
  $\Asterisk_i\sph(Z_i)$.
\end{corollary}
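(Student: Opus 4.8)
The plan is to bootstrap from \Cref{prop:full_subcpx} and the remark following it (\Cref{rmk:full_in_link}), rather than re-run any Hatcher normal form arguments from scratch. Those two results already identify, for each complementary component $Z_i$ of $\sigma$, the complex $\sph(Z_i)$ with the full subcomplex $L_i \subseteq \lk(\sigma)$ spanned by the vertices of $\lk(\sigma)$ that can be isotoped into $Z_i$. So the corollary reduces to two checks: first, that the vertex sets $V(L_i)$ partition $V(\lk(\sigma))$; second, that a finite set of vertices of $\lk(\sigma)$ spans a simplex exactly when each of its ``pieces'' $\{\a_j : \a_j \in L_i\}$ spans a simplex of $\sph(Z_i)$. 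These are precisely the two conditions defining the join $\Asterisk_i \sph(Z_i)$, so together they yield the isomorphism; components carrying no essential sphere simply contribute empty factors, which do not affect the join. Since simplices of $\sph(M_\Gamma)$ are finite, only finitely many components are ever involved at once, so there is no difficulty with $\sigma$ being infinite.

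For the vertex partition I would first handle existence: a vertex $\a \in \lk(\sigma)$ has a representative disjoint from $\sigma$, hence contained in $M_\Gamma \setminus \sigma$, hence (being connected) in a single $Z_i$, where it is essential by \Cref{rmk:full_in_link}, so $\a \in L_i$. For uniqueness — that $\a$ cannot be isotoped into two distinct components — I would take homotopic representatives $\a' \subset Z_i$ and $\a'' \subset Z_j$ and argue, exactly as in the proof of \Cref{prop:full_subcpx} (pass to a finite-type submanifold $N \cong M_{n,s}$ containing the homotopy and with boundary disjoint from $\sigma$, then apply Hatcher normal form), that $\a'$ and $\a''$ are homotopic through a homotopy disjoint from $\sigma$; such a homotopy lies in a single complementary component, forcing $i = j$. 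I expect this uniqueness step to be the main obstacle, but it has in effect already been carried out inside the proof of \Cref{prop:full_subcpx}, so in practice it is a citation.

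For the simplex condition, both directions are simultaneous-isotopy arguments. For ``if'', I would fix a representative of $\sigma$ cutting $M_\Gamma$ into the $Z_i$, realize the relevant $\a_j$ pairwise disjointly inside each $Z_i$ (possible since they span a simplex of $\sph(Z_i)$, and injectivity of $\sph(Z_i) \hookrightarrow \sph(M_\Gamma)$ guarantees these represent the correct $M_\Gamma$-isotopy classes), and observe that since distinct $Z_i$ are disjoint and disjoint from $\sigma$, this assembles into a simultaneous disjoint realization of $\{\a_0,\dots,\a_k\} \cup \sigma$, i.e.\ the $\a_j$ span a simplex of $\lk(\sigma)$. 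For ``only if'', realize $\{\a_0,\dots,\a_k\} \cup \sigma$ disjointly, note that $\sigma$ cuts $M_\Gamma$ into the $Z_i$, observe each $\a_j$ lands in the unique component from the vertex step, and conclude that the $\a_j$ lying in a given $Z_i$ are pairwise disjoint essential spheres of $Z_i$, hence span a simplex of $\sph(Z_i)$. Assembling the vertex bijection with this simplex correspondence then gives the desired isomorphism $\lk(\sigma) \cong \Asterisk_i \sph(Z_i)$.
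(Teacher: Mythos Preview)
Your proposal is correct and follows essentially the same route as the paper: the key step in both is the uniqueness/disjointness argument (a vertex of $\lk(\sigma)$ cannot lie in two distinct $L_i$), and both handle it by citing the homotopy-disjoint-from-$\sigma$ argument inside the proof of \Cref{prop:full_subcpx}. The paper's version is terser---it says only that, given \Cref{rmk:full_in_link}, it suffices to show the $\sph(Z_i)$ have pairwise disjoint images---leaving the vertex-covering and simplex-condition checks implicit (via the flag property of $\lk(\sigma)$), whereas you spell these out explicitly.
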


\begin{proof}
  By \Cref{rmk:full_in_link}, it suffices to show that the
  $\sph(Z_i)$ have pairwise disjoint images in $\lk(\sigma)$.
  Suppose that $\a \in \lk(\sigma)$ 
  is in the image of $\sph(Z_i)$ and $\sph(Z_j)$, hence 
  we may fix homotopic representatives $\alpha \subset Z_i$ and 
  $\alpha' \subset Z_j$. 
  As in the proof of \Cref{prop:full_subcpx} we obtain a homotopy
  between $\alpha,\alpha'$ disjoint from $\sigma$, hence
  $\alpha,\alpha'$ lie in the same complementary component of
  $\sigma$ and $Z_i = Z_j$. 
\end{proof}
\subsubsection{Pants decompositions and $M_{0,s}$}\label{sec:rigid0}

We now shift our focus and introduce the tools we will need to prove
\Cref{thm:geom_rigidity}. 
The proof relies on a generalization
of the results of Bering--Leininger in \cite{bering2024finite},
which we will discuss in \Cref{sec:finite_rigidity}. 
We state here the relevant
preliminary definitions and results from \cite{bering2024finite}.

\begin{definition}\label{def:Pants} Any manifold homeomorphic to
  $M_{0,3}$ is called a \emph{pair of pants}. A maximal 
  sphere system $P\subset \sph(M_{n,s})^{(0)}$ is called a
  \emph{pants decomposition}. Fixing an open regular neighborhood
  $\nbd(P) \supset P$, each component of $M_{n,s}\setminus
  \nbd(P)$ is homeomorphic to a pair of pants.  
\end{definition}

\begin{definition}
    Suppose $P$ is a pants decomposition of $M_{n,s}$. Two spheres $\a,\b\in P$ are \emph{adjacent} in $P$ if they are two of the boundary spheres of some pair of pants component of $M_{n,s}\setminus P$. A sphere $\a\in P$ is \emph{self-adjacent} in $P$ if it bounds two cuffs of a single pair of pants in $M_{n,s}\setminus P$.
\end{definition}

\begin{definition}
Let $P_\a$ be a pants decomposition containing a sphere $\a$ that
is not self-adjacent.  Consider the connected component of
$M_{n,s}\setminus \nbd( {P_a\setminus \a})$ containing $a$. This is
homeomorphic to $M_{0,4}$, and $\sph(M_{0,4}) = \set{a, a', a''}$.
There are pants decompositions $P_{a'} = P_{a} \setminus \set{a}
\cup a'$ and similarly $P_{a''}$.  A change in pants decomposition
from $P_a \mapsto P_{a'}$ or $P_a \mapsto P_{a''}$ is called a
\emph{flip move.}  See \Cref{fig:Xdet}.
\end{definition} 

  \begin{definition}[{\cite[Definition
    2.2]{bering2024finite}}]\label{def:XDetectable} Let $X\subset
    \sph(M_{n,s})$ be a subcomplex. Two spheres $\a, \a' \in X^{(0)}$
    which intersect essentially have $X$-\emph{detectable
    intersection} if there are pants decompositions $P_\a, P_{\a}'
    \subset X^{(0)}$ such that $P_\a,P_{\a'}$ differ by a \emph{flip
    move} $\a \to \a'$, \ie\ 
    for which $\a\in P_\a$, $\a'\in P_{\a'}$ and
    $P_\a\setminus \{\a\}= P_{\a'} \setminus \{\a'\}$.
    
\end{definition}

Since $P_\a, P_{\a'}$ differ by a flip move 
$\a \to \a'$, 
the component of $M_{n,s} \setminus \nbd(P_\a
\setminus \{\a\})$ containing $\a \cup \a'$ deformation retracts to
$\a \cup \a'$ and is homeomorphic to $M_{0,4}$ (see \Cref{fig:Xdet}). 
If $\a,\a'$ has an intersection $X$-detectable, then
they intersect essentially: we apply
\Cref{prop:full_subcpx} to $M_{0,4}\setminus \partial
M_{0,4} \hookrightarrow M_{n,s} \setminus \partial M_{n,s}$.  
\begin{figure}[ht!]
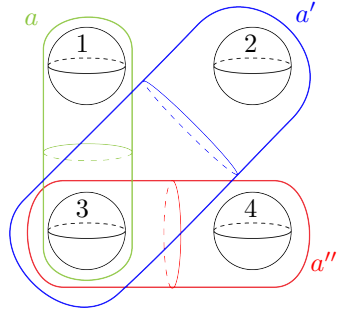

    \centering
    \begin{overpic}[width=4cm]{figs/Xdet.png}
        \put(100,12){\color{red}{$\a''$}}
        \put(95,95){\color{blue}{$\a'$}}
        \put(5,94){\color{LimeGreen}{$\a$}}
        \put(22,85){1}
        \put(78,85){2}
        \put(22,30){3}
        \put(78,30){4}
    \end{overpic}
    \caption{The three spheres $a, a',$ and $a''$ in $M_{0,4}$ differ by a flip move.  $\partial
    M_{0,4}$ is labeled $1$--$4$.}
    
\label{fig:Xdet}
\end{figure}

We will consider locally injective simplicial maps $f:X \to
\sph(M_{n,s})$, where $X$ is a subcomplex of $\sph(M_{n,s})$. The
map $f$ is \emph{simplicial} if it sends vertices to vertices and
edges to edges, and \emph{locally injective} if it is injective
when restricted to the stars of vertices in $X$.

\begin{lemma}[{\cite[Lemma 8]{bering2024finite}}]\label{lem:XdetInt}
      \label{lem:locallyInjXDetectable} 
      Let
      $X\subset \sph(M_{n,s})$ be a subcomplex, and suppose $f:X \to
      \sph(M_{n,s})$ is a locally injective simplicial map. If $\a,\a'
      \in X^{(0)}$ have $X$-detectable intersection, then $f(\a)$ and
      $f(\a')$ have $f(X)$-detectable intersection.
\end{lemma}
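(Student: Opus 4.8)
The plan is to show that $f$ carries the witnessing flip move for $\a,\a'$ to a witnessing flip move for $f(\a),f(\a')$. Fix pants decompositions $P_\a,P_{\a'}\subset X^{(0)}$ with $\a\in P_\a$, $\a'\in P_{\a'}$, and $P_\a\setminus\{\a\}=P_{\a'}\setminus\{\a'\}$, as provided by \Cref{def:XDetectable}. I will establish three things: (i) $f(P_\a)$ and $f(P_{\a'})$ are pants decompositions contained in $f(X)^{(0)}$; (ii) $f(P_\a)\setminus\{f(\a)\}=f(P_{\a'})\setminus\{f(\a')\}$; and (iii) $f(\a)\neq f(\a')$. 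Together these say $f(P_\a)$ and $f(P_{\a'})$ differ by the flip move $f(\a)\to f(\a')$, so by \Cref{def:XDetectable} and the discussion following it, $f(\a)$ and $f(\a')$ have $f(X)$-detectable intersection.

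For (i) and (ii), first record that $f$ is injective on each of $P_\a$ and $P_{\a'}$: any two distinct spheres in $P_\a$ are disjoint, so one lies in the star of the other, and local injectivity forces their images to differ, and likewise for $P_{\a'}$. Since $f$ is simplicial, $f(P_\a)$ spans a simplex of $\sph(M_{n,s})$, i.e.\ is a sphere system, and by the injectivity just noted it has $|P_\a|$ elements. As all pants decompositions of $M_{n,s}$ consist of the same number of spheres, a sphere system of this cardinality is maximal; hence $f(P_\a)$, and similarly $f(P_{\a'})$, is a pants decomposition, and it lies in $f(X)^{(0)}$ because $f$ is simplicial. Injectivity of $f$ on $P_\a$ also gives $f(\a)\notin f(P_\a\setminus\{\a\})$, so $f(P_\a)\setminus\{f(\a)\}=f(P_\a\setminus\{\a\})$, and symmetrically $f(P_{\a'})\setminus\{f(\a')\}=f(P_{\a'}\setminus\{\a'\})$; since $P_\a\setminus\{\a\}=P_{\a'}\setminus\{\a'\}$, these two sets coincide, giving (ii).

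The remaining point (iii) is the only one requiring care, since $\a$ and $\a'$ intersect essentially and so do not lie in a common star, meaning local injectivity does not apply to them directly. To get around this I would introduce an auxiliary sphere: pick any $\beta\in P_\a\setminus\{\a\}$, a nonempty set under the complexity hypotheses in force (pants decompositions of $M_{n,s}$ then contain at least two spheres). Then $\beta$ is disjoint from $\a$, and $\beta$ is also disjoint from $\a'$ because the flip move $\a\to\a'$ takes place inside the component $\cong M_{0,4}$ of $M_{n,s}\setminus\nbd(P_\a\setminus\{\a\})$, which is disjoint from every sphere of $P_\a\setminus\{\a\}$. Hence both $\a$ and $\a'$ lie in the star of $\beta$ in $X$, and local injectivity of $f$ at $\beta$ yields $f(\a)\neq f(\a')$.

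Combining (i)--(iii): $f(P_\a)$ and $f(P_{\a'})$ are pants decompositions in $f(X)^{(0)}$ with $f(\a)\in f(P_\a)$, $f(\a')\in f(P_{\a'})$, $f(\a)\neq f(\a')$, and $f(P_\a)\setminus\{f(\a)\}=f(P_{\a'})\setminus\{f(\a')\}$; such a configuration automatically exhibits the complementary $M_{0,4}$ with $f(\a)$ not self-adjacent and $f(\a),f(\a')$ intersecting essentially (this is exactly the content of the paragraph after \Cref{def:XDetectable}, via \Cref{prop:full_subcpx}), so $f(\a)$ and $f(\a')$ have $f(X)$-detectable intersection. The main obstacle is step (iii) — everything else is formal bookkeeping — and the essential input there is the existence of the auxiliary sphere $\beta$ adjacent to both $\a$ and $\a'$.
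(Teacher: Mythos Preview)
Your argument is correct and is the standard one; the paper does not supply its own proof of this lemma, citing it directly from \cite{bering2024finite}, so there is nothing to compare against beyond noting that your write-up matches the natural proof. One small caveat: your step (iii) needs $P_\a\setminus\{\a\}\neq\varnothing$, which fails precisely for $M_{0,4}$ (where pants decompositions are single spheres and every map is vacuously locally injective, so the lemma is actually false there); you flag this as ``the complexity hypotheses in force,'' and indeed the paper only invokes the lemma when $2n+s\geq 6$, so this is harmless in context.
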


The following two results discuss the sphere graph of $M_{0,s}$. 

\begin{lemma}[{\cite[Lemma 9]{bering2024finite}}]
  \label{lem:GenusZeroSphere}
    A sphere $\a\in \sph(M_{0,s})^{(0)}$ is determined by the
  partition of $\partial M_{0,s}$ induced by the connected
components of $M_{0,s}\setminus \a$.  
\end{lemma}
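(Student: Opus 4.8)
The strategy is to reduce the statement to Laudenbach's theorem (quoted above via \cite{laudenbach_sur_1973}): two essential spheres in $M_{0,s}$ that are homotopic as maps $S^2 \to M_{0,s}$ are isotopic. So it suffices to show that the induced partition of $\partial M_{0,s}$ determines the free homotopy class of $\a$, and for this I would pass through homology.

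\textbf{Step 1: every sphere separates, and essential spheres give ``balanced'' partitions.} Since $M_{0,s}$ is $S^3$ with $s$ disjoint open balls removed, van Kampen gives $\pi_1(M_{0,s}) = 1$, so $H_1(M_{0,s};\Z/2) = 0$ and, by Poincar\'e--Lefschetz duality, $H_2(M_{0,s},\partial M_{0,s};\Z/2) = 0$; hence every embedded sphere in $M_{0,s}$ is separating. Thus an essential $\a$ has exactly two complementary components, whose closures $U, V$ partition $\partial M_{0,s} = A \sqcup B$ with $A \subset \partial U$, $B \subset \partial V$; since $\a$ is neither nullhomotopic nor peripheral, $|A|, |B| \ge 2$.

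\textbf{Step 2: the partition determines $[\a]$ up to sign.} Fix once and for all orientations of the boundary spheres $b_1, \dots, b_s$. Viewing $U$ as a compact oriented $3$-manifold with $\partial U = \a \cup \bigsqcup_{i \in A} b_i$, the relation $[\partial U] = 0$ in $H_2(U;\Z)$, pushed forward along $U \hookrightarrow M_{0,s}$, shows $[\a] = \pm\sum_{i \in A}[b_i]$ in $H_2(M_{0,s};\Z)$, the sign depending only on the chosen orientation of $\a$. Hence two essential spheres $\a, \a'$ inducing the same partition satisfy $[\a] = \pm[\a']$ in $H_2(M_{0,s};\Z)$. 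Because $M_{0,s}$ is simply connected, the Hurewicz map $\pi_2(M_{0,s}) \to H_2(M_{0,s};\Z)$ is an isomorphism, so $[\a] = \pm[\a']$ already in $\pi_2(M_{0,s})$; precomposing $\a'$ with an orientation-reversing diffeomorphism of $S^2$ if necessary, the underlying maps $S^2 \to M_{0,s}$ are freely homotopic. By \cite{laudenbach_sur_1973} they are isotopic, i.e.\ $\a$ and $\a'$ are the same vertex of $\sph(M_{0,s})$.

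\textbf{Main obstacle.} The genuinely delicate point is the orientation bookkeeping in Step 2 --- checking that an unordered partition pins down $[\a]$ only up to the sign that is invisible at the level of unoriented submanifolds, so that no real homotopy obstruction survives --- together with verifying that the spheres under consideration are essential (so that $|A|,|B|\ge 2$ and Laudenbach's theorem applies). An alternative, avoiding homology, would induct on $s$ using Hatcher normal form with respect to a fixed pants decomposition of $M_{0,s}$ realizing the prescribed partition; but the $\pi_2$ argument above is shorter and self-contained given the cited results.
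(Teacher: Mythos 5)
The paper does not actually prove this lemma; it is quoted verbatim from Bering--Leininger \cite{bering2024finite}, so there is no in-text argument to compare against. Your homological proof is correct and self-contained given the Laudenbach citation that the paper itself invokes in Section 2.3.1. The structure is sound: (a) every embedded sphere in $M_{0,s}$ separates, so the partition $\{A,B\}$ of $\partial M_{0,s}$ is well-defined; (b) the class of $\a$ in $H_2(M_{0,s};\Z) \cong \Z^{s-1}$ is $\pm\sum_{i\in A}[b_i]$, so the partition determines $[\a]$ up to sign; (c) Hurewicz upgrades this to a free homotopy of maps $S^2 \to M_{0,s}$ after possibly reversing orientation (which is invisible at the level of unoriented embedded spheres); (d) Laudenbach promotes the homotopy to an isotopy. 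The essentiality hypothesis is used correctly to rule out $|A|\in\{0,1\}$.

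Two small presentational remarks. In Step~1, the chain $\pi_1 = 1 \Rightarrow H_1(M;\Z/2) = 0 \Rightarrow H_2(M,\partial M;\Z/2) = 0 \Rightarrow \text{separation}$ is valid, but the last inference is doing real work: it is the dual of the more transparent criterion that a two-sided connected codimension-one submanifold is non-separating precisely when some loop meets it once mod $2$, which is impossible when $H_1(M;\Z/2) = 0$. You could drop the Lefschetz duality detour and argue directly from $H_1(M;\Z/2) = 0$, or from $\pi_1 = 1$ (a loop meeting $\a$ once would bound a disk transverse to $\a$, whose preimage of $\a$ is a compact $1$-manifold with an odd number of boundary points, a contradiction). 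In Step~2, it is worth stating explicitly that because $\{A,B\}$ is an unordered pair, the two spheres may label sides oppositely; this is harmless since $\sum_i [b_i] = 0$ forces $\sum_{i\in A}[b_i] = -\sum_{i\in B}[b_i]$, so the conclusion $[\a] = \pm[\a']$ still holds. Neither point is a gap. The route through $\pi_2$ and Laudenbach is a clean alternative to the more combinatorial normal-form argument one would expect in the original source, and it generalizes readily to any simply connected $M$ for which the Laudenbach statement holds.
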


\begin{lemma}[{\cite[Corollary 11]{bering2024finite}}]
  \label{lem:GenusZeroRigidity}
    If $N\cong M_{0,s} \cong N'$ and $f:\sph(N) \to \sph(N')$ is a
  simplicial automorphism, then there is a homeomorphism $h: N \to
N'$ so that $h(\a)=f(\a)$ for every $\a\in \sph(N)$.  
\end{lemma}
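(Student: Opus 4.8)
The plan is to use \Cref{lem:GenusZeroSphere} to replace $\sph(N)$ by a purely combinatorial complex and then determine its automorphisms. Write $B=\partial N$ and $B'=\partial N'$, so $|B|=|B'|=s$, and assume $s\ge4$ (otherwise both sphere complexes are empty and there is nothing to prove). By \Cref{lem:GenusZeroSphere}, sending an essential sphere to the induced partition of $B$ identifies $\sph(N)$ with the flag complex $\mathcal{P}(B)$ whose vertices are the bipartitions $\{A,B\setminus A\}$ with $2\le|A|\le s-2$, an edge joining two bipartitions when some part of one is contained in some part of the other; likewise $\sph(N')\cong\mathcal{P}(B')$. Both complexes being flag, $f$ is recorded by a graph isomorphism $\mathcal{P}(B)\to\mathcal{P}(B')$, and it will suffice to produce a bijection $\tau\colon B\to B'$ inducing $f$.

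First I would intrinsically recognize the \textbf{pair bipartitions}, those $\{A,A^c\}$ with $\min(|A|,|A^c|)=2$. By \Cref{cor:link_join} (applied to $M_{0,s}$, using $M_{0,s}\setminus\partial M_{0,s}\cong M_\Gamma$ for $\Gamma$ of rank $0$ with $s$ rays), the link of $\{A,A^c\}$ is $\sph(M_{0,|A|+1})\ast\sph(M_{0,|A^c|+1})$; since $\sph(M_{0,m})\ne\varnothing$ exactly when $m\ge4$, this link is a nontrivial join (i.e.\ $X_1\ast X_2$ with both $X_i\ne\varnothing$) whenever $|A|,|A^c|\ge3$. For a pair bipartition the link is instead $\sph(M_{0,s-1})$, and I would prove the auxiliary claim that $\sph(M_{0,m})$ is not a nontrivial join for $m\ge5$: in any splitting $X_1\ast X_2$ all pair bipartitions lie on one side, because the ``crossing'' graph among them is a connected triangular graph; then any non-pair bipartition $\{A,A^c\}$ crosses the pair bipartition with small part $\{a,b\}$ for $a\in A$, $b\in A^c$, forcing the other side to be empty. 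Hence for $s\ge6$ a vertex is a pair bipartition iff its link is not a nontrivial join — a condition preserved by $f$ and $f^{-1}$; for $s=4,5$ every vertex is a pair bipartition. Either way $f$ restricts to a bijection between the pair bipartitions of $B$ and those of $B'$.

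Next I would reconstruct $\tau$. Write $P_{ij}$ for the pair bipartition of $B$ with small part $\{i,j\}$; two distinct such are non-adjacent in $\mathcal{P}(B)$ precisely when $|\{i,j\}\cap\{k,l\}|=1$, so the induced subgraph on pair bipartitions is the Kneser graph $K(s,2)$. For $s\ge5$ its automorphism group is $\mathrm{Sym}(s)$, acting in the standard way (the classical computation, e.g.\ via Whitney's theorem for $L(K_s)$), which yields a unique bijection $\tau\colon B\to B'$ with $f(P_{ij})=P'_{\tau(i)\tau(j)}$; for $s=4$, $\sph(M_{0,4})$ is three isolated points and the surjection $\mathrm{Sym}(4)\to\mathrm{Sym}(3)$ still produces such a $\tau$. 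Finally, every bipartition $\{A,A^c\}$ of $B$ is determined by the $P_{ij}$ adjacent or equal to it — namely those with $\{i,j\}\subseteq A$ or $\{i,j\}\subseteq A^c$, which (as $|A|,|A^c|\ge2$) encode the equivalence relation on $B$ with classes $A$ and $A^c$. Applying $f$ gives $f(\{A,A^c\})=\{\tau A,\tau A^c\}$, so $f=\tau_*$ on $\mathcal{P}(B)$. As $N\cong M_{0,s}\cong N'$, realize $\tau$ by a homeomorphism $h\colon N\to N'$ inducing the prescribed bijection on boundary spheres; by \Cref{lem:GenusZeroSphere} it induces $\tau_*=f$ on spheres, i.e.\ $h(\a)=f(\a)$ for every $\a\in\sph(N)$.

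I expect the main obstacle to be the combinatorial core of the second paragraph: proving $\sph(M_{0,m})$ is not a nontrivial join for $m\ge5$, so that pair bipartitions are recognizable from the complex alone, together with invoking that the automorphism group of the Kneser/triangular graph on $2$-subsets of an $s$-set is $\mathrm{Sym}(s)$ for $s\ge5$. The degenerate cases $s=4,5$ must also be checked separately, and one should confirm that \Cref{cor:link_join} transfers verbatim to $M_{0,s}$.
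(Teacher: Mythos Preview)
The paper does not supply its own proof of this lemma: it is quoted verbatim as \cite[Corollary 11]{bering2024finite}, so there is no argument in the present paper to compare against. Your proposal stands on its own and is essentially correct.

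A few remarks on the details. Your intrinsic characterisation of pair bipartitions is sound: the non-adjacency graph on pair bipartitions of an $m$-element set is the Johnson (triangular) graph $J(m,2)$, which is connected for $m\ge 3$, so in any putative join decomposition all pair bipartitions lie in one factor; and any non-pair bipartition is non-adjacent to some $P_{ab}$, forcing it into the same factor. This shows $\sph(M_{0,m})$ is not a nontrivial join for $m\ge 5$, exactly what you need. The appeal to $\Aut(K(s,2))\cong S_s$ for $s\ge 5$ is standard (equivalently via Whitney for $L(K_s)=T(s)$), and your separate treatment of $s=4$ via the surjection $S_4\twoheadrightarrow S_3$ is the right fix, since uniqueness of $\tau$ is not needed, only existence. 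Your recovery of an arbitrary bipartition from the pair bipartitions adjacent-or-equal to it is correct and yields $f=\tau_*$. Finally, realising $\tau$ by a homeomorphism of $M_{0,s}$ is immediate, and the transfer of \Cref{cor:link_join} to $M_{0,s}$ is unproblematic since $M_{0,s}\setminus\partial M_{0,s}\cong M_\Gamma$ for $\Gamma$ a rank-$0$ graph with $s$ rays and the sphere complexes agree. The only place to be slightly more careful in a write-up is to state explicitly that $\sph(M_{0,s})$ is a flag complex (it is, by definition), so that the graph isomorphism determines the simplicial one.
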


The following lemma provides a useful topological criterion to distinguish spheres combinatorially.

\begin{lemma}[{\cite[Lemma 12]{bering2024finite}}]
  \label{lem:ThreeSphereConfiguration}
    Suppose $\a, \b, \c \in \sph(M_{n,s})$ are such that $\b$ and $\c$
    are disjoint and $\a$ essentially intersects $\b$ and $\c$, each
    in one circle, such that the boundary components of 
    $\overline{\nbd}(\a \cup \b \cup \c)$ are either essential
    or peripheral.
    Let $\b'\in \sph(\nbd(\a\cup \b))$ be the
    unique sphere in $\nbd(\a\cup \b)$ distinct from $\a$ and $\b$ and
    $\c'\in \sph(\nbd(\a\cup \c))$ the unique sphere in $\nbd(\a\cup \b)$
    distinct from $\a$ and $\c$. Then $\b'$ and $\c'$
    intersect, and both intersect $\b$ and $\c$.  
\end{lemma}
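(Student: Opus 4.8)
The plan is to work inside the manifold $W \coloneqq \overline{\nbd}(\a \cup \b \cup \c)$, a compact submanifold of $M_{n,s}$ with spherical boundary, and to identify its topological type explicitly from the stated intersection pattern. Since $\b$ and $\c$ are disjoint and $\a$ meets each of them in a single essential circle, $\a$ is divided by $\a\cap(\b\cup\c)$ into (at most) three disk pieces, and $\b,\c$ are each divided into two disk pieces. Reassembling these pieces along the two intersection circles gives a spine for $W$; I expect a direct cut-and-paste analysis to show $W \cong M_{0,k}$ for some small $k$, with $\a, \b, \c$ realized as explicit genus-zero spheres corresponding to partitions of $\partial W$ (here is where \Cref{lem:GenusZeroSphere} enters). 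The essentiality/peripherality hypothesis on $\partial W$ guarantees that $W \hookrightarrow M_{n,s}$ is $\pi_2$-injective and full on its sphere complex by \Cref{prop:full_subcpx}, so it suffices to prove the conclusion inside $W$ itself: if $\b'$ and $\c'$ intersect essentially in $W$, and each meets $\b$ and $\c$ essentially in $W$, then the same holds in $M_{n,s}$.

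Next I would nail down $\b'$ and $\c'$ concretely. Since $\b' \in \sph(\nbd(\a\cup\b))$ is the third sphere of a thrice-punctured sphere $\nbd(\a\cup\b)$ (so $\{\a,\b,\b'\}$ is, up to relabelling, the triple realizing a flip relation), and similarly for $\c'$, both $\b'$ and $\c'$ live inside $W$ and correspond to explicit partitions of the four or so boundary spheres of $W$ that I will have catalogued. With all of $\a,\b,\c,\b',\c'$ expressed as partitions of $\partial W$ via \Cref{lem:GenusZeroSphere}, the claimed intersections reduce to the combinatorial fact that two genus-zero spheres intersect essentially if and only if their boundary partitions are \emph{crossing} (neither refines the other) — a standard dictionary which I would either cite or quickly re-derive from \Cref{lem:GenusZeroSphere}. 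Checking that $\b'$ crosses $\c'$, and that each crosses $\b$ and $\c$, then becomes a finite verification on a small partition lattice.

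The main obstacle I anticipate is the first step: correctly enumerating the possible homeomorphism types of $W = \overline\nbd(\a\cup\b\cup\c)$ and the induced labelling of $\partial W$. The count of boundary components depends on how the intersection circles sit — in particular whether the two circles $\a\cap\b$ and $\a\cap\c$ separate $\a$ into two or three pieces — and one must be careful that the hypothesis forces the ``generic'' configuration rather than a degenerate one (this is exactly what the essential-or-peripheral condition on $\partial W$ is doing, ruling out pieces that would let a boundary sphere bound a ball). A clean way to organize this is to build $W$ from pieces: start with $\nbd(\b \sqcup \c)$, a disjoint union of two thrice-punctured spheres, then attach a $1$-handle-like neighborhood of each arc of $\a$ running between them (and possibly within one of them). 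Tracking the Euler characteristic of $\partial W$ through these attachments pins down the number of boundary spheres, and then \Cref{lem:GenusZeroSphere} does the rest. Once $W$ is identified, everything downstream is bookkeeping with partitions, and the $\pi_2$-injective fullness of $W\hookrightarrow M_{n,s}$ from \Cref{prop:full_subcpx} transports the conclusion back to $M_{n,s}$.
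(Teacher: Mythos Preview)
The paper does not give a proof of this lemma: it is quoted verbatim from \cite[Lemma~12]{bering2024finite} and no argument is supplied, so there is nothing in the paper to compare your proposal against directly.

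That said, your outline is the natural approach and is essentially how one proves the result. A couple of sharpenings are worth noting. First, there is no case analysis for the decomposition of $a$: two disjoint simple closed curves on a $2$-sphere always cut it into exactly three pieces (two disks and one annulus), so the configuration is rigid and an Euler-characteristic count gives $W\cong M_{0,6}$ on the nose, not merely $M_{0,k}$ for some $k$. With the six boundary spheres labelled appropriately one finds (up to relabelling) $a\leftrightarrow\{135\,|\,246\}$, $b\leftrightarrow\{12\,|\,3456\}$, $c\leftrightarrow\{34\,|\,1256\}$, $b'\leftrightarrow\{146\,|\,235\}$, $c'\leftrightarrow\{145\,|\,236\}$, and the five required crossings are immediate.

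Second, the step where you invoke \Cref{prop:full_subcpx} to transport essential intersection from $W$ back to $M_{n,s}$ deserves a little more care than you indicate. That proposition is stated for a complementary component of a \emph{sphere system}, i.e.\ pairwise distinct isotopy classes of essential spheres. The hypothesis that each component of $\partial W$ is essential or peripheral rules out any of them bounding a ball (which is the crucial point), and peripheral ones can be absorbed by appending the adjacent once-punctured balls without changing $\sph(W)$. What the hypothesis does not visibly exclude is two components of $\partial W$ being isotopic to one another in $M_{n,s}$; if that occurred, the relevant complementary component would be $W$ with an $S^2\times I$ glued on, and one must check the partition argument survives. This is a genuine but minor technical wrinkle rather than a conceptual gap, and the underlying $\pi_2$-injectivity and Hatcher-normal-form machinery in \Cref{lem:pi2_inj} and \Cref{prop:hnf_equiv} are robust enough to handle it.
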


We include a useful technical result, which is equivalent to the
connectedness of the pants graph for $M_{n,s}$.  In particular, this
proposition replaces the use of Outer space in the 
proof of Proposition 22 in \cite{bering2024finite}. 

\begin{proposition}\label{prop:FlipMoveConnectivity}
  Any two pants decompositions in $M_{n,s}$ differ 
  by a finite sequence of flip moves.
\end{proposition}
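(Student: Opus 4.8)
The plan is to prove Proposition \ref{prop:FlipMoveConnectivity} by induction on the complexity of $M_{n,s}$, using a ``distance to a common sub-decomposition'' argument. Fix two pants decompositions $P, Q \subset \sph(M_{n,s})$. First I would reduce to showing that one can perform flip moves on $P$ until it shares a sphere with $Q$; once $P$ and $Q$ have a common sphere $\a$, cutting along $\a$ splits the problem into pants decompositions of the (one or two) components of $M_{n,s} \setminus \nbd(\a)$, each of strictly smaller complexity (measured by $3n + s$, say), and an inductive hypothesis finishes the job after checking that flip moves in a complementary component lift to flip moves in $M_{n,s}$ — this is exactly the content packaged by \Cref{prop:full_subcpx} and the observation that a flip move is supported in an $M_{0,4}$ piece.

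The heart of the argument is therefore the base-of-induction-free step: if $P \cap Q = \varnothing$ (as sphere systems, up to isotopy), then some finite sequence of flip moves applied to $P$ produces a decomposition meeting $Q$. Here I would fix a sphere $\b \in Q$ and put $\b$ in Hatcher normal form with respect to $P$ using \Cref{prop:hnf}, so $\b$ meets $P$ in finitely many circles. If $\b$ is disjoint from $P$ we are done; otherwise pick a circle of $\b \cap P$ innermost on $\b$, i.e.\ bounding a disk $D \subset \b$ with $D \cap P = \partial D$ lying on some $\a \in P$. This disk $D$, together with a disk in $\a$, specifies a flip move (or a short sequence of flip moves) on $P$ replacing $\a$ by a sphere $\a^\ast$ with $|\a^\ast \cap \b| < |\a \cap \b|$ — surgering $\a$ along $D$ and discarding the inessential/peripheral piece gives a sphere still disjoint from $P \setminus \a$, hence lying in the $M_{0,4}$ obtained by cutting along $P \setminus \a$, so it is one of $\a, \a', \a''$ and the move is a flip. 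Iterating, the total intersection number $\sum_{\a \in P} |\a \cap \b|$ strictly decreases, so after finitely many flip moves $P$ becomes disjoint from $\b$, i.e.\ shares an essential sphere with $Q$ (after possibly one more flip, if the resulting $P$-parallel copy of $\b$ is not literally in $P$ we flip to install it — or more simply, being disjoint from $\b$ already lets us cut along $\b$).

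I expect the main obstacle to be the bookkeeping around peripheral and inessential spheres: when surgering $\a$ along an innermost disk of $\b$, one of the two resulting spheres may bound a ball or be peripheral in $M_{n,s}$, and one must argue that the \emph{other} one is essential and that replacing $\a$ by it is realized by an honest flip move (needing that $\a$ is not self-adjacent in the relevant sense, or handling the self-adjacent case by a preliminary flip). The non-self-adjacency subtlety and the case $M_{0,4}$ itself (where there are no flip moves to make but also only three spheres, any two of which intersect, so the statement is about the empty claim or reduces to $M_{0,3}$ after cutting) require a careful but routine case check. A cleaner alternative I would keep in reserve is to cite connectivity of the pants/flip graph for $M_{n,s}$ from the handlebody-group or $\Out(F_n)$ literature (this is classical, e.g.\ via Outer space as in \cite{bering2024finite}), but since the stated goal of this proposition is precisely to \emph{avoid} invoking Outer space, the surgery argument above is the one I would write out in full.
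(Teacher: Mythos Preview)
Your inductive skeleton (cut along a common sphere, reduce to strictly smaller complexity) matches the paper's, but the step that manufactures a common sphere is where the proposal breaks. When you surger $\a$ along an innermost disk $D \subset \b$, the disk $D$ lies in a single pair of pants $T \cong M_{0,3}$ of $M_{n,s} \setminus \nbd(P)$, with $\partial D$ on the cuff $\a$ of $T$. The two spheres $D \cup D_1$ and $D \cup D_2$ produced by the surgery each cobound an $S^2 \times I$ in $\overline T$ with one of the other two cuffs $c_1, c_2$ of $T$; they are therefore isotopic to $c_1$ and $c_2$, which are \emph{boundary} spheres of the $M_{0,4}$ obtained by cutting along $P \setminus \a$. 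Neither is one of the three essential spheres $\a, \a', \a''$ in that $M_{0,4}$, so the surgery does not hand you a flip move, and no monotone decrease of $|\b \cap P|$ follows. (If you keep the surgered sphere anyway, it is isotopic to a sphere already in $P$, so swapping it in collapses rather than flips.) This is not the peripheral/self-adjacency bookkeeping you flagged; the surgery simply lands in the wrong isotopy classes.

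The paper bypasses surgery altogether by using connectedness of $\sph(M_{n,s})$. A path in the barycentric subdivision of $\sph(M_{n,s})$ between two maximal simplices $P, Q$ produces a chain of pants decompositions $P = P_0, \ldots, P_m = Q$ with each consecutive pair sharing a non-empty face (every simplex lies in a maximal one). Once $P_i \cap P_{i+1} \neq \varnothing$, cutting along that intersection and applying the inductive hypothesis to the complementary components (each of strictly smaller $\dim \sph$) connects $P_i$ to $P_{i+1}$ by flips. So the missing ingredient in your argument is precisely connectivity of the sphere complex, which is what does the work that your surgery was meant to do.
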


\begin{proof}
  Fix $M_{n,s}$ with minimal dimension $k = \dim
  \sph(M_{n,s})$ such that the proposition
  does not hold: $k > 0$, else $\sph(M_{n,s})$ is empty or $M_{n,s}
  $ is $M_{1,1}$ or $M_{0,4}$ and the claim is immediate.
  Let $\mathcal{S}'$ denote the barycentric
  subdivision of $\sph(M_{n,s})$ and let $\mathcal{S}''$ denote the
  barycentric subdivision of $\sph(M_{n,s})^{(k-2)}$.  It is
  equivalent to show that the subcomblex 
  $\mathcal{P} = \mathcal{S}'
  \setminus \mathcal{S}''$ is connected: in particular, any flip
  move corresponds to moving between (the barycenters of) two
  $k$-simplices in $\sph(M_{n,s})$ 
  via a $(k-1)$-simplex. We claim that 
  if two $k$-simplices in $\sph(M_{n,s})$ share a face 
  then their barycenters are in the same component of
  $\mathcal{P}$, which suffices.
  In particular any simplex in $\sph(M_{n,s})$ is the face of a
  $k$-simplex, 
  hence any path in $\mathcal{S}'$ gives a
  sequence of $k$-simplices $P_i$ with $P_i \cap P_{i+1} \neq
  \varnothing$. Since $k > 0$, $\sph(M_{n,s})$ and $\mathcal{S}'$
  are connected, hence by the claim $\mathcal{P}$ is connected.

  Suppose that $P,P'$ are $k$-simplices with non-empty
  intersection $\sigma = P \cap P'$.  
  Let $N_i$ denote the non-pants 
  components of $M_{n,s}\setminus \sigma$.
  Since $\sigma$ is non-empty and 
  each $\sph(N_i)$ is a full subcomplex
  of $\sph(M_{n,s})$, $\dim(\sph(N_i)) < k$.
  By minimality we may choose a sequence
  of flip moves in $N_i$ between $P \cap N_i$ and $P' \cap N_i$ for
  each $i$; concatenating these sequences gives a path
  between $P,P'$ in $\mathcal{P}$.
\end{proof}

\subsection{Edge isomorphisms and rigidity}\label{sec:edge_isoms}
In this subsection, we state a version of Whitney's
graph isomorphism theorem  
which will be useful in \Cref{sec:geom_rigid}.  Let
$\Delta, \Delta'$ denote graphs, possibly with loops and multiple edges between vertices.

\begin{definition}\label{def:edge_isom}
  
  A bijection 
  $\psi : E(\Delta) \to E(\Delta')$ is an \emph{edge isomorphism} if
  for all $e,e' \in E(\Delta)$, 
   there is an isomorphism  of subgraphs
  $e \cup e' \to \psi(e) \cup \psi(e')$ inducing $\psi|_{\{e,e'\}}$.
\end{definition}

\begin{definition}\label{def:edge_pair}
  Given a map $\psi : E(\Delta) \to E(\Delta')$ and graphs 
  $G,G'$, $\psi$ has a \emph{$G,G'$-pair} if, for some $\eta
  \subset E(\Delta)$, the subgraphs 
  $\bigcup_{e \in \eta} e, \bigcup_{e \in \eta} \psi(e)$ are
  isomorphic to $G, G'$, in any order.
\end{definition}

Let $K_3$ denote the $3$-clique and $K_{1,3}$ the $3$-star.  The following theorem is immediate from 
{\cite[\textit{e.g.}\ Corollary 2.2]{gardner}}.

\begin{theorem}[Gardner]\label{thm:whitney_multi}
  Suppose $\Delta,\Delta'$ are finite.  
  An edge isomorphism $\psi : E(\Delta) \to E(\Delta')$ is induced
  by an isomorphism $\Delta \to \Delta'$ if and only if $\psi$ does
  not have a $K_3,K_{1,3}$-pair.
\end{theorem}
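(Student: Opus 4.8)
\textbf{Proof proposal for \Cref{thm:whitney_multi}.}
The plan is to reduce the statement to the classical Whitney theorem for simple graphs, interpreting the condition ``$\psi$ has no $K_3,K_{1,3}$-pair'' as exactly the exception set that arises when multigraphs are allowed. First I would recall the precise form of \cite[Corollary 2.2]{gardner} (or the equivalent statement in Whitney's original paper, as extended to multigraphs): an edge isomorphism between connected graphs is induced by a graph isomorphism unless the two graphs form the exceptional pair $K_3$ and $K_{1,3}$, together with the finitely many other sporadic exceptions that occur in the \emph{simple} case. The subtlety in the multigraph setting is that the only genuinely new obstruction — beyond the $K_3 \leftrightarrow K_{1,3}$ swap — can always be localized: if $\psi$ fails to come from a vertex bijection, one finds a small subgraph (on at most three edges) witnessing the failure, and the only such witness is a $K_3$ paired with a $K_{1,3}$. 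So the proof is essentially a matter of quoting Gardner's result and checking that the hypothesis ``no $K_3,K_{1,3}$-pair'' matches his exclusion hypothesis verbatim.

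The key steps, in order: (1) Reduce to the connected case — an edge isomorphism restricts to an edge isomorphism on each connected component (two edges in different components span a subgraph that is a disjoint union of their individual spanned subgraphs, and such a disjoint-union subgraph must map to a disjoint union, so $\psi$ preserves the component partition), so it suffices to handle each component and then assemble, noting that the $K_3,K_{1,3}$-pair can only occur within a single component. (2) Apply Gardner's theorem componentwise: on each component $C$, either $\psi|_C$ is induced by an isomorphism, or $C$ and $\psi(C)$ are the exceptional pair, \ie\ one is $K_3$ and the other $K_{1,3}$ (with $\psi$ being the natural bijection between their three edges). (3) Observe that in the second case $\psi$ has a $K_3,K_{1,3}$-pair by taking $\eta = E(C)$, contradicting the hypothesis; hence on every component $\psi$ is induced by an isomorphism $h_C$. (4) Glue the $h_C$ into a global isomorphism $\Delta \to \Delta'$ inducing $\psi$, which is automatic since the components and their images are matched up.

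The main obstacle is really just bookkeeping rather than mathematics: one must make sure that the \emph{only} exception in the multigraph version of Whitney's theorem is the $K_3,K_{1,3}$ pair — in particular that parallel edges and loops do not introduce new exceptional configurations. This is precisely the content of Gardner's work, so the honest statement is that this step is \emph{quoted}, not reproved; the paper already signals this by attributing the theorem to Gardner and citing Corollary 2.2. If one wanted a self-contained argument, the work would be in re-deriving Gardner's classification, which is out of scope here. Given the citation, the remaining content of the proof is the component-reduction in step (1) and the trivial observation in step (3) that the exceptional pair is exactly a $K_3,K_{1,3}$-pair, so I would keep the written proof to a couple of sentences.
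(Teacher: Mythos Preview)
The paper does not give a proof of this theorem: it states only that the result ``is immediate from \cite[\textit{e.g.}\ Corollary 2.2]{gardner}'' and moves on. Your proposal correctly recognizes that the mathematical content is a citation rather than an argument, and your component-reduction sketch is a reasonable unpacking of what ``immediate'' might entail---though the paper itself supplies none of those details (and in its applications the dual graphs $\Delta_\sigma$ are connected anyway, so the reduction is not needed there).
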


\begin{remark}
  If $\Delta$ is connected and 
  $|\Delta| \neq 2$, then a unique graph isomorphism induces $\psi$.
  In particular, if $\psi'$ were another such then $\psi^{-1}\psi'$
  is the identity on edges, which implies identity unless $\Delta$
  consists of two vertices with edges only between them.
\end{remark}

\begin{corollary}\label{cor:whitney_infinite}
  \Cref{thm:whitney_multi} likewise holds if $\Delta,\Delta'$
  are infinite and $\Delta$ is connected with $|\Delta| \neq 2$.
\end{corollary}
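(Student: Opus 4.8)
The plan is to reduce the infinite case to the finite case (Theorem \ref{thm:whitney_multi}, Gardner's theorem) by a compactness/exhaustion argument. First I would observe that an edge isomorphism $\psi : E(\Delta)\to E(\Delta')$ restricts well to finite subgraphs: if $\eta\subset E(\Delta)$ is any finite edge set, the subgraph $\bigcup_{e\in\eta}e$ spanned by those edges is finite, and $\psi$ together with its defining local isomorphisms on pairs $e\cup e'$ tells us the isomorphism type of $\bigcup_{e\in\eta}\psi(e)$ — in particular, whether two edges of $\psi(\eta)$ share zero, one, or two endpoints is determined by the corresponding data for $\eta$. The key hypothesis, that $\psi$ has no $K_3,K_{1,3}$-pair, is a statement only about finite (in fact three-edge) subsets, so it is inherited by every finite sub-edge-isomorphism.

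Next I would set up the exhaustion. Since $\Delta$ is connected, write $E(\Delta)=\bigcup_n \eta_n$ as an increasing union of finite, connected edge sets $\eta_n$ (connected in the sense that $\Delta_n := \bigcup_{e\in\eta_n}e$ is a connected subgraph), with $\bigcup_n \Delta_n = \Delta$ and $|\Delta_n|\geq 3$ eventually (possible because $|\Delta|\neq 2$: if $|\Delta|=1$ the statement is trivial and if $|\Delta|\geq 3$ we can arrange $|\Delta_n|\geq 3$; the excluded case $|\Delta|=2$ is exactly where uniqueness fails). For each $n$, $\psi|_{\eta_n}$ is an edge isomorphism from $\Delta_n$ onto $\Delta_n' := \bigcup_{e\in\eta_n}\psi(e)$, which is connected and has at least $3$ vertices, and it has no $K_3,K_{1,3}$-pair; by \Cref{thm:whitney_multi} and the Remark, there is a \emph{unique} graph isomorphism $\phi_n : \Delta_n \to \Delta_n'$ inducing $\psi|_{\eta_n}$.

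Then I would glue. Uniqueness forces compatibility: for $m\leq n$, $\phi_n|_{\Delta_m}$ is a graph isomorphism onto its image inducing $\psi|_{\eta_m}$, and its image is $\Delta_m'$ since $\psi$ is a bijection onto $E(\Delta')$ restricted appropriately; by uniqueness $\phi_n|_{\Delta_m}=\phi_m$. Hence the $\phi_n$ assemble into a single graph homomorphism $\phi:\Delta\to\Delta'$ with $\phi|_{\Delta_n}=\phi_n$. It is injective because each $\phi_n$ is and every vertex/edge of $\Delta$ lies in some $\Delta_n$; it is surjective because $\psi$ is surjective onto $E(\Delta')$, so every edge of $\Delta'$, hence (by connectedness of $\Delta'$, which follows from that of $\Delta$ via $\psi$) every vertex of $\Delta'$, is hit. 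Thus $\phi$ is a graph isomorphism inducing $\psi$. The converse direction — that an isomorphism-induced $\psi$ has no $K_3,K_{1,3}$-pair — is immediate since $K_3\not\cong K_{1,3}$, exactly as in the finite case. For uniqueness of $\phi$, the same argument as the Remark applies verbatim: another inducing $\phi'$ agrees with $\phi$ on each $\Delta_n$ by finite uniqueness, hence everywhere.

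The main obstacle is purely bookkeeping rather than conceptual: one must be careful that the finite pieces $\Delta_n$ can be chosen connected with at least three vertices (so that finite uniqueness applies) and that $\psi$ genuinely restricts to a \emph{bijective} edge isomorphism onto $\Delta_n'$ — this requires noting that the image subgraph $\Delta_n'$ depends only on the edge set $\psi(\eta_n)$ and that adjacency data transfers, which is built into \Cref{def:edge_isom}. No genuinely infinite phenomenon intervenes because both the hypothesis and the local isomorphism data are finitary.
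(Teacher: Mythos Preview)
Your proposal is correct and follows essentially the same approach as the paper: exhaust $\Delta$ by finite connected subgraphs $\Delta_n$ with $|\Delta_n|\neq 2$, apply \Cref{thm:whitney_multi} and the uniqueness Remark to obtain compatible isomorphisms $\phi_n$, and take the direct limit. The paper's proof is more terse but the argument is the same.
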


\begin{proof}
  Fix a compact exhaustion  of $\Delta$ by connected subgraphs
  $\Delta_i$ of order $n_i \neq 2$.  
  Let $\Delta_i'$ denote the subgraph $\bigcup_{e \in
  E(\Delta_i)} \psi(e)$; note that $\Delta_i'$ is likewise a compact
  exhaustion of $\Delta'$, since every $e' \in E(\Delta')$ is
  the image of an edge $e \in E(\Delta_i)$ for some $i$.
  Then $\psi$ restricts to edge isomorphisms $\psi_i :
  E(\Delta_i) \to
  E(\Delta_i')$; if any has a $K_3,K_{1,3}$-pair then so
  does $\psi$, hence it follows that  $\psi_i$ is induced by a
  unique isomorphism $\tilde \psi_i : \Delta_i \to \Delta_i'$.
  By uniqueness, these maps form a direct system, the direct 
  limit of which is an isomorphism 
  $\tilde\psi:\Delta \to \Delta'$ inducing $\psi$.  
\end{proof}

As in the remark, we observe that the $\tilde\psi$ above 
is likewise unique.

\section{A proof of the main theorem}\label{sec:geom_rigid} 

We prove \Cref{thm:IvanovRigidity0}.  We first show 
that a sphere graph isomorphism induces an isomorphism between 
the dual graphs of a pants decomposition and its image, which we 
will use to define the desired diffeomorphism of $M_{\Gamma}$. 

\subsection{Dual graphs of pants decompositions}
\label{sec:pants_dual}

Henceforth, let $f : \sph(M_\Gamma) \xrightarrow{\sim}
\sph(M_{\Gamma'})$ be an isomorphism of sphere graphs.
Given a maximal sphere system $\sigma \subset \sph(M_\Gamma)$,
$f\sigma$ is likewise maximal and $\sigma,f\sigma$ 
define pants decompositions
of $M_\Gamma, M_{\Gamma'}$ respectively.  Let $\Delta_\sigma$,
$\Delta_{f\sigma}$ denote the dual graphs to these pants
decompositions. 
For a sphere $\a\in \sigma$, we denote by $e_\a$ the
dual edge in $\Delta_\sigma$. 
Thus $E(\Delta_\sigma) = \{e_t : t \in \sigma\}$ and
the restriction $f|_\sigma$
defines a bijection $f|_\sigma : E(\Delta_\sigma) \to
E(\Delta_{f\sigma})$. 

\break 

\begin{lemma}\label{lem:dual_edge_isom}
  $f|_\sigma$ is an edge isomorphism.
\end{lemma}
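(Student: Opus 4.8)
The plan is to verify \Cref{def:edge_isom} directly: for any two spheres $\a,\a' \in \sigma$, I want an isomorphism of subgraphs $e_\a \cup e_{\a'} \to e_{f\a} \cup e_{f\a'}$ that induces $f|_{\{e_\a,e_{\a'}\}}$. The subgraph $e_\a \cup e_{\a'}$ of the dual graph $\Delta_\sigma$ is determined by the local combinatorial picture of how the two spheres $\a,\a'$ sit in the pants decomposition $\sigma$ — that is, by which pairs of pants of $M_\Gamma \setminus \nbd(\sigma)$ are incident to $\a$ and $\a'$ and in what adjacency pattern. Up to isomorphism, $e_\a \cup e_{\a'}$ is one of a short list of possibilities: two disjoint edges, two edges sharing one vertex, two edges sharing both vertices, a loop disjoint from an edge, a loop sharing its vertex with an edge, two loops at distinct vertices, and (degenerately) a single edge when $\a=\a'$. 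So it suffices to show that $f$ preserves which case occurs, and matches up the shared vertices correctly.

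The key point is that each of these cases is detected by the isomorphism type of $\lk(\sigma \setminus \{\a,\a'\})$ restricted to (or rather, the local structure near) $\a,\a'$, which $f$ preserves since $f$ carries $\sigma \setminus \{\a,\a'\}$ to $f\sigma \setminus \{f\a,f\a'\}$ and hence induces an isomorphism $\lk(\sigma\setminus\{\a,\a'\}) \to \lk(f\sigma \setminus \{f\a,f\a'\})$. By \Cref{cor:link_join}, this link is the join of the sphere complexes of the complementary components of $\sigma \setminus \{\a,\a'\}$, and the component(s) containing $\a \cup \a'$ is a small handlebody-type piece — homeomorphic to $M_{0,4}$, $M_{0,5}$, $M_{1,1}$, $M_{1,2}$, $M_{1,3}$, or a disjoint union of two such — whose topological type is pinned down by which join factors are nontrivial and how $\a,\a'$ and the neighboring spheres of $\sigma$ sit inside them. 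Concretely: $\a = \a'$ iff $f\a = f\a'$; $e_\a, e_{\a'}$ are disjoint iff $\a,\a'$ lie in different components of $M_\Gamma \setminus \nbd(\sigma \setminus \{\a,\a'\})$, which is visible because then $\lk(\sigma\setminus\{\a,\a'\})$ splits that pair across two join factors; a loop at a vertex corresponds to $\a$ being self-adjacent in $\sigma$, detected by the relevant component being $M_{1,1}$-like rather than having enough boundary spheres; and the two-shared-vertices case versus one-shared-vertex is distinguished by the number of pairs of pants incident to both $\a$ and $\a'$, which in turn is read off from the count and adjacency of the fixed spheres of $\sigma$ bordering that component. In each case the isomorphism $f$ transports the defining topological data, so the abstract subgraph $e_\a \cup e_{\a'}$ is carried to $e_{f\a}\cup e_{f\a'}$ by an isomorphism inducing $f$ on the two edges; since vertices of the dual graph are exactly the pairs of pants, and a pair of pants adjacent to a fixed sphere $t \in \sigma \setminus \{\a,\a'\}$ is labeled by $f$-images consistently, the vertex identifications are forced.

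The main obstacle I anticipate is bookkeeping the degenerate and low-complexity configurations cleanly: self-adjacent spheres (loops in $\Delta_\sigma$), spheres that are adjacent via two different pairs of pants (double edges), and the possibility that a small component like $M_{0,4}$ or $M_{1,1}$ has a sphere complex too impoverished to distinguish, say, a loop-plus-edge from a double edge by the join decomposition alone. To handle this I would fall back on the flip-move / $X$-detectable machinery: whether $\a$ is self-adjacent, and whether $\a,\a'$ span a common pair of pants, can be characterized in terms of which flip moves are available within $\sigma$ and which spheres in $\lk(\sigma \setminus \{\a,\a'\})$ intersect $\a$ or $\a'$, all of which $f$ preserves because $f$ is a sphere-graph isomorphism and hence preserves intersection, disjointness, maximality, and the poset of subsystems. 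Finally I would note that this also shows $f|_\sigma$ is a bijection on edges (since $f$ restricts to a bijection $\sigma \to f\sigma$), completing the verification that $f|_\sigma$ is an edge isomorphism.
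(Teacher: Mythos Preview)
Your proposal is correct and takes essentially the same approach as the paper: detect the isomorphism type of $e_\a \cup e_\b$ from the links $\lk(\sigma \setminus \a)$ and $\lk(\sigma \setminus \{\a,\b\})$ via \Cref{cor:link_join}, then note that $f$, being a simplicial isomorphism, preserves these links. The paper resolves your anticipated obstacle cleanly by first testing each edge for being a loop via $\lk(\sigma \setminus \a) \cong \sph(M_{1,1})$ (one sphere removed), which already separates loop-plus-edge from a bigon before ever looking at $\lk(\sigma\setminus\{\a,\b\})$, so no flip-move or $X$-detectable machinery is needed; note also that $M_{1,3}$ never arises as a complementary component when only two spheres are removed from a maximal system.
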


\begin{proof}
For any $e_\a, e_\b \in E(\Delta_\sigma)$, 
we must show there is an isomorphism on subgraphs 
$e_\a \cup e_\b \to e_{f\a} \cup e_{f\b}$ 
for which $e_\a \mapsto e_{f\a}$ and $e_{\b} \mapsto e_{f\b}$. 
By examining complementary components and applying
\Cref{cor:link_join} 
we obtain the following: 

\begin{enumerate}[label=(\roman*)]
  \item $e_\a$ is a loop in $\Delta_\sigma$ if and only
    if $\lk(\sigma \setminus \a) = \{\a\} \cong
    \sph(M_{1,1})$.  Otherwise,
    $\lk(\sigma \setminus \a) = \{\a,\a',\a''\}
    \cong \sph(M_{0,4})$ for
    some distinct $\a',\a''$.

  \item $e_\a, e_\b$ have common incident
    vertices (\textit{i.e.}\ $a,b$ are adjacent) in 
    $\Delta_\sigma$ if
    and only if $\lk(\sigma \setminus \{\a,\b\}) \cong
    \sph(M_{1,2})$ or $\sph(M_{0,5})$. 

  \item If $e_\a, e_\b$ are adjacent in $\Delta_\sigma$
    and $e_\a$ or $e_\b$ is a loop, 
    then they are incident on exactly one
    common vertex.  If $e_\a,e_\b$ are not loops and 
    incident on exactly one
    common vertex, then $\lk(\sigma \setminus
    \{\a,\b\}) = \sph(M_{0,5})$.  If they are incident on two
    common vertices (\textit{i.e.}\ they form a bigon), then $\lk(\sigma \setminus \{\a,\b\}) =
    \sph(M_{1,2})$.
\end{enumerate}

These properties are sufficient to determine $e_a \cup e_b$ up to
order preserving isomorphism.  Moreover, since (i)-(iii) are specified
combinatorially (because links are preserved by isomorphisms), they are preserved by $f$: if any hold for $e_a,
e_b$, then likewise do they for $e_{fa}, e_{fb}$, which suffices.
\end{proof}

\begin{proposition}\label{prop:edge_to_dual}
  $f|_\sigma$ is induced by an isomorphism 
  $\Delta_\sigma \to
  \Delta_{f\sigma}$.  
\end{proposition}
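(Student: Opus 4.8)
The plan is to invoke Whitney's graph isomorphism theorem in the form of \Cref{thm:whitney_multi} (for $\Delta_\sigma$ finite, i.e.\ $\Gamma$ finite-type) or \Cref{cor:whitney_infinite} (for $\Delta_\sigma$ infinite). By \Cref{lem:dual_edge_isom} we already know $f|_\sigma : E(\Delta_\sigma) \to E(\Delta_{f\sigma})$ is an edge isomorphism, so the only thing left to check is the hypothesis of those theorems: that $f|_\sigma$ has no $K_3, K_{1,3}$-pair. I would argue this topologically. Suppose some edge subset $\eta \subset E(\Delta_\sigma)$ with $\bigcup_{e\in\eta} e \cong K_3$ had $\bigcup_{e\in\eta} f(e) \cong K_{1,3}$ (the other order is symmetric since $f^{-1}$ is also an edge isomorphism). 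Then $\eta$ corresponds to a three-sphere subsystem $\{\a,\b,\c\}\subset \sigma$ whose union of dual edges is a triangle, and I would identify the complementary-component signature of such a configuration via \Cref{cor:link_join}: a $K_3$ subgraph on three edges of a pants decomposition means the three spheres cut off a component (or sit inside a larger piece) that, together with the incident pants, forces $\lk(\sigma \setminus \{\a,\b,\c\})$ to be isomorphic to the sphere complex of a specific manifold — e.g.\ $\sph(M_{1,3})$ or the relevant $M_{0,s}$ — whereas a $K_{1,3}$ subgraph forces a different one. Since links are preserved by the sphere-graph isomorphism $f$, the $K_3$ and $K_{1,3}$ signatures cannot be interchanged, giving the required contradiction.

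Concretely, the step I would carry out carefully is a small dictionary: for a three-edge subgraph of a graph, the isomorphism types that can occur as $\bigcup_{e\in\eta}e$ are limited (triangle $K_3$, star $K_{1,3}$, path $P_4$, various configurations with loops/bigons), and each is distinguished from the others by the adjacency and loop data of the three edges, which is exactly what \Cref{lem:dual_edge_isom}'s properties (i)--(iii) detect from $\lk(\sigma\setminus\{\cdot\})$. The key point is that $K_3$ and $K_{1,3}$ have the \emph{same} pairwise-adjacency pattern (every pair of the three edges is adjacent, no loops, no bigons), so they are \emph{not} distinguished by looking at pairs of edges — this is precisely why Whitney's theorem needs the extra hypothesis and why \Cref{lem:dual_edge_isom} alone is insufficient. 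So I must extract a combinatorial invariant of the \emph{triple} $\{\a,\b,\c\}$: namely whether $\lk(\sigma\setminus\{\a,\b,\c\})$ is the sphere complex of the manifold dual to a triangle versus the manifold dual to a star. These are non-isomorphic sphere complexes (one should check, e.g., that their dimensions or their ``number of maximal sphere systems up to the obvious symmetry'' differ, or simply that the manifolds $M_{1,3}$ vs.\ $M_{0,6}$ — or whichever pair arises — are non-homeomorphic and hence have non-isomorphic sphere complexes by the finite-type case of \Cref{thm:IvanovRigidity0} applied inductively, or by \Cref{lem:GenusZeroRigidity} when genus zero). Since $f$ carries $\lk(\sigma\setminus\{\a,\b,\c\})$ isomorphically to $\lk(f\sigma\setminus\{f\a,f\b,f\c\})$, the triple-type is an $f$-invariant, ruling out the pair.

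With the $K_3,K_{1,3}$-pair hypothesis verified, \Cref{thm:whitney_multi} (resp.\ \Cref{cor:whitney_infinite}) yields an isomorphism $\Delta_\sigma \to \Delta_{f\sigma}$ inducing $f|_\sigma$, provided $\Delta_\sigma$ is connected — which holds since $M_\Gamma$, hence $\Gamma$, is connected — and $|\Delta_\sigma|\neq 2$; the case $|\Delta_\sigma| = 2$ corresponds to exactly two $0$-handles, which after the doubling and with the low-complexity hypotheses of \Cref{thm:IvanovRigidity0} excluded can be handled directly (or folded into the sporadic analysis). I expect the main obstacle to be the bookkeeping in the triple dictionary: carefully enumerating which complementary manifold $M_\Gamma\setminus(\sigma\setminus\{\a,\b,\c\})$ arises for a triangle versus a star of dual edges — keeping track of loops and self-adjacencies among $\a,\b,\c$, which multiply the cases — and confirming in each subcase that the two resulting sphere complexes are genuinely non-isomorphic. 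Everything else is a direct application of the Whitney-type theorem already stated.
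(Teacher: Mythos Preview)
Your proposal is correct and follows essentially the same route as the paper: apply \Cref{thm:whitney_multi}/\Cref{cor:whitney_infinite} to the edge isomorphism from \Cref{lem:dual_edge_isom}, and rule out a $K_3,K_{1,3}$-pair by computing $\lk(\sigma\setminus\{\a,\b,\c\})$, which is exactly $\sph(M_{1,3})$ for a triangle and $\sph(M_{0,6})$ for a star (via \Cref{cor:link_join}), and these are non-isomorphic. Two small simplifications relative to your write-up: first, the ``bookkeeping'' you anticipate is trivial, since in both $K_3$ and $K_{1,3}$ the three edges are loop-free with no bigons, so there is literally only the single case $M_{1,3}$ versus $M_{0,6}$; second, the worry about $|\Delta_\sigma|=2$ is unnecessary, as the finite Whitney theorem \Cref{thm:whitney_multi} has no such hypothesis, and in the infinite case $|\Delta_\sigma|$ is certainly not $2$.
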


\begin{proof}
  By \Cref{thm:whitney_multi} and \Cref{cor:whitney_infinite}, 
  it suffices that $f|_\sigma$ does not have a $K_3,K_{1,3}$-pair.
  Suppose that $f|_\sigma$ has such a pair on 
  $e_\a,e_\b,e_\c \in E(\Delta_\sigma)$.  If
  $e_\a \cup e_\b \cup e_\c \cong K_3$ then $e_{f\a} \cup
  e_{f\b} \cup e_{f\c} \cong K_{1,3}$. It follows that $\lk(\sigma
  \setminus \{\a,\b,\c\}) \cong \sph(M_{1,3})$ and
  $\lk(f\sigma \setminus \{f\a,f\b,f\c\}) \cong 
  \sph(M_{0,6}) \not\cong \sph(M_{1,3})$, a contradiction. See \Cref{fig:K13K3}.  
  An analogous argument
  applies exchanging $K_3$ and $K_{1,3}$.
  \end{proof}

\begin{figure}[ht!]
      \centering
      \begin{overpic}[width=10cm]{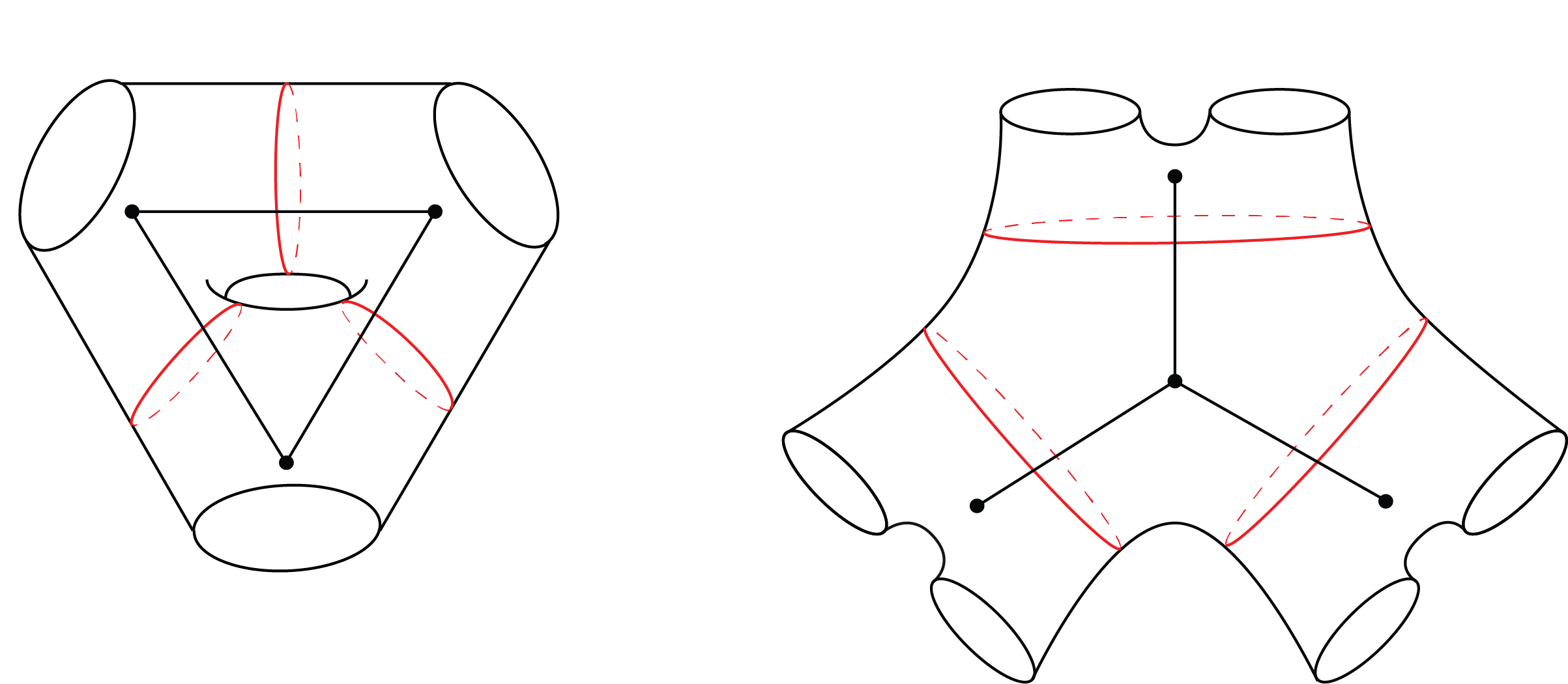}
        \put(14,32){$e_\a$}
        \put(9,22){$e_\b$}
        \put(25,24){$e_\c$}
        \put(75.5,26){$e_{f\a}$}
        \put(67,18){$e_{f\b}$}
        \put(79,18){$e_{f\c}$}
      \end{overpic}
      \caption{The $M_{1,3}$ and $M_{0,6}$ components in $M_\Gamma
      \setminus (\sigma \setminus \{\a,\b,\c\})$ and
      $M_{\Gamma'}
      \setminus (f\sigma \setminus \{f\a,f\b,f\c\})$ respectively.
    The figure shows one-half of the doubled handlebodies.}
    \label{fig:K13K3}
  \end{figure}

\subsection{Constructing a 
diffeomorphism}\label{sec:diffeo}

We are interested in diffeomorphisms $M_\Gamma \to M_{\Gamma'}$
that agree with $f$ locally about $\sigma$.

\begin{definition}\label{def:compatible}
  For a maximal sphere system $\sigma \subset \sph(M_\Gamma)$, let
  \[X_\sigma \coloneq \bigcup_{\a \in \sigma}
  \lk(\sigma \setminus \a)\:.\]
\end{definition}

We note that $\sigma \subset X_\sigma$.
Let $f_\sigma : \Delta_\sigma \to \Delta_{f\sigma}$ denote an
isomorphism inducing $f|_\sigma$. 
Realizing $M_\Gamma,M_{\Gamma'}$ as the gluing
of pairs of pants along spheres in $\sigma,f\sigma$ respectively,
$f_\sigma$ is induced by  
a diffeomorphism $h_0 : M_\Gamma \to M_{\Gamma'}$ defined
up to isotopy and precomposition by 
diffeomorphisms fixing pointwise
$\Delta_\sigma$.

\begin{definition}\label{def:almost_peripheral}
  A separating 
  sphere $\a \subset M_\Gamma$ is \emph{almost peripheral} if it
  bounds a pair of pants or a $M_{1,1}$.
\end{definition}

For an almost peripheral sphere $\a
\in \sigma$, 
$\lk(\sigma
\setminus \a) = \{\a,\a',\a''\}$ and the half-twist fixes $\a$ and
exchanges $\a',\a''$: obtain $h$ from $h_0$ by precomposing
by half-twists on almost peripheral $\a \in \sigma$ such that
$h$ agrees with $f$ on $\lk(\sigma \setminus \a)$.  

\begin{lemma}\label{lem:compatibility_sigma}
  $h$ agrees with $f$ on $X_\sigma$.
\end{lemma}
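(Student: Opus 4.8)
The plan is to verify agreement of $h$ and $f$ vertex-by-vertex on $X_\sigma = \bigcup_{\a \in \sigma} \lk(\sigma \setminus \a)$. A vertex of $X_\sigma$ lies in some $\lk(\sigma \setminus \a)$, and by the dichotomy established in the proof of \Cref{lem:dual_edge_isom}(i), either $e_\a$ is a loop in $\Delta_\sigma$, in which case $\lk(\sigma \setminus \a) = \{\a\} \cong \sph(M_{1,1})$ and there is nothing to check since $h(\a) = h_0(\a) = f(\a)$ by construction of $h_0$ (it induces $f_\sigma$, which induces $f|_\sigma$); or $\lk(\sigma \setminus \a) = \{\a, \a', \a''\} \cong \sph(M_{0,4})$. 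So the real content is the case of the three-vertex links $\{\a,\a',\a''\}$ attached to a non-loop edge $e_\a$.

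For such an $\a$, I first note that $h(\a) = f(\a)$: indeed $h$ differs from $h_0$ only by half-twists supported near almost peripheral spheres, each of which fixes the central sphere of its $M_{0,4}$ and the spheres of $\sigma$, so $h$ still sends $\sigma$ to $f\sigma$ compatibly with $f_\sigma$, giving $h(\a) = f(\a)$ for every $\a \in \sigma$. It remains to handle $\a', \a''$. Split into two cases according to whether $\a$ is almost peripheral. If $\a$ is \emph{not} almost peripheral, then the $M_{0,4}$ component $Z$ of $M_\Gamma \setminus \nbd(\sigma \setminus \a)$ has all four boundary spheres distinct in $M_\Gamma$ (none is peripheral, none coincides with another cuff), so the mapping class group of $Z$ fixing its boundary spheres setwise cannot swap $\a'$ and $\a''$ without a corresponding combinatorial constraint; more precisely, $h_0(Z)$ is the analogous $M_{0,4}$ component in $M_{\Gamma'}$ and $h_0$ restricts to a diffeomorphism $Z \to h_0(Z)$ sending $\{\a,\a',\a''\}$ to $\{f\a, h_0\a', h_0\a''\}$. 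Since $f$ also maps $\lk(\sigma \setminus \a) = \{\a,\a',\a''\}$ bijectively to $\lk(f\sigma \setminus f\a) = \{f\a, f\a', f\a''\}$ fixing $f\a$, and $h_0 \a, h_0 \a', h_0 \a''$ are exactly these three spheres, we get $\{h_0\a', h_0\a''\} = \{f\a', f\a''\}$; I must then rule out the transposition. Here I would use a combinatorial invariant of the pair $(\a', \sigma)$ versus $(\a'', \sigma)$ — for instance which further flip moves / adjacencies are available, or an appeal to \Cref{lem:GenusZeroSphere} applied to the partition of the four boundary cuffs of $Z$ induced by $\a'$ versus $\a''$ — that is preserved by both $f$ (being combinatorial) and by $h_0$ (being a diffeomorphism compatible with the cuff-labelling), forcing $h_0\a' = f\a'$ and $h_0\a'' = f\a''$. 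If $\a$ \emph{is} almost peripheral, two of the four cuffs of $Z$ are identified (or one is peripheral), so the half-twist on $\a$ realizes the transposition $\a' \leftrightarrow \a''$; this is precisely why $h$ was \emph{defined} by precomposing $h_0$ with exactly those half-twists needed to make $h$ agree with $f$ on each such $\lk(\sigma \setminus \a)$, so agreement holds by construction.

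Assembling: for every vertex $\beta \in X_\sigma$, pick $\a \in \sigma$ with $\beta \in \lk(\sigma \setminus \a)$; if $e_\a$ is a loop, $\beta = \a$ and $h(\beta) = f(\beta)$; if not, $\beta \in \{\a,\a',\a''\}$ and the two cases above give $h(\beta) = f(\beta)$. Since $h$ is induced by a diffeomorphism it is in particular a simplicial isomorphism $\sph(M_\Gamma) \to \sph(M_{\Gamma'})$, and it agrees with the simplicial isomorphism $f$ on all vertices of the subcomplex $X_\sigma$; hence $h$ agrees with $f$ on $X_\sigma$ as a simplicial map, which is the claim.

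\textbf{Main obstacle.} The delicate point is the non-almost-peripheral case: showing the diffeomorphism $h_0$ cannot secretly transpose $\a'$ and $\a''$ relative to what $f$ does. This requires pinning down a combinatorial quantity attached to $\a'$ within $X_\sigma$ (equivalently, distinguishing $\a'$ from $\a''$ by how they sit in pants decompositions refining $\sigma \setminus \a$) that is simultaneously a diffeomorphism invariant and an isomorphism invariant; everything else is bookkeeping about which half-twists were inserted and the already-established fact that links are preserved by $f$.
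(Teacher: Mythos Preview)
Your outline matches the paper's structure: reduce to each $\lk(\sigma \setminus \a)$, dispose of the loop case since then the link is $\{\a\} \subset \sigma$, and handle almost peripheral $\a$ by the very construction of $h$ via half-twists. You also correctly isolate the only real content, namely ruling out the transposition $\a' \leftrightarrow \a''$ when $\a$ is neither a loop nor almost peripheral. But at that point your proposal has a genuine gap. You gesture at ``a combinatorial invariant of the pair $(\a',\sigma)$ versus $(\a'',\sigma)$'' without specifying one, and the candidates you list (an appeal to \Cref{lem:GenusZeroSphere}, or unspecified ``further flip moves / adjacencies'') are not yet an argument. The partition of the four cuffs of $Z$ is indeed the right \emph{topological} distinction between $\a'$ and $\a''$, but $f$ only sees the sphere complex, so you must encode that partition purely in sphere-complex terms before you can say $f$ preserves it.

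The paper supplies exactly such an encoding. Since $\a$ is not almost peripheral, one can choose $\b,\b' \in \sigma$ (possibly equal as spheres) appearing as cuffs of the $M_{0,4}$ component $M$ on opposite sides of $\a$. Up to relabeling, $\a'$ also separates these two cuffs in $M$ while $\a''$ does not. Set $\sigma' = (\sigma \setminus \a) \cup \a'$ and $\sigma'' = (\sigma \setminus \a) \cup \a''$. The combinatorial witness is the link isomorphism type after removing $\b,\b'$: one checks directly that
\[
\lk(\sigma \setminus \{\b,\b'\}) \;\cong\; \lk(\sigma' \setminus \{\b,\b'\}) \;\not\cong\; \lk(\sigma'' \setminus \{\b,\b'\}).
\]
Since $f$ is a sphere-complex isomorphism it preserves these link types, and since $h$ is a diffeomorphism with $h|_\sigma = f|_\sigma$ it computes them correctly on the image side. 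If $h\a' = f\a''$ and $h\a'' = f\a'$, then $h\sigma'' = f\sigma'$, and comparing $\lk(h\sigma'' \setminus \{h\b,h\b'\})$ via $f$ and via $h$ yields a contradiction. This is precisely the cuff-partition information you wanted, packaged so that both $f$ and $h$ visibly respect it; filling in this step completes your argument along the paper's lines.
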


\begin{proof}
  By construction $h_*$ agrees with $f$ on $\sigma$ and $\lk(\sigma
  \setminus \a)$ for $\a \in \sigma$ almost peripheral.  If
  $\a$ is a loop in $\Delta_\sigma$, then $\lk(\sigma \setminus
  \a) = \{\a\}\subset \sigma$. Thus it 
  remains to show that $h_*$ agrees with $f$ on $\lk(\sigma
  \setminus \a)$ when $\a$ is not a loop or 
  almost peripheral.

  Let $\lk(\sigma \setminus \a) = \{\a,\a',\a''\}$
  and $\sigma' = (\sigma \setminus \a) \cup \a'$ and
  $\sigma'' = (\sigma \setminus \a) \cup \a''$.
  Let $M \cong M_{0,4}\setminus \partial M_{0,4}$ denote the
  complementary 
  component of $\sigma \setminus \a$
  containing $\a$, hence $M' = h(M)$ is the complementary
  component of $f\sigma \setminus f\a$ containing $f\a$.  
  Since $\a$ is not almost peripheral,
  there exist $\b,\b' \in \sigma$ which are separated by
  $\a$ in $M$ (possibly $\b = \b'$). 
  Without loss of generality, $\a'$ 
  separates $\b,\b'$ in $M$ and $\a''$ does not, hence
  likewise $h\a$ 
  and $h\a'$ separate $h\b,h\b'$ in $M'$ and
  $h\a''$ does not. 
  It follows that $\lk(\sigma \setminus
  \{\b,\b'\}) \cong \lk(\sigma' \setminus \{\b,\b'\})
  \not\cong \lk(\sigma'' \setminus \{\b,\b'\})$ and 
  $\lk(h\sigma \setminus
  \{h\b,h\b'\}) \cong \lk(h\sigma' \setminus
  \{h\b,h\b'\})
  \not\cong \lk(h\sigma'' \setminus \{h\b,h\b'\})$. See \Cref{fig:FourLinks}. 
  
  We have that $h\a = f\a$, $h\b = f\b$, and
  $h\b'=f\b'$.  Suppose that $h\a' = f\a''$ and
  $h\a'' = f\a'$.  Then $h\sigma'' = f\sigma'$ (and
  \textit{vice versa}), hence $\lk(h\sigma'' \setminus
  \{h\b,h\b'\}) = \lk(f\sigma' \setminus
  \{f\b,f\b'\}) \cong \lk(\sigma' \setminus \{\b,\b'\})
  \cong \lk(\sigma \setminus \{\b,\b'\}) \cong \lk(h\sigma
  \setminus \{h\b,h\b'\})$, a contradiction with the above. Hence 
  $h\a' = f\a'$ and $h\a'' = f\a''$ as required.
\end{proof}

\begin{figure}[h!]
    \centering
    \begin{tabular}{>{\centering\arraybackslash}m{0.473\textwidth} >{\centering\arraybackslash}m{0.473\textwidth}}
        \hline \\[0.2cm]
       \center
       \begin{overpic}[scale=0.3]{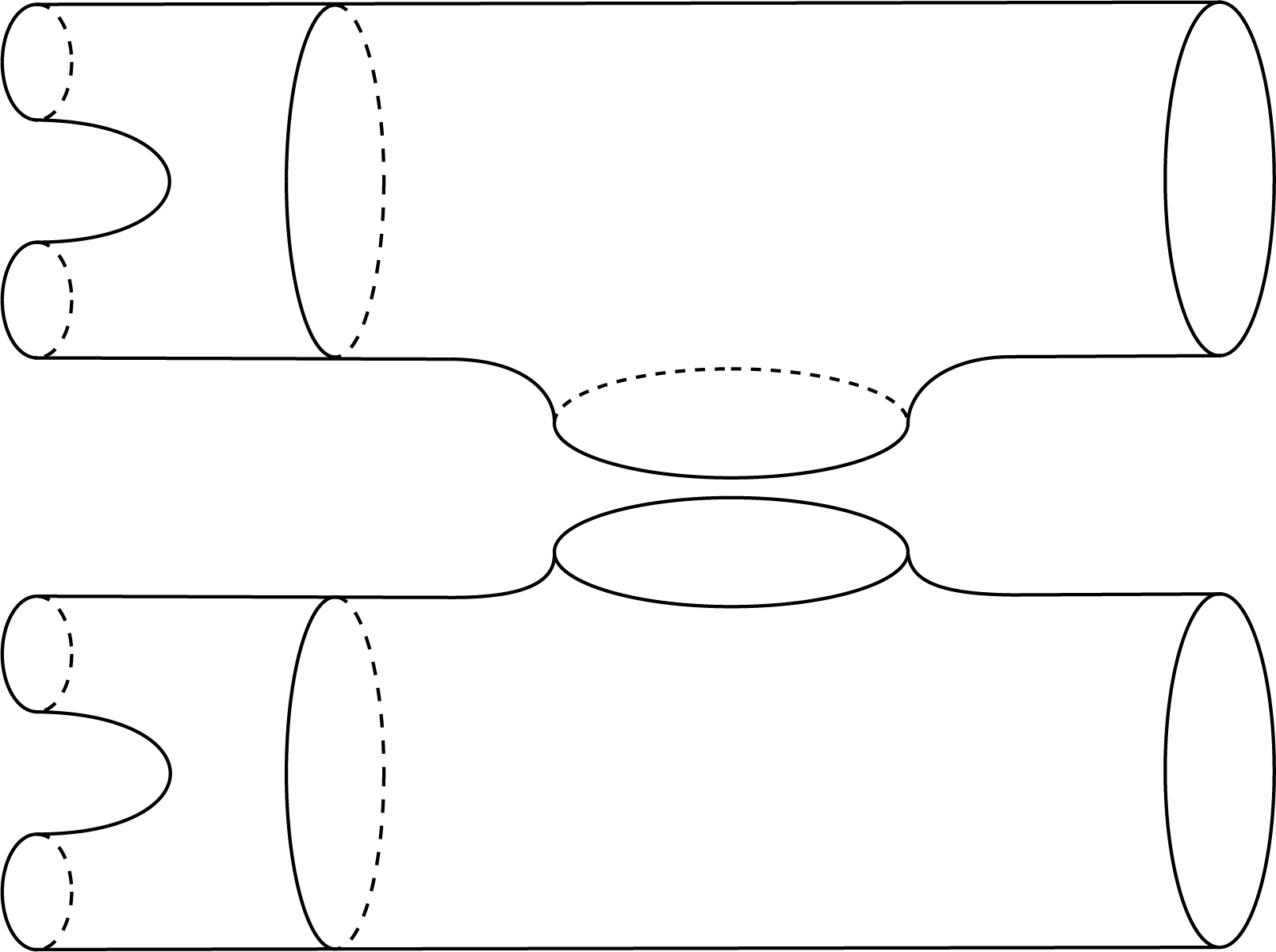}
           \put(32,57){$b$}
           \put(32,10){$b'$}
           \put(70,35){$a$}
       \end{overpic}
       & 
        \begin{overpic}[scale=0.3]{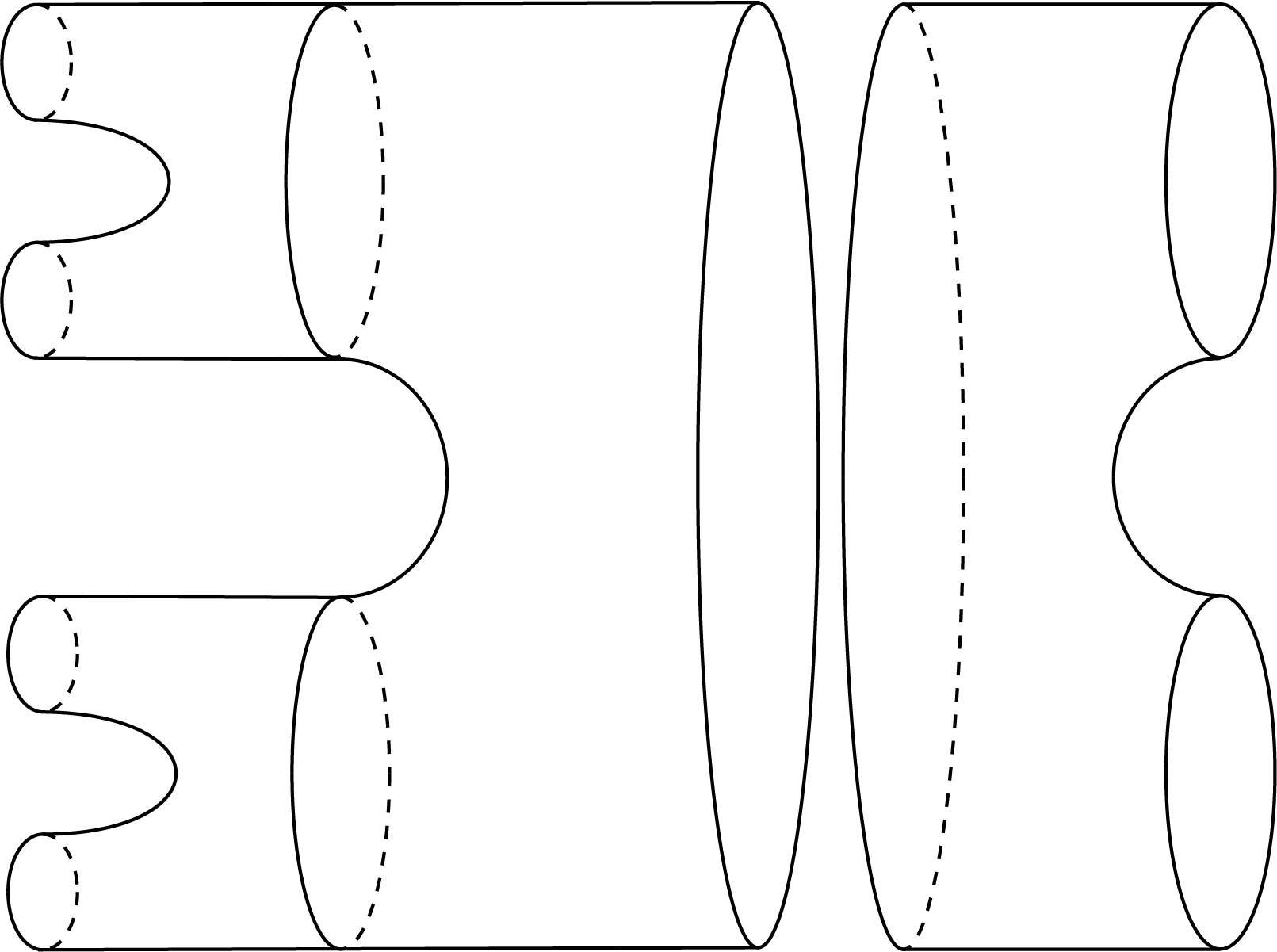}
           \put(32,57){$b$}
           \put(32,10){$b'$}
           \put(62.5,73){$a''$}
       \end{overpic}\\[1cm] 
       $\color{Maroon}\lk(\sigma\setminus\{b,b'\})\cong S(M_{0,4})\ast S(M_{0,4})$ &  $\color{Maroon}\lk(\sigma''\setminus\{b,b'\})\cong S(M_{0,5})$ \\[0.4cm]
       \center  \begin{overpic}[scale=0.3]{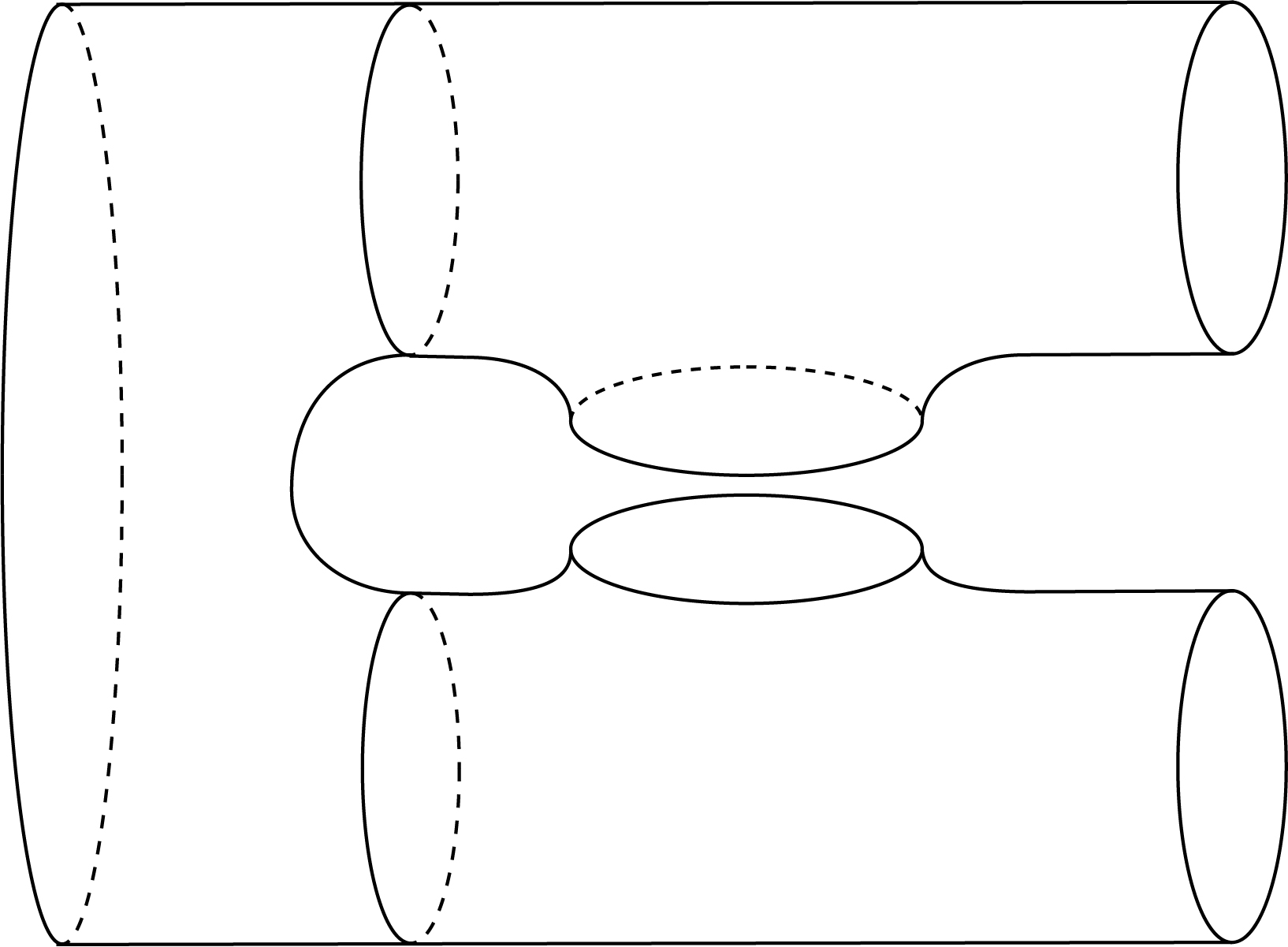}
           \put(37,57){$b$}
           \put(37,10){$b'$}
           \put(70,34){$a$}
       \end{overpic}&  \begin{overpic}[scale=0.3]{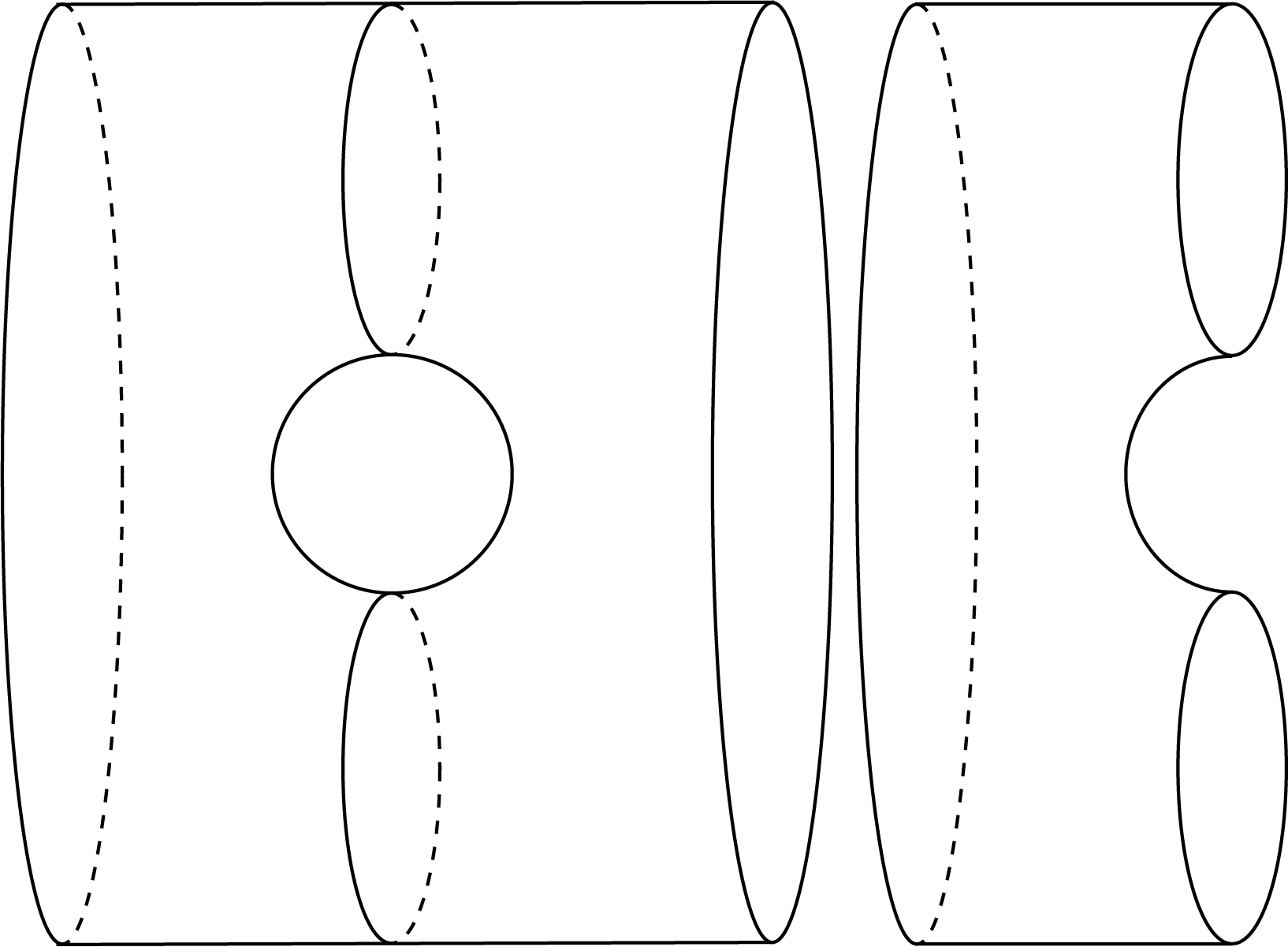}
           \put(35,57){$b$}
           \put(35,10){$b'$}
           \put(62.5,73){$a''$}
       \end{overpic} \\[1.1cm] 
       \center$\color{Maroon}\lk(\sigma\setminus\{b,b'\})\cong S(M_{0,5})$ &  $\color{Maroon}\lk(\sigma''\setminus\{b,b'\})\cong S(M_{1,2})$ \\[0.4cm]
        \center  \begin{overpic}[scale=0.3]{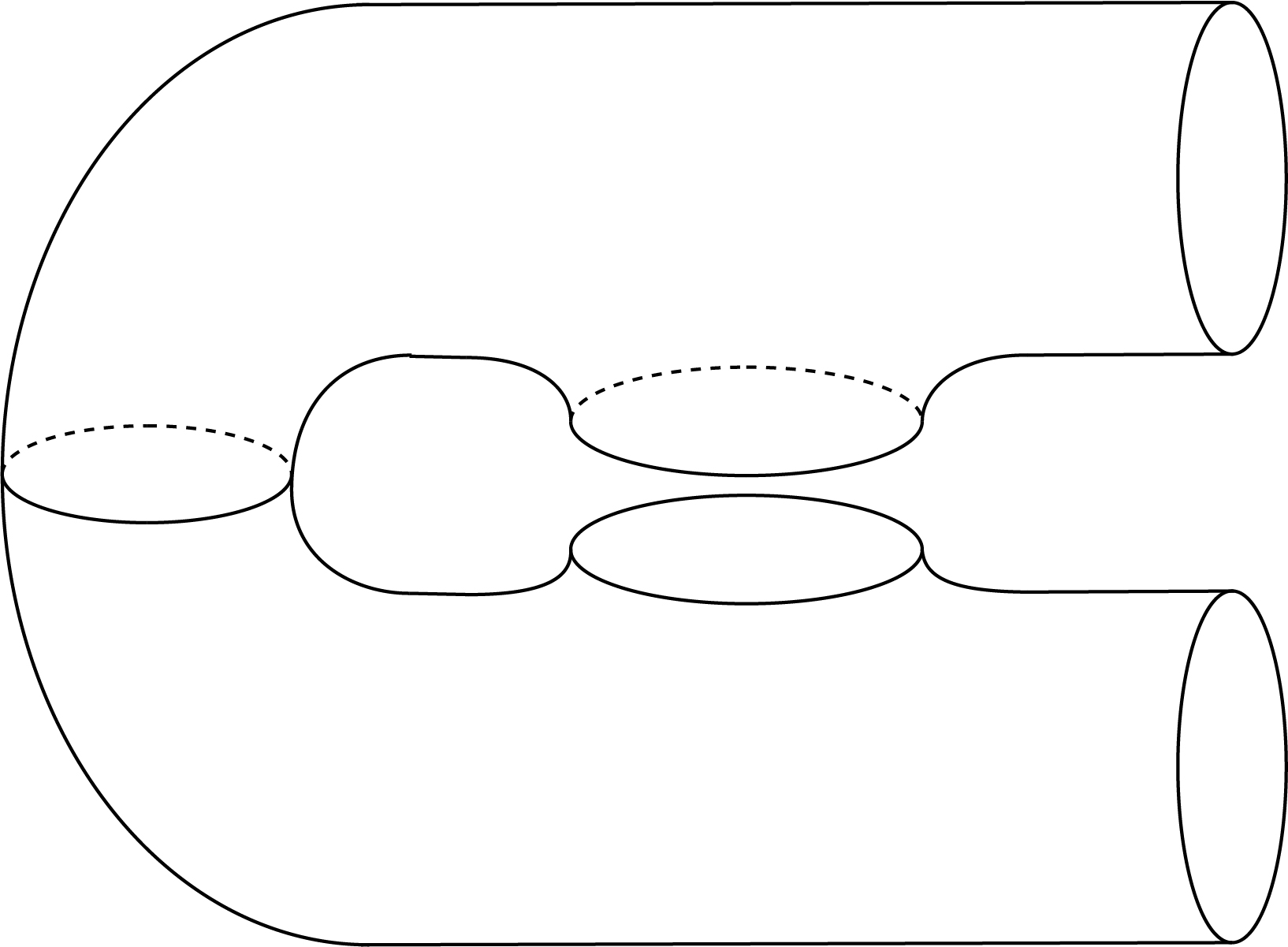}
           \put(4,42.5){$b = b'$}
           \put(70,34){$a$}
       \end{overpic}&  \begin{overpic}[scale=0.3]{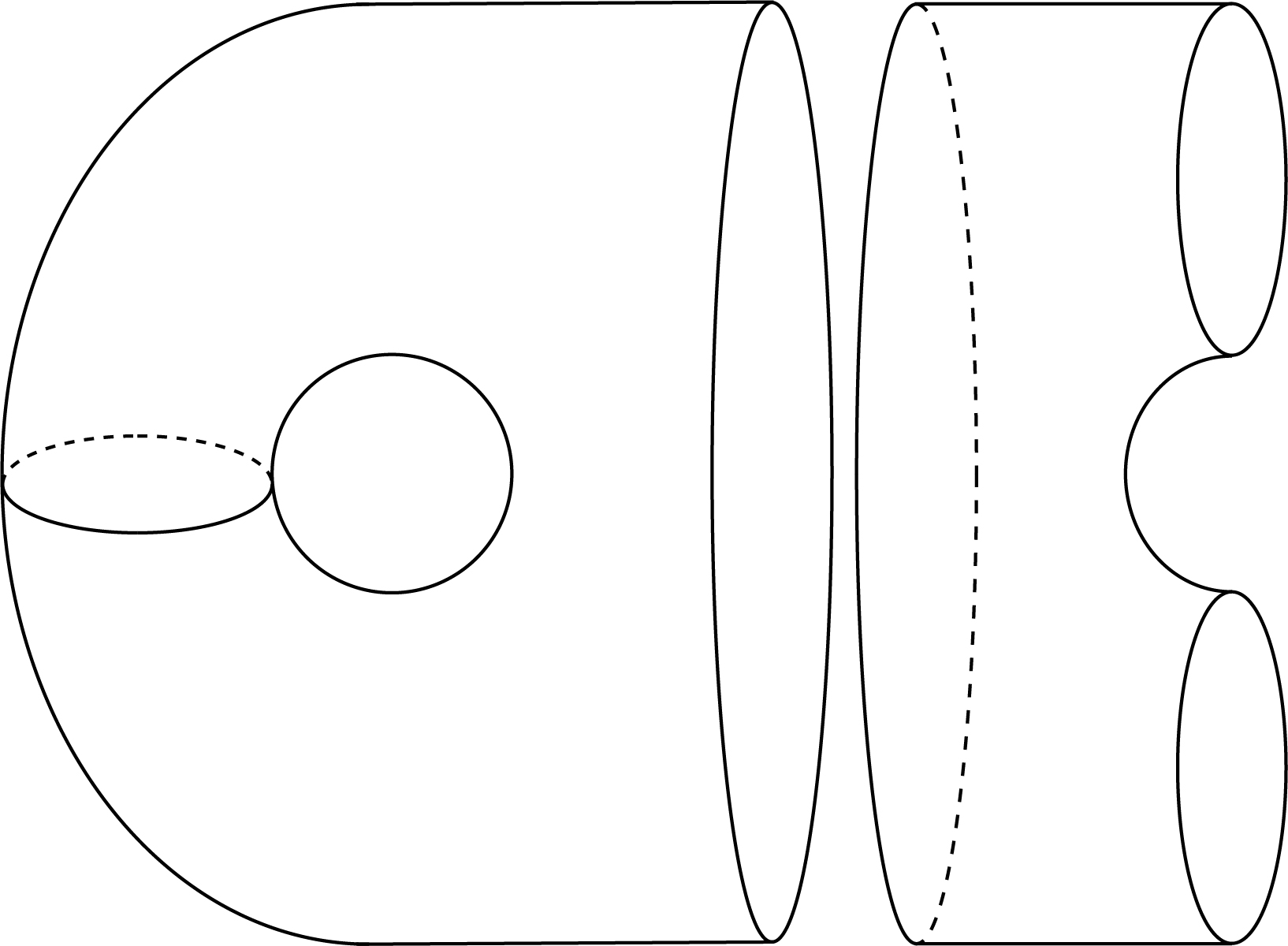}
           \put(2,41.5){$b = b'$}
           \put(62.5,73){$a''$}
       \end{overpic} \\[.4cm] 
       \center$\color{Maroon}\lk(\sigma\setminus\{b,b'\})\cong S(M_{0,4})$ &  $\color{Maroon}\lk(\sigma''\setminus\{b,b'\})\cong S(M_{1,1})$ \\[0.4cm]
       \hline
    \end{tabular}    
    \caption{Various links in the
    proof of \Cref{lem:compatibility_sigma}. Row 1 shows when $e_a$ and $e_{b}$ are not
    adjacent, Row 2 when
$e_a$ and $e_{b}$ are incident on a single common vertex, and 
Row 3 when $b = b'$.
Because $a$ and $a'$ both separate $b$ and $b'$ in $M$, the left column is identical replacing $a$ with $a'$, $\sigma$ with $\sigma'$, and $b$ with $b'$.}
\label{fig:FourLinks}
\end{figure}
 
\begin{remark*}\label{rmk:compatibility_sigma} 
  The map $f$ need not be an isomorphism for
  \Cref{lem:compatibility_sigma} to hold.  Suppose that $f : Z
  \subset \sph(M_\Gamma) \to \sph(M_{\Gamma'})$ such that $X_\sigma
  \subset Z$, $f|_\sigma$ is the edge map of an isomorphism
  $\Delta_\sigma \xrightarrow{\sim} \Delta_{f\sigma}$, and $h :
  M_\Gamma \to M_{\Gamma'}$ is the diffeomorphism constructed as
  above.  If $f$ extends to an isomorphism $\lk(\sigma \setminus
  \{\a,\b,\c\}) \xrightarrow{\sim} \lk(f\sigma \setminus
  \{f\a,f\b,f\c\})$ for any $\a,\b,\c \in \sigma$ such that
  $e_\a \cup e_\b \cup e_\c$ is connected in $\Delta_\sigma$, then
  $h$ induces $f$ on $X_\sigma$: in particular, in the proof above 
  $\lk(\sigma' \setminus \{b,b'\}) \subset \lk(\sigma \setminus
  \{a,b,b'\})$ and $\lk(f\sigma' \setminus \{f\b,f\b'\}) \cong
  \lk(\sigma' \setminus \{\b,\b'\})$, hence the argument 
  applies without modification.
  We will use this fact in \Cref{sec:loc_finite_infinite}.
\end{remark*}

\begin{lemma}\label{lem:compatibility_flip}
  Suppose that $\rho,\rho'$ are maximal sphere systems 
  that differ by a flip move.
  If a diffeomorphism 
  $g : M_\Gamma \to M_{\Gamma'}$ agrees 
  with $f$ on $X_\rho$, then it likewise agrees with $f$ on  
  $X_{\rho'}$.
\end{lemma}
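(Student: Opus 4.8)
Write $\rho' = (\rho\setminus\{\a\})\cup\{\a'\}$ where $\a\to\a'$ is the flip move, so $\lk(\rho\setminus\{\a\}) = \{\a,\a',\a''\}$. Since $\rho\setminus\{\a\}\subseteq\rho\subseteq X_\rho$ and $\a'\in\lk(\rho\setminus\{\a\})\subseteq X_\rho$, the hypothesis gives $g|_{\rho'} = f|_{\rho'}$; more generally $g$ and $f$ agree on every subsystem of $\rho'$, and because $g$ (a diffeomorphism) and $f$ (a simplicial isomorphism) both carry links to links, for each $\c\in\rho'$ they restrict to bijections of $\lk(\rho'\setminus\{\c\})$ onto a common vertex set. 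As $X_{\rho'} = \bigcup_{\c\in\rho'}\lk(\rho'\setminus\{\c\})$, the plan is to check $g=f$ on each $\lk(\rho'\setminus\{\c\})$. Several cases are immediate because $\lk(\rho'\setminus\{\c\})\subseteq X_\rho$: when $\c=\a'$ we have $\rho'\setminus\{\a'\} = \rho\setminus\{\a\}$; and when $\c\in\rho\setminus\{\a\}$ is not adjacent to $\a$ in $\Delta_\rho$, the complementary component $W_\a$ of $\rho\setminus\{\a,\c\}$ containing $\a$ also contains $\a'$ (which meets $\a$) but not $\c$, so cutting $W_\a$ along $\a$ or along $\a'$ leaves the component $W_\c\ni\c$ untouched and \Cref{cor:link_join} yields $\lk(\rho'\setminus\{\c\}) = \sph(W_\c) = \lk(\rho\setminus\{\c\})\subseteq X_\rho$.

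\textbf{The adjacent case.} It remains to treat $\c\in\rho\setminus\{\a\}$ adjacent to $\a$. Let $V$ be the complementary component of $\rho\setminus\{\a,\c\}$ containing $\a$; then $\c\in\sph(V)$ as well, $\sph(V)\hookrightarrow\sph(M_\Gamma)$ is a full subcomplex by \Cref{prop:full_subcpx}, and $V\cong M_{0,5}$ or $V\cong M_{1,2}$ depending on the local combinatorics. All of $\a,\a',\a''$, the vertices of $\lk(\rho\setminus\{\c\})$, and the vertices of $\lk(\rho'\setminus\{\c\}) = \sph(Y)$ (for the non-pants component $Y$ of $(\rho\setminus\{\a,\c\})\cup\{\a'\}$ inside $V$) lie in $\sph(V)$. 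If $Y\cong M_{1,1}$ then $\lk(\rho'\setminus\{\c\}) = \{\c\}$ and $\c\in X_\rho$, so we are done; otherwise $Y\cong M_{0,4}$ and $\lk(\rho'\setminus\{\c\}) = \{\c,\gamma_1,\gamma_2\}$, and since $g\c = f\c$ it suffices to show $g$ does not transpose $\gamma_1,\gamma_2$. When $V\cong M_{0,5}$ — which forces $\c$ not self-adjacent, so $\lk(\rho\setminus\{\c\}) = \{\c,\c',\c''\}$ with $\c',\c''\in X_\rho$ — I would identify each sphere of $\sph(V)$ with its partition of the five boundary spheres of $V$ via \Cref{lem:GenusZeroSphere}; a short computation then shows $\c'$ is disjoint from exactly one of $\gamma_1,\gamma_2$ (say $\gamma_1$) and meets the other essentially. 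Thus $\gamma_1$ is \emph{combinatorially} singled out among $\{\gamma_1,\gamma_2\}$ as ``the one disjoint from $\c'$'', a property preserved by both the isomorphism $f$ and the diffeomorphism $g$; since $g$ and $f$ agree on $\c'$ and map $\{\gamma_1,\gamma_2\}$ onto the same pair, we get $g\gamma_1 = f\gamma_1$, hence $g\gamma_2 = f\gamma_2$.

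\textbf{The main obstacle.} The genuinely delicate situation is $V\cong M_{1,2}$, which occurs exactly when there is a self-adjacency at the flip (e.g.\ $\c$ self-adjacent with $\a$ the third cuff of its pair of pants): then $\c$ has no flip-partners in $\rho$, and a direct check shows $\gamma_1$ and $\gamma_2$ have \emph{identical} intersection patterns with each of $\a,\a',\a'',\c$, so no single disjointness relation distinguishes them. Here I would instead exploit that $V\cong M_{1,2}$ has a unique essential separating sphere-type (one bounding an $M_{1,1}$): of $\gamma_1,\gamma_2$, exactly one is separating in $V$, and separating-ness can be phrased combinatorially (a vertex $v$ is non-separating iff some maximal sphere system $\sigma\ni v$ has $\lk(\sigma\setminus\{v\}) = \{v\}$), giving the needed combinatorial label; alternatively one feeds the triple $(\a,\a',\gamma_i)$ into \Cref{lem:ThreeSphereConfiguration} to manufacture a sphere whose behaviour pins down $\gamma_i$. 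Equivalently — and this is probably the cleanest write-up — one can compare $g$ with the canonical diffeomorphism $h'$ built from $\rho'$ as in \Cref{sec:diffeo} (which agrees with $f$ on $X_{\rho'}$ by \Cref{lem:compatibility_sigma}), note that $\phi := h'^{-1}g$ fixes $\rho'\cup\{\a,\a',\a''\}$ pointwise, and argue that such a diffeomorphism can involve no half-twist at a sphere $\c\in\rho'$ that is affected by the flip, so $\phi$ acts trivially on $X_{\rho'}$. I expect verifying this last ``no leftover half-twist'' claim in the $M_{1,2}$ case — i.e.\ ruling out the symmetry swapping $\gamma_1,\gamma_2$ — to be the one step requiring real care rather than bookkeeping.
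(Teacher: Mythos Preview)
Your overall strategy is the paper's: reduce $X_{\rho'}\setminus X_\rho$ to the links $\lk(\rho'\setminus\{c\})$ for $c$ adjacent to $a$, split into $V\cong M_{0,5}$ versus $V\cong M_{1,2}$, and in the $M_{0,5}$ case distinguish $\gamma_1,\gamma_2$ by which one is disjoint from $c'\in\lk(\rho\setminus\{c\})\subset X_\rho$. This is exactly the paper's case~(ii). In the $M_{1,2}$ case you also correctly isolate the decisive observation---exactly one of $\gamma_1,\gamma_2$ is separating in $V$---which is the paper's case~(i).

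The one real gap is how you make ``separating in $V$'' into something both $f$ and $g$ are forced to respect. Your criterion ``$v$ is non-separating iff \emph{some} maximal $\sigma\ni v$ has $\lk(\sigma\setminus\{v\})=\{v\}$'' quantifies over arbitrary maximal systems, and the witnessing $\sigma$ need not have $\sigma\setminus\{v\}\subset X_\rho$ (in fact one checks it cannot), so you have no control over $f$ versus $g$ on it. The paper instead uses the \emph{specific} system $\gamma_i\cup(\rho'\setminus\{a',c\})=\gamma_i\cup(\rho\setminus\{a,c\})$: its link is $\sph(M_{1,1})$ (one vertex) when $\gamma_i$ is separating in $V$ and $\sph(M_{0,4})$ (three vertices) otherwise. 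Since $\rho\setminus\{a,c\}\subset X_\rho$, the maps $f$ and $g$ agree there; both preserve link isomorphism type (one as a simplicial isomorphism, the other as a diffeomorphism) and both send $\{\gamma_1,\gamma_2\}$ onto the same two-element set, which forces $g\gamma_i=f\gamma_i$. Your alternatives via \Cref{lem:ThreeSphereConfiguration} or via comparing with a fresh $h'$ are unnecessary once this criterion is in hand, and the latter would in any case require essentially this same computation to rule out the residual swap.

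A minor inaccuracy: $V\cong M_{1,2}$ does not occur only when $c$ is self-adjacent in $\rho$; it also occurs when $e_a,e_c$ form a bigon in $\Delta_\rho$, in which case $c$ \emph{does} have flip partners $c',c''$. Your separating-in-$V$ argument covers both sub-cases uniformly, so this does not affect correctness, only the description.
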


\begin{proof} 
  Suppose that $\rho'$ is obtained from $\rho$ 
  by a flip move $\a \mapsto \a'$, and let $\rho_0 \subset
  \rho \cap \rho'$ denote the set of spheres in $\rho$ 
  adjacent to $\a$
  (equivalently, spheres in $\rho'$ adjacent to $\a'$).  
  We note that $\lk(\rho \setminus
  \a) = \lk(\rho' \setminus \a')$, $\rho \triangle \rho'
  = \{\a,\a'\}\subset \lk(\rho\setminus \a)$, and for
  $\gamma \in (\rho \cap \rho')\setminus \rho_0$, $\lk(\rho \setminus
  \gamma) = \lk(\rho' \setminus \gamma)$. 

  It follows that  
  $X_{\rho'} \setminus X_\rho \subset \bigcup_{\b \in \rho_0}
  \lk(\rho' \setminus \b)$, thus it suffices that $g_*$
  agrees with $f_*$ on $\lk(\rho' \setminus \b)$ for $\b \in
  \rho_0$. 
  If $\b$ is a loop in $\Delta_{\rho'}$, 
  then $\lk(\rho' \setminus \b) = \{\b\} \subset \rho$.
  Otherwise, let $M$ be the complementary component of 
  $\rho \setminus \{\a,\b\} = \rho' \setminus 
  \{\a',\b\}$ containing $\a,\a'$, and $\b$; 
  $M' = g(M)$ is then the 
  complementary component of $f\rho \setminus \{f\a,f\b\}$
  containing $f\a,f\a'$ and $f\b$.  Let $\lk(\rho
  \setminus \b) = \{\b,\b',\b''\}$ and $\lk(\rho'
  \setminus \b) = \{\b,\b^\dag,\b^\ddag\}$, all spheres
  of which are contained in $M$.  We consider two cases (see \Cref{fig:FlipMoveAgrees}):

  \begin{enumerate}[label=\textit{(\roman*)}]
  \item \textit{$e_\a$ and $e_\b$ are incident on two vertices}  
    ($M \cong M_{1,2} \setminus \partial M_{1,2}$). Then $\b$ is
    non-separating in $M$, hence exactly one of
  $\b^\dag,\b^\ddag$ is separating in $M$, say $\b^\dag$.
  Thus $\lk(\b^\dag \cup (\rho' \setminus \{\a',\b\}))
  \cong \sph(M_{1,1})$ and 
  $\lk(\b^\ddag \cup (\rho' \setminus \{\a',\b\})) \cong
  \sph(M_{0,4})$. 
  The same holds for the $f$ images of these
  sets, and since only $g\b^\dag$ is separating in $M'$, likewise
  for their $g_*$ images.  Hence $g_*\b^\dag = f\b^\dag$ 
  and $g_*\b^\ddag = f\b^\ddag$.

\item \textit{$e_{\a'}$ and $e_\b$ are incident on only one vertex}
  ($M \cong M_{0,5}\setminus \partial M_{0,5}$). 
  Exactly one of $\b^\dag,\b^\ddag$ is disjoint from $\b'$, say
  $\b^\dag$.  Then likewise $g\b^\dag$ is disjoint from $g\b'$ and
  $g\b^\ddag$ is not.  Since
  $f$ preserves disjointness and $g_*\b' = f\b'$, $g_*\b^\dag
  = f\b^\dag$ and $g_*\b^\ddag = f\b^\ddag$. 
  \qedhere
\end{enumerate}
  \end{proof}

  \begin{figure}[ht!]
    \centering
    \begin{tabular}{>{\centering\arraybackslash}m{0.473\textwidth} >{\centering\arraybackslash}m{0.473\textwidth}}
        \centering
    \begin{overpic}[width=4.7cm]{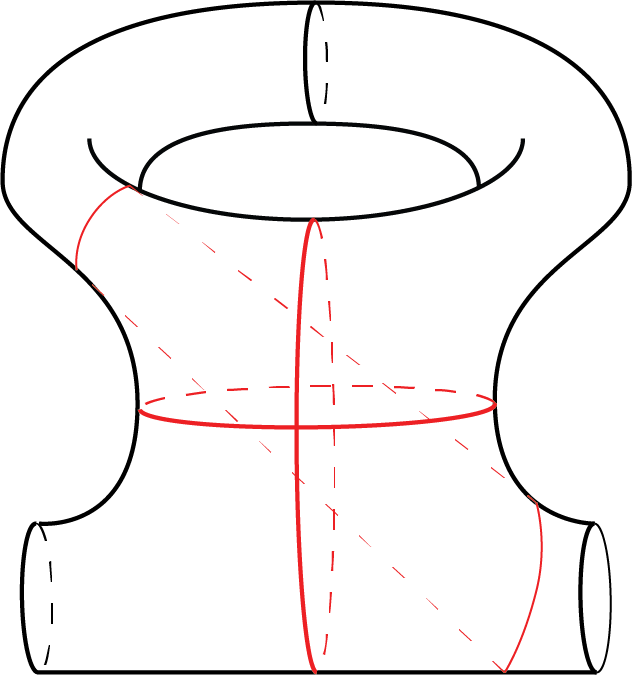}
        \put(39,88){$a'$}
        \put(10,53){\color{red}{$b$}}
        \put(75,40){\color{red}{$b^\dag$}}
        \put(47,-6){\color{red}{$b^\ddag$}}
    \end{overpic}
         &
       \begin{overpic}[width=5.3cm]{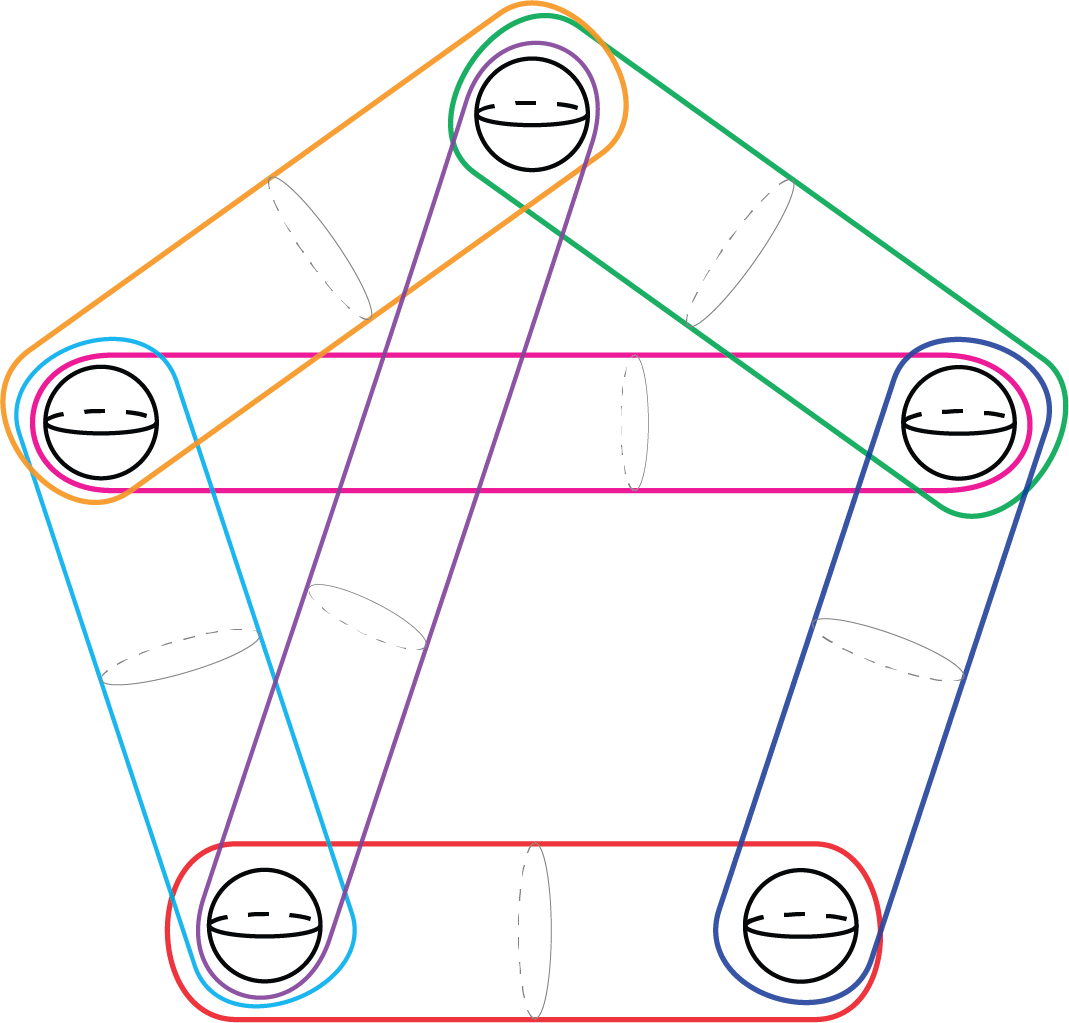}
        \put(25,83){\color{orange}{$b$}}
        \put(75,83){\color{teal}{$b^\dag$}}
        \put(6,24){\color{cyan}{$b'$}}
        \put(50,-5){\color{red}{$a'$}}
        \put(90,24){\color{blue}{$a$}}
        \put(61,43){\color{magenta}{$b^\ddag$}}
        \put(40,30){\color{violet}{$b''$}}
        \end{overpic} \\[1.5cm] 
    \end{tabular}
    \vspace{5pt}
    \caption{On the left is $M \cong M_{1,2} \setminus \partial M_{1,2}$ illustrating part (i). 
  On the right is $M \cong M_{0,5}$ with the spheres in part (ii) drawn in.}
  \label{fig:FlipMoveAgrees}
\end{figure}

\begin{corollary}\label{cor:ManifoldMCGActionSurjective}
    If a diffeomorphism $g$ agrees with
    $f$ on $X_\sigma$, then $g_* = f$. 
\end{corollary}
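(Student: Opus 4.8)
The plan is to show that if $g$ agrees with $f$ on $X_\sigma$ for one maximal sphere system $\sigma$, then $g$ agrees with $f$ on \emph{every} vertex of $\sph(M_\Gamma)$, whence $g_* = f$. The mechanism is propagation along flip moves. First I would invoke \Cref{lem:compatibility_flip}: if $\rho,\rho'$ are maximal sphere systems differing by a flip move, and $g$ agrees with $f$ on $X_\rho$, then $g$ agrees with $f$ on $X_{\rho'}$ as well. By \Cref{prop:FlipMoveConnectivity}, any two pants decompositions of a given component are connected by a finite sequence of flip moves — but since $M_\Gamma$ may be infinite, I would instead argue locally: given any essential sphere $\a \in \sph(M_\Gamma)^{(0)}$, choose a maximal sphere system $\rho$ containing $\a$. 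It suffices to connect $\sigma$ to $\rho$ by a sequence of flip moves, apply \Cref{lem:compatibility_flip} repeatedly to conclude $g$ agrees with $f$ on $X_\rho \ni \a$, and thus $g_*(\a) = f(\a)$.

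The one subtlety is that $\sigma$ and $\rho$ are maximal sphere systems in a possibly infinite-type manifold, so they differ by a priori infinitely many spheres and \Cref{prop:FlipMoveConnectivity} does not directly apply. Here I would argue that $\a$ is contained in a finite-type submanifold $N \cong M_{n,s} \subset M_\Gamma$ whose boundary spheres are disjoint from $\sigma$ and such that $\a$ is essential and non-peripheral in $N$; this is possible since $\a$ is compact, using a compact exhaustion of $M_\Gamma$ together with \Cref{prop:hnf} to push $\a$ off a cofinite subsystem of $\sigma$. Then $\sigma \cap N$ is a pants decomposition of $N$, and one extends $\sigma \cap N$ within $N$ to a pants decomposition $P$ of $N$ containing $\a$; by \Cref{prop:FlipMoveConnectivity} applied to $N$, one passes from $\sigma\cap N$ to $P$ by finitely many flip moves supported inside $N$. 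Each such flip move extends to a flip move on the ambient maximal sphere system (leaving the spheres in $\sigma$ outside $N$ untouched), so we obtain a finite chain of flip moves from $\sigma$ to a maximal system $\rho \supset P \ni \a$. Applying \Cref{lem:compatibility_flip} along this chain yields agreement of $g$ and $f$ on $X_\rho$, hence $g_*(\a) = f(\a)$.

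Since $\a$ was arbitrary, $g_*$ and $f$ agree on all vertices of $\sph(M_\Gamma)$; as both are simplicial maps determined by their action on vertices, $g_* = f$ on the nose. The main obstacle I anticipate is the bookkeeping in the first step — verifying that an arbitrary essential sphere can be captured in a finite-type submanifold adapted to $\sigma$, and that flip moves performed inside this submanifold genuinely lift to flip moves of ambient maximal sphere systems without disturbing the agreement already established outside $N$. Everything after that is a routine induction on the length of the flip sequence using \Cref{lem:compatibility_flip}.
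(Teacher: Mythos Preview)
Your proposal is correct and follows essentially the same strategy as the paper's proof: localize to a finite-type piece, apply \Cref{prop:FlipMoveConnectivity} there, extend the flip moves to the ambient maximal system, and induct via \Cref{lem:compatibility_flip}. The paper streamlines your construction of $N$ by fixing from the outset a compact exhaustion $(M_i)$ of $M_\Gamma$ with $\partial M_i \subset \sigma$, so that $\sigma \cap (M_i \setminus \partial M_i)$ is automatically a pants decomposition of $M_i$ and no appeal to normal form is needed.
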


\begin{proof}
  Fix a compact exhaustion $M_i$ of $M_\Gamma$ such that $\partial
  M_i \subset \sigma$. For any $\b \in \sph(M_\Gamma)$, choose
  $i$ such that $\b \subset M_i$; let 
  $\bar \sigma = \sigma \cap M_i\setminus \partial M_i$ and fix 
  $\bar \sigma'$ a maximal sphere system in $M_i$ containing
  $\b$.  By \Cref{prop:FlipMoveConnectivity} there is a finite 
  sequence of maximal sphere systems in $M_i$ between $\bar
  \sigma$ and $\bar \sigma'$ by successive flip moves, 
  which then extends to such a sequence between $\sigma$ and
  $\sigma' = \bar\sigma' \cup (\sigma \setminus \bar\sigma) 
  \ni \b$ of maximal sphere systems in $M_\Gamma$.
  Inductively applying 
  \Cref{lem:compatibility_flip} shows that $g$ 
  agrees with $f$ on $X_{\sigma'}$, and in particular $g_*(\b) =
  f\b$. As this is true for any $\b$, it follows that $g_*=f$.
\end{proof}

\begin{remark*}\label{rmk:uniqueness} 
  It follows that $g_*$ is uniquely determined by its restriction to
  $X_\sigma$ among the class of isomorphisms induced by
  diffeomorphisms $M_\Gamma \to M_{\Gamma'}$. 
  In particular, if $g'$ is another
  diffeomorphism that
  agrees with $g_*$ over $X_\sigma$, then $g'_* = g_*$. 
\end{remark*}

We now restate and prove the main theorem of this paper: 

\begin{namedtheorem}[\Cref{thm:IvanovRigidity0}] Let
  $\Gamma,\Gamma'$ be two locally finite connected graphs.  Suppose
  $f: \sph(M_{\G})\to \sph(M_{\G'})$ is an isomorphism. Then $f$ is
  induced by a diffeomorphism $h:M_{\G} \to M_{\G'}$. In particular,
  \begin{enumerate*}[label=(\roman*)] \item $\G$ and $\G'$ are
    proper homotopy equivalent and \item when $\Gamma$ is not a
    graph of rank $r$ with $s$ rays such that $2r+s<4$ or $(r,s)\in
    \{(0,4), (2,0)\}$,  $\Aut(\sph(M_{\G}))\cong \Map(\G)$ as
  topological groups.  \end{enumerate*} \end{namedtheorem}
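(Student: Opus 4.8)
The plan is to assemble the machinery of \Cref{sec:pants_dual,sec:diffeo} to produce the diffeomorphism, and then read off the two ``in particular'' statements from \Cref{thm:ADMQ} and \Cref{thm:GraphMCGAction}. First I would fix a maximal sphere system $\sigma \subset \sph(M_\Gamma)$, equivalently a pants decomposition; since $f$ is an isomorphism, $f\sigma$ is again a maximal sphere system and defines a pants decomposition of $M_{\Gamma'}$ with dual graph $\Delta_{f\sigma}$. By \Cref{lem:dual_edge_isom} the bijection $f|_\sigma : E(\Delta_\sigma) \to E(\Delta_{f\sigma})$ is an edge isomorphism, and by \Cref{prop:edge_to_dual} it is induced by a graph isomorphism $f_\sigma : \Delta_\sigma \to \Delta_{f\sigma}$. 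Realizing $M_\Gamma$ and $M_{\Gamma'}$ as pairs of pants glued along $\sigma$ and $f\sigma$ according to their dual graphs, $f_\sigma$ is induced by a diffeomorphism $h_0 : M_\Gamma \to M_{\Gamma'}$, and precomposing $h_0$ with half-twists on the almost peripheral spheres of $\sigma$, as in the construction preceding \Cref{lem:compatibility_sigma}, yields a diffeomorphism $h : M_\Gamma \to M_{\Gamma'}$ that agrees with $f$ on $X_\sigma$ by \Cref{lem:compatibility_sigma}. Finally \Cref{cor:ManifoldMCGActionSurjective} promotes this local agreement to $h_* = f$, so $f$ is induced by a diffeomorphism, which is the first assertion.

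For (i), the diffeomorphism $h$ shows $M_\Gamma \cong M_{\Gamma'}$, so it suffices to note that the characteristic triple $(\mathrm{rk}(\Gamma),\Ends(\Gamma),\Ends_\ell(\Gamma))$ is determined by the homeomorphism type of $M_\Gamma$: the rank is the rank of $H_1(M_\Gamma)$, doubling does not change ends so $\Ends(M_\Gamma) \cong \Ends(\Gamma)$ canonically, and an end of $M_\Gamma$ fails to have a neighborhood with finitely generated fundamental group precisely when the corresponding end of $\Gamma$ lies in $\Ends_\ell(\Gamma)$. Hence a homeomorphism $M_\Gamma \to M_{\Gamma'}$ induces an isomorphism of characteristic triples, and \Cref{thm:ADMQ}, together with the classical classification in the finite-rank, finitely-many-ends case, gives that $\Gamma$ and $\Gamma'$ are properly homotopy equivalent.

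For (ii), take $\Gamma' = \Gamma$ and assume $\Gamma$ is not one of the listed sporadic graphs. By \Cref{thm:GraphMCGAction} the induced action of $\Map(\Gamma)$ on $\sph(M_\Gamma)$ is faithful, giving an injective homomorphism $\Phi : \Map(\Gamma) \hookrightarrow \Aut(\sph(M_\Gamma))$. It is surjective: any $f \in \Aut(\sph(M_\Gamma))$ equals $h_*$ for a diffeomorphism $h$ as above — which we may take orientation preserving, since $M_\Gamma$ carries an orientation-reversing diffeomorphism, namely the reflection exchanging the two copies of $N_\Gamma$, acting trivially on $\sph(M_\Gamma)$ — and $h$ descends under $\Psi$ to an element of $\Map(\Gamma)$ acting as $h_* = f$. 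Thus $\Phi$ is a group isomorphism. To see it is an isomorphism of topological groups I would use the standard description of the topology on $\Map(\Gamma)$ with neighborhood basis at the identity given by pointwise stabilizers of compact subgraphs: $\Phi$ is continuous because each subbasic open $U_a$ pulls back to the stabilizer of $a$, which contains the stabilizer of any compact subgraph carrying $a$ and is therefore open; and $\Phi$ is open because the stabilizer of a compact subgraph $K$ maps onto $\bigcap_{a \in \tau} U_a$ for a finite sphere system $\tau$ filling the compact submanifold $M_K$ together with $\partial M_K$, which is open in $\Aut(\sph(M_\Gamma))$.

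The step I expect to demand the most care is this last topological refinement. The group isomorphism in (ii) and the existence of $h$ in the first assertion fall out cleanly from the cited lemmas, but matching the permutation topology on $\Aut(\sph(M_\Gamma))$ with the quotient topology on $\Map(\Gamma)$ requires a careful dictionary between pointwise stabilizers of compact subgraphs of $\Gamma$ and finite intersections of vertex stabilizers in $\sph(M_\Gamma)$, in particular the fact that a finite sphere system filling a compact submanifold $M_K$, taken together with $\partial M_K$, determines the restriction of a mapping class to $M_K$ up to isotopy.
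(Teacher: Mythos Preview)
Your construction of $h$ and the proofs of (i) and the group isomorphism in (ii) match the paper's approach exactly: the paper likewise invokes \Cref{lem:compatibility_sigma} and \Cref{cor:ManifoldMCGActionSurjective} to obtain $h_*=f$, reads off (i) from the characteristic triple and \Cref{thm:ADMQ}, and uses \Cref{thm:GraphMCGAction} for injectivity of the action.

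The one substantive divergence is the topological part of (ii). The paper does not argue directly; it cites \cite[Prop.\ 7.1]{udall2024spherecomplexlocallyfinite}, which asserts that the quotient topology on $\Map(\Gamma)$ coincides with the topology generated by the subbasis $\{\rho^{-1}(U_a)\}_{a\in\sph(M_\Gamma)^{(0)}}$, making $\rho$ a homeomorphism immediately. Your sketch attempts to prove this compatibility by hand via a dictionary between pointwise stabilizers of compact subgraphs of $\Gamma$ and finite intersections of sphere stabilizers. The idea is in the right direction, but as written it conflates $\Gamma$ and $M_\Gamma$: phrases like ``compact subgraph carrying $a$'' do not parse, since $a$ is a sphere in $M_\Gamma$ while the topology on $\Map(\Gamma)$ is defined via the compact-open topology on $\mathrm{PHE}(\Gamma)$. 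To make your argument rigorous you would need to pass through $\Map(M_\Gamma)$ and show that (a) the surjection $\Psi:\Map(M_\Gamma)\to\Map(\Gamma)$ is open and (b) stabilizers of finite sphere systems in $\Map(M_\Gamma)$ correspond to stabilizers of compact submanifolds, which in turn descend to a neighborhood basis in $\Map(\Gamma)$. This is essentially the content of the cited result, and is not as short as your sketch suggests; you are right to flag it as the step requiring the most care.
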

  \begin{proof} 
    From \Cref{lem:compatibility_sigma} and
    \Cref{cor:ManifoldMCGActionSurjective} we obtain that 
    $f$ is induced by a diffeomorphism, and in particular the action
    $\xi : \Map(M_\Gamma) \to \Aut(\sph(M_\Gamma))$ is surjective. 
    Thus $M_{\G}$ and $M_{\G'}$ are diffeomorphic, hence $M_\Gamma,
    M_{\Gamma'}$ have the same characteristic triple and likewise do
    $\Gamma, \Gamma'$.  By \Cref{thm:ADMQ} $\G$ and $\G'$ are proper
    homotopy equivalent, which implies \textit{(i)}.  

    Let $\rho : \Map(\Gamma) \to \Aut(\sph(M_\Gamma))$ be the
    action induced by $\xi$ in 
    \Cref{thm:GraphMCGAction}; note that $\rho$ and 
    $\xi$ have the same image in $\Aut(\sph(M_\Gamma))$, hence
    $\rho$ is surjective.
    To show \textit{(ii)}, suppose $\Gamma$ is a graph satisfying
    the hypotheses of \textit{(ii)}: by \Cref{thm:GraphMCGAction} 
    $\rho$ is injective, hence a group isomorphism. To obtain
    that $\rho$ is a homeomorphism, we observe that the pullback of
    the permutation topology on $\Aut(\sph(M_\Gamma))$ is compatible
    with the usual quotient topology on $\Map(\Gamma)$.  In
    particular, 
    by \cite[Prop.\ 7.1]{udall2024spherecomplexlocallyfinite}
    the quotient topology is identical to 
    the topology generated by the subbasis at identity 
    $\{U'_a\}_{a \in \sph(M_\Gamma)^{(0)}}$, where 
    \[
      U'_a =
    \rho^{-1}(U_a) = \{\phi \in \Map(\Gamma) : \rho(\phi)(a) = a\}
  \]
    and $\{U_a\}$ is the subbasis for $\Aut(\sph(M_{\Gamma}))$ given
    after \Cref{def:sphcpx}.
  \end{proof}

  Consequently, we obtain another proof of the following result,
  originally proven in Proposition 4.11 of \cite{AB2021}. 

\begin{corollary}
    For any locally finite connected graph $\G$, $\Map(\G)$ is
  Polish.  
\end{corollary}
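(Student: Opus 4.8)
The plan is to deduce that $\Map(\Gamma)$ is Polish from the fact, just established, that the action $\rho : \Map(\Gamma) \to \Aut(\sph(M_\Gamma))$ is a topological group isomorphism onto $\Aut(\sph(M_\Gamma))$ (in the sporadic cases one can argue separately, or note the statement is easily checked by hand since $\Map(\Gamma)$ is then a known countable or compact group). So it suffices to show $\Aut(\sph(M_\Gamma))$, with the permutation topology, is Polish — i.e.\ separable and completely metrizable. First I would recall the standard fact that $\Aut(\sph(M_\Gamma))$ sits inside $\Sym(\sph(M_\Gamma)^{(0)})$, the full symmetric group on the countable vertex set, equipped with the permutation topology, and that this is exactly the topology of pointwise convergence (treating $\sph(M_\Gamma)^{(0)}$ as discrete). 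The vertex set $\sph(M_\Gamma)^{(0)}$ is countable because $M_\Gamma$ has a countable exhaustion and there are only countably many isotopy classes of essential spheres; hence $\Sym(\sph(M_\Gamma)^{(0)}) \cong S_\infty$ is a Polish group.

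The key step is then to observe that $\Aut(\sph(M_\Gamma))$ is a \emph{closed} subgroup of $S_\infty$: a bijection $\phi$ of the vertex set lies in $\Aut(\sph(M_\Gamma))$ iff for every pair of vertices $a,b$ spanning an edge, $\phi(a),\phi(b)$ span an edge, and symmetrically for $\phi^{-1}$; each such condition is closed (indeed clopen) in the pointwise-convergence topology, and $\Aut(\sph(M_\Gamma))$ is the intersection of countably many such clopen sets, hence closed. A closed subgroup of a Polish group is Polish, so $\Aut(\sph(M_\Gamma))$ is Polish. Transporting along the isomorphism $\rho$ — which the main theorem shows is a homeomorphism of topological groups — gives that $\Map(\Gamma)$ is Polish in the non-sporadic cases.

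Finally I would dispatch the sporadic cases $2r+s<4$ or $(r,s)\in\{(0,4),(2,0)\}$ directly: here $\Map(\Gamma)$ is computed explicitly in the literature (e.g.\ it is finite, or $\Homeo$ of a finite discrete space, or a similarly tame group), and in each such case it is manifestly Polish; alternatively, one can note that $\Map(\Gamma)$ always embeds continuously and closedly into a Polish group via its action on a suitable combinatorial object and invoke the same closed-subgroup argument. The main obstacle is really bookkeeping rather than mathematics: one must be careful that the permutation topology on $\Aut(\sph(M_\Gamma))$ genuinely coincides with the subspace topology from $S_\infty$ (this is where the definition of the subbasis $\{U_a\}$ after \Cref{def:sphcpx} is used) and that $\rho$ being a homeomorphism — already recorded in the proof of \Cref{thm:IvanovRigidity0} via \cite[Prop.~7.1]{udall2024spherecomplexlocallyfinite} — legitimately transfers the Polish property; once these identifications are in place, the closed-subgroup-of-$S_\infty$ argument is routine.
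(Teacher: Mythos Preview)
Your proposal is correct and follows essentially the same approach as the paper: invoke the topological isomorphism $\Map(\Gamma)\cong\Aut(\sph(M_\Gamma))$ from \Cref{thm:IvanovRigidity0}(ii), note that the automorphism group of a countable graph with the permutation topology is Polish (you unpack this as ``closed subgroup of $S_\infty$'' while the paper just cites it as standard), and handle the finitely many sporadic cases separately. The paper dispatches the sporadic cases more crisply by observing that in those cases $\Map(\Gamma)$ is countable and discrete, hence Polish; your treatment there is correct but slightly vaguer.
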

\begin{proof}
    It is a standard result of descriptive set theory that the
  automorphism group of a countable graph equipped with the
permutation topology is Polish. Except for the finitely many cases
excluded in \Cref{thm:IvanovRigidity0}(ii), $\Map(\G)$ is
topologically isomorphic to the automorphism group of a countable
graph by \Cref{thm:IvanovRigidity0}(ii), so the result follows in
these cases. For the excluded cases, $\Map(\G)$ is countable and 
discrete, hence Polish.  
\end{proof}

\section{Rigidity of the sphere complex for finite-type doubled
handlebodies with $S^2$-boundaries}\label{sec:finite_rigidity} 

The goal of this section is to establish \Cref{thm:geom_rigidity}.
The setup and arguments closely follow \cite[Section
3]{bering2024finite}, where analogous results are proven for doubled
handlebodies with empty boundary. For completeness, we reintroduce
their setup and highlight the adjustments needed to prove the
results in the setting of doubled handlebodies with nonempty
boundary. For clarity, we will match the notation conventions in
\cite{bering2024finite} throughout this section.  We first construct
a set $X_0$ that can be extended to a finite strongly rigid set $X$
by adding a collection of ``good spheres.'' 

    Throughout this section, we will implicitly use 
    \Cref{prop:full_subcpx} to identify sphere 
    complexes of
    submanifolds with subcomplexes of the sphere complex of a parent
    manifold. In particular, links of simplices will be identified
    with sphere complexes of submanifolds.
    \par 
     Given a subcomplex $X$ of $\sph(M_{\G})$, we say that $X$ is
     \emph{geometrically rigid} if for every simplicial locally
     injective map $f:X\to \sph(M_{\G})$ there is a diffeomorphism
     $h$ of $M_{\G}$ such that the restriction of $h_*$ to $X$
     agrees with $f$. To prove \Cref{thm:geom_rigidity}, we first
     exhaust $\sph(M_{n,s})$ by a sequence of geometrically rigid
     sets in \Cref{prop:GeoRigidExhaustion}, and then show that
     these sets are strongly rigid.
    
    \subsection{Constructing a geometrically rigid set $X$.} Let $Y$ be a maximal collection of disjoint spheres $S_i \subset
    \sph(M_{n,s})$ whose union is non-separating.  Then, by removing
    a small regular neighborhood of $Y$ from $M_{n,s}$, we obtain a
    manifold with no genus. Specifically, 
    \begin{equation*}
      N \coloneq
      M_{n,s}\setminus \mathrm{nbd}(Y) \cong M_{0, 2n + s}
    \end{equation*}

Let $Z$ be the collection of all spheres in $\sph(M_{n,s})$ that are
disjoint from $Y$.  By construction of $N$, it follows $Z =
\sph(N)$.  

The spheres in $\partial N$ come in two types: 
\begin{itemize}
\item[1)] those coming from the original boundary of $M_{n,s}$; 
\item[2)] pairs of spheres $S_i^+, S_i^-$ coming from removing a sphere $S_i \in Y$.  
\end{itemize}
We define a labeling map $\delta : \partial N \to Y \cup \partial M_{n,s}$ recording where boundary components of $N$ came from.   Specifically, for spheres $S_i^{\pm} \in \partial N$ coming from removing a sphere $S_i \in Y$, $\delta(S_i^{\pm}) = S_i$. On the other hand, for spheres originally in the boundary $S_j \subset \partial M_{n,s}$, we define $\delta(S_j) = S_j$. See \Cref{fig:cutMns} for an illustration.  

 \begin{figure}[ht!]
    \centering
        \begin{overpic}[width=10cm]{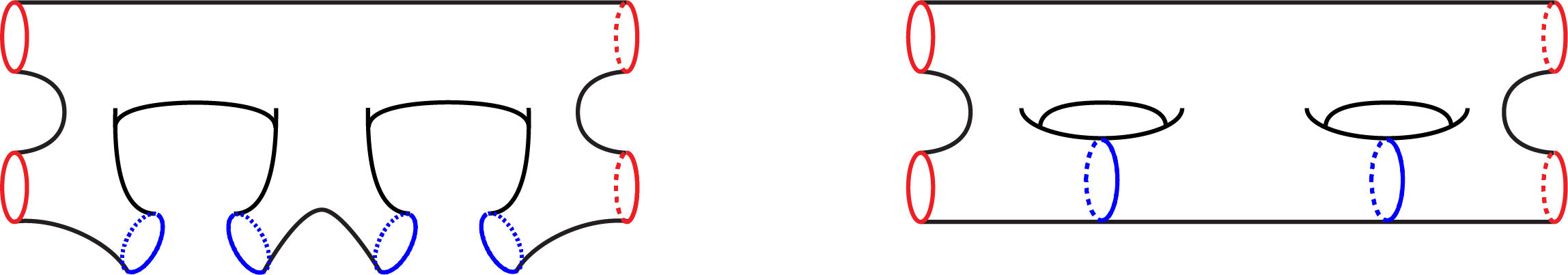}
            \put(-4,4){\color{red}{$S_3$}}
            \put(-4,15){\color{red}{$S_4$}}
            \put(41,4){\color{red}{$S_5$}}
            \put(41,15){\color{red}{$S_6$}}
            \put(7,-3){\color{blue}{$S_1^-$}}
            \put(15,-3){\color{blue}{$S_1^+$}}
            \put(23,-3){\color{blue}{$S_2^-$}}
            \put(30,-3){\color{blue}{$S_2^+$}}
            \put(54,4){\color{red}{$S_3$}}
            \put(54,15){\color{red}{$S_4$}}
            \put(100.5,4){\color{red}{$S_5$}}
            \put(100.5,15){\color{red}{$S_6$}}
            \put(69,0){\color{blue}{$S_1$}}
            \put(87,0){\color{blue}{$S_2$}}
            \put(48,11){$\longrightarrow$}
            \put(49.5,8){$\delta$}
        \end{overpic}
    \caption{The handlebodies pictured above illustrate the process of removing a neighborhood of $Y$ from $M_{n,s}$. The red spheres correspond to boundary components of the uncut manifold.  The blue spheres play the role of $Y$.}
    \label{fig:cutMns}
\end{figure} 

We are interested in the subgraph of the 1-skeleton
of $\sph(M_{n,s})$ 
spanned by $Y$ and $Z$, which we denote by 
\begin{equation*}
    X_0 := \langle Y \cup Z \rangle.  
\end{equation*}
\noindent
Because $X_0$ is a join of $Y$ and $Z$, and $Y$ is complete, $X_0$
is not geometrically rigid. See \Cref{fig:NotRigid}. 

\begin{figure}[ht!]
    \centering
    \begin{overpic}[width=10cm]{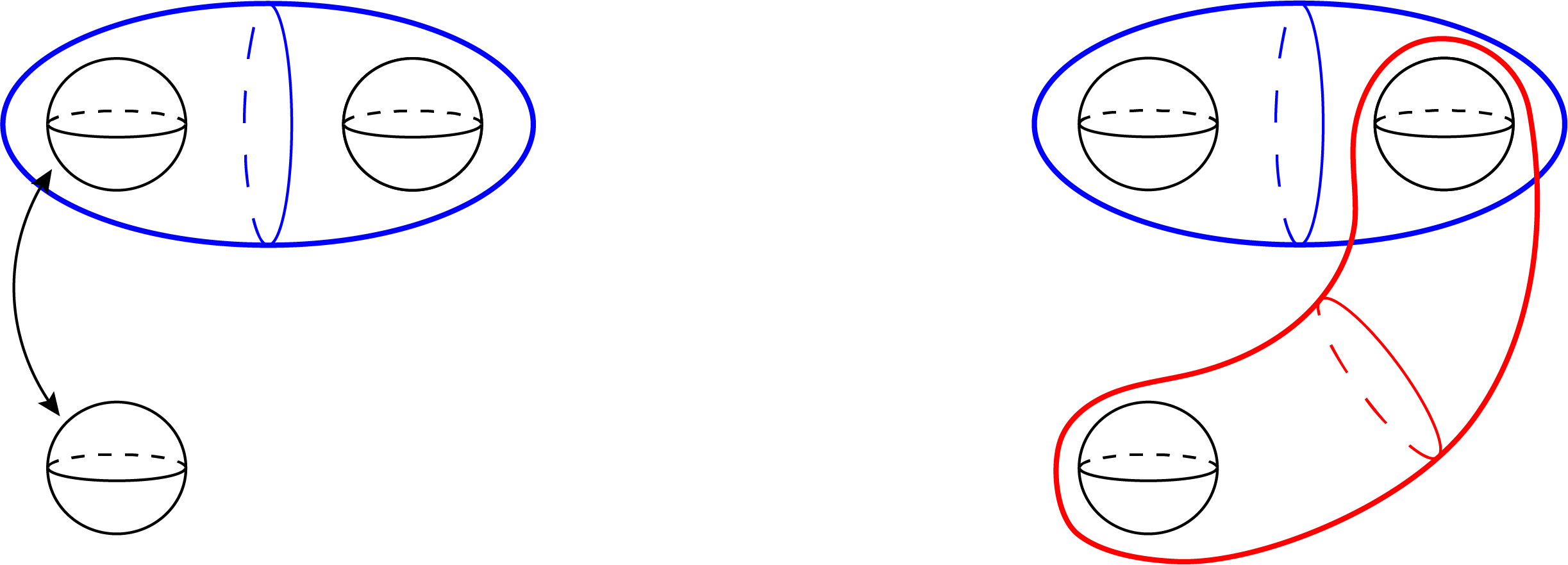}
        \put(0,3){$S_1$}
        \put(10,22){$S_2$}
        \put(20,22){$S_3$}
        \put(33,22){\color{blue}{$T$}}
        \put(3,15){$f$ swaps $S_1$ and $S_2$}
        \put(78,3){$S_2$}
        \put(75,22){$S_1$}
        \put(87,22){$S_3$}
        \put(100,22){\color{blue}{$f(T) = T$}}
        \put(90,3){\color{red}{$h(T)$}}
        \put(50,15){$\longrightarrow$}
        \put(51.5,12){$f$}
    \end{overpic}
    \caption{Let $S_1, S_2, S_3 \in Y$ and $T \in Z$ be as shown above.  Consider the simplicial isomorphism $f \colon X \to X$ with $f|_{Z} = \mathrm{Id}_Z$, and $f|_{Y}$ permuting $S_1$ and $S_2$.
    Any homeomorphism inducing the permutation of $S_1$ and $S_2$ cannot fix $T$. So, $f$ is not induced by any homeomorphism $h$.}  
    \label{fig:NotRigid}
\end{figure}

 Fix a locally injective simplicial map $f \colon X_0 \to
 \sph(M_{n,s})$. Then intersecting pairs of spheres in $Z$ have
 $X_0$-detectable intersection, so by \Cref{lem:XdetInt} $f(Z)$ must
 fill a connected submanifold. As $f(Z)$ is disjoint from $f(Y)$,
 the submanifold filled by $f(Z)$ must lie in a component of
 $M_{n,s}\setminus \nbd(f(Y))$. By a complexity argument, this is
 only possible if $M_{n,s}\setminus \nbd(f(Y))$ is connected, and
 thus homeomorphic to $M_{0, 2n+s}$. That is, if $M_{n,s}\setminus
 \nbd(f(Y))$ is disconnected, there is no way the manifold filled by
 $f(Z)$ could embed into any of the components of $M_{n,s}\setminus
 \nbd(f(Y))$, as can be seen by analyzing the possible components of
 $M_{n,s}\setminus \nbd(f(Y))$. By \Cref{lem:GenusZeroRigidity},
 there is a homeomorphism $h \colon N = M_{n,s} \setminus \nbd(Y)
 \to N' = M_{n,s} \setminus \nbd(f(Y))$  arising from $f|_{X_0}$,
 such that $h|_Z = f|_Z$.   

To build a finite rigid set, we will add spheres to the set $X_0$,
namely pairs of ``good spheres.'' The addition of these good spheres
will allow us to keep track of the pairs of boundary components of
$M_{0, 2n+s}$ that correspond to a component of $Y$.  In particular,
we will show that when $X_0$ is extended to a set $X$ containing
good pairs $h: N \to  N'$ ascends to a homeomorphism $\hat{h}
\colon M_{n,s} \to M_{n,s}$, which induces $f: X \to \sph(M_{n,s})$. 

We will now recall the definitions of a good sphere and a good pair.  

\begin{definition}[{\cite[Section 3]{bering2024finite}}]
Given $A \subset Y$, let $A^+, A^- \subset \partial N$ be the boundary spheres obtained from removing a neighborhood of $A$ from $M_{n,s}$; that is, $\delta(A^{\pm}) = A$.  
Let $a$ be an essential sphere in $M_{n,s}$ that essentially intersects $A$ in a single simple closed curve, and such that $a$ is disjoint from all other spheres of $Y$.  

When we descend to the cut manifold $N$, the sphere $a$ decomposes as the union of the two disjoint disks $a^+$ and $a^-$, with $\partial a^+ \subset A^+$ and $\partial a^- \subset A^-$. The boundary of regular neighborhoods of $A^+ \cup a^+$ and $A^- \cup a^-$ are disjoint pairs of pants. See \Cref{fig:GoodSphere} for an illustration of this setup. 

\begin{figure}[ht!]
    \centering
    \begin{overpic}[width=10cm]{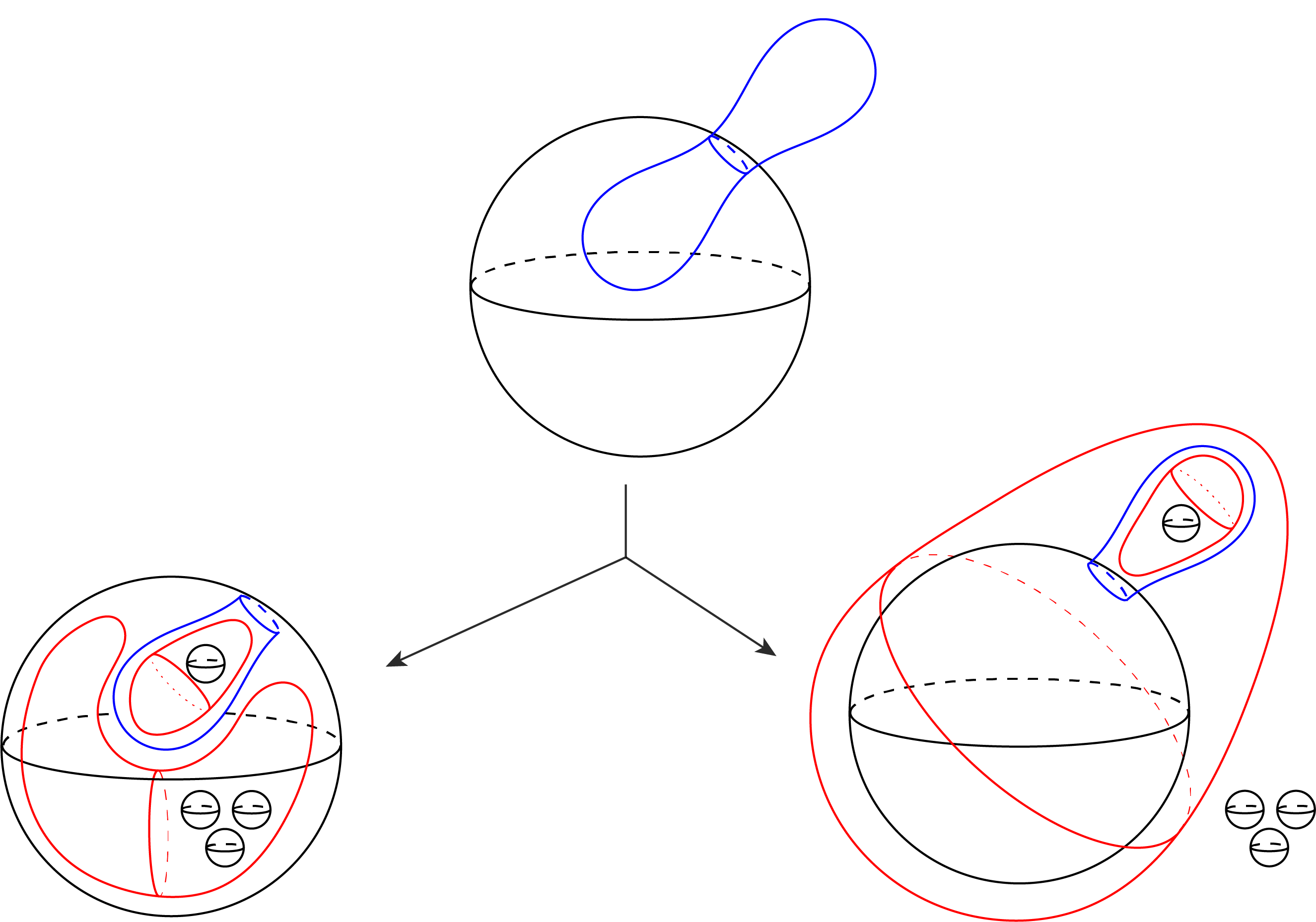}
        \put(63,48){$A$}
        \put(60,62){\color{blue}{$a$}}
        \put(90.5,18){$A^+$}
        \put(90.5,32){\color{red}{$S_3$}}
        \put(95,38){\color{red}{$S_4$}}
        \put(82,31){\color{blue}{$a^+$}}
        \put(-5,17){$A^-$}
        \put(12,23){\color{blue}{$a^-$}}
        \put(11,16){\color{red}{$S_1$}}
        \put(7,5){\color{red}{$S_2$}}
    \end{overpic}
    \caption{
   The sphere $a$ is good for $A$.  In the top picture, the spheres $a$ and $A$ are pictured in the uncut manifold $M_{n,s}$, and they intersect essentially.  After cutting along $Y$, we obtain the picture in $N$ below.  The six spheres pictured inside of $S_2$ and outside of $S_4$ are meant to illustrate that these spheres are essential and not necessarily peripheral, unlike $S_1$ and $S_3$, which are peripheral since $a$ is good.}
    \label{fig:GoodSphere}
\end{figure}

We denote $\partial (\mathrm{nbd}(A^- \cup a^-)) = A^- \cup S_1 \cup S_2$ and $\partial (\mathrm{nbd}(A^+ \cup a^+)) = A^+ \cup S_3 \cup S_4$. Let $\partial(A, a) := A^+ \cup A^- \cup S_1 \cup S_2 \cup S_3 \cup S_4$.  
If $S_1$ and $S_3$ are peripheral in the cut manifold $Y$, we say that $a$ is a \emph{good sphere} for $A$.  
Suppose that $a$ and $a'$ are disjoint good spheres for $A$.  Then if $\partial(A, a) \cap \partial(A, a') = A^+ \cup A^-$, then $a$ and $a'$ are said to be a \emph{good pair} for $A$.  

\end{definition}

Good pairs can always be found when $2n+s \ge 6$, i.e., when $\partial N$ has at least $6$ components. This is because each sphere in a good pair for $A$ requires the use of two boundary components of $N$ other than $A^{\pm}$, and by assumption, these pairs of boundary components must be distinct for each sphere in the pair. 
\par 
Let $X$ be the subcomplex of $\sph(M_{n,s})$ spanned by the vertices of  $X_0$, together with a choice of a good pair for each sphere $A \subset Y$.  

\subsection{Geometric rigidity of $X$ for $M_{n,s}$}
Throughout the rest of \Cref{sec:finite_rigidity}, we fix a choice of $M_{n,s}$, with $2n+s\geq 6$.
\par 
Up to this point, the setup has been the same as in \cite[Section 3]{bering2024finite}. A key difference in the setting of $M_{n,s}$ when $s \ne 0$ is that the good pairs in $X$ serve an additional role as discussed below in the proof of \Cref{prop:RigidityOnY}.

\begin{proposition}[{\cite[Lemma 13]{bering2024finite}}]
    Let $f \colon X \to \sph(M_{n,s})$ be a locally injective, simplicial map.  Let $h: N \to N'$ be the homeomorphism inducing $f|_Z$ as in \Cref{lem:GenusZeroRigidity}.  
    Then, for every sphere $A \in Y$ we have $\delta(h(A^\pm)) = f(\delta(A^\pm))$ where $A^+$ and $A^-$ denote the corresponding boundary spheres in $N$, i.e., $\delta(A^\pm) = A$. 
    \label{prop:RigidityOnY}
\end{proposition}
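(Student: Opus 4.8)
The plan is to use the good pair for $A$ to pin down the pair of boundary spheres $\{A^+,A^-\}$ of $N$ by a purely combinatorial recipe, transport that recipe across the homeomorphism $h$ (which by \Cref{lem:GenusZeroRigidity} realizes $f$ on all of $Z=\sph(N)$), and then identify the output with $\delta^{-1}(f(A))=\{f(A)^+,f(A)^-\}$.

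First I would isolate the auxiliary spheres produced by a good pair $\{a,a'\}$ for $A$. Write $S_2,S_4$ for the essential cuffs of the pants $\nbd(A^-\cup a^-)$ and $\nbd(A^+\cup a^+)$, and $S_1,S_3$ for the peripheral ones (so $\partial\nbd(A^-\cup a^-)=A^-\cup S_1\cup S_2$ and $\partial\nbd(A^+\cup a^+)=A^+\cup S_3\cup S_4$), and similarly $S_1',\dots,S_4'$ for $a'$. Each of $S_2,S_4,S_2',S_4'$ is an essential, non-peripheral sphere of $N\cong M_{0,2n+s}$, hence a vertex of $Z\subset X_0$. Let $T_1,T_3$ (resp. $T_1',T_3'$) be the boundary spheres of $N$ to which $S_1,S_3$ (resp. $S_1',S_3'$) are peripheral. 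By \Cref{lem:GenusZeroSphere}, $S_4$ is the unique sphere of $N$ whose induced partition of $\partial N$ has small side $\{A^+,T_3\}$, and likewise $S_2\leftrightarrow\{A^-,T_1\}$, $S_4'\leftrightarrow\{A^+,T_3'\}$, $S_2'\leftrightarrow\{A^-,T_1'\}$; since the six boundary spheres $A^+,A^-,T_1,T_3,T_1',T_3'$ are pairwise distinct (this is built into the definition of a good pair), $A^+$ is the unique common element of the small sides of $S_4$ and $S_4'$, and $A^-$ the unique common element of the small sides of $S_2$ and $S_2'$.

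Next, since $h\colon N\to N'$ is a homeomorphism with $h|_Z=f|_Z$, it carries each pants $\nbd(A^\pm\cup a^\pm)$ to a pants in $N'$ and preserves peripherality of cuffs; so by \Cref{lem:GenusZeroSphere} applied in $N'$, the sphere $f(S_4)=h(S_4)$ has small side $\{h(A^+),h(T_3)\}$, and similarly for $f(S_2),f(S_4'),f(S_2')$. Running the recipe above on these images recovers $h(A^+)$ (resp. $h(A^-)$) as the unique common element of the small sides of $f(S_4),f(S_4')$ (resp. $f(S_2),f(S_2')$). It remains to show $h(A^\pm)\in\{f(A)^+,f(A)^-\}$, and this is the step I expect to be the main obstacle, since $h$ sees a priori only $f|_Z$. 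The idea is to feed the connected sphere $a$ itself into the argument: $a$ essentially intersects $A$ and is disjoint from $Y\setminus A$, so (as $f$ is simplicial and, by the complexity argument preceding the proposition, $f(Y)$ is non-separating with $M_{n,s}\setminus\nbd(f(Y))\cong N'$) $f(a)$ is disjoint from $f(Y)\setminus f(A)$; one shows $f(a)$ must nonetheless meet $f(A)$ essentially, for otherwise $f(a)\in Z'=\sph(N')$ and comparing $f(a)$ with the $f$-images of the $Z$-spheres adjacent to $a$ contradicts local injectivity of $f$ on a suitable star. Granting that, $f(a)$ is cut by $f(A)$ and by nothing else of $f(Y)$, so its pieces lie on $f(A)^+$ and $f(A)^-$; since $f(a)$ is disjoint from $f(S_4),f(S_4'),f(S_2),f(S_2')$ (as $a$ is disjoint from $S_4,S_4',S_2,S_2'$), these pieces must sit inside the pants neighbourhoods detecting $h(A^+)$ and $h(A^-)$, forcing $\{h(A^+),h(A^-)\}=\{f(A)^+,f(A)^-\}$, i.e. $\delta(h(A^\pm))=f(A)$. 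It is exactly in this last comparison that the good pairs acquire their additional role when $s\neq0$: the peripheral cuffs $S_1,S_3$ may now be peripheral to boundary spheres of $N$ coming from $\partial M_{n,s}$ rather than from $Y$, and the good-pair hypothesis is what supplies the distinctness of the six boundary spheres used throughout.
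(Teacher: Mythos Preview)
Your plan is the Bering--Leininger strategy, and the paper's own proof simply defers to their Lemma~13, adding only the remark that original boundary spheres of $M_{n,s}$ intersect no essential sphere and therefore admit no good spheres; this is what rules out $h(A^\pm)\in\partial M_{n,s}\subset\partial N'$. Note that this is \emph{not} the ``additional role when $s\neq 0$'' you identify in your last sentence: distinctness of the six boundary spheres $A^\pm,T_1,T_3,T_1',T_3'$ is already forced by the good-pair condition regardless of $s$, whereas the genuinely new obstruction is that $h$ might send some $A^\pm$ to a component of $\partial N'$ coming from $\partial M_{n,s}$.

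Your reconstruction of the matching step has concrete gaps. First, the assertion that $a$ is disjoint from $S_2',S_4'$ is unjustified: since $a$ is disjoint from $a'$ but crosses $A$ once, $a$ must cross $\partial\,\nbd(A\cup a')=S_1'\cup S_2'\cup S_3'\cup S_4'$ in two circles, one on each side of $A$, so $a$ meets one of $\{S_3',S_4'\}$ and one of $\{S_1',S_2'\}$; which one depends on the choice of $a$ and on whether $T_3',T_1'$ lie in $\delta^{-1}(Y)$ or in $\partial M_{n,s}$. Second, the local-injectivity contradiction for ``$f(a)$ must meet $f(A)$'' is only a gesture (``a suitable star''): for $n\geq 2$ any $B\in Y\setminus\{A\}$ works as the center, but when $n=1$ you have not produced a vertex adjacent to both $a$ and the putative $z\in Z$ with $f(z)=f(a)$. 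Third, even granting both, the clause ``these pieces must sit inside the pants neighbourhoods detecting $h(A^\pm)$, forcing $\{h(A^+),h(A^-)\}=\{f(A)^+,f(A)^-\}$'' is not a deduction: disjointness of the pieces of $f(a)$ from $f(S_2),f(S_4)$ only says each piece lies on one side of each of these spheres in $N'$, and you have not argued why that side must be the small one, nor why the boundary sphere carrying $\partial f(a)^\pm$ is $h(A^\pm)$ rather than $h(T_1)$ or $h(T_3)$.
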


\begin{proof}
  For all $A \in Y$, the spheres $A^\pm$ are not mapped by $h$ to spheres $S \in \partial N'$ coming from the original boundary of $M_{n,s}$.  This is true because the original boundary spheres of $M_{n,s}$ do not intersect any essential spheres in $M_{n,s}$. In particular, the boundary spheres do not admit any good spheres or good pairs. 
  Thus, the good spheres in $X$ allow us to differentiate between the boundary components of $N$ that result from removing $Y$ and the original boundary components of $M_{n,s}$. A priori, $h$ could map a sphere coming from $Y$ to an original boundary sphere of $M_{n,s}$.  However, following the argument in {\cite[Lemma 13]{bering2024finite}} this cannot happen.   
Given this additional role of the good pairs, the proof of {\cite[Lemma 13]{bering2024finite}} applies in the more general setting of $M_{n, s}$ when $s\neq0$. 
\end{proof}

\begin{proposition}[{\cite[Proposition 14]{bering2024finite}}] The set $X$ is geometrically rigid. 

\end{proposition}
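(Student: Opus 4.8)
The plan is to follow the proof of \cite[Proposition 14]{bering2024finite}, replacing the use of \cite[Lemma 13]{bering2024finite} there with \Cref{prop:RigidityOnY} so as to accommodate the boundary spheres of $M_{n,s}$ coming from $\partial M_{n,s}$. So fix a locally injective simplicial map $f\colon X\to\sph(M_{n,s})$ and restrict it to $X_0$. As in the discussion preceding the proposition, intersecting pairs of spheres in $Z$ have $X_0$-detectable intersection, so by \Cref{lem:XdetInt} the image $f(Z)$ fills a connected submanifold lying in a single component of $M_{n,s}\setminus\nbd(f(Y))$; a complexity argument then forces $M_{n,s}\setminus\nbd(f(Y))$ to be connected, hence homeomorphic to $M_{0,2n+s}$, and \Cref{lem:GenusZeroRigidity} supplies a homeomorphism $h\colon N\to N':=M_{n,s}\setminus\nbd(f(Y))$ with $h|_Z=f|_Z$, which up to isotopy we may take to be a diffeomorphism.

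The first main step is to promote $h$ to a diffeomorphism of $M_{n,s}$. By \Cref{prop:RigidityOnY}, $\delta(h(A^\pm))=f(A)$ for every $A\in Y$, so $h$ carries the unordered pair of boundary spheres of $N$ of $\delta$-label $A$ onto the pair of boundary spheres of $N'$ of $\delta$-label $f(A)$, and carries boundary spheres of $N$ lying in $\partial M_{n,s}$ onto boundary spheres of $N'$ lying in $\partial M_{n,s}$. Now $M_{n,s}$ is obtained from $N$ by gluing a copy of $S^2\times I$ onto each such pair, with core sphere the corresponding $A\in Y$, and likewise $M_{n,s}$ is obtained from $N'$ by gluing copies of $S^2\times I$ onto the pairs of $\delta$-label $f(A)$. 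Since $\mathrm{Diff}^+(S^2)$ is connected, the boundary identification induced by $h$ on each pair extends over the glued-in $S^2\times I$, so $h$ extends to a diffeomorphism $\hat h\colon M_{n,s}\to M_{n,s}$. By construction $\hat h|_N=h$, hence $\hat h_*|_Z=f|_Z$, and $\hat h$ takes the core sphere of each glued-in $S^2\times I$ to the corresponding core sphere in $M_{n,s}$, hence $\hat h_*(A)=f(A)$ for all $A\in Y$. Thus $\hat h_*$ agrees with $f$ on $X_0$.

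It remains to check that $\hat h_*$ agrees with $f$ on the good spheres. Let $\{a,a'\}$ be the good pair chosen for $A\in Y$. The good sphere $a$ is pinned down inside $M_{n,s}$ by its defining configuration $\partial(A,a)=A^+\cup A^-\cup S_1\cup S_2\cup S_3\cup S_4$ together with the goodness conditions (\Cref{fig:GoodSphere}), and in the genus-zero cut manifold $N$ this reduces, via \Cref{lem:GenusZeroSphere}, to a statement about the partitions of $\partial N$ cut out by the disks $a^+,a^-$. As in \cite{bering2024finite}, this whole configuration is recorded combinatorially inside $X$ by the good pair---the disjointness of $a$ and $a'$ with each other, with $Y\setminus A$, and with the spheres of $Z$ they cut off, together with the incidence $\partial(A,a)\cap\partial(A,a')=A^+\cup A^-$---so it is preserved by $f$. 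Since $\hat h_*$ agrees with $f$ on $X_0$, sends peripheral spheres of $N$ to peripheral spheres of $N'$, and respects the $\delta$-labelling of the boundary spheres, it reproduces this configuration; hence $\hat h_*(a)=f(a)$, and the same argument gives $\hat h_*(a')=f(a')$. Therefore $\hat h_*|_X=f$, so $X$ is geometrically rigid.

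I expect the main obstacle to be the promotion of $h$ to $\hat h$ in the second step---specifically, ruling out the possibility, which does not arise when $s=0$, that $h$ sends some boundary sphere $A^\pm$ of $N$ to a boundary sphere of $N'$ originating from $\partial M_{n,s}$; such behaviour would be incompatible with the gluing description and obstruct the descent. This is exactly what including a good pair for each $A\in Y$ rules out: the spheres of $\partial M_{n,s}$ admit no good spheres, so \Cref{prop:RigidityOnY} forces the $\delta$-labels to be matched, after which the descent and the good-sphere bookkeeping proceed as in \cite{bering2024finite}.
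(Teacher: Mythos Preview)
Your proposal is correct and follows essentially the same approach as the paper: use \Cref{prop:RigidityOnY} to ascend $h$ to a diffeomorphism $\hat h$ of $M_{n,s}$ agreeing with $f$ on $X_0$, then verify agreement on the good spheres exactly as in \cite[Proposition~14]{bering2024finite}. The paper's proof is in fact terser than yours---it simply cites \Cref{prop:RigidityOnY} for the ascent and defers entirely to \cite{bering2024finite} for the good-sphere verification---so your additional detail on the gluing and the role of the boundary-sphere matching is consistent with, and a fleshing-out of, what the paper leaves implicit.
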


\begin{proof}
\Cref{prop:RigidityOnY} shows that $h: N \to N'$ ascends to a map $\hat{h}: M_{n,s} \to M_{n,s}$ such that $\hat{h}_\ast$ and $f$ agree on $X_0$.  It remains to verify that $\hat{h}_\ast$ and $f$ agree on the good spheres in $X$.  The argument proceeds exactly as in {\cite[Proposition 14]{bering2024finite}}. 
\end{proof}

\subsection{Exhaustion by geometrically rigid sets}
In this section, we generalize Proposition 22 of \cite{bering2024finite} and prove that we can find a nested family of geometrically rigid sets that exhaust $\sph(M_{n,s})$ (see \Cref{prop:GeoRigidExhaustion}). The proof will primarily follow the argument used in \cite{bering2024finite}, with two differences that will be explicitly stated.

\begin{definition}
    Suppose $P$ is a pants decomposition of $M_{n,s}$ and $a\in P$. A sphere $b\in\sph(M_{n,s})$ is a \emph{split sphere for $(a,P)$} if $a$ is the unique sphere in $P$ intersecting $b$. 
\end{definition}

\begin{definition}
    If $X\subseteq\sph(M_{n,s})$ is a subcomplex, $P\subseteq X^{(0)}$ and $b\in X^{(0)}$ is a split sphere for $(a,P)$, then we say that $P$ is $X-$split at $a$ by $b$. We say that $P$ is $X-$split if it is $X-$split at $a$ for some $a\in P$. If $X$ contains every split sphere for $P$, then we say it is fully $X-$split.
\end{definition}

\begin{definition}
    Suppose $X\subseteq\sph(M_{n,s})$ is a subcomplex and $a\in X^{(0)}$. A pair of distinct, disjoint spheres $(b_1.b_2)$ in $\sph(M_{n,s})^{(0)}$ is a \emph{split pair} for $a$ relative to $X$ if there exists pants decompositions $P_1,P_2\subseteq X^{(0)}$, both containing $a$ such that $b_i$ is a split sphere for $(a,P_i)$ for $i=1,2$. See \Cref{lab:SplitPair}.
\end{definition}

\begin{figure}[ht!]
    \centering
    \begin{tabular}{>{\centering\arraybackslash}m{0.473\textwidth} >{\centering\arraybackslash}m{0.473\textwidth}}
       \centering
       \begin{overpic}[scale=0.3]{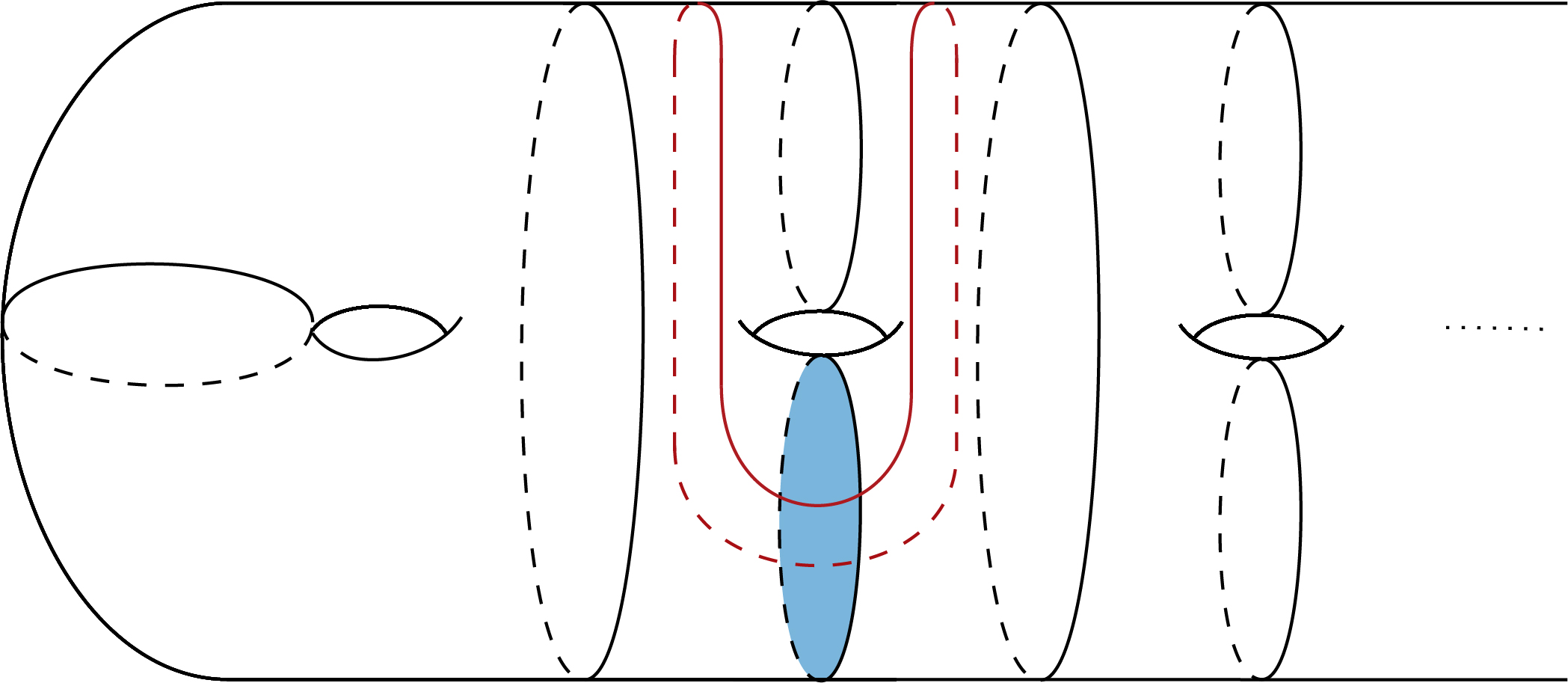}
        \put(43,46){$\color{Maroon}b_1$}
        \put(51,-5){$a$}
        \put(-7,5){$P_1$}
        \end{overpic}
         &
       \begin{overpic}[scale=0.3]{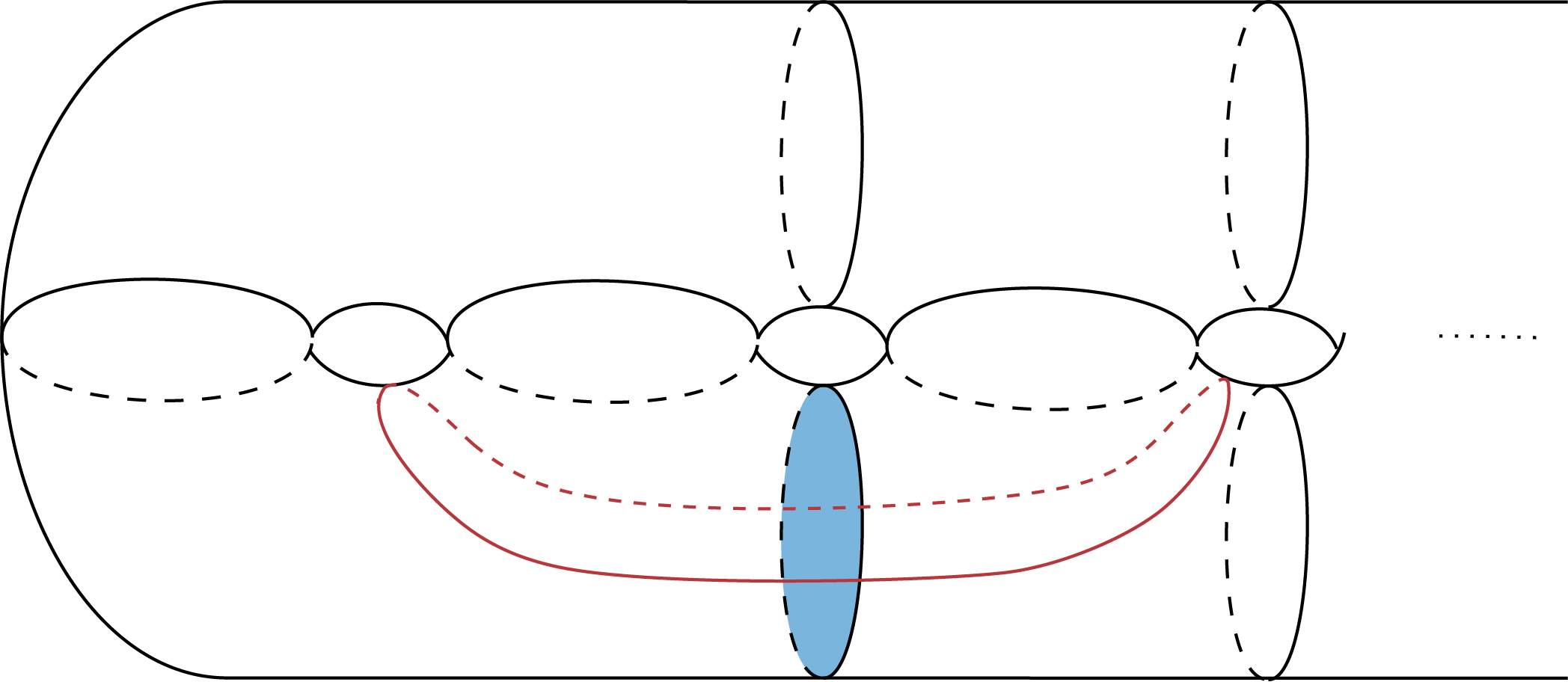}
        \put(23,5){$\color{Maroon}b_2$}
        \put(51,-5){$a$}
        \put(-7,5){$P_2$}
        \end{overpic}  
    \end{tabular}
    \vspace{5pt}
    \caption{The spheres $b_1$ and $b_2$ give a split pair for $a$ with respect to the pants decompositions $P_1$ and $P_2$.}
    \label{lab:SplitPair}
\end{figure}

\begin{lemma}[{\cite[Lemma 18]{bering2024finite}}]
\label{lem:PantsInductiveStep}
    Suppose $X\subseteq\sph(M_{n,s})$ is a geometrically rigid subcomplex and $a\in X^{(0)}$ has a split pair $(b,c)$. Then the subcomplex $X_{b,c}$ induced by $X\cup\{b,c\}$ is geometrically rigid.
\end{lemma}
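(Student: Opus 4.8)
The plan is to combine the geometric rigidity of $X$ with the three-sphere configuration of \Cref{lem:ThreeSphereConfiguration} to rigidify the two new spheres. Fix a locally injective simplicial map $f\colon X_{b,c}\to\sph(M_{n,s})$ and let $P_1,P_2\subseteq X^{(0)}$ be pants decompositions, both containing $a$, witnessing that $(b,c)$ is a split pair for $a$; so $b$ is a split sphere for $(a,P_1)$ and $c$ a split sphere for $(a,P_2)$, and $b,c$ are distinct and disjoint. Restricting $f$ to $X$ and using geometric rigidity of $X$, choose a diffeomorphism $h$ of $M_{n,s}$ with $h_*|_X=f|_X$; replacing $f$ by $h_*^{-1}\circ f$ (still locally injective and simplicial) we may assume $f|_X=\mathrm{id}_X$. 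It then suffices to prove $f(b)=b$ and $f(c)=c$, since then $f$ is the inclusion $X_{b,c}\hookrightarrow\sph(M_{n,s})$, hence induced by the identity, and the original map is induced by $h$.

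First I would determine $f(b)$ up to the obvious ambiguity. Because $P_1$ is maximal and $b\notin P_1$, the sphere $b$ meets $a$ essentially; hence $a$ is not self-adjacent in $P_1$ and $\lk(P_1\setminus a)\cong\sph(M_{0,4})$, say $\lk(P_1\setminus a)=\{a,b,b^{\circ}\}$. Since $(P_1\setminus a)\cup\{b\}$ spans a simplex of $X_{b,c}$, so does its image $(P_1\setminus a)\cup\{f(b)\}$; thus $f(b)$ is disjoint from $P_1\setminus a$ and, by local injectivity on the star of any $d\in P_1\setminus a$, distinct from each of its spheres, so $f(b)\in\lk(P_1\setminus a)$. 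Also $f(b)\ne a=f(a)$, since $a$ and $b$ both lie in the star of any $d\in P_1\setminus a$, on which $f$ is injective. Hence $f(b)\in\{b,b^{\circ}\}$, and symmetrically $f(c)\in\{c,c^{\circ}\}$ with $\lk(P_2\setminus a)=\{a,c,c^{\circ}\}$. (That $f(b),f(c)$ intersect $f(a)$ essentially also follows from \Cref{lem:XdetInt} applied to the flip moves $a\to b$ and $a\to c$.)

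The crux is to rule out the remaining options, which I would do by applying \Cref{lem:ThreeSphereConfiguration} to $(a,b,c)$: its hypotheses hold since $b,c$ are disjoint, $a$ meets each of $b,c$ in a single circle (distinct essential spheres of an $M_{0,4}$ do), and the boundary spheres of $\overline{\nbd}(a\cup b\cup c)$ are essential or peripheral because $b,c$ are split relative to $P_1,P_2$. As $a$ and $b$ together fill the $M_{0,4}$ component of $M_{n,s}\setminus\nbd(P_1\setminus a)$ containing $a$, the unique sphere of $\sph(\nbd(a\cup b))$ other than $a,b$ is $b^{\circ}$; likewise the unique sphere of $\sph(\nbd(a\cup c))$ other than $a,c$ is $c^{\circ}$. \Cref{lem:ThreeSphereConfiguration} then gives that $b^{\circ}$ meets $c^{\circ}$, and that each of $b^{\circ},c^{\circ}$ meets both $b$ and $c$; in particular $b^{\circ}$ meets $c$ and $c^{\circ}$ meets $b$. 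Thus $(b,c)$ is the only disjoint pair among $\{b,b^{\circ}\}\times\{c,c^{\circ}\}$; since $f$ is simplicial and $b,c$ disjoint, $f(b)$ and $f(c)$ are disjoint, forcing $f(b)=b$ and $f(c)=c$.

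The step I expect to require the most care is the verification that $\overline{\nbd}(a\cup b\cup c)$ has essential-or-peripheral boundary, so that \Cref{lem:ThreeSphereConfiguration} applies; this is precisely where the split-pair hypothesis is used rather than merely ``$a$ meets $b$ and $c$ with $b,c$ disjoint.'' For $M_{n,s}$ with $s\ne 0$ there is nothing further to check: the boundary spheres of $M_{n,s}$ meet no essential sphere and so do not interact with the flip moves or neighborhoods above, and the argument of \cite[Lemma 18]{bering2024finite} goes through with only cosmetic changes.
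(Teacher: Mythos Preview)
The paper does not give its own proof of this lemma; it is stated with a citation to \cite[Lemma~18]{bering2024finite} and used as a black box in the exhaustion argument. Your proposal reproduces exactly the argument from that reference: reduce via geometric rigidity of $X$ to the case $f|_X=\mathrm{id}$, use the flip-move structure at $a$ in $P_1$ and $P_2$ to constrain $f(b)\in\{b,b^\circ\}$ and $f(c)\in\{c,c^\circ\}$, and then apply \Cref{lem:ThreeSphereConfiguration} to see that $(b,c)$ is the only disjoint pair in $\{b,b^\circ\}\times\{c,c^\circ\}$, forcing $f(b)=b$ and $f(c)=c$. Your identification of the boundary-essentiality check for $\overline{\nbd}(a\cup b\cup c)$ as the step where the split-pair hypothesis is genuinely used (rather than mere pairwise intersection data) is also correct, and the passage from $s=0$ to general $s$ is indeed cosmetic here.
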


The following is Lemma 20 of \cite{bering2024finite}. The lemma as stated in \cite{bering2024finite} claims that if $a$ and $c$ are two distinct adjacent spheres in a pants decomposition $P$ of $M_{n,s}$, then the component of $M_{n,s}\setminus (P\setminus\{a,c\})$ is homeomorphic to $M_{0,5}$. However, consider Case 2 of \Cref{fig:pants-exhaustion}, where when $a$ and $c$ are the two nonperipheral spheres on the left, and the given component is homeomorphic to $M_{1,2}$.
\par
Instead, we note the following lemma, which follows directly from their proof. We simply assume that $a$ and $c$ have the property that $M_{n,s}\setminus (P\setminus\{a,c\})$ is actually homeomorphic to $M_{0,5}$. Afterward, we note how to modify the proof of Lemma 21 of \cite{bering2024finite}, which is the only place where Lemma 20 is used in that paper.

\begin{lemma}[{\cite[Lemma 20]{bering2024finite}}]
    Suppose $X\subseteq\sph(M_{n,s})$ is a subcomplex and $P\subseteq X^{(0)}$ is a pants decomposition that is $X-$split at $a$ by $b\in X^{(0)}$. For every sphere $c\in P$ that is adjacent to $a$ so that the component of $M_{n,s}\setminus (P\setminus\{a,c\})$ containing $a$ and $c$ is homeomorphic to $M_{0,5}$, there are spheres $d$ and $e$ such that $(d,e)$ is a split pair for $c$.
\end{lemma}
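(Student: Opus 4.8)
The plan is to reproduce the argument of \cite[Lemma 20]{bering2024finite} verbatim; the only place the extra hypothesis is used is that that argument builds the split pair inside the component $W$ of $M_{n,s}\setminus\nbd(P\setminus\{a,c\})$ containing $a$ and $c$, which \cite{bering2024finite} implicitly took to be $M_{0,5}$. By hypothesis $W\cong M_{0,5}$, so $\{a,c\}$ is a pants decomposition of $W$; label $\partial W=\{w_1,\dots,w_5\}$ so that the three pairs of pants of $W\setminus\nbd(\{a,c\})$ have cuffs $\{a,w_1,w_2\}$, $\{a,c,w_3\}$ and $\{c,w_4,w_5\}$, and write $Q$ for the pair of pants with cuffs $\{c,w_4,w_5\}$. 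Each $w_i$ is a sphere of $P\setminus\{a,c\}$ or a component of $\partial M_{n,s}$, and in either case is unaffected by the modifications below.

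First I would produce a second pants decomposition containing $c$. Cutting $W$ along $c$ yields $Q$ together with a copy $W''\cong M_{0,4}$ with cuffs $\{c,w_1,w_2,w_3\}$ that contains $a$. Since $P$ is $X$-split at $a$ by $b$, the sphere $b$ meets $a$ but no other sphere of $P$, so $b$ lies in $W''$ and is one of the two vertices of $\sph(W'')$ other than $a$. Hence $P':=(P\setminus\{a\})\cup\{b\}$ is again a pants decomposition (the flip of $P$ across $a$), and $P'\subseteq X^{(0)}$ since $b\in X^{(0)}$ and $P\setminus\{a\}\subseteq X^{(0)}$; moreover $c\in P\cap P'$. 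Because the flip takes place in $W''$, which meets $Q$ only along $c$, the sphere $c$ still bounds $Q$ in $P'$; thus the $M_{0,4}$ components $V_c$ of $M_{n,s}\setminus\nbd(P\setminus\{c\})$ and $V_c'$ of $M_{n,s}\setminus\nbd(P'\setminus\{c\})$ containing $c$ both contain $Q$, with $\partial V_c=\{a,w_3,w_4,w_5\}$ and $\partial V_c'=\{b,w_j,w_4,w_5\}$, where $w_j\in\{w_1,w_2\}$ is the cuff not separated from $c$ by $b$.

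Next I would choose the pair. Any vertex of $\sph(V_c)$ distinct from $c$ meets $c$ and no other sphere of $P$, hence is a split sphere for $(c,P)$, and similarly any vertex of $\sph(V_c')$ distinct from $c$ is a split sphere for $(c,P')$. By \Cref{lem:GenusZeroSphere} such a vertex is determined by the partition it induces on the four cuffs of $V_c$, resp.\ $V_c'$. Let $d$ be the vertex of $\sph(V_c)$ grouping $w_3$ with $w_4$ and $e$ the vertex of $\sph(V_c')$ grouping $b$ with $w_4$; reading off their induced partitions of $\{w_1,\dots,w_5\}$, the sphere $d$ separates $\{w_3,w_4\}$ from $\{w_1,w_2,w_5\}$ and $e$ separates $\{w_j,w_5\}$ from the other three cuffs. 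A block of $d$'s partition sits inside a block of $e$'s, so $\{d,e\}$ is realized by disjoint spheres, hence (having the size of a pants decomposition of $M_{0,5}$) as a pants decomposition of $W$; in that realization the two cuffs of $a$ lie in one block of $d$ and the two cuffs of $b$ in one block of $e$, so $a$ is disjoint from $d$ and $b$ from $e$, placing $d$ in $V_c$ and $e$ in $V_c'$ while both still cross $c$. Since $d\neq e$, the pair $(d,e)$ is a split pair for $c$ relative to $X$.

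The one step above that is not routine bookkeeping is the claim that $d$ and $e$ can be drawn simultaneously disjoint, each in its respective $M_{0,4}$ piece and still crossing $c$; this is the finite verification among partitions of the five boundary spheres of $M_{0,5}$ that is carried out implicitly in \cite[Lemma 20]{bering2024finite}, and it goes through precisely because the flip $a\to b$ is supported in $W''$, which meets $Q$ only in $c$.
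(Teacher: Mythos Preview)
Your proposal is correct and takes the same approach as the paper. In fact the paper gives no proof at all: it simply records that the statement ``follows directly from their proof'' in \cite{bering2024finite}, the added hypothesis that the component is $M_{0,5}$ being precisely what that argument needs; you have written out that argument explicitly, and your partition bookkeeping in $W\cong M_{0,5}$ checks out.
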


We now discuss Lemma 21 of \cite{bering2024finite} and note the small difference that has to be made in the proof.
\begin{lemma}[{\cite[Lemma 21]{bering2024finite}}]
    Suppose $X\subseteq\sph(M_{n,s})$ is a finite geometrically rigid set and $P\subseteq X^{(0)}$ a pants decomposition is $X-$split. Then there is a finite geometrically rigid set $X^P\supseteq X$ so that $P$ is fully $X^P-$split, that is, $X^P$ contains every split sphere for $P$.
\end{lemma}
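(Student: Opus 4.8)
The plan is to reproduce the proof of \cite[Lemma~21]{bering2024finite} essentially verbatim, the one change being that the split condition on $P$ is propagated across only those adjacencies for which the (weakened) Lemma~20 above applies, together with a check that these still suffice to reach every sphere of $P$. First I would record the structure of the dual graph $\Delta_P$ of $P$: it is a finite connected graph with all vertices of degree $3$, with edges indexed by the spheres of $P$, where $e_a$ is a loop exactly when $a$ is self-adjacent (equivalently $\lk(P\setminus a)\cong\sph(M_{1,1})$, a single vertex) and $\lk(P\setminus a)\cong\sph(M_{0,4})$ otherwise. Say two non-self-adjacent spheres $a,c\in P$ are \emph{$M_{0,5}$-adjacent} if they are adjacent in $P$ and the component of $M_{n,s}\setminus\nbd(P\setminus\{a,c\})$ containing $a\cup c$ is homeomorphic to $M_{0,5}$; this is exactly the situation in which the weakened Lemma~20, given that $P$ is split at $a$, furnishes a split pair for $c$.

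The combinatorial claim underpinning the difference with \cite{bering2024finite} is that, when $2n+s\ge 6$, the graph on the non-self-adjacent spheres of $P$ with edges given by $M_{0,5}$-adjacency is connected. To see this, note that two non-self-adjacent spheres adjacent in $P$ fail to be $M_{0,5}$-adjacent only when they form a bigon, i.e.\ the edges $e_a,e_c$ have the same pair of endpoints $\{v,w\}$ in $\Delta_P$ (which yields an $M_{1,2}$ complementary component rather than $M_{0,5}$). In that case, the remaining edge at $v$ (and the one at $w$) is a non-loop edge forming neither a bigon nor a triple edge with $e_a$ or $e_c$ --- otherwise $\Delta_P$ would be a bigon or a triple edge and $M_{n,s}$ would be $M_{1,2}$ or $M_{2,0}$ --- so the corresponding sphere is $M_{0,5}$-adjacent to both $a$ and $c$, and one routes around the bigon. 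Since $2n+s\ge 6$ excludes $M_{1,2}$ and $M_{2,0}$, such a routing edge is always available, and iterating over the finitely many bigons along a path proves the claim.

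With this in hand I would mimic \cite[Lemma~21]{bering2024finite}. Starting from the hypothesis that $P$ is $X$-split at some $a_0$ (necessarily non-self-adjacent) by $b_0\in X^{(0)}$, induct along a spanning tree of the $M_{0,5}$-adjacency graph rooted at $a_0$. At the inductive step a finite geometrically rigid $X'\supseteq X$ with $P\subseteq (X')^{(0)}$ is split at $a$, and for an $M_{0,5}$-adjacent $c\in P$ the weakened Lemma~20 yields a split pair $(d,e)$ for $c$ relative to $X'$; by \Cref{lem:PantsInductiveStep} the induced subcomplex $\langle X'\cup\{d,e\}\rangle$ is again finite and geometrically rigid, and one verifies --- exactly as in \cite{bering2024finite} --- that $P$ is now split at $c$ as well. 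Self-adjacent spheres have no split spheres (their complementary component $\sph(M_{1,1})$ being a single vertex), so they are neither split at nor passed through. After finitely many steps $P$ is split at every non-self-adjacent sphere; running the propagation once more, arranging the split pair furnished by Lemma~20 to contain the other flip partner of $c$ (the two flip partners playing symmetric roles in its proof), one picks up, for every non-self-adjacent $a\in P$, both spheres of $\lk(P\setminus a)\cong\sph(M_{0,4})$ distinct from $a$ --- i.e.\ both split spheres of $(a,P)$. As $P$ is finite this terminates, each step preserves finiteness and geometric rigidity, so the resulting $X^P$ is a finite geometrically rigid set containing every split sphere for $P$.

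I expect the crux to be the combinatorial claim of the second paragraph --- that $M_{0,5}$-adjacency alone connects all the non-self-adjacent spheres of $P$, i.e.\ that every bigon in $\Delta_P$ can be routed around once $2n+s\ge 6$ --- together with the (essentially bookkeeping) verification that adjoining the split pair from Lemma~20 really does make $P$ split at the new sphere. Everything else is the argument of \cite[Section~3]{bering2024finite}.
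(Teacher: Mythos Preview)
Your approach is the paper's: both argue that $M_{0,5}$-adjacency connects all non-self-adjacent spheres of $P$ and then defer the remaining construction to \cite{bering2024finite}. There is one slip: your claim that $\Delta_P$ has all vertices of degree $3$ is false when $s>0$, since a pair of pants may have cuffs in $\partial M_{n,s}$, which contribute no edges, so vertices of $\Delta_P$ can have degree $1$, $2$, or $3$. Your routing argument then fails as written, because ``the remaining edge at $v$'' need not exist. The repair is immediate: at a bigon $\{e_a,e_c\}$ with endpoints $v,w$, at least one of $v,w$ has degree $3$ (otherwise $\Delta_P$ is just the bigon and $M_{n,s}=M_{1,2}$, excluded by $2n+s\ge 6$), and the third edge there --- which cannot be a loop (valence) or have its other endpoint among $\{v,w\}$ (else a triple edge and $M_{n,s}=M_{2,0}$, also excluded) --- provides the detour. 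The paper's connectivity argument avoids this by casing on vertex valence (valence $2$, or valence $3$ not incident to a loop) rather than assuming trivalence throughout.
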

\begin{proof}
    We inductively define the sets $P_i\subset P$ as follows. Suppose $a_0\in P$ is $X$-split. Define $P_0=\{a_0\}$, and for $i\geq 1$ let
    $$
  P_i = \left\{ s\in P\ \middle\vert \begin{array}{l}
    s \text{ is adjacent to } a\in P_{i-1} \text{ and the component of } M_{n,s}\setminus (P\setminus \{a,s\}) \\
    \text{containing } a,s \text{ is homeomorphic to } M_{0,5}
  \end{array}\right\}.
$$
These sets differ from the sets denoted $P_i$ in the proof of Lemma 21 of \cite{bering2024finite}, as there it is not assumed that the component of $M_{n,s}\setminus (P\setminus \{a,s\})$ is homeomorphic to $M_{0,5}$ (as they implicitly assume this). Even so, just as in their proof, there is a $k$ so that $\cup_{i=1}^k P_i$ contains all the spheres in $P$ which have a split sphere (i.e., are not self-adjacent). To see this, consider the dual graph to $P$, which has a vertex for every component of $M_{n,s}\setminus P$, and an edge connecting two vertices if they share a common component of $P$. This graph is connected, and the subgraph spanned by the edges whose corresponding spheres are not self-adjacent is connected (as such spheres correspond to the non-loop edges of the graph).
\par 
Then, starting from the edge corresponding to $a_0$, one can reach any other sphere by only going along edges corresponding to spheres $a$ and $s$ so that the component of $M_{n,s}\setminus (P\setminus \{a,s\})$ containing $a$ and $s$ is homeomorphic to $M_{0,5}$. To see this, note that every edge which is not a loop contains a vertex of valence $2$, or a vertex of valence $3$ that is not incident to a loop (potentially one of both). If a vertex $v$ is valence $2$ and $s_1$ and $s_2$ are the two spheres in $P$ that correspond to the edges containing $v$, then it is easy to see that the component of $M_{n,s}\setminus (P\setminus\{s_1,s_2\})$ containing $s_1$ and $s_2$ is homeomorphic to $M_{0,5}$. Thus if $s_1\in P_{i-1}$, then $s_2\in P_i$. If $v$ has valence $3$ and is incident to no loops, with edges corresponding to spheres $s_1, s_2$, and $s_3$, then one can see that one of these spheres, say $s_3$, is such that, for $i=1,2$, the component of $M_{n,s}\setminus(P\setminus\{s_i,s_3\})$ containing $s_i$ and $s_3$ is homeomorphic to $M_{0,5}$. In this case, one needs to explicitly use the fact that $M_{2,0}$ has been excluded, see Case 5 in \Cref{fig:pants-exhaustion} below. Thus if $s_1$ is in $P_{i-1}$, then $s_3\in P_i$ and $s_2\in P_{i+1}$, and similarly if $s_2\in P_{i-1}$. If $s_3\in P_{i-1}$, both $s_1$ and $s_2$ are in $P_i$.
\par 
\Cref{fig:pants-exhaustion} depicts all the cases for $v$ and helps illustrate the above argument. The important cases for the choice of vertex $v$ are Case 4, which illustrates the valence $2$ case, and Cases 10 and 11, which illustrate the valence $3$ case. 
\par 
The proof then proceeds identically as in \cite{bering2024finite}, as the connectivity argument above shows that every non-self-adjacent sphere in $P$ will eventually be contained in some $P_i$.
\end{proof}

\begin{figure}
    \centering
    \includegraphics[width=\textwidth]{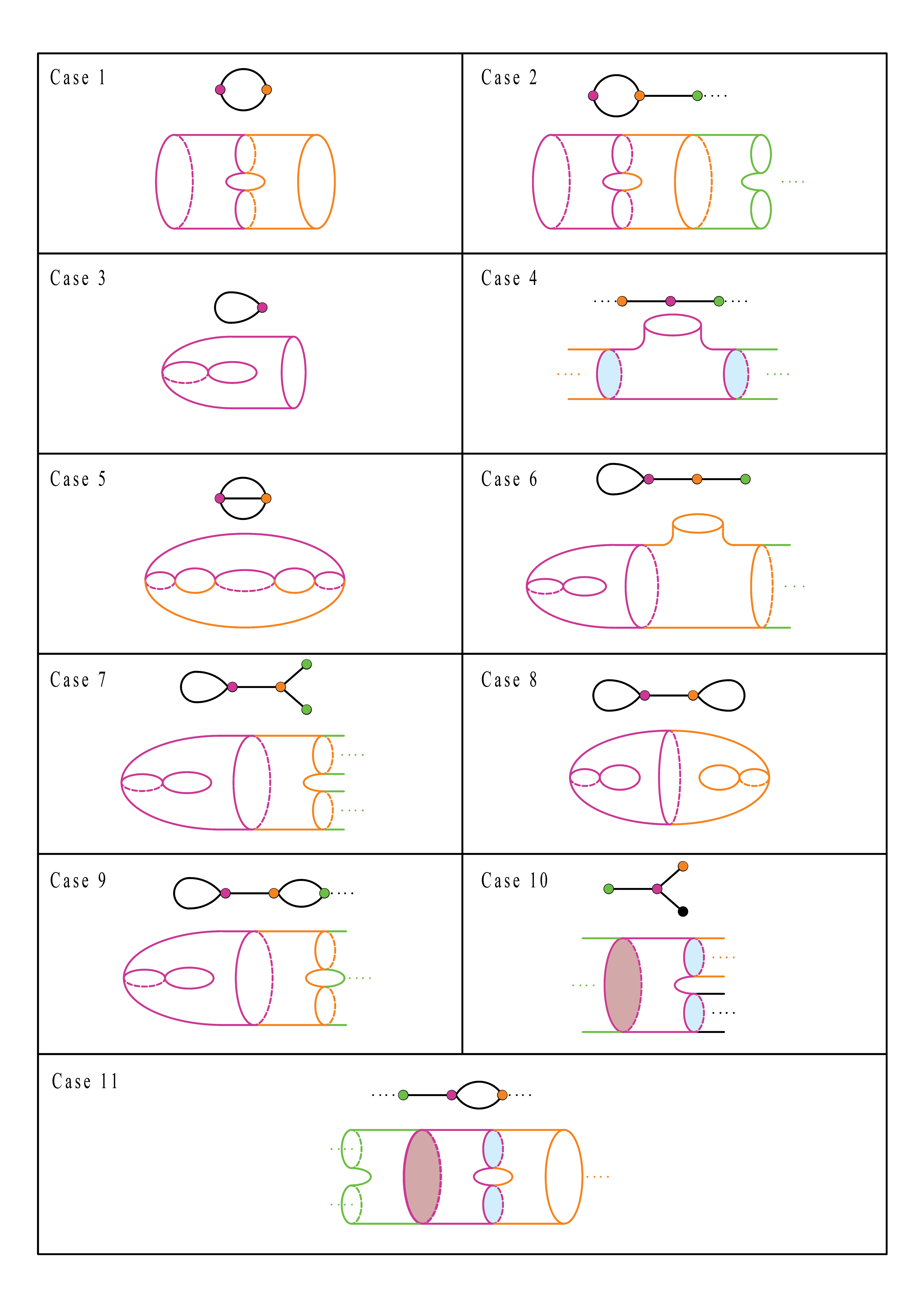}
    \caption{In all the cases, the vertex $v$ is the pink vertex. In cases 1-4, $v$ has valence 2, and the blue spheres denote $s_1$ and $s_2$. In cases 5-11, $v$ has valence three, the blue spheres denote $s_1$ and $s_2$ while the red sphere denotes $s_3$. The cases without these spheres colored in don't apply, either because the pink pair of pants has a self-adjacent sphere (Cases 3, 6, 7, 8 and 9)  or because we may assume the complexity is high enough (Cases 1, 3, 5 and 8).}
    \label{fig:pants-exhaustion}
\end{figure}

We now note the main proposition, which is the key ingredient for proving \Cref{thm:geom_rigidity}. The proof is nearly identical, but we will modify the start of the argument slightly so that it makes sense in the generality we are working in. 
\begin{proposition}[{\cite[Proposition 22]{bering2024finite}}]\label{prop:GeoRigidExhaustion}
    There exists a nested family of finite geometrically rigid sets $X_j\subseteq\sph(M_{n,s})$ such that
    $$
    \sph(M_{n,s})=\bigcup_jX_j
    $$
\end{proposition}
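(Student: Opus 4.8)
The plan is to build the exhaustion inductively by combining the geometric rigidity of the sets $X$ constructed above (for an appropriate choice of non-separating system $Y$) with the three lemmas just quoted from \cite{bering2024finite}: the inductive step \Cref{lem:PantsInductiveStep} (adding a split pair keeps a set geometrically rigid), the split-pair-producing Lemma~20, and the ``fully split'' Lemma~21. First I would fix an exhaustion of $\sph(M_{n,s})$ by finite subcomplexes $K_0 \subseteq K_1 \subseteq \cdots$ with $\bigcup_j K_j = \sph(M_{n,s})$; this is possible since $\sph(M_{n,s})$ is a countable locally finite complex (indeed finite-dimensional). The goal is then to enlarge each $K_j$ to a finite geometrically rigid set $X_j$ with $X_j \subseteq X_{j+1}$, so that $\bigcup_j X_j = \sph(M_{n,s})$.

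The heart of the argument is a single claim: \textbf{given any finite geometrically rigid set $X$ and any finite set of vertices $F \subset \sph(M_{n,s})^{(0)}$, there is a finite geometrically rigid set $X' \supseteq X \cup F$.} To prove this, I would first enlarge $X$ to contain a pants decomposition $P$ which is $X$-split at some sphere $a$ — by adding the spheres of $P$ (geometric rigidity is preserved because one can realize these additions via split pairs, or directly by the argument of Lemma~21's hypothesis being arrangeable) and a split sphere $b$ for $(a,P)$, using \Cref{lem:PantsInductiveStep}. Then Lemma~21 produces a finite geometrically rigid $X^P \supseteq X$ in which $P$ is fully $X^P$-split: that is, $X^P$ contains \emph{every} split sphere for $P$. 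Now I invoke the key combinatorial input from \cite{bering2024finite}, namely that any sphere in $\sph(M_{n,s})$ can be reached from $P$ by a finite sequence of flip moves (\Cref{prop:FlipMoveConnectivity}), and that passing a split sphere through a flip move stays within reach of geometrically rigid extensions via \Cref{lem:PantsInductiveStep}. Concretely, each flip move $P \mapsto P'$ replaces one sphere $a \in P$ by $a' = b$, where $b$ is a split sphere for $(a,P)$ already contained in $X^P$; then $P'$ is $X^P$-split (at $a'$, by $a$), and applying Lemma~21 again yields a finite geometrically rigid $X^{P'} \supseteq X^P$ with $P'$ fully split. Iterating along the finite flip sequence from $P$ to a pants decomposition $P_v \ni v$ for each $v \in F$, and taking the union of the finitely many resulting rigid sets (a finite union of geometrically rigid sets containing a common pants decomposition is geometrically rigid, by the same gluing used in the inductive step), gives the desired $X' \supseteq X \cup F$.

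With the claim in hand, the proposition follows by a standard diagonal construction: set $X_0$ to be the geometrically rigid set constructed in the previous subsection (so $X_0$ contains a pants decomposition, after possibly enlarging via split pairs), and inductively let $X_{j+1}$ be a finite geometrically rigid set containing $X_j \cup K_{j+1}$, which exists by the claim applied with $X = X_j$ and $F = K_{j+1}^{(0)}$. Then $X_0 \subseteq X_1 \subseteq \cdots$ is a nested family of finite geometrically rigid sets with $\bigcup_j X_j \supseteq \bigcup_j K_j = \sph(M_{n,s})$, hence equal to it.

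The main obstacle I expect is verifying that the flip-move propagation genuinely produces split pairs at each stage in the full generality of $M_{n,s}$ with $s \neq 0$ — in particular, ensuring that when a flip move is performed near a self-adjacent sphere or near the $S^2$-boundary components, Lemma~20 (in the corrected form stated above, requiring the relevant complementary component to be $M_{0,5}$) still applies or can be routed around, exactly as in the modified proof of Lemma~21. The low-complexity exclusions ($M_{1,1}$, $M_{0,4}$, $M_{2,0}$) are handled by the hypothesis $2n+s \geq 6$ together with the observations already recorded in the excerpt, but one must check that cutting along intermediate sphere systems never creates an excluded piece in a way that breaks the induction; this is the same bookkeeping as in \cite{bering2024finite}, adapted to allow peripheral $S^2$ boundary spheres, and is where the good-pair machinery of \Cref{prop:RigidityOnY} does the extra work of distinguishing original boundary spheres from those arising by cutting along $Y$.
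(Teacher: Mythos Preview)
Your approach is essentially the same as the paper's: start from the geometrically rigid set $X$ constructed earlier (which already contains an $X$-split pants decomposition $P_0$), and then propagate along flip moves using \Cref{prop:FlipMoveConnectivity} together with Lemma~21 to produce larger and larger finite geometrically rigid sets reaching every sphere. The paper organizes this via the nested families $\mathcal{P}_i$ of pants decompositions at flip-distance $\leq i$ from $P_0$, whereas you phrase it as an ``extend to cover any finite $F$'' claim, but the content is identical.

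One small point to tighten: your parenthetical that ``a finite union of geometrically rigid sets containing a common pants decomposition is geometrically rigid, by the same gluing used in the inductive step'' is not what \Cref{lem:PantsInductiveStep} says, and as stated needs a uniqueness input (essentially \Cref{cor:uniqueGeoRigid}) to match up the diffeomorphisms on the pieces. The cleanest fix is simply to process the vertices of $F$ sequentially, always enlarging the \emph{same} growing set, so that the sets are nested and no union argument is needed.
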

\begin{proof}
    Let $X$ be the strongly rigid set constructed before \Cref{prop:RigidityOnY}. By construction, $X$ contains a pants decomposition $P_0$ which is $X$-split.
    \par 
    We define a sequence of collections of pants decompositions of $M_{n,s}$ as follows. Begin with $\mathcal{P}_0=\{P_0\}$, and inductively define
    $$\mathcal{P}_i=\{P \text{ a pants decomposition } | \text{ there is a } P' \in \mathcal{P}_{i-1} \text{ such that } |P \Delta P'|=2\}.$$
    For every $P\in \mathcal{P}_i$, there is a $P'\in \mathcal{P}_{i-1}$ so that $P$ is obtained from $P'$ by exchanging split spheres. By \Cref{prop:FlipMoveConnectivity}, every pants decomposition can be reached from $P_0$ by applying these exchanges. Thus, every pants decomposition of $M_{n,s}$ is contained in some $\mathcal{P}_i$. 
    The proof then proceeds in exactly the same way as in Proposition 22 of \cite{bering2024finite}.
\end{proof}

To finish the proof of \Cref{thm:geom_rigidity}, it suffices to show that each $X_j$ in \Cref{prop:GeoRigidExhaustion} is strongly rigid. To do this, we summarize the corresponding argument in \cite{bering2024finite}. The proofs of the next lemma and its corollary follow exactly as in \cite{bering2024finite}.

\break 

\begin{lemma}[{\cite[Lemma 23]{bering2024finite}}]
    Suppose $h\in \Map(M_{n,s})$ is the point-wise stabilizer of a pants decomposition $P$ so that every sphere is the boundary of two distinct complementary components of $P$. Then $h$ induces the identity map on $\sph(M_{n,s})$.
\end{lemma}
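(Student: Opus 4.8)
The plan is to show that if $h \in \Map(M_{n,s})$ fixes every sphere of a pants decomposition $P$ in which each sphere bounds two distinct complementary pairs of pants, then $h$ fixes every vertex of $\sph(M_{n,s})$, by a flip-move propagation argument anchored at $P$. First I would record the local picture: since $h$ pointwise stabilizes $P$, for each $a \in P$ the link $\lk(P \setminus a) \cong \sph(M_{0,4}) = \{a,a',a''\}$ is preserved setwise by $h_*$ with $a$ fixed, so $h_*$ either fixes or swaps $a',a''$. The hypothesis that $a$ bounds \emph{two distinct} pairs of pants means $a$ is not self-adjacent, and moreover the two cuffs on either side of the $M_{0,4}$-piece lie in genuinely different pants; I would use this to pin down which of $a',a''$ is which. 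Concretely, among the four boundary spheres of the $M_{0,4}$ containing $a$, at most one pair can be ``swapped'' by the half-twist about $a$, and $h$ already fixes all of $P$, hence fixes all four of these boundary spheres; tracking how $a'$ versus $a''$ partitions those four boundary spheres (via \Cref{lem:GenusZeroSphere}, the partition they induce) shows $h_*a' = a'$ and $h_*a'' = a''$. Thus $h_*$ fixes $X_P = \bigcup_{a\in P}\lk(P\setminus a)$ pointwise; equivalently $h_*$ agrees with the identity on $X_P$.

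Next I would invoke \Cref{cor:ManifoldMCGActionSurjective}: a diffeomorphism (here the identity) that agrees with the automorphism $h_*$ on $X_\sigma$ for a maximal sphere system $\sigma = P$ forces $h_* = \id$ on all of $\sph(M_{n,s})$. (Strictly, \Cref{cor:ManifoldMCGActionSurjective} is phrased for $f$ an isomorphism $\sph(M_\Gamma)\to\sph(M_{\Gamma'})$; I would note that $h_*$ is exactly such an isomorphism with $\Gamma = \Gamma'$, so the corollary applies verbatim once we know $h_*$ and $\id$ agree on $X_P$.) Alternatively, and this is the route matching \cite{bering2024finite}, one runs the flip-move propagation directly: by \Cref{prop:FlipMoveConnectivity} any maximal sphere system is joined to $P$ by a finite chain of flip moves, and \Cref{lem:compatibility_flip} (with $g = \id$, $f = h_*$) shows that agreement on $X_\rho$ passes to agreement on $X_{\rho'}$ across each flip; since every vertex lies in some maximal system, $h_*$ fixes every vertex.

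The main obstacle is the first step — correctly using the ``two distinct complementary components'' hypothesis to rule out that $h_*$ transposes $a'$ and $a''$ for some $a \in P$. The subtlety is that a priori $h$ could act on the $M_{0,4}$-piece by the hyperelliptic-type involution exchanging $a'$ and $a''$; the hypothesis is exactly what prevents the neighboring configuration from being symmetric enough to permit this while fixing $P$. I would handle it precisely as in the proof of \Cref{lem:compatibility_sigma}: pick $b, b' \in P$ separated by $a$ inside the relevant $M_{0,4}$ (this is possible because $a$ is not almost peripheral in the decomposition, as it bounds two distinct non-degenerate pants), note that exactly one of $a', a''$ separates $b$ from $b'$, and observe $h_*$ must preserve this separation property since it fixes $b, b'$ and the relevant links are combinatorially determined. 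Everything else is a direct appeal to the machinery already assembled in \Cref{sec:diffeo}.
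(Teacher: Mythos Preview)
The paper itself gives no proof of this lemma; it simply records that the argument in \cite{bering2024finite} carries over verbatim. Your route through the flip-move machinery of \Cref{sec:diffeo} is a reasonable alternative, but there is a real gap in the first step.

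You assert that each $a\in P$ ``is not almost peripheral in the decomposition, as it bounds two distinct non-degenerate pants,'' and then choose $b,b'\in P$ on opposite sides of $a$ inside the $M_{0,4}$-piece. These are different conditions: the hypothesis only excludes self-adjacent spheres, not spheres bounding a pair of pants whose other two cuffs lie in $\partial M_{n,s}$. When that happens, both cuffs on one side of $a$ are peripheral, so there is no $b\in P$ to choose there and the separation argument borrowed from \Cref{lem:compatibility_sigma} is unavailable. Your earlier claim that $h$ ``fixes all four of these boundary spheres'' has the same flaw: cuffs in $\partial M_{n,s}$ are not in $P$, and a mapping class fixing $P$ pointwise may still permute them. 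Concretely, the half-twist supported on such a peripheral pair of pants fixes every sphere of $P$ yet swaps $a'$ and $a''$, so the argument cannot go through unmodified when $s\geq 2$. In the $s=0$ setting of \cite{bering2024finite} this issue does not arise, since there an almost peripheral sphere must bound an $M_{1,1}$, whose core sphere is self-adjacent and hence already excluded by the hypothesis.
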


\begin{corollary}[{\cite[Corollary 24]{bering2024finite}}]\label{cor:uniqueGeoRigid}
    If $X\subset \sph(M_{n,s})$ is geometrically rigid and contains a pants decomposition $P$ so that every sphere is the boundary of two distinct complementary components of $P$, then it is uniquely geometrically rigid in the sense that any mapping class fixing $X$ pointwise induces the identity on $\sph(M_{n,s})$.
\end{corollary}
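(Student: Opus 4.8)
The plan is to obtain the corollary as a formal consequence of the preceding lemma, \cite[Lemma 23]{bering2024finite}, which carries all the topological content. Geometric rigidity of $X$ — that every locally injective simplicial map $f\colon X\to\sph(M_{n,s})$ is realized by some diffeomorphism of $M_{n,s}$ — is assumed here; the adjective ``uniquely'' adds only that the induced automorphism of $\sph(M_{n,s})$ does not depend on the chosen realizing diffeomorphism, and the displayed clause (``any mapping class fixing $X$ pointwise induces the identity on $\sph(M_{n,s})$'') is precisely a reformulation of that. So the only thing to prove is the displayed clause.

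I would argue as follows. Let $h\in\Map(M_{n,s})$ fix $X$ pointwise, i.e.\ $h_\ast(a)=a$ for every vertex $a$ of $X$. Since $P\subseteq X^{(0)}$, in particular $h_\ast$ fixes every sphere of $P$, so $h$ lies in the pointwise stabilizer of $P$; and since by hypothesis no sphere of $P$ is self-adjacent (each bounds two distinct components of $M_{n,s}\setminus P$), the pair $(h,P)$ satisfies the hypotheses of \cite[Lemma 23]{bering2024finite}. That lemma then gives that $h$ induces the identity on $\sph(M_{n,s})$, which is the assertion. To read off the usual uniqueness statement: if diffeomorphisms $h_1,h_2$ satisfy $(h_1)_\ast|_X=(h_2)_\ast|_X=f$, then $h_2^{-1}h_1$ fixes $X$ pointwise, hence acts trivially on all of $\sph(M_{n,s})$ by the above, so $(h_1)_\ast=(h_2)_\ast$ on $\sph(M_{n,s})$; thus $f$ extends to an automorphism of $\sph(M_{n,s})$ in exactly one way.

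I do not expect a genuine obstacle: the whole argument is a repackaging of \cite[Lemma 23]{bering2024finite}, with the no-self-adjacency hypothesis on $P$ being exactly what makes that lemma applicable (a self-adjacent sphere of $P$ would admit a half-twist exchanging its two cuffs acting nontrivially on spheres crossing it, so the conclusion would fail). The only point needing any care is the elementary bookkeeping ``$h$ fixes $X$ pointwise $\Rightarrow$ $h$ fixes $P$ pointwise $\Rightarrow$ $h$ is in the pointwise stabilizer of $P$ in the sense of \cite[Lemma 23]{bering2024finite}'', which is immediate from $P\subseteq X^{(0)}$.
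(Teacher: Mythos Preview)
Your proposal is correct and matches the paper's approach: the paper does not spell out a proof but simply notes that the corollary follows exactly as in \cite{bering2024finite}, and your argument---fix $X$ pointwise, hence fix $P$ pointwise, then invoke \cite[Lemma 23]{bering2024finite}---is precisely that derivation.
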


\begin{proof}[Proof of \Cref{thm:geom_rigidity}]
The proof of Theorem 25 of \cite{bering2024finite} generalizes to imply \ref{thm:IvanovRigidity0}(ii) a graph of rank $n$ with $s$ ends, where $2n+s\geq 6$ (using \Cref{thm:GraphMCGAction} in place of Laudenbach's result in \cite{laudenbach_sur_1973}).  
\par 
In particular, if $X_j$ is a set as in \Cref{prop:GeoRigidExhaustion} and $f: X_j \to \sph(M_{n,s})$ is a simplicial locally injective map, then by \Cref{prop:GeoRigidExhaustion} there is a diffeomorphism $h$ of $M_{n,s}$ inducing $f$. By \Cref{cor:uniqueGeoRigid}, as each $X_j$ contains a pants decomposition as required, any other diffeomorphism inducing $f$ agrees with $h$ on $\sph(M_{n,s})$. In particular, $h_*$ is an automorphism of $\sph(M_{n,s})$ that agrees with $f$ on $X_j$. It is the only such automorphism since there is some diffeomorphism inducing every automorphism, and we just showed that any other diffeomorphism inducing $f$ agrees with $h$ on $\sph(M_{n,s})$. Thus, $X_j$ is strongly rigid.
\end{proof}

\section{Another proof of \Cref{thm:IvanovRigidity0}}\label{sec:mainthm} 
 In this section, we utilize the results in \Cref{sec:finite_rigidity} along with an argument analogous to one that appears in \cite{bavard2020isomorphisms} to give another proof of \Cref{thm:IvanovRigidity0}.
\par
We first show that the sphere complex uniquely determines finite-rank
doubled handlebodies. This result follows from those in \Cref{sec:geom_rigid}, but we give another proof of it here to show the proof of \Cref{thm:IvanovRigidity0} that appears here is independent of the proofs in \Cref{sec:geom_rigid}.

\begin{proposition}\label{prop:FiniteTypeIso}
    Suppose $\phi:\sph(M_{n,s})\to \sph(M_{m,r})$ is an isomorphism between the sphere graphs of two handlebodies of finite type. Then $n=m$ and $s=r$.
\end{proposition}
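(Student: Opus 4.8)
The plan is to recover $(n,s)$ from two combinatorial invariants of $\sph(M_{n,s})$: its dimension, which I claim is $3n+s-4$, and the maximal number of \emph{self-adjacent} spheres in a pants decomposition, which I claim is $n$. Since $\phi$ is a simplicial isomorphism it preserves both, giving $3n+s=3m+r$ and $n=m$, hence $s=r$. (I assume, as is implicit, that the complexes are non-degenerate --- say $2n+s,\,2m+r\ge 4$, so that pants decompositions exist --- some hypothesis of this kind being necessary, since $\sph(M_{1,0})\cong\sph(M_{1,1})$ is a single point while $(1,0)\ne(1,1)$.)

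For the dimension: a top-dimensional simplex is a pants decomposition $\sigma$; with $P$ complementary pairs of pants and dual graph $\Delta_\sigma$ (a vertex per pants, an edge per sphere of $\sigma$), counting cuffs gives $3P=2|\sigma|+s$, while van Kampen applied to the gluing of the simply-connected pairs of pants along $2$-spheres gives $\F_n\cong\pi_1(M_{n,s})\cong\F_{b_1(\Delta_\sigma)}$ with $b_1(\Delta_\sigma)=|\sigma|-P+1$; together these force $|\sigma|=3n+s-3$. To detect self-adjacency combinatorially, note that for $a\in\sigma$ the component of $M_{n,s}\setminus(\sigma\setminus a)$ obtained by uncutting $a$ is $M_{0,4}$ when $a$ borders two distinct pairs of pants and $M_{1,1}$ when $a$ is self-adjacent, so by \Cref{cor:link_join} the link $\lk(\sigma\setminus a)$ is $\sph(M_{0,4})$ (three vertices) in the first case and $\sph(M_{1,1})$ (one vertex) in the second. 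As $\phi$ takes maximal simplices to maximal simplices and preserves links, it carries self-adjacent spheres of $\sigma$ exactly onto self-adjacent spheres of $\phi\sigma$.

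Finally, the self-adjacent spheres of $\sigma$ are the loops of $\Delta_\sigma$, so there are at most $b_1(\Delta_\sigma)=n$ of them; and this is attained by realizing $M_{n,s}$ (for $n\ge 1$) as $n$ copies of $M_{1,1}$ attached along $2$-spheres to a copy of $M_{0,n+s}$ and taking the sphere $S^2\times\{\mathrm{pt}\}$ in each $M_{1,1}$ together with the attaching spheres and a pants decomposition of the $M_{0,n+s}$ piece. Thus the maximum over pants decompositions of the number of self-adjacent spheres equals $n$; applying the previous paragraph to such a $\sigma$ and to $\phi^{-1}$ gives $n=m$, and then $s=r$. I expect the only subtle point to be this last construction --- exhibiting a pants decomposition with the full $n$ self-adjacent spheres --- and pinning down the finitely many low-complexity manifolds that must be excluded; the rest is bookkeeping. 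Since the argument uses only \Cref{cor:link_join} and elementary $3$-manifold topology, it is independent of \Cref{sec:geom_rigid}.
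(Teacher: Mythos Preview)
Your argument is correct and genuinely different from the paper's.  Both proofs begin with the dimension invariant to obtain $3n+s=3m+r$, but diverge from there.  The paper splits into cases: when $s,r\le 2$ the congruence alone suffices, and when $s\ge 3$ it exhibits a sphere cutting off an $M_{0,s+1}$ and uses the equivalence relation of \Cref{prop:equivequiv} (developed just above for the alternate proof of \Cref{thm:IvanovRigidity0}) to argue such a sphere cannot exist on the other side if $m>n$.  Your route instead extracts $n$ directly as the maximum number of self-adjacent spheres over all pants decompositions, detecting self-adjacency via $|\lk(\sigma\setminus a)|\in\{1,3\}$ from \Cref{cor:link_join}.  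This is more uniform --- no case split on $s$ --- and uses only the elementary link computation, at the cost of having to exhibit a pants decomposition realizing $n$ loops in the dual graph.  Your explicit construction needs a small patch when $n+s<3$ (only $(n,s)=(2,0)$ under your hypothesis $2n+s\ge 4$): there the two attaching spheres of your $M_{0,n+s}=M_{0,2}$ piece are isotopic, but gluing two copies of $M_{1,1}$ directly along their boundaries still yields a pants decomposition with $2$ self-adjacent spheres, so the claim survives.  You are also right to flag the low-complexity exclusions; the paper's statement is used only in a context where $2n_i+s_i\ge 6$, so the issue does not arise there, but as literally stated the proposition does fail for \textit{e.g.}\ $M_{1,0}$ versus $M_{1,1}$.
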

\begin{proof}
    Note that the dimension of the largest simplex of the two sphere complexes must be the same, which puts a dimensional restriction on when the two graphs can be isomorphic. Thus, if $s, r\in \{0,1,2 \}$, the result follows, as the maximal simplex dimension of $\sph(M_{n,s})$ is $3n+s-3$, and similarly it is $3m+r-3$ for $\sph(M_{m,r})$. For these to be equal, $s-r$ must be a multiple of $3$, which means $s=r$ in this case. Thus, $n=m$ as well.
    \par 
    Now suppose we have an isomorphism when $m>n$ and $s\geq 3$. Then it follows that $s>r$. In $\sph(M_{n,s})$ there is a sphere $S$ cutting off a homeomorphic copy of $M_{0, s+1}$. In particular, by \Cref{prop:equivequiv}, as the equivalence classes of the link must be preserved by the isomorphism, $\phi(S)$ must also be a separating sphere, and one of its complementary components is homeomorphic to $M_{0, s+1}$. This is because the only doubled handlebodies with boundary that have a finite sphere graph are $M_{1,1}$ and $M_{0, p}$, and the only time when they have the same number of spheres as $M_{0,s+1}$ is when $p=s+1$ ($\sph(M_{1,1})$ only has a single sphere). But this is impossible as $s>r$, so no such spheres exist in $M_{m,r}$. Hence, $m=n$, and thus $s=r$, finishing the proof.
\end{proof}
 
\subsection{Links of sphere systems} 
In analogy with \cite{bavard2020isomorphisms}, we first define an
equivalence relation on $\lk(\sigma)$ for $\sigma$ a sphere system in
$\sph(M_\Gamma)$.
\begin{definition}\label{def:lnequiv} 
  Let $\a,\b \in \lk (\sigma)$.  Then $\a \sim \b$ if and only if 
  there exists $\c \in \lk(\sigma)$ non-adjacent 
  to both $\a, \b$. 
\end{definition}

\begin{remark}
  Since $\lk(\sigma)$ is loop-free, in particular $\a \sim \a$.
  Likewise, if $\a, \b$ are non-adjacent, then $\a$ is non-adjacent to
  both and $\a \sim \b$.
\end{remark}

While defined combinatorially, $\sim$ may be characterized
topologically.
In particular, the following shows that $\sim$ is an equivalence
relation.

\begin{proposition}\label{prop:equivequiv}
  Let $\a, \b \in \lk(\sigma)$.  Then $\a \sim \b$ if and only if $\a, \b$
  lie in the same complementary component of $\sigma$.
\end{proposition}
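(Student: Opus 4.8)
The plan is to prove both implications by working topologically with sphere systems and their complementary components, leveraging \Cref{cor:link_join}, which identifies $\lk(\sigma)$ with the join $\Asterisk_i \sph(Z_i)$ over complementary components $Z_i$ of $\sigma$. The key structural fact is that two vertices of a join are non-adjacent if and only if they lie in the same join factor (since the join construction makes vertices in different factors adjacent, and vertices within a factor $\sph(Z_i)$ are non-adjacent precisely when the corresponding spheres intersect essentially in $Z_i$). So the whole proposition reduces to understanding, within a single $\sph(Z_i)$, when two spheres admit a common non-neighbor, together with a cross-factor argument.

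For the easier direction, I would first show that if $\a,\b$ lie in \emph{different} complementary components $Z_i \neq Z_j$, then $\a \not\sim \b$: any $\c \in \lk(\sigma)$ lies in some $Z_k$, and it is adjacent to whichever of $\a,\b$ lies in a different component than $\c$ (by \Cref{cor:link_join}, vertices in distinct factors are always joined), so $\c$ cannot be non-adjacent to both. Hence $\a \sim \b$ forces them into the same $Z_i$. Conversely, suppose $\a,\b \in Z_i$. If $\a = \b$ or if $\a,\b$ are disjoint, then (as in the Remark) taking $\c = \a$ works since $\sph(Z_i)$ is loop-free. The substantive case is when $\a,\b \subset Z_i$ intersect essentially: I need to produce $\c \in \sph(Z_i)$ disjoint from both. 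Here I would cut $Z_i$ along $\a \cup \b$ (or pass to a regular neighborhood of $\a \cup \b$ and its complement) and argue that, since $Z_i \cong M_{n,s}\setminus \partial M_{n,s}$ has enough complexity — $\dim \sph(Z_i) \geq 1$ and $Z_i$ is not one of the sporadic manifolds with no room — the complement of a filling neighborhood of $\a \cup \b$, or the complement of $\a \cup \b$ after putting them in a normal-type configuration, contains an essential sphere. One clean way: take $\sigma' $ a maximal sphere system in $Z_i$ containing $\a$; then $\b$ is homotopic to a sphere in normal form with $\sigma'$ (\Cref{prop:hnf}), and since $\b$ meets only finitely many spheres of $\sigma'$ essentially, some sphere $\c \in \sigma' \setminus \{\a\}$ is disjoint from both $\a$ and $\b$ — provided $\sigma'$ has at least two spheres, which holds as long as $Z_i$ is not $M_{1,1}$ or $M_{0,4}$; those two cases I would handle by hand (in $M_{1,1}$ there is only one sphere so no two distinct essential spheres intersect essentially, and in $M_{0,4}$ the three spheres pairwise intersect but one checks directly against the definition).

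The main obstacle I anticipate is the case analysis ensuring a common disjoint sphere $\c$ actually exists when $\a,\b$ intersect essentially inside a complementary component of small-but-not-sporadic complexity, and in particular making the Hatcher-normal-form argument yield a sphere of $\sigma'$ genuinely disjoint from the intersecting pair rather than merely meeting $\b$ in a controlled way. I would address this by choosing $\sigma'$ carefully — extend $\{\a\}$ to a maximal system so that the piece structure of $\b$ is confined, then count: $\b$ in normal form meets each sphere of $\sigma'$ in at most the number of its pieces, and a dimension/Euler-characteristic bound shows $|\sigma'| = \dim\sph(Z_i)+1$ exceeds the number of spheres $\b$ can meet once $Z_i$ is large enough, forcing an unused sphere disjoint from $\b$ (and from $\a$, as $\a \in \sigma'$). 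Finally, I would assemble: $\a \sim \b$ $\iff$ same component, which also re-proves that $\sim$ is an equivalence relation since "lying in the same complementary component" visibly is.
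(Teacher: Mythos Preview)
Your proposal contains a fundamental confusion about adjacency in the sphere complex that inverts the hard direction of the proof. In $\sph(M_\Gamma)$, two spheres are \emph{adjacent} precisely when they can be realized \emph{disjointly}; they are \emph{non-adjacent} when they intersect essentially. Consequently, the Remark after \Cref{def:lnequiv} dispatches the cases $a=b$ and ``$a,b$ intersect essentially'' (take $c=a$), not the case ``$a,b$ disjoint'' as you wrote. The substantive case is exactly the opposite of what you treat: $a,b$ lie in the same component $Z_i$ and are \emph{disjoint} (hence adjacent), and one must produce $c \in \sph(Z_i)$ that \emph{intersects both essentially} (hence is non-adjacent to both). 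Your Hatcher-normal-form argument --- extend $a$ to a maximal system $\sigma'$ and find $c \in \sigma' \setminus \{a\}$ disjoint from $b$ --- manufactures a sphere disjoint from both $a$ and $b$, which is the wrong thing entirely: such a $c$ is adjacent to both and witnesses nothing about $\sim$.

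The paper's approach is quite different and worth knowing. With $a,b$ disjoint in the component $M$, one notes that $M$ has punctures (it is cut out by $\sigma$), and constructs a simple arc $\gamma$ between two punctures of $M$ that crosses $a$ and $b$ essentially; a short case analysis on the separating/non-separating type of $a,b$ in $M$ shows how to route $\gamma$ (with signed intersection number controlling essentiality via \Cref{cor:int_arc}). Then \Cref{lem:int_pop} gives that the boundary sphere $c$ of a regular neighborhood of $\gamma$ intersects both $a$ and $b$ essentially. When $M$ has only one puncture, one first removes an auxiliary non-separating sphere to create more punctures. Your join observation for the forward direction is fine and equivalent to the paper's one-line argument that $a \cup b \cup c$ is connected and disjoint from $\sigma$; the gap is entirely in the converse.
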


\begin{proof}
To prove the forward direction, note that if $\c \in \lk(\sigma)$
is not adjacent to $\a,\b$, then $\a \cup \b \cup \c$ is connected
and disjoint from $\sigma$, hence is contained in a single connected
component.

Conversely, suppose that $\a, \b \subset M$ for $M \in \pi_0(M_\Gamma
\setminus \sigma)$.
We assume $\a, \b$
are disjoint and distinct, else $\a\sim \b$ by the remark above; by
\Cref{rmk:full_in_link}, $\a,\b$ are essential in $M$.
  Suppose first that $M$ has at least two distinct punctures
  $x,y$.  By \Cref{lem:int_pop}, it suffices to find a simple arc
  $\gamma \subset M$ between $x,y$ that intersects $\a, \b$
  essentially: the sphere $\c$ bounding a regular neighborhood of
  $\gamma$ intersects $\a, \b$ essentially in $M$, thus by
  \Cref{prop:full_subcpx} essentially in $M_\Gamma$. 
  Thus $\c$ is essential in
  $M_\Gamma$ and disjoint from $\sigma$, hence $\c \in \lk(\sigma)$
  and $\a\sim \b$.  We consider four cases:

  \begin{enumerate}[label=\textit{(\roman*)}]
    \item \textit{$\a \cup \b$ is non-separating.}  We may choose
      $\gamma$ to intersect $\a, \b$ each exactly once.
    \item \textit{$\a \cup \b$ is separating but $\a, \b$ are
      non-separating.} We may choose $\gamma$ intersecting $\a, \b$
      each exactly once if $x,y$ are not separated by $\a \cup \b$;
      else, choose $\gamma$ to intersect $\b$ once and $\a$ with
      signed intersection $\pm 2$. 
    \item \textit{Only $\a$ is separating.} If $\a$ separates some
      punctures of $M$, then replace $x,y$ such that they are
      separated by $\a$ and fix $\gamma$ to intersect $\b$ exactly
      once.  Else, the complementary component $M'$ 
      of $\a$ not containing
      $x,y$ must have genus; if $\b \not \subset M'$, 
      fix a non-separating sphere $d \subset
      M'$ and choose $\gamma$ to intersect $\b$ and $d$ (if defined) 
      each exactly once.
    \item \textit{$\a, \b$ are separating.}   Let $U,V,W$ be
      components of $M \setminus (\a \cup \b)$, where $\a$ separates
      $U$ and $V \cup W$ and $\b$ separates $U \cup V$ and $W$. 
      Let $P$ denote the set of punctures of $M$.
      If $U$ has no punctures in $P$, then it has genus, hence fix a
      non-separating sphere $X \subset U$, and likewise if $W$
      has no punctures in $P$ fix a non-separating sphere $Y
      \subset W$.  Replace $x \in P$ to be a puncture in $U$, if one
      exists, and $y \in P$ to be a puncture in $W$, if one exists,
      or else some puncture distinct from $x$.
      If defined, let $\gamma$ intersect $X, Y$ each exactly once.  
  \end{enumerate}

  $M$ is obtained by removing $\sigma$ from $M_\Gamma$, hence
  must have at least one puncture.  If $M$ has exactly one puncture,
then we may assume $M$ has genus at least $2$; otherwise, $M$ has at most
one distinct essential sphere, and the statement follows.  Hence
there exists an essential sphere $q$ distinct from $\a,\b$
and non-separating in $M$; we note that $M' = M \setminus q$ has
three punctures.  Replacing $\sigma$ with $\sigma' = \sigma
\cup \{q\}$ and $M$ with $M'$, the argument above
obtains $\c \in \lk(\sigma') \subset \lk(\sigma)$ non-adjacent
to $\a,\b$.
\end{proof}

Let $\lk(\sigma)|_{[\a]}$ denote the full subcomplex of
$\lk(\sigma)$
(or equivalently of 
$\sph(M_\Gamma)$, since it is flag) 
induced by the equivalence class of $\a$ in
$\lk(\sigma)$. By \Cref{prop:equivequiv}, $[\a]$ is exactly 
the spheres in $\lk(\sigma)$ in the same component $M \subset
M_\Gamma \setminus \sigma$ as
$\a$.  From \Cref{prop:full_subcpx} and \Cref{rmk:full_in_link}
we then have the following:

\begin{corollary}\label{cor:link_equiv_char}
  Let $\sigma \subset \sph(M_\Gamma)$ and $\a \in \lk(\sigma).$  Let $M$
  be the component of $M_\Gamma \setminus \sigma$ containing $\a$.  
  Then
  the equivalence class $[\a]$ is the set of all essential embedded
  2-spheres in $M$, and $\lk(\sigma)|_{[\a]} \cong \sph(M)$. \qed
\end{corollary}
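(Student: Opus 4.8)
The plan is to assemble the statement directly from \Cref{prop:equivequiv}, \Cref{prop:full_subcpx}, and \Cref{rmk:full_in_link}; no new geometric input is needed. First I would invoke \Cref{prop:equivequiv} to identify the equivalence class $[\a]$ with the set of vertices of $\lk(\sigma)$ represented by essential spheres lying in the component $M \subset M_\Gamma \setminus \sigma$ containing $\a$.

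Next I would check that this set of vertices is exactly the set of isotopy classes of essential embedded $2$-spheres in $M$. One inclusion is \Cref{rmk:full_in_link}: any $\b \in \lk(\sigma)$ contained in $M$ is essential in $M$. For the reverse inclusion, if $\b \subset M$ is essential in $M$ then $\b$ is disjoint from $\sigma$ and, by \Cref{lem:pi2_inj} (equivalently by \Cref{prop:full_subcpx}), essential in $M_\Gamma$, so $\b \in \lk(\sigma)$. Since the vertex map induced by $\sph(M) \hookrightarrow \sph(M_\Gamma)$ in \Cref{prop:full_subcpx} is injective, distinct isotopy classes in $M$ remain distinct in $M_\Gamma$, so we get an honest bijection between the vertex set of $\sph(M)$ and $[\a]$.

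Finally, to upgrade this vertex bijection to a simplicial isomorphism $\lk(\sigma)|_{[\a]} \cong \sph(M)$, I would use the fullness clause of \Cref{prop:full_subcpx}: the inclusion $\sph(M) \hookrightarrow \sph(M_\Gamma)$ is a full simplicial embedding, and by \Cref{rmk:full_in_link} its image is precisely the full subcomplex spanned by $[\a]$, i.e.\ $\lk(\sigma)|_{[\a]}$. A full simplicial embedding is an isomorphism onto its image, completing the argument.

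The only point requiring any care — and the nearest thing to an obstacle — is the bookkeeping that separates isotopy in $M$ from isotopy in $M_\Gamma$ and correctly excludes peripheral or ball-bounding spheres; but this is exactly what \Cref{prop:full_subcpx} and \Cref{rmk:full_in_link} were established to handle, so the proof is essentially immediate.
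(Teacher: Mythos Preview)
Your proposal is correct and matches the paper's approach exactly: the paper states the corollary with a \qed and no further argument, deriving it immediately from \Cref{prop:equivequiv}, \Cref{prop:full_subcpx}, and \Cref{rmk:full_in_link}, which is precisely the assembly you describe.
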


\subsection{Diffeomorphisms from automorphisms}
The following theorem will be the main result to proving
\Cref{thm:IvanovRigidity0} in the general case. It is analogous to
the work in \Cref{sec:geom_rigid} to construct a diffeomorphism
inducing an isomorphism between sphere graphs, but the method in
which we build the diffeomorphism differs from what is in
\Cref{sec:geom_rigid}.  The proof below is inspired by the proof of
\cite[Theorem 1.3]{bavard2020isomorphisms}, and for the most part,
follows it very closely. 

\begin{thm}\label{thm:IvanovRigidity}
    Let $\Gamma$ and $\Gamma'$ be two locally finite, infinite graphs, with associated 3-manifolds $M_\Gamma$ and $M_{\Gamma'}$. Every simplicial isomorphism of the sphere complexes $\sph(M_\Gamma) \to \sph(M_\Gamma')$ is induced by a homeomorphism $M_\Gamma \to M_{\Gamma'}$.  
\end{thm}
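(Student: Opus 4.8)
The plan is to adapt the strategy of Bavard--Dowdall--Rafi's proof of \cite[Theorem 1.3]{bavard2020isomorphisms}, using the finite-type rigidity machinery of \Cref{sec:finite_rigidity} as the local building block. Fix an isomorphism $\phi : \sph(M_\Gamma) \to \sph(M_{\Gamma'})$. First I would fix a compact exhaustion of $M_\Gamma$ by submanifolds $M_i \cong M_{n_i,s_i}$ whose boundary spheres lie in some maximal sphere system $\sigma$, arranged so that $2n_i + s_i \geq 6$ and so that each $M_i$ is one of the complementary components appearing in \Cref{cor:link_equiv_char}; concretely, $M_i$ is cut off by a subsystem $\sigma_i \subset \sigma$ and $\sph(M_i) \cong \lk(\sigma_i)|_{[\a_i]}$ as a full subcomplex of $\sph(M_\Gamma)$. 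Using \Cref{prop:equivequiv} and \Cref{cor:link_equiv_char}, the isomorphism $\phi$ carries $\lk(\sigma_i)|_{[\a_i]}$ onto $\lk(\phi\sigma_i)|_{[\phi\a_i]} \cong \sph(M_i')$ for some complementary component $M_i' \subset M_{\Gamma'} \setminus \phi\sigma_i$; by \Cref{prop:FiniteTypeIso} we get $M_i' \cong M_i$, so $\phi$ restricts to a simplicial isomorphism $\sph(M_i) \to \sph(M_i')$ of finite-type sphere complexes.

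The next step is to promote each of these restricted isomorphisms to a homeomorphism. Inside $M_i$, pick one of the finite strongly rigid sets $X \subset \sph(M_i)$ constructed in \Cref{sec:finite_rigidity} (geometrically rigid by \Cref{prop:GeoRigidExhaustion}, strongly rigid by the proof of \Cref{thm:geom_rigidity}). The restriction $\phi|_X$ is a locally injective simplicial map $X \to \sph(M_i')$, so strong rigidity produces a diffeomorphism $h_i : M_i \to M_i'$ with $(h_i)_* = \phi$ on all of $\sph(M_i)$; moreover this $h_i$ is unique up to isotopy among diffeomorphisms inducing $\phi$ on $\sph(M_i)$, by the uniqueness clause in \Cref{cor:uniqueGeoRigid}. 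Here I would enlarge the exhaustion so that $M_i \subset M_{i+1}$ and so that $\sph(M_i) \subset \sph(M_{i+1})$ is the full subcomplex of spheres contained in $M_i$; then the uniqueness forces $h_{i+1}|_{M_i}$ to agree (up to isotopy) with $h_i$, and after an isotopy correction we may assume $h_{i+1}$ genuinely extends $h_i$. Taking the direct limit of the $h_i$ gives a homeomorphism $h : M_\Gamma \to M_{\Gamma'}$.

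Finally I would check $h$ induces $\phi$ on all of $\sph(M_\Gamma)$: any essential sphere $\b$ in $M_\Gamma$ is isotopic into some $M_i$ (this uses that the $M_i$ exhaust and that $\b$ is compact, plus \Cref{prop:full_subcpx} to see essentiality is detected in $M_i$), so $h_*(\b) = (h_i)_*(\b) = \phi(\b)$. The main obstacle I anticipate is the compatibility/coherence step: arranging the exhaustion and the strongly rigid sets so that the $h_i$ can actually be chosen to nest, rather than merely agreeing up to isotopy, and making sure the boundary behavior of $h_i$ (the $S^2$-boundary components, tracked by the labeling map $\delta$ and the "good pairs" in \Cref{prop:RigidityOnY}) is compatible with the gluing that rebuilds $M_\Gamma$ from the $M_i$. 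This is exactly the point where the nonempty-boundary refinements of \Cref{sec:finite_rigidity} are needed, and where the argument departs from the closed-surface template of \cite{bavard2020isomorphisms}; one must also handle the low-complexity components of $M_\Gamma \setminus \sigma$ (those with $2n+s < 6$) by absorbing them into larger pieces of the exhaustion rather than treating them directly.
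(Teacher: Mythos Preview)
Your proposal is correct and follows essentially the same strategy as the paper: a compact exhaustion by finite-type pieces, identification of their images via the combinatorial equivalence relation on links (\Cref{prop:equivequiv}, \Cref{cor:link_equiv_char}, \Cref{prop:FiniteTypeIso}), application of the finite-type rigidity of \Cref{sec:finite_rigidity} to produce diffeomorphisms on each piece, and a direct limit after resolving compatibility. The paper handles your two flagged obstacles in a slightly streamlined way---it arranges the exhaustion so that every component of $M_\Gamma \setminus K_i$ is of infinite type (so the compact piece is singled out as the unique equivalence class with finite clique number), and it disposes of the coherence step by observing that each $h_i$ is determined up to sphere twists, which can be absorbed---but the core argument is the same.
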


\begin{proof}

Fix a simplicial isomorphism $\Psi \colon \sph(M_\Gamma) \to \sph(M_\Gamma')$. Fix a compact exhaustion $K_1 \subset K_2 \subset \dots$ of $M_{\G}$ such that all components $M_{\Gamma} \setminus K_i$ are infinite-type and $K_i$ is homeomorphic to $M_{n_i, s_i}$ for $2n_i+s_i\geq 6$. By enlarging each $K_i$ if necessary, we may assume that the boundary spheres $\a_i$ of $K_i$ are essential embedded spheres of $K_{i + 1}$.
\par 
Let $E_i$ denote the equivalence class of $\lk(\a_i)$ that contains essential embedded 2-spheres in $K_i$.  The equivalence class $E_i$ is distinguishable combinatorially from the other equivalence classes of $\lk(\a_i)$ since it is the unique class with a finite clique number.  This is because $K_i$ is finite-type and, therefore, the spheres in a pants decomposition of $Y_i$ for a finite maximal clique.  Since connected components of $M_\Gamma \setminus Y_i$ are of infinite type,  their associated equivalence classes in $\lk(\a_i)$ have unbounded clique numbers. 
\par 
In particular, as the restriction of $\Psi$ from $\lk(\a_i)$ to $\lk(\Psi(\a_i))$ sends equivalence classes to equivalence classes (as the classes are defined combinatorially), it follows from the discussion in the previous paragraph that:
\begin{itemize}
    \item $\lk(\Psi(\a_i))$ contains exactly one equivalence class $E_i'$ which corresponds to a compact doubled handlebody $K_i' \subset M_{\Gamma'}$ whose boundary is $\Psi(\a_i)$, and 
    \item $\Psi$ restricts to an isomorphism $\Psi_i: \sph(K_i)\to \sph(K_i')$.
\end{itemize}

By \Cref{prop:FiniteTypeIso}, $K_i \cong K_i'$. Recall that the proof of \Cref{thm:geom_rigidity} showed that every element of $\Aut(\sph(M_{n_i, s_i}))$ is induced by a diffeomorphism. It follows that $\Psi_i$ is induced by a diffeomorphism $\phi_i: K_i \to K_i'$. The ambiguity on the choice of $\phi_i$ is only up to a product of sphere twists. By possibly modifying each $\phi_i$ by sphere twists, we may thus assume that these diffeomorphisms are compatible in the sense that $\phi_{i+1}(K_i)=K_i'$ and the restriction of $\phi_{i+1}$ to $K_i$ agrees with $\phi_i$. In particular, the direct limit of this sequence of diffeomorphisms induces a diffeomorphism $\phi: M_{\G} \to M_{\G'}$ which, by construction, induces the isomorphism $\Psi$.
\end{proof}

To finish the proof of \Cref{thm:IvanovRigidity0}, one can follow the proof in \Cref{sec:geom_rigid}, utilizing \Cref{thm:IvanovRigidity} in place of \Cref{lem:compatibility_sigma} and \Cref{cor:ManifoldMCGActionSurjective}.

\section{Locally finite strongly rigid
sets}\label{sec:loc_strongly_rigid}

Let $M_\Gamma$ be the doubled handlebody associated to a 
locally
finite graph $\Gamma$.  We would like to extend to
$\sph(M_\Gamma)$ the results of \Cref{sec:finite_rigidity} and 
construct an exhaustion by (the appropriate generalization of)
finite strongly rigid sets. 

If $\Gamma$ is infinite-type, then
$M_\Gamma$ does not admit a finite strongly rigid set. Indeed, a finite set $X_0 \subset \sph(M_\Gamma)^{(0)}$ has compact union 
$K= 
\bigcup_{\a\in X_0}\a \subset M_\Gamma$.
There exists a diffeomorphism $h : M_\Gamma \to M_\Gamma$
acting non-trivially on 
$\sph(M_\Gamma)$ but supported disjointly from $K$, 
hence fixing $X_0$. The identity automorphism and 
$h_* \neq \mathrm{id}$ both restrict to the inclusion map of $X_0$
into $\sph(M_{\G})$, hence $X_0
\hookrightarrow \sph(M_\Gamma)$ does not extend to a unique
automorphism of $\sph(M_{\G})$.  We instead
consider the rigidity of subcomplexes that satisfy 
a local version of finiteness.

\begin{definition}\label{def:loc_finite}
  A subcomplex $X \subset \sph(M_\Gamma)$ is 
\emph{topologically locally finite} if every
compact $K \subset M_\Gamma$ essentially intersects finitely many
components of $X^{(0)}$. 
\end{definition}

In fact, for all $\Gamma$ of infinite-type there exist
\textit{no} strongly 
rigid sets in $\sph(M_{\Gamma})$, as we describe in
\Cref{sec:no_rigid}. 
Nonetheless, we will construct subcomplexes 
which always admit unique isomorphism extensions of
locally injective maps, provided those maps also preserve maximal sphere systems. 

\begin{definition}\label{def:maximal}
  A simplicial map $f : X \subset \sph(M_\Gamma) \to
  \sph(M_{\Gamma'})$ is \emph{maximal} if for any sphere
  system $\sigma \subset X$ that is 
  maximal in $M_\Gamma$, $f\sigma$ is maximal in $M_{\Gamma'}$.
\end{definition}

\noindent
This condition is necessary in the following sense: 
if $\sigma \subset \sph(M_\Gamma)$
is a maximal sphere system $\sigma$ and a map
$f : \sigma \to \sph(M_\Gamma)$ extends to an automorphism, then
$f\sigma$ must also be maximal. 
If $X \subset \sph(M_\Gamma)$ 
is rigid, then any locally injective map $X \to
\sph(M_\Gamma)$ extends to an automorphism, hence is maximal.  

A subcomplex $X \subset \sph(M_\Gamma)$ for which any maximal
locally injective map $f : X \to \sph(M_\Gamma)$ extends to a unique
automorphism is \emph{strongly rigid over maximal maps}.
For brevity, by \emph{locally finite strongly rigid set}, we will
mean a topologically locally finite subcomplex that is strongly
rigid over maximal maps.
Note that when $\Gamma$ has infinite type, 
such subcomplexes are not locally finite in the usual
simplicial sense. In \Cref{sec:loc_finite_infinite}, we prove the
following, which implies \Cref{thm:sr_exhaustion}:

\begin{theorem}\label{thm:loc_finite_rigidity}
  Suppose that $\Gamma$ is connected and 
  $\dim(\sph(M_{\Gamma})) \geq 4$, and in particular if
  $\Gamma$ is infinite-type.  
  Then $\sph(M_\Gamma)$ is covered by 
  nested locally finite strongly rigid sets $Z_i$. 
\end{theorem}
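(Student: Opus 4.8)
The plan is to build the exhaustion $Z_i$ by combining a compact exhaustion of $M_\Gamma$ with the finite-type strongly rigid sets from \Cref{sec:finite_rigidity}, and to verify the unique extension property for maximal locally injective maps by reducing, piece by piece, to the finite-type situation. Concretely, I would fix a compact exhaustion $K_1 \subset K_2 \subset \cdots$ of $M_\Gamma$ by submanifolds $K_i \cong M_{n_i,s_i}$ with $2n_i + s_i \geq 6$ (possible once $\dim \sph(M_\Gamma) \geq 4$, enlarging $K_i$ so the boundary spheres are essential in $K_{i+1}$ and lie in a common maximal sphere system). Inside each $K_i$ take a finite geometrically/strongly rigid set $X^{(i)} \subset \sph(K_i)$ as produced by \Cref{prop:GeoRigidExhaustion}, chosen to contain a pants decomposition of $K_i$, and arranged to be nested: $X^{(i)} \subset X^{(i+1)}$, using that $\sph(K_i) \hookrightarrow \sph(K_{i+1})$ is a full subcomplex by \Cref{prop:full_subcpx}. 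I would then augment $X^{(i)}$ with the boundary spheres $\partial K_j$ for $j \leq i$ together with enough spheres to pin down how the pieces of $M_\Gamma \setminus \partial K_j$ fit together — this is where the argument of \Cref{lem:compatibility_sigma} and \Cref{rmk:compatibility_sigma} enters — and set $Z_i$ to be the resulting full subcomplex. Topological local finiteness is automatic: each vertex of $Z_i$ is represented by a sphere in the compact set $K_i$, and a compact $K$ meets only finitely many components of a sphere system living in a fixed compact set.

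The core is the extension property. Given a maximal locally injective $f : Z_i \to \sph(M_\Gamma)$, I would first use the equivalence-class machinery of \Cref{sec:mainthm} (\Cref{prop:equivequiv}, \Cref{cor:link_equiv_char}) exactly as in the proof of \Cref{thm:IvanovRigidity}: the boundary sphere system $\sigma_j = \partial K_j \subset Z_i$ has the property that $\lk(\sigma_j)$ decomposes as a join over complementary components of $\sigma_j$, with the finite-clique-number class corresponding to the compact piece $K_j$. Because $f$ is maximal and preserves links and joins (hence equivalence classes, since these are combinatorially defined), $f\sigma_j$ is a maximal sphere system cutting off a piece on which $f$ restricts to a map $\sph(K_j) \cap Z_i \to \sph(\text{that piece})$; by \Cref{prop:FiniteTypeIso}-type reasoning that piece is homeomorphic to $K_j$. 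Applying geometric rigidity of $X^{(j)}$ inside $K_j$ and the compatibility lemmas \Cref{lem:compatibility_sigma}, \Cref{lem:compatibility_flip}, \Cref{cor:ManifoldMCGActionSurjective}, one gets diffeomorphisms $\phi_j : K_j \to f(K_j)$ inducing $f$ on the relevant subcomplex; the augmenting spheres and \Cref{rmk:compatibility_sigma} force the $\phi_j$ to be compatible across $j$ (up to sphere twists, which can be corrected inductively as in \Cref{thm:IvanovRigidity}). Passing to the direct limit yields a diffeomorphism $\phi : M_\Gamma \to M_\Gamma$ with $\phi_* |_{Z_i} = f$, hence an automorphism of $\sph(M_\Gamma)$ extending $f$.

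For uniqueness of the extension, I would invoke \Cref{cor:uniqueGeoRigid}: since each $Z_i$ contains a pants decomposition $P_0$ of (a compact exhausting piece of) $M_\Gamma$ in which every sphere bounds two distinct complementary components, any two diffeomorphisms inducing $f$ on $Z_i$ agree on that piece, and then the flip-connectivity argument of \Cref{cor:ManifoldMCGActionSurjective} (via \Cref{prop:FlipMoveConnectivity} applied to successively larger $K_\ell$) propagates the agreement to all of $\sph(M_\Gamma)$. So the extension is unique, and $Z_i$ is strongly rigid over maximal maps. Finally $\bigcup_i Z_i = \sph(M_\Gamma)$ because $\bigcup_i X^{(i)} = \bigcup_i \sph(K_i) = \sph(M_\Gamma)$: every essential sphere in $M_\Gamma$ lies in some $K_i$, hence is a vertex of $\sph(K_i)$, and by \Cref{prop:GeoRigidExhaustion} eventually appears in the exhausting rigid sets of $K_i$.

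\textbf{Main obstacle.} I expect the delicate point to be the \emph{compatibility of the local diffeomorphisms} $\phi_j$ across the exhaustion — ensuring one can choose, simultaneously for all $j$, representatives inducing $f$ that restrict correctly from $K_{j+1}$ to $K_j$. The finite-type rigidity results only determine $\phi_j$ up to sphere twists supported near $\partial K_j$, and a priori these ambiguities could obstruct taking a coherent direct limit; resolving this requires carefully choosing the augmenting spheres in $Z_i$ so that, via \Cref{rmk:compatibility_sigma}, the value of $f$ on those spheres rigidifies the gluing data (which pairs of boundary spheres of $M_\Gamma \setminus \partial K_j$ are identified, and with what twisting), mirroring the role of ``good pairs'' in \Cref{sec:finite_rigidity}. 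The maximality hypothesis on $f$ is essential here and at the step identifying the compact equivalence class, since without it $f$ could collapse a maximal system and the link/join structure used to locate $K_j$ would fail.
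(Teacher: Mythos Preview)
Your construction has a fundamental gap: the sets $Z_i$ you build are \emph{finite}. Every vertex of $Z_i$ is a sphere supported in the compact $K_i$ (the $X^{(i)}$, the boundary spheres $\partial K_j$, and the finitely many augmenting spheres), so $Z_i^{(0)}$ is finite. But when $\Gamma$ is infinite-type, a maximal sphere system of $M_\Gamma$ is infinite, hence $Z_i$ contains no maximal sphere system of $M_\Gamma$ and the maximality hypothesis on $f$ is vacuous. Then ``strongly rigid over maximal maps'' collapses to ordinary strong rigidity, and the paper's opening paragraph of \Cref{sec:loc_strongly_rigid} already shows this fails for any finite set: a diffeomorphism supported off $K_i$ acts nontrivially on $\sph(M_\Gamma)$ while fixing $Z_i$ pointwise, so the inclusion $Z_i \hookrightarrow \sph(M_\Gamma)$ has two distinct automorphism extensions. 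Your uniqueness argument via \Cref{cor:uniqueGeoRigid} does not help, since that corollary concerns pants decompositions of $M_{n,s}$, not of $M_\Gamma$; a pants decomposition of $K_i$ does not pin down what happens outside $K_i$.

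There is a second, related problem in your existence step. You invoke the equivalence-class machinery of \Cref{prop:equivequiv} to locate the compact side of $f(\partial K_j)$, but that machinery requires access to the \emph{full} link $\lk(\sigma_j)$ in $\sph(M_\Gamma)$, whereas $f$ is only defined on $Z_i$. Knowing $f$ on finitely many spheres of $\lk(\sigma_j)$ does not tell you which equivalence class of $\lk(f\sigma_j)$ has finite clique number. The paper avoids both issues by taking the opposite tack: it fixes a \emph{global} maximal sphere system $\sigma$ (an infinite pants decomposition of $M_\Gamma$) and defines $Z_\sigma$ to contain all of $\sigma$ together with, for each connected $5$-edge window $\eta \subset \sigma$ in $\Delta_\sigma$, a finite strongly rigid set $Y_\eta$ of $\sph(M_\eta)$. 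Then $Z_\sigma$ is topologically locally finite but not finite, the hypothesis that $f\sigma$ is maximal has content, and the proof of \Cref{prop:Zsigma_rigid} uses the local rigid sets $Y_\eta$ to show $f|_\sigma$ is an edge isomorphism of dual graphs without a $K_3,K_{1,3}$-pair, producing a global diffeomorphism via the machinery of \Cref{sec:diffeo}. The exhaustion is then obtained by enlarging $\sigma$ through flip moves, not through a compact exhaustion of $M_\Gamma$.
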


\subsection{Constructing locally finite strongly rigid
sets}\label{sec:loc_finite_infinite}

We will need the following simplicial non-embeddings: 

\begin{lemma}\label{lem:non_embed} \phantom{.}
  \begin{enumerate}[label=\textit{(\roman*)}]  
  \item \textit{$\sph(M_{0,4})*\sph(M_{0,4})$ does not embed in 
    $\sph(M_{0,5})$ and vice versa.}
  \item \textit{$\sph(M_{0,4})*\sph(M_{0,4})$ and $\sph(M_{0,5})$
       do not embed in $\sph(M_{1,2})$.}
  \item \textit{$\sph(M_{0,6})$ does not embed in $\sph(M_{1,3})$}.
\end{enumerate}
\end{lemma}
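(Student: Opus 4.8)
The plan is to reduce the lemma to elementary graph theory, using \Cref{cor:link_join} and \Cref{lem:GenusZeroSphere} to identify the relevant sphere complexes and their vertex links. Throughout, a \emph{simplicial embedding} of graphs means an injective simplicial map; it realizes the domain as a (not necessarily induced) subgraph of the target, so the image of a $k$-regular graph is a $k$-regular subgraph.

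First I would record the small cases. By \Cref{lem:GenusZeroSphere} an essential sphere in $M_{0,s}$ is determined by the partition of $\partial M_{0,s}$ it induces, each block of size $\geq 2$, and two such spheres are disjoint exactly when the partitions are nested. Hence $\sph(M_{0,3}) = \varnothing$, $\sph(M_{0,4})$ is three vertices with no edges, $\sph(M_{1,1})$ is a single vertex, and $\sph(M_{0,5})$ is the graph on the $\binom{5}{2}=10$ two-element subsets of $\{1,\dots,5\}$, adjacency being disjointness — the Kneser graph $\mathrm{KG}(5,2)$ (the Petersen graph), which is $3$-regular and connected. Consequently $\sph(M_{0,4}) \ast \sph(M_{0,4}) \cong K_{3,3}$, a $3$-regular graph on $6$ vertices, while $\sph(M_{1,1}) \ast \sph(M_{0,4}) \cong K_{1,3}$, a graph on $4$ vertices.

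Next I would analyze $\sph(M_{1,2})$. An essential sphere in $M_{1,2}$ is non-separating, with complement $M_{0,4}$, or separating, splitting $M_{1,2}$ as $M_{1,a} \sqcup M_{0,b}$ with $a+b=4$, $a \geq 1$, $b \geq 3$; the only option is $M_{1,1} \sqcup M_{0,3}$. By \Cref{cor:link_join} a separating sphere then has link $\sph(M_{1,1}) \ast \sph(M_{0,3}) \cong \sph(M_{1,1})$, a single vertex, so separating spheres have degree $1$ and form an independent set. A non-separating sphere $\a$ has link $\sph(M_{0,4})$, three vertices; writing $M_{1,2}\setminus\a \cong M_{0,4}$ with boundary $\{\a^+,\a^-\}\cup\partial M_{1,2}$ and inspecting the three induced $2{+}2$ partitions, one checks that re-gluing $\a^+$ to $\a^-$ makes exactly one of the three spheres separating in $M_{1,2}$ and the other two non-separating. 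Thus each non-separating sphere has exactly two non-separating neighbours, so deleting the separating spheres from $\sph(M_{1,2})$ leaves a $2$-regular graph. The point I want is: \textbf{$\sph(M_{1,2})$ contains no $3$-regular subgraph} — such a subgraph cannot use a separating sphere (degree $1$ there), hence sits inside a $2$-regular graph, which is impossible.

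Now the three parts. \textit{(i)} $K_{3,3}$ and $\sph(M_{0,5})$ are both $3$-regular and $\sph(M_{0,5})$ is connected, so any $3$-regular subgraph of $\sph(M_{0,5})$ is all of it; an embedding $K_{3,3}\hookrightarrow\sph(M_{0,5})$ would force $6=10$. The reverse is impossible since $10>6$. \textit{(ii)} $K_{3,3}$ and $\sph(M_{0,5})$ are $3$-regular, but $\sph(M_{1,2})$ has no $3$-regular subgraph. \textit{(iii)} In $\sph(M_{0,6})$ a sphere inducing a $3{+}3$ partition has complement $M_{0,4}\sqcup M_{0,4}$, hence link $\cong \sph(M_{0,4})\ast\sph(M_{0,4}) \cong K_{3,3}$. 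On the other hand every essential sphere of $M_{1,3}$ is non-separating (link $\sph(M_{0,5})$), or separates $M_{1,3}$ as $M_{1,1}\sqcup M_{0,4}$ (link $\sph(M_{1,1})\ast\sph(M_{0,4}) \cong K_{1,3}$), or as $M_{1,2}\sqcup M_{0,3}$ (link $\sph(M_{1,2})\ast\sph(M_{0,3}) \cong \sph(M_{1,2})$) — these being the only splits $M_{1,a}\sqcup M_{0,b}$ with $a+b=5$, $a\geq 1$, $b\geq 3$. An embedding $\sph(M_{0,6})\hookrightarrow\sph(M_{1,3})$ restricts to an embedding of links, hence would embed $K_{3,3}$ into one of $\sph(M_{0,5})$, $K_{1,3}$, $\sph(M_{1,2})$, contradicting \textit{(i)}, the count $6>4$, and \textit{(ii)} respectively.

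I expect the main obstacle to be the middle step: correctly enumerating the essential spheres of $M_{1,2}$ with their links and deducing that $\sph(M_{1,2})$ has no $3$-regular subgraph, together with the analogous classification of vertex links of $\sph(M_{1,3})$; everything else is bookkeeping with \Cref{cor:link_join} and \Cref{lem:GenusZeroSphere}. (Should the applications in \Cref{sec:loc_finite_infinite} need it, the same arguments survive replacing ``embeds'' by ``admits a locally injective simplicial map'': such a map out of a $3$-regular graph sends each vertex to a vertex of degree $\geq 3$, which suffices to rerun the contradictions, and likewise after restriction to links.)
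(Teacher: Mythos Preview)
Your proof is correct and takes a genuinely different route from the paper, particularly in parts \textit{(i)} and \textit{(iii)}. For \textit{(i)}, the paper argues by an explicit case analysis on the Petersen graph (fixing the image of one vertex, then its neighbours, and deriving a contradiction), whereas your observation that a $3$-regular subgraph of a connected $3$-regular graph must be the whole graph is cleaner and conceptual. For \textit{(ii)}, the paper cites that $\sph(M_{1,2})$ is a tree (established only later, in \Cref{sec:sporadic}) and rules out embedded cycles; your degree analysis is self-contained at this point and gives the same conclusion. The sharpest difference is \textit{(iii)}: the paper invokes the contractibility of $\sph(M_{1,3})$ from \cite{hatcher_homology_2004} together with an Euler-characteristic count showing $H_2(\sph(M_{0,6}))\neq 0$, then uses the long exact sequence of the pair. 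Your argument instead localises at a $3{+}3$ sphere, whose link is $K_{3,3}$, and observes that an embedding restricts to an embedding of links, reducing \textit{(iii)} to \textit{(i)}, \textit{(ii)}, and a vertex count. This avoids the external citation and any homology, at the modest cost of classifying the three link types in $\sph(M_{1,3})$. Either approach suffices for the applications in \Cref{sec:loc_finite_infinite}; yours is more elementary and keeps the argument within graph theory.
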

\begin{proof}\phantom{.}
  \begin{enumerate}[label=\textit{(\roman*)}]  
  \item Suppose $\sph(M_{0,4})*\sph(M_{0,4})\cong K_{3,3}$ can be
    embedded via a map $f$ into $\sph(M_{0,5})$, which is isomorphic
    to the Petersen graph (see \Cref{fig:JoinNotEmbed}). 
    Consider the vertex
    $v=(1,2)$ in $K_{3,3}$. Without loss in generality, we may
    assume $f(v)=(1,4)$. Since both $v$ and $f(v)$ have valence 3,
    we know that $$\{(1',2'), (1',3'),
    (2',3')\}\xmapsto{f}\{(3,5),(3,2),(2,5)\}$$ Again, without loss
    in generality, we may assume that $f(1',2')=(3,5)$. Since
    $(1,3)$ and $(2,3)$ are adjacent to $(1',2')$ in $K_{3,3}$,
  \[
  \{(1,3),(2,3)\}\xmapsto{f}\{(2,4),(1,2)\}
\]
This is not possible since neither $(2,4)$ nor $(1,2)$ are adjacent
to $(3,2)$ in the Petersen graph.

  On the other hand, $\sph(M_{0,5})$ has ${5 \choose 2}=10$
  vertices, while $\sph(M_{0,4})*\sph(M_{0,4})$ has only $6$
  vertices, so $\sph(M_{0,5})$ cannot embed into
  $\sph(M_{0,4})*\sph(M_{0,4})$.

  \item The graph $\sph(M_{1,2})$ a tree (see
    \Cref{fig:M12SphereGraph}). As seen in \Cref{fig:JoinNotEmbed},
    both  $\sph(M_{0,4})*\sph(M_{0,4})$ and $\sph(M_{0,5})$ admit
    embedded loops, and thus cannot be embedded into a tree.  

  \item
    The complex $\sph(M_{1,3})$
    is contractible
\cite{hatcher_homology_2004}. On the other hand, $\sph(M_{0,6})$ is
a two-dimensional simplicial complex with 25 vertices, 105 edges,
and 105 faces, which means its Euler characteristic is 25. Since the
Euler characteristic is greater than two, $H_2(\sph(M_{0,6}))\neq
0$.  Suppose that $\sph(M_{0,6})$ embeds into $\sph(M_{1,3})$.  Then
the pair $(\sph(M_{1,3}),\sph(M_{0,6}))$ gives the exact sequence
\[
H_3(\sph(M_{1,3}),\sph(M_{0,6})) \to H_2(\sph(M_{0,6})) \to
H_2(\sph(M_{1,3})) \:.
\]
Since $\sph(M_{1,3})$ is $2$-dimensional the first group vanishes
and $H_2(\sph(M_{0,6})) \hookrightarrow H_2(\sph(M_{1,3})) = 0$, a
contradiction.\qedhere
\end{enumerate}
\end{proof}

\begin{figure}[ht!]
        \centering
        \begin{overpic}[width=12cm]{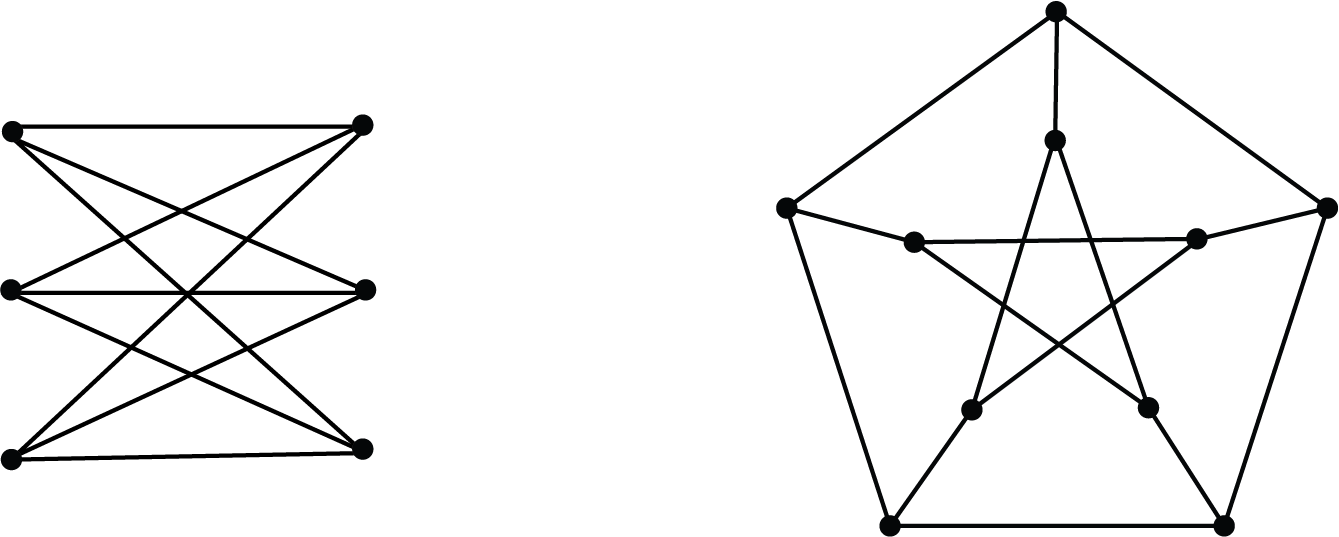}
            \put(-7,30){$(1,2)$}
            \put(-7,18){$(1,3)$}
            \put(-7,5){$(2,3)$}
            \put(28,30){$(1',2')$}
            \put(28,18){$(1',3')$}
            \put(28,5){$(2',3')$}
            \put(51,25){$(3,5)$}
            \put(100,25){$(3,4)$}
            \put(93,0){$(2,5)$}
            \put(59,0){$(1,4)$}
            \put(76,41){$(1,2)$}
            \put(80,29){$(4,5)$}
            \put(65,24){$(2,4)$}
            \put(66,11){$(3,2)$}
            \put(87,10){$(1,3)$}
            \put(88,18.5){$(1,5)$}
        \end{overpic}
        \caption{On the left is $\sph(M_{0,4}) \ast \sph(M_{0,4}) \cong K_{3,3}$.  With the boundary components of each $\partial M_{0,4}$ labeled 1--4 and $1'$--$4'$, the pair $(i,j)$ determines the sphere by \Cref{lem:GenusZeroSphere}.  Similarly, for $\sph(M_{0,5})$, which is pictured on the right and is isomorphic to the Petersen graph.}
        \label{fig:JoinNotEmbed}
    \end{figure}
Recall that $X_\eta \coloneq \bigcup_{\a \in \eta}\lk(\eta \setminus \a)$ for $\eta \subset
\sph(M_\Gamma)$ a maximal sphere system.

\begin{lemma}\label{lem:local_diffeo}
  Let $\Gamma$ be connected and let
  $\eta \subset \sph(M_\Gamma)$ be a maximal sphere system such
  that $|\eta| \geq 4$. Suppose $Y_\eta \subset \sph(M_\Gamma)$
  is a full subcomplex such that
  \begin{enumerate}
    \item $X_\eta \subset Y_\eta$, 
    \item 
      for $\a \neq \b \subset \eta$, $\lk(\eta \setminus
      \{\a,\b\})\subset Y_\eta$, unless $e_\a \cup e_\b$
      has non-zero rank in $\Delta_\eta$, in which case $Y_\eta$
      contains a subset of $\lk(\eta \setminus \{\a,\b\})$ 
      of size at least $11$, and
    \item for $\a,\b,\c \subset \eta$, if
      $e_\a\cup e_\b\cup e_\c \cong K_{1,3}$ then $\lk(\eta
      \setminus \{\a,\b,\c\}) \subset Y_\eta$; if
      $e_\a \cup e_\b\cup e_\c \cong K_3$ then $Y_\eta$
      contains a subset of $\lk(\eta
      \setminus \{\a,\b,\c\})$ of size at least $26$.
  \end{enumerate}
  If $f : Y_\eta \to \sph(M_{\Gamma'})$ is a locally injective
  simplicial map for which $f\eta$ is maximal, then $M_\Gamma \cong
  M_{\Gamma'}$.
\end{lemma}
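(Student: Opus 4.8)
The plan is to mirror the argument of \Cref{sec:pants_dual} and \Cref{sec:diffeo}: we will show that $f|_\eta$ induces a graph isomorphism $\Delta_\eta\to\Delta_{f\eta}$ between the dual graphs of the pants decompositions $\eta$ and $f\eta$, and that such an isomorphism is realized by a diffeomorphism of the ambient doubled handlebodies (as in the construction of $h_0$ in \Cref{sec:diffeo}), whence $M_\Gamma\cong M_{\Gamma'}$. Some preliminaries: since $f\eta$ is maximal by hypothesis and $f$ is locally injective simplicial, $f$ is injective on the simplex $\eta$, so $f|_\eta\colon \eta\to f\eta$ is a bijection, $|f\eta|=|\eta|\ge 4$, and $\eta$, $f\eta$ are pants decompositions with connected dual graphs $\Delta_\eta$, $\Delta_{f\eta}$; moreover $f|_\eta$ restricts to a bijection $E(\Delta_\eta)\to E(\Delta_{f\eta})$. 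For $\tau\subsetneq\eta$ with $1\le|\tau|\le 3$, $f$ carries $\lk(\eta\setminus\tau)$ into $\lk(f\eta\setminus f\tau)$ (simpliciality), and since $\lk(\eta\setminus\tau)$ lies in the star of any vertex of $\eta\setminus\tau$ (which is nonempty as $|\eta|\ge 4$), $f$ is \emph{injective} on $\lk(\eta\setminus\tau)\cap Y_\eta$. Finally, by \Cref{cor:link_join} and \Cref{cor:link_equiv_char}, each $\lk(f\eta\setminus f\tau)$ is the join of the sphere complexes of the components of $M_{\Gamma'}\setminus(f\eta\setminus f\tau)$, each such component being a doubled handlebody in which a maximal sphere system has exactly $|\tau|$ spheres.

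The crucial first step is that the hypotheses force $\Delta_\eta$ and $\Delta_{f\eta}$ to be loop-free. Suppose $e_\a$ were a loop at a vertex $v$ of $\Delta_\eta$. A pair of pants has three cuffs, so $v$ is incident to at most one edge besides $e_\a$; as $|\eta|\ge 4$ and $\Delta_\eta$ is connected, there is an edge $e_\b$ not incident to $v$. Then $e_\a\cup e_\b$ has $b_1\ge 1$, while $\a$ and $\b$ lie in distinct components of $M_\Gamma\setminus(\eta\setminus\{\a,\b\})$, so by \Cref{cor:link_join} $\lk(\eta\setminus\{\a,\b\})\cong\sph(M_{1,1})*\sph(Z)$ with $Z\cong M_{1,1}$ or $M_{0,4}$; this complex has at most $4$ vertices, so $Y_\eta$ cannot contain a subset of it of size $\ge 11$, contradicting hypothesis~(2). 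Hence $\Delta_\eta$ is loop-free. Likewise, if $e_{f\a}$ were a loop in $\Delta_{f\eta}$ then $\lk(f\eta\setminus f\a)=\{f\a\}$, forcing $f$ to map the subcomplex $\lk(\eta\setminus\a)\subset X_\eta\subset Y_\eta$ injectively into a singleton, hence $\lk(\eta\setminus\a)=\{\a\}$ and $e_\a$ a loop --- impossible; so $\Delta_{f\eta}$ is loop-free as well.

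With loops excluded, the incidence type of a pair $e_\a,e_\b$ --- disjoint, sharing exactly one vertex (a path), or sharing two vertices (a bigon) --- is recorded by $\lk(\eta\setminus\{\a,\b\})$, which is respectively $\sph(M_{0,4})*\sph(M_{0,4})\cong K_{3,3}$ ($6$ vertices), $\sph(M_{0,5})$ ($10$ vertices), or $\sph(M_{1,2})$ (infinite); the same trichotomy holds for $\Delta_{f\eta}$. Hypothesis~(2) places all of this subcomplex (first two cases) or a subset of size $\ge 11$ (third case) inside $Y_\eta$, and $f$ restricts there to a simplicial embedding into $\lk(f\eta\setminus\{f\a,f\b\})$, one of the same three complexes. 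By cardinality ($6<10<\infty$) together with \Cref{lem:non_embed}(i),(ii), such an embedding is possible only when source and target types coincide; thus $f|_\eta$ preserves incidence types and, being loop-free on both sides, is an \emph{edge isomorphism}. To apply \Cref{thm:whitney_multi}/\Cref{cor:whitney_infinite} it remains to exclude $K_3,K_{1,3}$-pairs. If $\{e_\a,e_\b,e_\c\}$ spans a $K_{1,3}$ (resp.\ $K_3$) in $\Delta_\eta$, the component of $M_\Gamma\setminus(\eta\setminus\{\a,\b,\c\})$ containing them is $M_{0,6}$ (resp.\ $M_{1,3}$), so $\lk(\eta\setminus\{\a,\b,\c\})\cong\sph(M_{0,6})$ (resp.\ $\sph(M_{1,3})$); hypothesis~(3) puts all $25$ vertices (resp.\ a subset of size $\ge 26$) in $Y_\eta$, which $f$ embeds into $\lk(f\eta\setminus\{f\a,f\b,f\c\})$. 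As this is $\sph$ of a complexity-$3$ component of $M_{\Gamma'}$ with nonempty boundary, it is $\sph(M_{0,6})$ or $\sph(M_{1,3})$ (the closed case $M_{2,0}$ cannot arise). A $K_{1,3}$ cannot map to a $K_3$ since $\sph(M_{0,6})$ does not embed in $\sph(M_{1,3})$ (\Cref{lem:non_embed}(iii)), and a $K_3$ cannot map to a $K_{1,3}$ since $\sph(M_{0,6})$ has only $25<26$ vertices; so $f|_\eta$ has no $K_3,K_{1,3}$-pair.

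Finally, $\Delta_\eta$ is connected with $|\Delta_\eta|\ne 2$ (a connected pants dual graph with $\ge 4$ edges has $\ge 3$ vertices, each vertex accommodating at most three cuffs), so by \Cref{thm:whitney_multi} and \Cref{cor:whitney_infinite} the edge isomorphism $f|_\eta$ is induced by a graph isomorphism $\Delta_\eta\to\Delta_{f\eta}$. Realizing $M_\Gamma$ and $M_{\Gamma'}$ as unions of pairs of pants glued according to $\Delta_\eta$ and $\Delta_{f\eta}$, with unmatched cuffs becoming the puncture-ends, this isomorphism is realized by a diffeomorphism $M_\Gamma\to M_{\Gamma'}$, exactly as in the construction of $h_0$ in \Cref{sec:diffeo}; hence $M_\Gamma\cong M_{\Gamma'}$. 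I expect the main obstacle to be the second paragraph --- the observation that the size conditions~(2) and~(3) are incompatible with loops in $\Delta_\eta$ --- since without loop-freeness one would have to distinguish a loop-plus-adjacent-edge from a bigon, and both configurations yield $\lk(\eta\setminus\{\a,\b\})\cong\sph(M_{1,2})$, so the argument would stall precisely where the non-surjectivity of $f$ blocks transporting ``small link'' information backwards.
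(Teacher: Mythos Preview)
Your loop-free argument is essentially the paper's: you use the same link identifications ($\sph(M_{0,4})\ast\sph(M_{0,4})$, $\sph(M_{0,5})$, $\sph(M_{1,2})$ for pairs; $\sph(M_{0,6})$, $\sph(M_{1,3})$ for triples), the same non-embedding lemma, and the same Whitney step.  That part is fine and matches the paper closely.

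The real divergence is your second paragraph.  You dispose of the loop case by noting that if $e_\a$ is a loop and $e_\b$ is any edge not incident to its vertex (which exists since $|\eta|\ge4$), then $e_\a\cup e_\b$ has positive rank while $\lk(\eta\setminus\{\a,\b\})\cong\sph(M_{1,1})\ast\sph(Z)$ has at most four vertices, so hypothesis~(2) is unsatisfiable and the lemma is vacuously true.  This observation is correct as the hypothesis is literally written.  The paper, by contrast, treats the loop case substantively: it strips out the loops to get a loop-free $\tilde\eta$, applies the loop-free argument there, and then for each loop $e_\a$ uses the pair $(\a,\tilde\a)$ (where $e_{\tilde\a}$ is the unique adjacent edge, so $\lk(\eta\setminus\{\a,\tilde\a\})\cong\sph(M_{1,2})$ is infinite and the ``$\ge 11$'' condition is meaningful) to show $e_{f\a}\cup e_{f\tilde\a}$ is either the same configuration or a bigon; bigons are then repaired by flip moves to produce an $\eta'$ with $\Delta_{\eta'}\cong\Delta_\eta$.

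What each buys: your route is shorter and formally valid for the statement on the page, but it exposes a mismatch between the lemma's hypothesis and its intended use.  In \Cref{rmk:h_eta} the paper asserts the existence of a finite strongly rigid $Y_\eta$ satisfying (1)--(3) for an arbitrary finite maximal system with $|\eta|\ge5$, and such systems certainly can have loops; under your reading no such $Y_\eta$ exists in that case.  The paper's longer proof is robust to the evidently intended weakening of~(2) (require ``$\ge 11$'' only when $e_\a\cup e_\b$ is a bigon or a loop-plus-adjacent-edge, i.e.\ when the link is $\sph(M_{1,2})$), which is what the downstream applications actually need.  So: your proof is correct for the lemma as stated, but you should flag that the hypothesis appears over-strong, and that the paper's flip-move argument is the one that survives the correction.
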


\begin{proof}
 It suffices to find a maximal sphere system $\eta' \subset
  \sph(M_{\Gamma'})$ such that $\Delta_\eta \cong \Delta_{\eta'}$.
   Since $|\eta| \geq 4$, $f$ is injective on the sets 
  $Y_\eta \cap \lk(\eta \setminus \a) = \lk(\eta \setminus
  \a)$, 
  $Y_\eta \cap \lk(\eta \setminus \{\a,\b\})$, and $Y_\eta
  \cap \lk(\eta \setminus \{\a,\b,\c\})$, as each of these sets is contained in the star of some element of $\eta$.

  First, assume that $\Delta_\eta$ is loop-free and $a\in\eta$.  If $e_{f\a}$ is a loop in $\Delta_{f\eta}$ then $\lk(\eta \setminus \a) \cong \sph(M_{0,4})$ does not embed into $\lk(f\eta
  \setminus f\a) \cong \sph(M_{1,1})$.  Thus, $\Delta_{f\eta}$ is also loop-free. 
  Now let $\a,\b \in \eta.$  The edges $e_\a$ and $e_\b$ are
  disjoint in $\Delta_\eta$ if and only if $\lk(\eta \setminus
  \{\a,\b\})$ is isomorphic to $\sph(M_{0,4})*\sph(M_{0,4})$ and
  $e_\a$ and $e_\b$ are incident on one common vertex or
  two common vertices if and only if $\lk(\eta \setminus \{ a, b\})$
  is isomorphic to $\sph(M_{0,5})$ or $\sph(M_{1,2})$, respectively.
  An analogous statement holds for $e_{f\a}$ and $e_{f\b}$. 

  The map $f$ restricts to an embedding $Y_\eta \cap \lk(\eta \setminus
  \{\a,\b\}) \hookrightarrow \lk(f\eta \setminus
  \{f\a,f\b\})$.  
  There are three possibilities for the domain of this embedding:
  $Y_\eta \cap \sph(M_{0,4})*\sph(M_{0,4})
  =\sph(M_{0,4})*\sph(M_{0,4}) $, $Y_\eta \cap \sph(M_{0,5}) =
  \sph(M_{0,5})$, or $Y_\eta \cap \sph(M_{1,2})$. We observe the 
  following embeddings are not possible: 

  \bigskip

\adjustbox{width=\textwidth}{
$\begin{tikzcd}[cramped,sep=tiny]
	& {\mathcal{S}(M_{0,5})} && {\mathcal{S}(M_{0,4}) \ast \mathcal{S}(M_{0,4}) } && {\mathcal{S}(M_{0,4}) \ast \mathcal{S}(M_{0,4})} \\
	{\mathcal{S}(M_{0,4}) \ast \mathcal{S}(M_{0,4}) } & {(1)} & {\mathcal{S}(M_{0,5})} & {(2)} & {Y_\eta \cap \mathcal{S}(M_{1,2})} & {(3)} \\
	& {\mathcal{S}(M_{1,2})} && {\mathcal{S}(M_{1,2})} && {\mathcal{S}(M_{0,5})} \\
	&& {}
	\arrow["\shortmid"{marking}, hook, from=2-1, to=1-2]
	\arrow["\shortmid"{marking}, hook, from=2-1, to=3-2]
	\arrow["\shortmid"{marking}, hook, from=2-3, to=1-4]
	\arrow["\shortmid"{marking}, hook, from=2-3, to=3-4]
	\arrow["\shortmid"{marking}, hook, from=2-5, to=1-6]
	\arrow["\shortmid"{marking}, hook, from=2-5, to=3-6]
\end{tikzcd}$
}
where $(1)$ and $(2)$ follow from \Cref{lem:non_embed}, and (3) 
is true since $|Y_\eta\cap\mathcal{S}(M_{1,2})|\geq11$, but
$|\sph(M_{0,4})*\sph(M_{0,4})|,|\sph(M_{0,5})| < 11$. 
  The only remaining possibility is that all links (intersected with
  $Y_\eta$) embed into
  isomorphic copies of themselves. Hence it follows that $\lk(\eta
  \setminus \{\a,\b\}) \cong \lk(f\eta \setminus \{f\a,f\b\})$ and
  $e_\a \cup e_\b \cong e_{f\a} \cup e_{f\b}$, without loss of
  generality preserving order: \textit{i.e.}\ 
  via a graph isomorphism with $e_\a \mapsto e_{f\a}$ and $e_\b
  \mapsto e_{f\b}$.

  Thus, $f|_\eta \colon \Delta_\eta \to \Delta_{f\eta}$ is an edge
  isomorphism. To obtain a graph isomorphism, we need only
  show that $f|_\eta$ does not have a $K_3,K_{1,3}$-pair.  Now,
  $e_\a\cup e_\b\cup e_\c \cong K_{1,3}$
  if and only if $\lk(\eta \setminus \{\a,\b,\c\}) \cong
  \sph(M_{0,6})$ and 
  $e_\a \cup e_\b \cup e_\c \cong K_3$ if and only if
  $\lk(\eta\setminus \{\a,\b,\c\}) \cong \sph(M_{1,3})$ 
 (see \Cref{fig:K13K3})  
 and likewise for $e_{f\a} \cup e_{f\b} \cup e_{f\c}$ as well. 
  The restriction $f : Y_\eta \cap \lk(\eta
  \setminus \{\a,\b,\c\}) 
  \hookrightarrow \lk(f\eta \setminus \{f\a,f\b,f\c\})$, along with
  \Cref{lem:non_embed} and that $|\sph(M_{0,6})^{(0)}| = 25$,
  implies $f|_\eta$ has no $K_3,K_{1,3}$-pairs.  Therefore,
  $\Delta_\eta \cong \Delta_{f\eta}$ by \Cref{thm:whitney_multi} and
  \Cref{cor:whitney_infinite}.

   \begin{figure}[ht!]
    \centering
    \begin{tabular}{>{\centering\arraybackslash}m{0.473\textwidth} >{\centering\arraybackslash}m{0.473\textwidth}}
       \centering
       \begin{overpic}[scale=0.5]{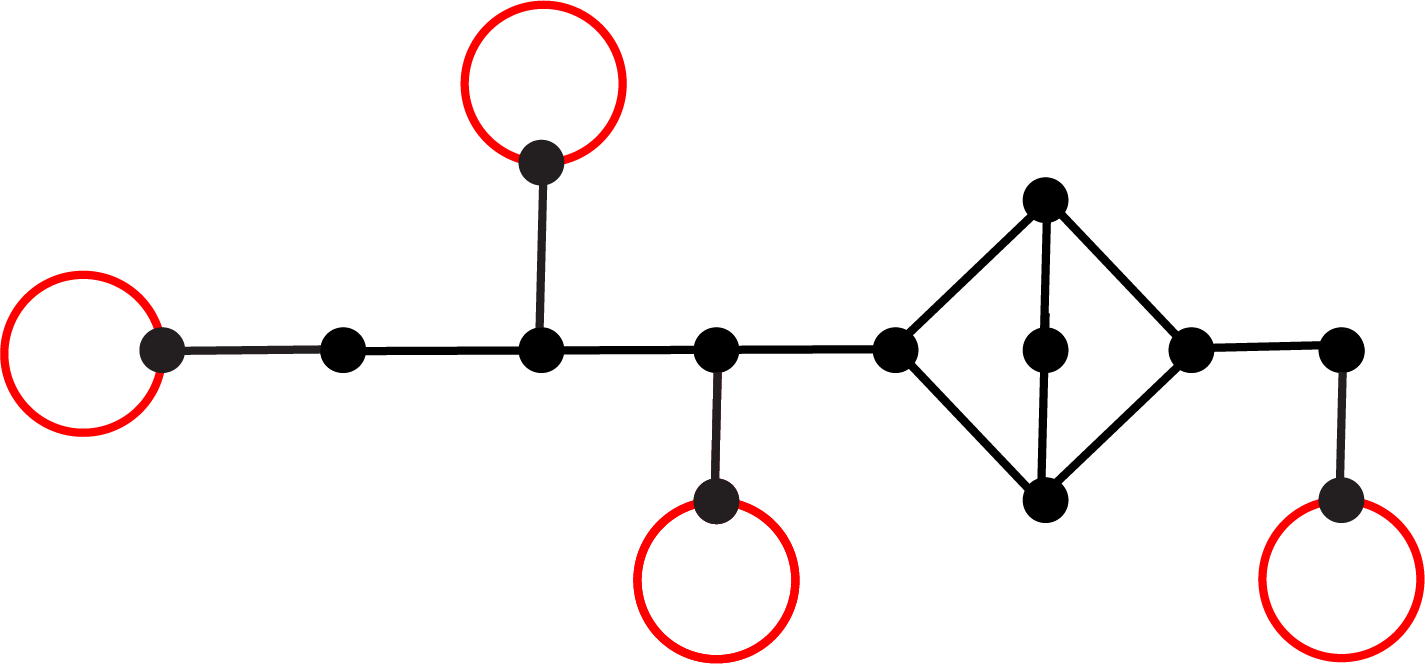}
            \put(3,12){$e_{a_1}$}
            \put(15,25){$e_{\tilde{a_1}}$}
            \put(30,28){$e_{\tilde{a_2}}$}
            \put(45,40){$e_{a_2}$}
            \put(42,15){$e_{\tilde{a_3}}$}
            \put(57,4){$e_{a_3}$}
            \put(100,4){$e_{a_4}$}
            \put(86,15){$e_{\tilde{a_4}}$}
        \end{overpic}
         &
       \begin{overpic}[scale=0.5]{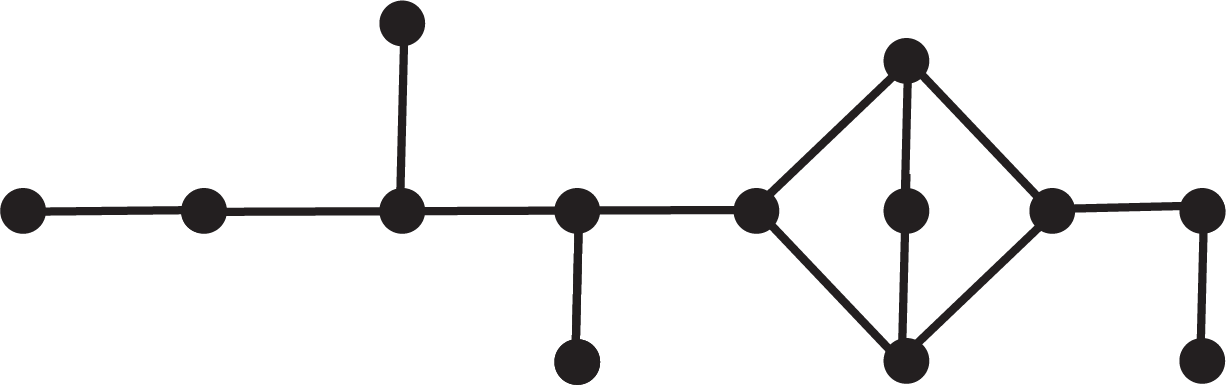}
            \put(5,17){$e_{\tilde{a_1}}$}
            \put(23,22){$e_{\tilde{a_2}}$}
            \put(37,7){$e_{\tilde{a_3}}$}
            \put(88,7){$e_{\tilde{a_4}}$}
        \end{overpic} \\[1.5cm] 
        $\Delta_\eta$ & $\Delta_{\tilde{\eta}}$
    \end{tabular}
    \vspace{5pt}
    \caption{On the left the graph $\Delta_\eta$ has loops $e_a$
    incident to a unique edge $e_{\tilde{a}}$. On the right,
  $\Delta_{\tilde{\eta}}$ is the loop-free subgraph.} 
  \label{fig:Eta}
\end{figure}

  Now consider the case where $\Delta_\eta$ has loops. Since
  $|\eta|\geq 4$, no edge in $\Delta_\eta$ can be adjacent to two
  loops.  Hence, if $a\in\eta$ and $e_a$ is a loop, then we denote
  by $\tilde{a}\in\eta$ the unique sphere such that $e_a$ is
  adjacent to $e_{\tilde{a}}$.  Define
  $\tilde{\eta}=\eta\setminus\eta_0$, where $\eta_0=\{a\in\eta\ |\
  e_a\text{ is a loop}\}$.  The subgraph
  $\Delta_{\tilde{\eta}}\subset\Delta_\eta$ is a graph without loops
  (see \Cref{fig:Eta}) and $\tilde{\eta},\ f\tilde{\eta}$ are
  maximal in $M_\Gamma\setminus\eta_0$ and $M_{\Gamma'}\setminus
  f\eta_0$ respectively.  By the previous case, 
  $\Delta_{\tilde{\eta}}\cong\Delta_{f\tilde{\eta}}$ via an
  isomorphism that induces $f|_{\tilde{\eta}}$ on the edges.
  Focusing on $a \in \eta_0$ (where $e_a$ is a loop) we see that
  $e_{fa}\cup e_{f\tilde{a}}$ is isomorphic to either two disjoint
  edges, a loop disjoint from an edge, two edges incident on one
  common vertex, a bigon or $e_a\cup e_{\tilde{a}}$.  In the first
  three cases, $\lk(f\eta\setminus\{fa,f\tilde{a}\})$ is isomorphic
  to $\sph(M_{0,4})\ast\sph(M_{0,4})$,
  $\sph(M_{1,1})\ast\sph(M_{0,4})$ and $\sph(M_{0,5})$ respectively.
  Since $|Y_\eta\cap\lk(\eta\setminus\{a,\tilde{a}\})|\geq11$, the
  map $f:Y_\eta\cap\lk(\eta\setminus\{a,\tilde{a}\})
  \hookrightarrow\lk(f\eta\setminus\{fa,f\tilde{a}\})$
  cannot be an embedding: $e_{fa}\cup
  e_{f\tilde{a}}$ is isomorphic to a bigon or $e_a\cup
  e_{\tilde{a}}$. 

  Let $\eta_1 \subset \eta_0$ be the subset of spheres 
  $a \in \eta_0$ for which $e_{fa}\cup e_{f\tilde a}$ is a bigon. 
  For each $a \in \eta_1$, we may perform a flip move $f\tilde{a}
  \mapsto \tilde a'$ such that $e_{fa}\cup e_{\tilde{a}'}\cong
  e_a\cup e_{\tilde{a}}$ (see \Cref{fig:Flip}). Vertices in 
  $\Delta_{f\eta}$ have valence at most $3$, hence no two bigons are
  adjacent: we may perform these flips disjointly
   for each $a \in \eta_1$ to obtain a new pants decomposition
   $\eta'$ for $M_{\Gamma'}$ with $\Delta_{\eta'}\cong
   \Delta_\eta$, as desired. 
\end{proof}

\smallskip
\begin{figure}[ht!]
    \centering
    \begin{tabular}{>{\centering\arraybackslash}m{0.25\textwidth} >{\centering\arraybackslash}m{0.02\textwidth} >{\centering\arraybackslash}m{0.25\textwidth} >{\centering\arraybackslash}m{0.03\textwidth} >{\centering\arraybackslash}m{0.25\textwidth}}
       \centering
       \begin{overpic}[scale=0.28]{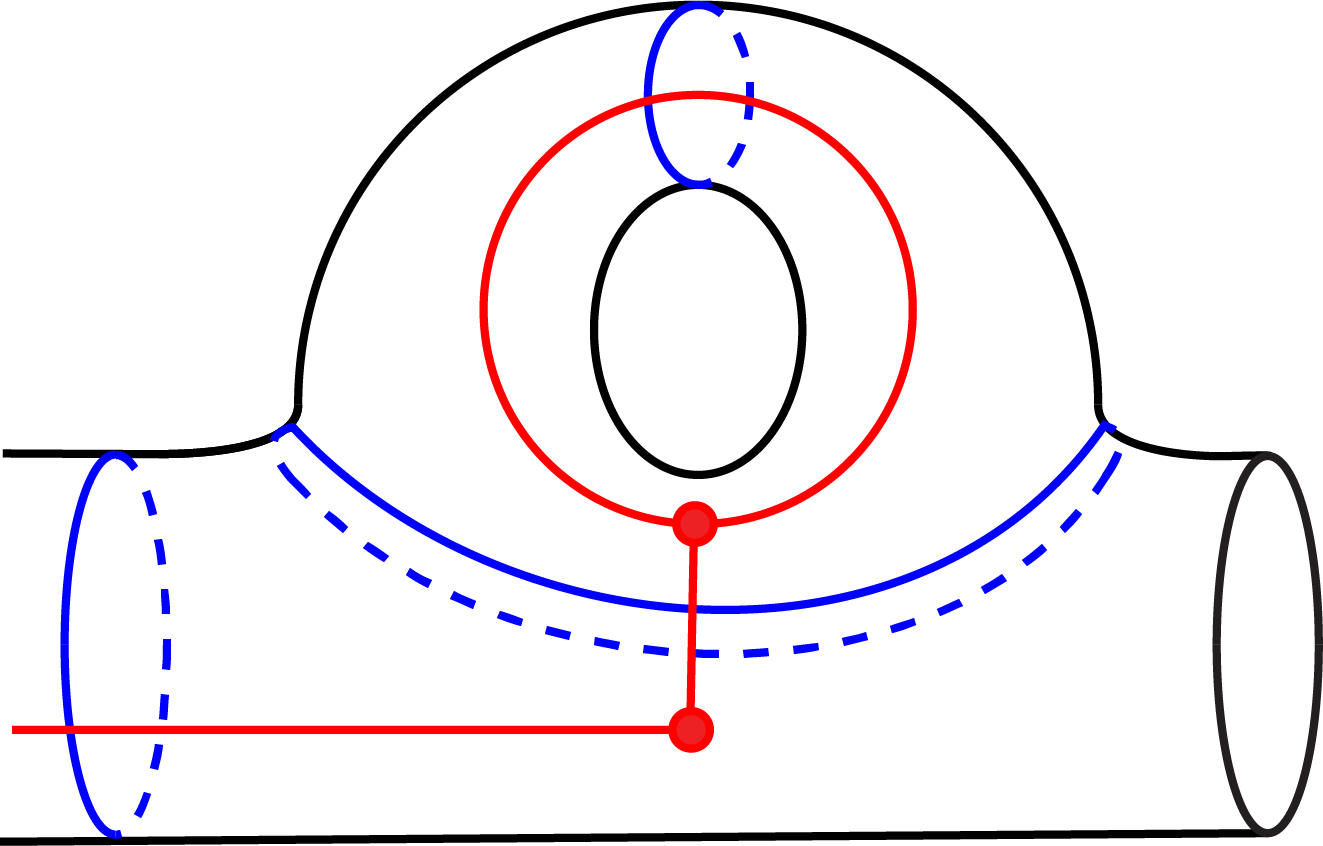}
       \put(50,66){$a$}
       \put(70,25){$\tilde{a}$}
        \end{overpic}
         &
        $\xrightarrow{f}$
         &
       \begin{overpic}[scale=0.28]{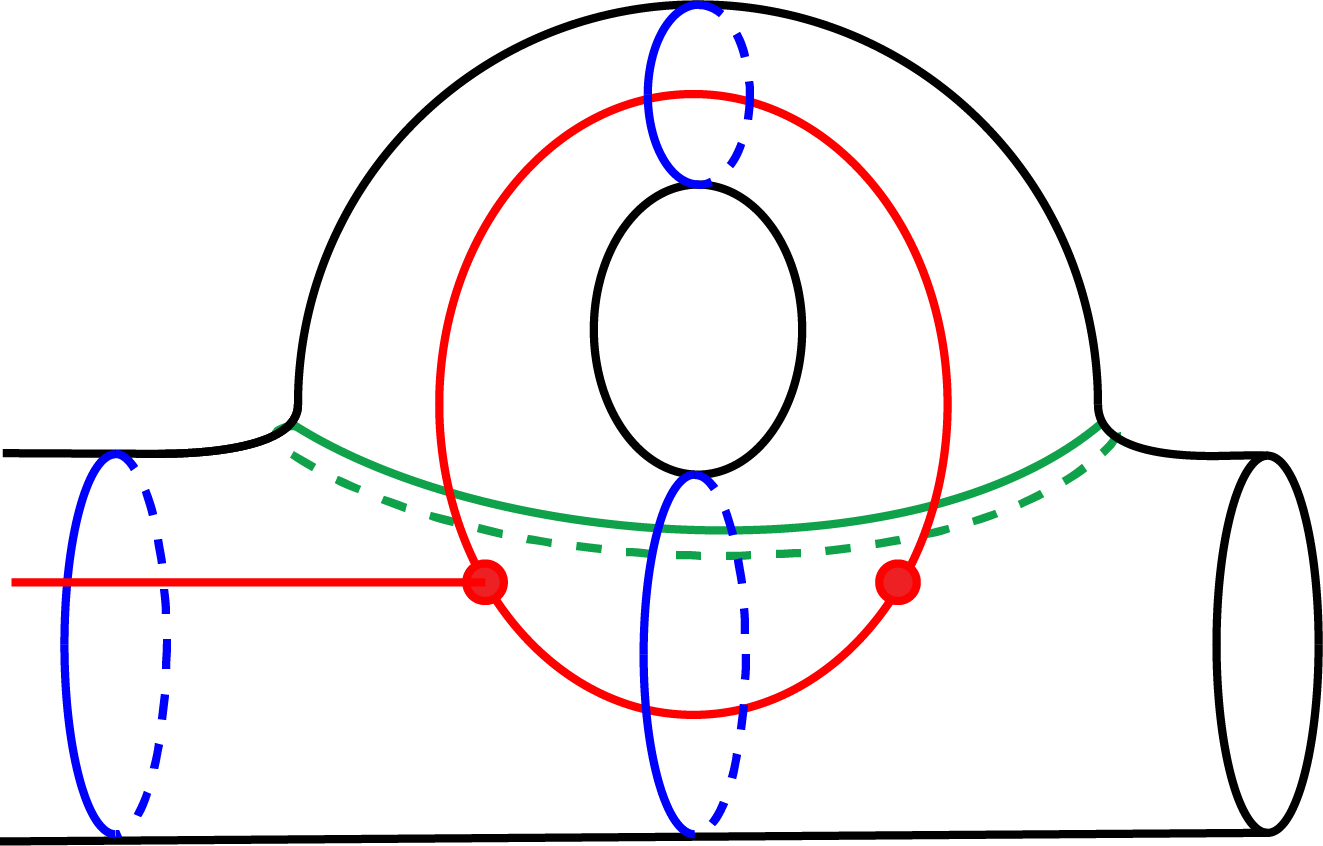}
       \put(45,67){$fa$}
       \put(77,17){$\tilde{a}'$}
       \put(45,-10){$f\tilde{a}$}
        \end{overpic} 
        &
        $\xrightarrow{\text{flip}}$
        &
        \begin{overpic}[scale=0.28]{figs/LoopCase1.png}
        \put(45,67){$fa$}
       \put(70,25){$\tilde{a}'$}
        \end{overpic} \\[.5cm] 
    \end{tabular}
    \caption{The flip move in \Cref{lem:local_diffeo}.}  
\label{fig:Flip}
\end{figure}

\vspace{-.3cm}
\begin{remark*}\label{rmk:h_eta} 
  If $\eta$ is a finite maximal system and $|\eta| \geq 5$,
then $M_\Gamma \cong M_{n,s}
\setminus \partial M_{n,s}$ for some $n,s$ and one verifies that 
$2n + s \geq 6$.  
By \Cref{thm:geom_rigidity}, there exists a
finite strongly rigid set $Y_\eta \subset \sph(M_\Gamma)$ 
satisfying the hypotheses of \Cref{lem:local_diffeo}, in which case
a locally injective map $Y_\eta \to \sph(M_{\Gamma'})$ extends
uniquely to an automorphism $\sph(M_\Gamma) \to \sph(M_{\Gamma'})$;
by \Cref{thm:IvanovRigidity0}, this automorphism is induced by a
diffeomorphism $M_\Gamma \to M_{\Gamma'}$. 
\end{remark*}

Suppose that $\Gamma$ is connected and 
of infinite or finite type such that
$\dim(\sph(M_\Gamma)) \geq 5$. 
Let $\sigma \subset \sph(M_\Gamma)$ be a maximal sphere system 
and let $\Omega$ be the collection of
sets of distinct spheres $\eta = \{\a,\b,\c,\d,e\} \subset
\sigma$ such that $\bigcup_{t\in \eta} e_t \subset \Delta_\sigma$
is connected.  For each 
$\eta \in \Omega$, denote by
$M_\eta$ the complementary component of $\sigma \setminus \eta$
that contains $\eta$
(and in particular is not a pair of pants), and let $Y_\eta \subset
\lk(\sigma \setminus \eta) \cong \sph(M_\eta)$ as
in \Cref{rmk:h_eta}.  
Define $Z_\sigma \subset
\sph(M_\Gamma)$ to be the full subcomplex induced by the $Y_\eta$:
\[
  Z_\sigma \coloneq \braket{Y_\eta}_{\eta \in \Omega}\:.
\]
Note that $\sigma \subset X_\sigma \subset Z_\sigma$, and that
$Z_\sigma$ is topologically locally finite by construction.

\begin{proposition}\label{prop:Zsigma_rigid}
  Suppose that $f : Z_\sigma \to \sph(M_{\Gamma'})$ is a locally
  injective simplicial map such that $f\sigma$ is maximal.  Then $f$
  extends to a unique automorphism $\sph(M_\Gamma) \to
  \sph(M_{\Gamma'})$.
\end{proposition}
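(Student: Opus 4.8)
The plan is to build, from the local diffeomorphisms supplied by the finite strongly rigid sets $Y_\eta$, a single diffeomorphism $h : M_\Gamma \to M_{\Gamma'}$ whose induced map on $\sph(M_\Gamma)$ extends $f$, following the template of \Cref{sec:geom_rigid}. Fix $\eta \in \Omega$. Then $f|_{Y_\eta} : Y_\eta \to \sph(M_{\Gamma'})$ is locally injective, since the star of a vertex of $Y_\eta$ lies inside its star in $Z_\sigma$. Because $f\sigma$ is maximal and $f$ simplicial, $f(\sigma\setminus\eta)$ is a sphere system disjoint from $f(Y_\eta)$. Every vertex of $Y_\eta \subset \sph(M_\eta)$ either lies in $\eta$ or essentially intersects some sphere of $\eta$ (a pants decomposition has empty link in $M_\eta$), and the essential-intersection graph on $Y_\eta^{(0)}$ is connected: the triangles $\lk(\eta\setminus t) = \{t,t',t''\} \subset X_\eta$ at non-loop $t\in\eta$, together with spheres of $Y_\eta$ crossing adjacent pairs in $\eta$, connect all of $\eta$, using that $\bigcup_{t\in\eta}e_t$ is connected in $\Delta_\sigma$. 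Hence $f(Y_\eta)$ lies in a single complementary component $M'_\eta$ of $f(\sigma\setminus\eta)$, and the spheres of $f\sigma$ interior to $M'_\eta$ --- namely $f\eta$ --- form a maximal sphere system of $M'_\eta$. As $\eta$ is a finite maximal sphere system of $M_\eta$ with $|\eta| = 5$ and $Y_\eta$ is chosen as in \Cref{rmk:h_eta}, the map $f|_{Y_\eta} : Y_\eta \to \sph(M'_\eta)$ extends, by \Cref{rmk:h_eta}, to the unique isomorphism $\sph(M_\eta)\xrightarrow{\sim}\sph(M'_\eta)$ induced by a diffeomorphism $h_\eta : M_\eta \to M'_\eta$.

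Next I would show, arguing as in \Cref{lem:dual_edge_isom} and \Cref{prop:edge_to_dual}, that $f|_\sigma$ is an edge isomorphism $\Delta_\sigma \to \Delta_{f\sigma}$ without a $K_3,K_{1,3}$-pair. Whether $e_\a$ is a loop, the incidence pattern of $e_\a$ and $e_\b$, and whether $e_\a\cup e_\b\cup e_\c$ is $K_3$ or $K_{1,3}$ are read off from the isomorphism types of $\lk(\sigma\setminus\a)$, $\lk(\sigma\setminus\{\a,\b\})$ and $\lk(\sigma\setminus\{\a,\b,\c\})$. When the edges involved span a connected subgraph of $\Delta_\sigma$, the relevant link lies --- up to a controlled number of vertices --- in some $Y_\eta$, $\eta\in\Omega$, equals the corresponding link of $\eta$ inside $M_\eta$, and on it $f$ agrees with the isomorphism induced by $h_\eta$; so its type is determined, using the non-embeddings of \Cref{lem:non_embed} and the vertex count of $\sph(M_{0,6})$ where only part of the link sits in $Y_\eta$. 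For spheres far apart in $\Delta_\sigma$ I would chain these local identifications along a path in the connected graph $\Delta_\sigma$. Thus $f|_\sigma$ is induced by a unique graph isomorphism $F : \Delta_\sigma\xrightarrow{\sim}\Delta_{f\sigma}$ (\Cref{thm:whitney_multi}, \Cref{cor:whitney_infinite}; $\Delta_\sigma$ is connected with $|\Delta_\sigma|\neq 2$), and in particular $M_\Gamma \cong M_{\Gamma'}$.

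As in \Cref{sec:diffeo}, $F$ is induced by a diffeomorphism $h_0 : M_\Gamma \to M_{\Gamma'}$, which I adjust by half-twists on almost peripheral spheres of $\sigma$ to a diffeomorphism $h$ agreeing with $f$ on $X_\sigma$: the verification is \Cref{lem:compatibility_sigma} in the form of \Cref{rmk:compatibility_sigma}, whose hypothesis --- that $f$ restricts to an isomorphism on $\lk(\sigma\setminus\{\a,\b,\c\})$ when $e_\a\cup e_\b\cup e_\c$ is connected --- is provided by the previous paragraph. Then $h_* : \sph(M_\Gamma)\xrightarrow{\sim}\sph(M_{\Gamma'})$ is an isomorphism agreeing with $f$ on $X_\sigma$. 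For any $\b \in Z_\sigma^{(0)}$, pick $\eta\in\Omega$ with $\b\in Y_\eta$; since $h$ agrees with $f$ on $\sigma \supset \sigma\setminus\eta$, it restricts to a diffeomorphism $M_\eta\to M'_\eta$, and $h|_{M_\eta}$ and $h_\eta$ both agree with $f$ on $X_\eta\subset Y_\eta$, so by \Cref{rmk:uniqueness} applied to $M_\eta$ they induce the same automorphism of $\sph(M_\eta)$; hence $h_*(\b) = (h_\eta)_*(\b) = f(\b)$, and $h_*$ extends $f$. For uniqueness, any automorphism $\Phi$ extending $f$ restricts to $f|_\sigma$, hence to $F$ on dual graphs, so by \Cref{thm:IvanovRigidity0} it is induced by a diffeomorphism realizing $F$ up to sphere twists and half-twists; since $\Phi$ also agrees with $h_*$ on $X_\sigma$, \Cref{rmk:uniqueness} gives $\Phi = h_*$.

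The main obstacle, I expect, is the second step: promoting the purely local data carried by $Z_\sigma$ --- which ``sees'' structure only within the finite pieces $M_\eta$ --- to the global isomorphism $\Delta_\sigma\cong\Delta_{f\sigma}$. Unlike in \Cref{lem:dual_edge_isom}, $f$ is not a full isomorphism, so for spheres far apart in $\Delta_\sigma$ the relevant link need not meet $Z_\sigma$ at all, and one must instead chain local isomorphisms along paths in $\Delta_\sigma$ while checking that consecutive $Y_\eta$'s overlap compatibly. The subsidiary claim that $f$ sends each $Y_\eta$ into a single complementary component of $f(\sigma\setminus\eta)$ --- connectivity of the essential-intersection graph of $Y_\eta$ --- likewise requires inspecting the construction behind \Cref{thm:geom_rigidity} rather than merely quoting it.
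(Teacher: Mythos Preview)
Your architecture matches the paper's: obtain local diffeomorphisms $h_\eta$ from the strongly rigid $Y_\eta$, show $f|_\sigma$ is an edge isomorphism $E(\Delta_\sigma)\to E(\Delta_{f\sigma})$ without a $K_3,K_{1,3}$-pair, invoke Whitney, build $h$ as in \Cref{sec:diffeo} via \Cref{rmk:compatibility_sigma}, and verify $(h|_{M_\eta})_*=(h_\eta)_*$ using \Cref{rmk:uniqueness}. Uniqueness is argued just as you do.

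Where you diverge is exactly the step you flag as the main obstacle, and here the paper's resolution is far simpler than your proposed chaining. For $\a,\b\in\sigma$ with $e_\a,e_\b$ \emph{non-adjacent} there is no need to walk a path in $\Delta_\sigma$: the relevant data already sits in $Z_\sigma$. If neither edge is a loop, then
\[
\lk(\sigma\setminus\{\a,\b\})=\lk(\sigma\setminus\a)*\lk(\sigma\setminus\b),
\]
and both factors lie in $X_\sigma\subset Z_\sigma$ by the very definition $X_\sigma=\bigcup_{t\in\sigma}\lk(\sigma\setminus t)$; since $Z_\sigma$ is full, the whole join $\cong\sph(M_{0,4})*\sph(M_{0,4})$ lies in $Z_\sigma$. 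It sits in the star of any third sphere of $\sigma$, so $f$ embeds it into $\lk(f\sigma\setminus\{f\a,f\b\})$, and \Cref{lem:non_embed} forbids $e_{f\a},e_{f\b}$ from being adjacent. If $e_\a$ is a loop and $e_{f\a},e_{f\b}$ were adjacent, then $e_{f\b}$ is the \emph{unique} edge adjacent to the loop $e_{f\a}$; taking $\c\in\sigma$ with $e_\c$ adjacent to $e_\a$, preservation of adjacency gives $e_{f\c}$ adjacent to $e_{f\a}$, whence $f\c=f\b$, contradicting injectivity of $f|_\sigma$.

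Your chaining idea, by contrast, does not obviously close: knowing that each consecutive pair along a path in $\Delta_\sigma$ maps to an adjacent pair says nothing, on its own, about whether the endpoints map to non-adjacent edges. The point you missed is that $Z_\sigma$ was designed to contain $X_\sigma$, and $X_\sigma$ already witnesses \emph{every} pair of spheres in $\sigma$, not just nearby ones; so the ``far apart'' case is no harder than the local one.
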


\begin{proof}
  For $\eta \in \Omega$, let $M_{f\eta}\subset M_{\Gamma'}$ denote
  the complementary component of $f\sigma \setminus f\eta$
  containing $f\eta$.  By \Cref{rmk:h_eta}, $f|_{Y_\eta}$
  extends to a unique isomorphism $\lk(\sigma \setminus \eta) \to
  \lk(f\sigma \setminus f\eta)$, which is 
  induced by a diffeomorphism $h_\eta : M_\eta \to M_{f\eta}$ 
  agreeing with $f$ over $X_\eta\subset Y_\eta$. 

  We first prove that $f|_\sigma$ 
  is an edge isomorphism $E(\Delta_\sigma)
  \to E(\Delta_{f\sigma})$ without a $K_3,K_{1,3}$-pair.  
  If $\a,\b \in \sigma$ are such that $e_a$ and $e_b$ are
  adjacent in $\Delta_\sigma$, 
  fix $\eta \in \Omega$ such that $\a,\b
  \in \eta$.  Then from the above we have 
  $e_\a \cup e_\b \cong e_{h_\eta \a} \cup e_{h_\eta \b} 
  = e_{f\a} \cup e_{f\b}$, preserving ordering.  A similar argument
  implies that $e_\a$ is a loop if and only if $e_{f\a}$ is a
  loop, and prohibits a $K_3,K_{1,3}$-pair (\textit{e.g.}\ fix $\eta
  \supset K_3$).  It remains to show that when
  $e_\a,e_\b$ are non-adjacent, so are $e_{f\a},e_{f\b}$.
  Suppose that $e_{f\a}, e_{f\b}$
  are adjacent and  $e_{\a}$ is a loop.  
  As $e_{f\a}$ is a loop, 
  $e_{f\b}$ is the unique sphere adjacent to $e_{f\a}$.  
  Let $\c \in
  \sigma$ be a sphere so that $e_\c$ is adjacent to $e_\a$ in
  $\Delta_\sigma$. 
Since $e_\c$ and $e_\a$ are adjacent, $e_\c \cup e_\a \cong
e_{f\c} \cup e_{f\a}$ and $e_{f\c}, e_{f\a}$ are adjacent, hence
$f\c = f\b$;  by injectivity 
  $\b = \c$ and $e_\a,e_\b$ are adjacent.  
  Finally, if neither $e_\a,e_\b$ are loops
  and $e_\a,e_\b$ are non-adjacent, then, since $X_\sigma\subset Z_\sigma$, 
  both $\lk(\sigma\setminus a)\subset Z_\sigma$ and $\lk(\sigma\setminus b)\subset Z_\sigma$.
  Consequently $\lk(\sigma\setminus a)\ast\lk(\sigma\setminus b)=\lk(\sigma\setminus\{a,b\})\subset Z_\sigma$.
  Now, as in the proof of
  \Cref{lem:local_diffeo}, we know that $\lk(\sigma\setminus b)\cong\sph(M_{0,4})\ast\sph(M_{0,4})$ 
  does not embed into $\lk(f\sigma\setminus\{fa,fb\})$ if $e_{fa}$ and $e_{fb}$ are adjacent. 
  Hence $e_{f\a}$ and $e_{f\b}$ must be non-adjacent. 

  By \Cref{thm:whitney_multi} and \Cref{cor:whitney_infinite},
  $f|_\sigma$ is induced by an isomorphism $\Delta_\sigma \to
  \Delta_{f\sigma}$.  
  Let $h$ be a diffeomorphism constructed as in \Cref{sec:diffeo};
  we show that $h_*$ extends $f$.
   For any  
  $\a,\b,\c \subset \sigma$ such that $e_\a \cup e_\b
  \cup e_\c \subset \Delta_\sigma$ is connected, fix $\rho \in
  \Omega$ containing $\a,\b,\c$. 
  Then $f$ extends to an isomorphism $\lk(\sigma \setminus \rho) \to
  \lk(f\sigma \setminus f\rho)$, which suffices to apply 
  \Cref{rmk:compatibility_sigma}: 
  $h_*$ agrees with $f$ over $X_\sigma$.
  For $\eta \in \Omega$,
  $h_*$ agrees with $f$ on $X_\eta = X_\sigma \cap \lk(\sigma
  \setminus \eta)$, hence by \Cref{rmk:uniqueness} 
  $(h|_{M_\eta})_* =
  (h_\eta)_*$. 
  Since $(h_\eta)_*$ agrees with $f$ over $Y_\eta$, we conclude.
  Finally, to show
  uniqueness, by \Cref{thm:IvanovRigidity0} 
  any isomorphism extending $f$ is induced by a diffeomorphism $g$
  that agrees with $h_*$ over $X_\sigma$: 
  $g_* = h_*$ again by \Cref{rmk:uniqueness}.
\end{proof}

If $\sigma,\sigma'$ are maximal sphere systems that differ by a flip
move, then $Z_\sigma \cup Z_{\sigma'}$ exhibits the same rigidity.
Suppose that $f : Z_\sigma \cup Z_{\sigma'} \to \sph(M_{\Gamma'})$
is a locally injective simplicial map and $f\sigma, f\sigma'$ are
maximal.  Apply \Cref{lem:compatibility_flip}: the isomorphism 
extensions with
respect to $\sigma,\sigma'$ are identical.  More generally, let 
$\mathcal P$ be a finite collection of maximal sphere systems
for which any two
$\rho,\rho' \in \mathcal{P}$ 
differ by a sequence of successive flip moves
in $\mathcal{P}$.  Then $Z_{\mathcal{P}} \coloneq \bigcup_{\rho \in
\mathcal{P}} Z_{\rho}$ is strongly rigid over maps $f$ which
preserve the maximality of all $\rho \in \mathcal{P}$.
Let $\mathcal{P}_i$ be a nested family of such sets such that every
sphere in $\sph(M_\Gamma)$ is contained in some $\rho \in
\mathcal{P}_i$ for some $i$.  Then $Z_{\mathcal{P}_i}$ is a (nested)
exhaustion of $\sph(M_\Gamma)$ by topologically 
locally finite subcomplexes 
that are strongly rigid over maximal maps,
proving \Cref{thm:loc_finite_rigidity}. 

\subsection{Non-existence of rigid sets}\label{sec:no_rigid} 

We now show that for all infinite-type graphs $\Gamma$, there is no
way to strengthen \Cref{thm:loc_finite_rigidity} to remove the
assumption that the locally injective maps involved send maximal
sphere systems to maximal sphere systems. We do this by producing nonsurjective embeddings of $\sph(M_{\G})$
into itself so that the image does not contain any sphere systems
which are maximal in $M_{\G}$. It follows immediately from this that no subcomplex of $\sph(M_{\G})$ can be strongly rigid.
\par 
The construction below is inspired by
a similar construction for surfaces, showing that the curve graph of a
hyperbolic surface embeds into the curve graph of the same surface
but with one extra puncture: see Theorem 2.3 of \cite{Rafi_2009}.
We start by discussing two technical lemmas about pairs of spheres in normal form. The first is a consequence of Proposition 1.1 in \cite{hatcher2002algebraic}.

\begin{lemma}\label{lem:disjointSpheresNormalForm}
    Suppose $\a$ and $\b$ are essential spheres in $M_{\G}$ whose homotopy classes can be realized disjointly. Fix a maximal sphere system $\sigma$. Then $\a$ and $\b$ contain disjoint homotopy representatives which are in normal form with respect to $\sigma$.
\end{lemma}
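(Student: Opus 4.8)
The plan is to pass to a compact handlebody and then apply Hatcher's normalization procedure to the two-component sphere system formed by disjoint representatives of $\a$ and $\b$ simultaneously, rather than to each sphere separately. First I would fix disjoint representatives $\a_0\simeq\a$ and $\b_0\simeq\b$. Since $\a_0\cup\b_0$ is compact, I can choose (as in the proof of \Cref{cor:ManifoldMCGActionSurjective}) a compact submanifold $N\cong M_{n,s}$ with $\a_0\cup\b_0\subset N$, with $\partial N\subset\sigma$, and with $\hat\sigma\coloneq\sigma\cap N$ a maximal sphere system of $N$. Every component of $\sigma\setminus\hat\sigma$ is then disjoint from $N$, so any sphere of $N$ in normal form with respect to $\hat\sigma$ is disjoint from $\sigma\setminus\hat\sigma$ and, since its pieces and their parallel disks all lie in $N$, is in normal form with respect to $\sigma$. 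Thus it suffices to produce disjoint representatives of $\a,\b$ in normal form with respect to $\hat\sigma$ inside $N$.

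Next I would handle the degenerate cases. If $\a_0$ is not essential in $N$, then because $\a$ is essential in $M_\Gamma$ it cannot bound a ball in $N$ and cannot be parallel to a boundary sphere or puncture of $M_\Gamma$; hence it is parallel to a component $S$ of $\partial N\subset\hat\sigma$, and after isotopy I may take $\a_0=S$. Then $\a_0$ is already in normal form, $\b_0$ is disjoint from $S$, and \Cref{prop:hnf} applied with $\sigma_0=\{S\}$ gives a representative $\b'$ of $\b$ in normal form with respect to $\hat\sigma$ via a homotopy missing $S$, so $\a_0,\b'$ are disjoint; the case where $\b_0$ is not essential is symmetric. If instead $\a_0\simeq\b_0$ in $N$, then by \cite{laudenbach_sur_1973} they are isotopic; I put $\a_0$ into normal form via \Cref{prop:hnf} to get $\a'$ and take $\b'$ to be a small parallel pushoff of $\a'$. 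A pushoff meets each sphere of $\hat\sigma$ in a parallel copy of the circles of $\a'\cap\hat\sigma$, so its pieces are pushoffs of the pieces of $\a'$ and inherit the normal-form conditions; thus $\b'$ is in normal form, is disjoint from $\a'$, and represents the common class of $\a_0$ and $\b_0$.

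In the remaining case $\a_0$ and $\b_0$ are essential in $N$ and $\a_0\not\simeq\b_0$, so $\{\a_0,\b_0\}$ is a two-component essential sphere system in $N$. Applying the sphere-system form of Hatcher's normalization (of which \Cref{prop:hnf} is the single-sphere specialization), $\{\a_0,\b_0\}$ is homotopic to a sphere system $\{\a',\b'\}$ in normal form with respect to $\hat\sigma$, with $\a'\simeq\a$ and $\b'\simeq\b$. The spheres $\a'$ and $\b'$ are disjoint, being distinct components of a sphere system; since they are disjoint, the components of $\a'\setminus\hat\sigma$ (resp.\ of $\b'\setminus\hat\sigma$) are among the pieces of $\{\a',\b'\}$, so each of $\a',\b'$ satisfies the normal-form conditions individually. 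Combined with the reduction of the first paragraph, this proves the lemma.

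The step I expect to be the crux is making sure we may appeal to the sphere-system form of Hatcher normalization, so that the surgeries and isotopies keep the system embedded and $\a,\b$ therefore remain disjoint throughout, together with the pushoff bookkeeping in the case $\a_0\simeq\b_0$; the compact reduction and the peripheral sub-cases are routine.
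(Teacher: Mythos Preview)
Your argument is correct and follows the same idea the paper has in mind: the paper does not give a proof at all but simply asserts the lemma is ``a consequence of Proposition~1.1 in [Hatcher],'' i.e.\ exactly the sphere-system version of normalization you invoke in your main case. Your compact reduction to $N\cong M_{n,s}$, the peripheral sub-cases, and the pushoff for $\a_0\simeq\b_0$ are details the paper leaves entirely implicit; they are routine but correctly handled. One small notational point: by $\hat\sigma=\sigma\cap N$ you should mean the spheres of $\sigma$ in the \emph{interior} of $N$ (as in the proof of \Cref{cor:ManifoldMCGActionSurjective}, where $\bar\sigma=\sigma\cap M_i\setminus\partial M_i$), since $\partial N\subset\sigma$ consists of peripheral spheres in $N$; with that reading your reduction and your use of \Cref{prop:hnf} with $\sigma_0=\{S\}$ go through (in fact for the latter you only need that Hatcher's homotopy stays inside $N$, hence off $\partial N$).
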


\begin{lemma}\label{lem:homotopyMinusPoint}
    Suppose $\a$ and $\a'$ are homotopic spheres in normal form with respect to a maximal sphere system $\sigma$. Let $p$ be a point in a component of $\sigma$ not contained in either $\a$ or $a'$. Then there is a homotopy from $a$ to $a'$ whose image does not intersect $p$.
\end{lemma}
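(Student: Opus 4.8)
The plan is to begin from the homotopy provided by \Cref{prop:hnf_equiv}. Since $\a$ and $\a'$ are homotopic and in normal form with respect to $\sigma$, that result supplies a homotopy $H\colon S^2\times I\to M_\G$ from $\a$ to $\a'$ which restricts to an isotopy on the intersection with every sphere of $\sigma$. Let $S\in\sigma$ be the sphere containing $p$ and put $\mu_t=H_t\cap S$, so that $\{\mu_t\}$ is an isotopy of multicurves in $S$ from $\mu_0=\a\cap S$ to $\mu_1=\a'\cap S$, with $p\notin\mu_0\cup\mu_1$ because $p\notin\a\cup\a'$. Since $\mu_t=\operatorname{im}(H_t)\cap S$ and $p\in S$, we have $p\in\operatorname{im}(H_t)$ if and only if $p$ lies on $\mu_t$; hence it is enough to replace $\{\mu_t\}$ by an isotopy with the same endpoints whose total sweep $\bigcup_t\mu_t$ avoids $p$, and then reassemble the homotopy. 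For the reassembly: given a deformation $\{\mu^s_t\}_{s\in I}$, rel endpoints and through multicurve isotopies, from $\{\mu^0_t\}=\{\mu_t\}$ to an isotopy $\{\mu^1_t\}$ whose sweep avoids $p$, isotopy extension produces ambient isotopies $k_t\colon S\to S$ with $k_t(\mu_t)=\mu^1_t$ and $k_0=k_1=\operatorname{id}$; thickening $k_t$ to an ambient isotopy $K_t$ of $M_\G$ supported in a bicollar $\nbd(S)\cong S\times[-1,1]$ and restricting to $k_t$ on $S$, the homotopy $t\mapsto K_tH_t$ runs from $\a$ to $\a'$ and meets $p$ only when $k_t^{-1}(p)\in\mu_t$, i.e.\ only when $p\in\mu^1_t$ --- which never occurs.

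This reduces the lemma to a two-dimensional statement: the isotopy $\{\mu_t\}$ may be deformed, rel endpoints and through multicurve isotopies, to avoid $p$. Writing $b\colon\pi_0(\mu_0)\to\pi_0(\mu_1)$ for the bijection of components induced by $H$ and using $S\setminus\{p\}\cong\R^2$, such a deformation exists precisely when $b$ --- together with the induced correspondence of complementary regions --- carries the component of $S\setminus\mu_0$ containing $p$ to the component of $S\setminus\mu_1$ containing $p$. To prove this \emph{region-matching} statement I would first reduce to the case where $\a$ and $\a'$ are disjoint: by Laudenbach's theorem \cite{laudenbach_sur_1973} homotopic essential spheres are isotopic, so \Cref{lem:disjointSpheresNormalForm} furnishes disjoint homotopy representatives of $\a$ and $\a'$ in normal form with respect to $\sigma$ and, after a small perturbation, avoiding $p$; concatenating the sought homotopy through these intermediate representatives --- each constituent step being an instance of the disjoint case --- reduces to disjoint $\a,\a'$. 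Two disjoint homotopic essential spheres cobound a product region $R\cong S^2\times I$ with $\partial R=\a\sqcup\a'$, and the region-matching statement then follows once one knows $p\notin\operatorname{int}(R)$: for then $p$ lies on the same side of $\a$ as of $\a'$, and the region correspondence --- realized by the sweep of $\a$ across $R$ onto $\a'$ --- patently preserves the component containing $p$.

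The hard part will be establishing $p\notin\operatorname{int}(R)$. If $\a$ and $\a'$ happen to be disjoint from $S$, this is immediate: $S$ is then a sphere disjoint from $\partial R$, and any sphere lying in $\operatorname{int}(R)\cong\operatorname{int}(S^2\times I)$ must either bound a ball or be isotopic to $\a$ --- both impossible, since $S$ is an essential component of $\sigma$ that is not isotopic to $\a$ (otherwise normal form would force $S=\a$, contradicting $p\in S\setminus\a$). In general $S$ meets $\partial R$ in $\mu_0\sqcup\mu_1$ and $S\cap R$ is a disjoint union of planar surfaces properly embedded in $R$; here the plan is to rule out $p$ lying in the interior of one of these pieces by an innermost-disk surgery on $S\cap R$ inside $R$, invoking the normal-form condition that no piece of $\a$ or $\a'$ is homotopic rel boundary to a disk of $\sigma$ in order to simplify $S\cap R$ near $p$ and reduce to the disjoint-from-$S$ situation. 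I expect this surgery step to be the only genuinely delicate point; everything else is formal given \Cref{prop:hnf_equiv}, \Cref{lem:disjointSpheresNormalForm}, and Laudenbach's theorem.
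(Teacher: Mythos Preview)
Your strategy is quite different from the paper's and, as written, has genuine gaps.

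The paper does not try to modify the Hatcher homotopy after the fact.  Instead it reruns Hatcher's construction of the homotopy (Proposition~1.2 of \cite{hatcher}) in the universal cover and observes that each of its two stages can already be performed avoiding the lifts of $p$: the second stage is supported in the complement of $\widetilde\sigma$ and hence misses $\widetilde p$ automatically, while in the first stage the intersection circles of $\tilde a$ and $\tilde a'$ with the sphere $\widetilde S\ni\widetilde p$ agree \emph{combinatorially} (this is what Hatcher's argument establishes), and one then slides the circles to agree in $\widetilde S\setminus\{\widetilde p\}\cong\mathbb R^2$.  The combinatorial agreement of pieces in the adjacent pairs of pants is exactly what pins down the region correspondence you are trying to prove; once one has it, there is nothing left to do.

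Your plan has two real problems.  First, the reduction to the disjoint case is circular.  \Cref{lem:disjointSpheresNormalForm} gives you \emph{new} normal--form representatives $\hat a,\hat a'$ that are disjoint from each other, but says nothing about $a$ being disjoint from $\hat a$ or $a'$ from $\hat a'$.  So in your concatenation $a\to\hat a\to\hat a'\to a'$ the outer two steps are \emph{not} instances of the disjoint case; they are instances of the very lemma you are proving.  Second, even in the disjoint case your argument conflates two different correspondences of complementary regions of $S\setminus\mu_0$ and $S\setminus\mu_1$: the one induced by the Hatcher isotopy $\{\mu_t\}$ (which is what you must control for your reassembly step to work) and the one induced by the sweep across the product region $R$.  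You use the latter to conclude region--matching, but you need the former; you never argue that they coincide.  Finally, the step you flag as ``the only genuinely delicate point'' --- showing $p\notin\operatorname{int}(R)$ via innermost--disk surgery --- is only gestured at, and in light of the first two issues it would not close the argument even if completed.

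The fix is to abandon the product--region detour entirely: the region--matching you want is exactly the combinatorial agreement of pieces that comes out of Hatcher's proof in the universal cover, and once you have it the isotopy of circles avoiding $\widetilde p$ is immediate in $\widetilde S\setminus\{\widetilde p\}\cong\mathbb R^2$.
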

\begin{proof}
    This follows directly from the proof of Proposition 1.2 in \cite{hatcher2002algebraic}. Namely, we first take two homotopic lifts $\tilde{a}$ and $\tilde{a}'$ of $a$ and $a'$, respectively, as well as a lift $\widetilde{\sigma}$ of $\sigma$. Following the result in \cite{hatcher2002algebraic}, one can first homotope $\tilde{a}$ via a homotopy whose image is disjoint from all the lifts of $p$ so that intersection of each piece of $\tilde{a}$ with $\widetilde{\sigma}$ agrees with the intersection of $\tilde{a}'$ with $\widetilde{\sigma}$ in the component of $\widetilde{M_{\G}}\setminus \widetilde{\sigma}$ that they both lie in. This homotopy can be built by choosing a neighborhood of each element of $\widetilde{\sigma}$ so that the intersections of $\tilde{a}$ and $\tilde{a}'$ with each neighborhood are either both empty or a cylinder. Note that  Proposition 1.2 in \cite{hatcher2002algebraic} implies that such neighborhoods exist, as the intersections of $\tilde{a}$ and $\tilde{a}'$ with the components of $\widetilde{M_{\G}}\setminus \widetilde{\sigma}$ agree combinatorially (that is, the pieces that show up in each component of $\widetilde{M_{\G}}\setminus \widetilde{\sigma}$ are the same for both spheres). Now, in each such neighborhood with nonempty intersection with $\tilde{a}$ and $\tilde{a}'$, one can slide $\tilde{a}$ so that the desired intersection agreement holds. In each neighborhood containing a lift of $p$, such a homotopy can be chosen to avoid this lift as the complement of a point in $S^2$ is simply connected. 
    \par 
    Now that the intersection circles of $\tilde{a}$ and
    $\tilde{a}'$ with $\widetilde{\sigma}$ agree, one can follow the
    argument of Proposition 1.2 in \cite{hatcher2002algebraic}
    verbatim to homotope $\tilde{a}$ to $\tilde{a}'$ via a homotopy supported in the complement of $\widetilde{\sigma}$, finishing the proof.
\end{proof}

The desired embedding from $\sph(M_{\G})$ into itself will be factored into two maps, the first given by the following lemma. Given a locally finite graph $\G$, let $M_{\G}^*$ denote the doubled handlebody $M_{\G}$ with an extra puncture.

\begin{lemma}\label{lem:puncturedManifoldSphereEmbedding}
    For any locally finite graph $\G$, there is a simplicial embedding of $\sph(M_{\G})$ into $\sph(M_{\G}^*)$.
\end{lemma}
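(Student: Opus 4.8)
The plan is to construct an embedding $\Phi\colon\sph(M_\Gamma)\hookrightarrow\sph(M_\Gamma^*)$ which is, up to isotopy, the map that ``forgets the extra puncture,'' rigidified near that puncture by a fixed maximal sphere system. If $\sph(M_\Gamma)=\varnothing$ there is nothing to prove, so assume it is nonempty; fix a maximal sphere system $\sigma\subset M_\Gamma$, a component $s_0\in\sigma$, a point $p\in s_0$, and identify $M_\Gamma^*=M_\Gamma\setminus\{p\}$. Fix once and for all a parallel copy $s_0^+$ of $s_0$ disjoint from $p$. For a vertex $[\alpha]\neq[s_0]$, put $\alpha$ in normal form with respect to $\sigma$ (\Cref{prop:hnf}); a small isotopy sliding the circles of $\alpha\cap s_0$ within the connected surface $s_0\setminus\{p\}$ preserves normal form and yields a representative $\alpha'$ with $p\notin\alpha'$. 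Set $\Phi([\alpha]):=[\alpha']$ and $\Phi([s_0]):=[s_0^+]$.

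I would then verify four things. (1) \emph{$\Phi$ has values in $\sph(M_\Gamma^*)$:} a sphere essential in $M_\Gamma$ and missing $p$ is essential in $M_\Gamma^*$, since a ball it bounds there, or an isotopy carrying it to an old peripheral region, lives already in $M_\Gamma$, while being peripheral to $p$ forces it to be isotopic to the small sphere about $p$, which bounds a ball in $M_\Gamma$; this applies to every $\alpha'$ and to $s_0^+\simeq s_0$. (2) \emph{Well-definedness:} for $[\alpha]\neq[s_0]$, two representatives $\alpha',\alpha''$ as above are homotopic in $M_\Gamma$, both in normal form with respect to $\sigma$, with $p$ on the component $s_0$ and $p\notin\alpha'\cup\alpha''$, so \Cref{lem:homotopyMinusPoint} produces a homotopy between them avoiding $p$, hence (by \cite{laudenbach_sur_1973} together with (1)) an isotopy in $M_\Gamma^*$; for $[s_0]$ the value is a single explicit choice. (3) \emph{Simpliciality:} if $[\alpha_0],\dots,[\alpha_k]$ span a simplex, iterating \Cref{lem:disjointSpheresNormalForm} gives pairwise disjoint representatives simultaneously in normal form with respect to $\sigma$; sliding all their intersection circles on $s_0$ off $p$ at once keeps them disjoint, and if some $\alpha_i$ is $[s_0]$ the others, being disjoint from $s_0$, admit normal forms disjoint from $s_0$ and hence from a close enough $s_0^+$, so the images span a simplex. (4) \emph{Injectivity:} each $\Phi([\alpha])$ is isotopic to $\alpha$ in $M_\Gamma$, and an isotopy in $M_\Gamma^*\subset M_\Gamma$ is an isotopy in $M_\Gamma$, so $\Phi$ is injective on vertices; the same observation shows $\Phi$ neither creates nor destroys pairwise disjointness, so, since spheres span a simplex exactly when pairwise disjoint up to isotopy (\Cref{def:sphcpx}), $\Phi$ is a simplicial embedding.

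The step I expect to be the real obstacle is handling the sphere $s_0$ carrying the puncture: the ``forget $p$'' recipe is genuinely ambiguous there, because the two parallel push-offs of $s_0$ to opposite sides need not be isotopic in $M_\Gamma^*$ (for genus-zero $M_\Gamma$ they induce different partitions of the punctures), so one must assign $\Phi([s_0])$ by hand and then check this choice is compatible with the normal-form recipe on every edge incident to $[s_0]$ — which is precisely what the stronger form of \Cref{prop:hnf} (homotoping disjointly from a subsystem) makes routine. The remaining work is confirming that the small ``slide the circles off $p$'' isotopies really preserve normal form and simultaneous disjointness, which is elementary but should be written out.
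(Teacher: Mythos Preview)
Your proposal is correct and follows essentially the same approach as the paper: fix a maximal sphere system $\sigma$, place the puncture $p$ on a component of $\sigma$, use normal form to choose representatives avoiding $p$, and invoke \Cref{lem:homotopyMinusPoint} to show the resulting assignment is well-defined and simplicial. The paper's version is slightly slicker in one respect: rather than fixing $p$ first and then sliding intersection circles of each sphere off $p$, it fixes a normal-form representative for every vertex of $\sph(M_\Gamma)$ first and then chooses $p$ in the complement of the (measure-zero) union of all their intersection circles with $s_0$, with a small parallel push for $s_0$ itself. This sidesteps your per-sphere sliding and collapses the special case of $[s_0]$ into a parenthetical, so the concern you flag at the end (the ambiguity of which side to push $s_0^+$ and the edge-compatibility check) simply does not arise. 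Your version is more hands-on but equally valid; note also that since $\sph(M_\Gamma)$ is flag by \Cref{def:sphcpx}, checking edges via the two-sphere \Cref{lem:disjointSpheresNormalForm} already suffices for simpliciality, and you need not iterate it to a full simplex.
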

\begin{proof}
     Put every element of $\sph(M_{\G})$ in normal form with respect to some maximal system $\sigma$. Place the extra puncture of $M_{\G}^*$ in a component $s$ of $\sigma$ so that the puncture is not contained in any of the fixed normal form representatives of the elements of $\sph(M_{\G})$. Such a choice for the puncture can be made since the intersections of all the spheres with $s$ is a countable collection of embedded circles, which is measure $0$ in $s$ (one needs to isotope the normal form representative of $s$ off itself by a small isotopy as well, so the puncture is disjoint from it too).
     \par 
     Then \Cref{lem:disjointSpheresNormalForm} implies that given any two spheres in normal form $\a$ and $\b$ which can be realized disjointly in $M_{\G}$, there are spheres $\a'$, $\b'$ also in normal form homotopic to $\a$, $\b$, respectively, which disjoint from each other. From \Cref{lem:homotopyMinusPoint} we obtain homotopies (and thus isotopies by \cite{laudenbach_sur_1973}) in $M_{\G}^*$ from $\a$ to $\a'$ and from $\b$ to $\b'$. In particular, it follows that two spheres which can be realized disjointly in $M_{\G}$ can also be realized disjointly in $M_{\G}^*$, if the elements of $\sph(M_{\G})$ are realized in normal form first, and then included into $M_{\G}^*$ as above. In particular, this induces a simplicial embedding of $\sph(M_{\G})$ into $\sph(M_{\G}^*)$, as desired.
\end{proof}
\begin{prop}\label{prop:nonMaximalEmbedding}
    Suppose $\Gamma$ is an infinite-type graph. Then there is a simplicial embedding of $\sph(M_{\G})$ into itself so that the image of every maximal system is not maximal. 
\end{prop}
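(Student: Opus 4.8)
The plan is to factor the desired self-embedding through the once-punctured manifold $M_\Gamma^*$. By \Cref{lem:puncturedManifoldSphereEmbedding} we already have a simplicial embedding $\iota\colon\sph(M_\Gamma)\hookrightarrow\sph(M_\Gamma^*)$, so it is enough to realize $M_\Gamma^*$ as a complementary component of an essential sphere inside $M_\Gamma$. Concretely, I will produce an essential separating sphere $S\subset M_\Gamma$ one of whose complementary components $Z$ is diffeomorphic to $M_\Gamma^*$ — equivalently, by \Cref{thm:ADMQ}, has the same characteristic triple. Composing
\[
\Phi\colon \sph(M_\Gamma)\xrightarrow{\ \iota\ }\sph(M_\Gamma^*)\xrightarrow{\ \cong\ }\sph(Z)\hookrightarrow\sph(M_\Gamma),
\]
where the last map is the full simplicial embedding of \Cref{prop:full_subcpx} (identifying $\sph(Z)$ with a subcomplex of $\sph(M_\Gamma)$), gives an injective simplicial map whose image lies in $\sph(Z)$, a proper full subcomplex of $\sph(M_\Gamma)$ since $S$ is essential but $S\notin\sph(Z)$, being peripheral in $Z$. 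The remaining point is to check that $\sph(Z)$, and hence $\im\Phi$, contains no sphere system that is maximal in $M_\Gamma$.

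To produce $S$ I will split on $\mathrm{rk}(\Gamma)$. If $\mathrm{rk}(\Gamma)=\infty$, attaching a handle leaves the characteristic triple unchanged, so $M_\Gamma\cong M_\Gamma\#(S^2\times S^1)$ by \Cref{thm:ADMQ}; the connect-sum sphere $S$ has complementary components $M_\Gamma^*$ and $M_{1,1}$ and is essential. If $\mathrm{rk}(\Gamma)<\infty$, then $\Ends_\ell(\Gamma)=\varnothing$ (an end accumulated by loops forces infinitely many independent cycles) and $\Ends(\Gamma)$ is infinite (as $\Gamma$ is infinite-type). In this case it suffices to find a proper clopen $W\subsetneq\Ends(\Gamma)$ with $W\cong\Ends(\Gamma)$ and $\lvert\Ends(\Gamma)\setminus W\rvert\ge 2$: choosing a representative of $\Gamma$ of the form ``$A$ joined to $B$ by a bridge edge'', with all genus in $A$, $\Ends(A)=W$, and $\Ends(B)=\Ends(\Gamma)\setminus W$, the mid-disk of the bridge (as in \Cref{fig:DoubHandle}) gives a separating sphere $S$ whose complementary components are $Z$, with the characteristic triple of $M_\Gamma^*$ and hence $Z\cong M_\Gamma^*$ by \Cref{thm:ADMQ}, and $M_{0,k}$ with $k=\lvert\Ends(\Gamma)\setminus W\rvert+1\ge 3$, which forces $S$ essential. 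Such a $W$ always exists: if $\Ends(\Gamma)$ has only finitely many isolated points $F$, then $Y\coloneq\Ends(\Gamma)\setminus F$ is a nonempty clopen perfect set, so a Cantor set, and one takes $W=F\sqcup Y'$ for a proper clopen $Y'\subsetneq Y$; if $\Ends(\Gamma)$ has infinitely many isolated points, one picks a convergent subsequence $p_1,p_2,\dots$ of them and checks that ``shifting it down by two'' is a homeomorphism of $\Ends(\Gamma)$ onto $W\coloneq\Ends(\Gamma)\setminus\{p_1,p_2\}$.

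Finally I will show $\im\Phi$ misses maximal systems; it is enough to show that no sphere system maximal in $M_\Gamma$ is contained in $\sph(Z)$. Suppose for contradiction that $\tau$ is such a system; then $S\notin\tau$ (since $S\notin\sph(Z)$), and each element of $\tau$ has a representative in $\mathrm{int}(Z)$, hence disjoint from $S$. Fix a compact $K\cong M_{n,s}\subset M_\Gamma$ with $\partial K\subset\tau$, chosen large enough that a representative of $S$ lies in $\mathrm{int}(K)$; then $\tau_K\coloneq\tau\cap K$ is a pants decomposition of $K$, and by fullness (\Cref{prop:full_subcpx}) $S$ is essential in $K$ and disjoint up to isotopy from every sphere of $\tau_K$. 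Putting $S$ in Hatcher normal form with respect to $\tau_K$ and invoking \Cref{prop:hnf} together with the essential uniqueness of normal form (\Cref{prop:hnf_equiv}), the normal form of $S$ is simultaneously disjoint from all of $\tau_K$; since $\tau_K$ is maximal in $K$, $S$ is isotopic in $K$, hence in $M_\Gamma$, to an element of $\tau_K$, which — being essential in $K$ — lies in $\tau$. This contradicts $S\notin\tau$. Therefore every maximal sphere system of $M_\Gamma$ fails to be contained in $\sph(Z)$, so $\Phi$ sends every maximal system to a non-maximal one and is in particular non-surjective.

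I expect the main obstacle to be the second step — producing the sphere $S$ with $Z\cong M_\Gamma^*$ — and within it the clopen self-similarity statement for $\Ends(\Gamma)$ in the finite-rank case. The normal-form argument in the last step also needs some care because $\tau$ may be infinite, which is precisely why it is localized to a compact piece $K$ on which \Cref{prop:hnf} and \Cref{prop:hnf_equiv} apply directly.
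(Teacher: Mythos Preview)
Your proof is correct and follows essentially the same strategy as the paper: factor through $\sph(M_\Gamma^*)$ via \Cref{lem:puncturedManifoldSphereEmbedding}, then realize $M_\Gamma^*$ as a complementary component of a well-chosen separating sphere $S$ in $M_\Gamma$, with the same case split on $\mathrm{rk}(\Gamma)$ and the structure of $\Ends(\Gamma)$. Your final normal-form argument is correct but more elaborate than necessary---the paper simply observes that $S\notin\im\Phi$ while every sphere in $\im\Phi$ is disjoint from $S$, which already forces the image of any maximal system to be non-maximal.
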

\begin{proof}
    We can classify infinite-type graphs into three types.
    \begin{enumerate}
        \item Graphs with infinite rank.
        \item Finite rank graphs with infinitely many isolated ends..
        \item Finite rank graphs whose space of ends is a Cantor set along with possibly finitely many isolated ends.
    \end{enumerate}
     To see this, suppose $\Gamma$ is not of the first two types. Then as we are assuming $\Gamma$ is infinite-type, it must have an infinite space of ends. It follows by a theorem of Brouwer that, if $E'$ is the space of ends of $\Gamma$ without its isolated ends, then $E'$ is homeomorphic to a Cantor  set\cite{brouwer1910Cantor}. This is because $E'$ is compact, perfect, totally disconnected, and metrizable.
    \par  
    We choose a separating sphere $\a$ in each case. For type (1) graphs, let $\a$ cut off a copy of $M_{1,1}$. For type (2) graphs, let $\a$ cut off $2$ isolated ends. Finally, for type (3) graphs, let $\a$ cut off a Cantor set of ends on one side. Then there is an embedding $\sph(M_{\G}^*)$ into $\sph(M_{\G})$ which is given by removing $\a$ from $M_{\G}$ and choosing a diffeomorphism from $M_{\G}^*$ to a component of $M_{\G}\setminus \a$, sending the extra puncture of $M_{\G}^*$ to the puncture corresponding to $\a$. Such a diffeomorphism exists, as in any case the characteristic triple of $M_{\G}^*$ is the same as one of the two components of $M_{\G}\setminus \a$.
    \par 
    The composition of the embedding from \Cref{lem:puncturedManifoldSphereEmbedding} and that from the previous paragraph sends $\sph(M_{\G})$ to a subcomplex of itself so that in either case, $\a$ is not in the image of this map and the image of every sphere can be realized disjointly from $\a$. In particular, every maximal sphere system is sent to something not maximal.
\end{proof}

\section{Geometric rigidity in low complexity cases} \label{sec:sporadic}
In this section we consider the existence of finite rigid sets of $\sph(M_{n,s})$ for $n,s$ not covered by \Cref{thm:geom_rigidity} The cases are $M_{0,4}, M_{0,5}, M_{1,0}, M_{1,1}, M_{1,2}, M_{1,3}, M_{2,0}$, and $M_{2,1}$. In the first four cases the complex is finite so the existence of finite rigid sets is trivial. It follows from Proposition 26 of \cite{bering2024finite} that $\mathcal{S}(M_{2,0})$ has no finite rigid sets.
\par 
The remaining cases are then $M_{1,2}, M_{1,3}$, and $M_{2,1}$. We give a direct argument in the first case. 

\begin{lemma}
    The graph $\sph(M_{1,2})$ is geometrically rigid. On the other hand, $\sph(M_{1,2})$ has no finite rigid sets.
\end{lemma}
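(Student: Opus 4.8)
The plan is to first pin down the combinatorics of $\sph(M_{1,2})$ and then run a short argument on the resulting tree. Every essential sphere in $M_{1,2}$ is either non-separating or separating, and a separating one must cut off a pair of pants containing both punctures (the other complementary piece being a copy of $M_{1,1}$): the two remaining possibilities, one puncture on each side or both punctures with the genus, give a peripheral sphere or a sphere bounding a ball. By \Cref{prop:full_subcpx} (together with \Cref{cor:link_join} and \Cref{rmk:full_in_link}), the link of a non-separating sphere $\beta$ is $\sph(M_{1,2}\setminus\beta)\cong\sph(M_{0,4})$, three vertices with no two joined, of which exactly one (the one separating the two copies of $\beta$ from the two punctures) becomes separating in $M_{1,2}$ after regluing and the other two become non-separating; while the link of a separating sphere is $\sph(M_{0,3}\sqcup M_{1,1})$, a single vertex. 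Thus $\sph(M_{1,2})$ is the tree of \Cref{fig:M12SphereGraph}: the non-separating spheres span a bi-infinite path $L$ (each is connected to exactly two other non-separating spheres, and removing the leaves of a tree leaves a connected subtree), and each vertex $\beta$ of $L$ carries a single pendant leaf $c_\beta$, the unique separating sphere disjoint from $\beta$. In particular the only valences occurring are $1$ (separating spheres) and $3$ (non-separating spheres).

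For geometric rigidity, let $f\colon \sph(M_{1,2})\to \sph(M_{1,2})$ be locally injective and simplicial. Since $f$ is injective on stars and the maximal valence is $3$, every valence-$3$ vertex is sent to a valence-$3$ vertex and its three neighbours map bijectively onto those of its image. As the edges of $\sph(M_{1,2})$ joining two valence-$3$ vertices are exactly the edges of $L$, the map $f$ restricts to a locally injective simplicial self-map of $L\cong\Z$; such a self-map of a bi-infinite path is an isometry $i\mapsto \pm i + k$, hence an automorphism of $L$. Knowing $f|_L$ and that $f$ is injective on the star of each $\beta\in L$ then forces $f(c_\beta)$ to be the pendant leaf of $f(\beta)$, so $f$ is a graph automorphism of $\sph(M_{1,2})$. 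Since $(r,s)=(1,2)$ is not among the cases excluded from \Cref{thm:IvanovRigidity0} (here $2r+s=4$, and $(1,2)\notin\{(0,4),(2,0)\}$), every automorphism of $\sph(M_{1,2})$ is induced by a diffeomorphism of $M_{1,2}$; hence so is $f$, and $\sph(M_{1,2})$ is geometrically rigid.

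For the second statement, let $Y\subset\sph(M_{1,2})$ be a finite subcomplex; I will produce a locally injective simplicial map $f\colon Y\to\sph(M_{1,2})$ not induced by any diffeomorphism, which by the first part just means $f$ does not extend to a graph automorphism. We may assume $Y$ is connected (otherwise build the bad map on one component and take the inclusion on the rest; restricting a would-be extension to that component gives a contradiction). If $Y\cap L=\varnothing$ then $Y$ is a finite set of pairwise non-adjacent separating spheres, and sending one of them to a non-separating sphere (and including the others) gives such an $f$, as it does not preserve valence. Otherwise $Y\cap L$ is a finite subpath of $L$; let $\beta$ be one of its endpoints, $\beta_0\in L\setminus Y$ the neighbour of $\beta$ on the far side, and $c_\beta$ the pendant leaf of $\beta$. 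If $c_\beta\in Y$, let $f$ agree with the inclusion except $f(c_\beta)\coloneqq\beta_0$: the edge $\{c_\beta,\beta\}$, if present, maps to the edge $\{\beta_0,\beta\}$, the only stars affected (those of $\beta$ and $c_\beta$) stay injective because $\beta_0\notin Y$, and $f$ sends the valence-$1$ vertex $c_\beta$ to the valence-$3$ vertex $\beta_0$, so $f$ does not extend. If $c_\beta\notin Y$, then $\beta$ has valence at most one in $Y$, and a short finite case check (according to whether $\beta$ is isolated in $Y$, and whether the pendant leaf of its unique $Y$-neighbour lies in $Y$) produces in the same spirit a locally injective simplicial $f$ that either violates valence or swaps a vertex of $L$ with a pendant leaf, breaking the pendant structure no automorphism can break. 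In each case $f$ fails to extend to an automorphism, so no finite $Y$ is rigid.

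\noindent\textbf{Main obstacle.}
The substantive part is the bookkeeping establishing the caterpillar description of $\sph(M_{1,2})$ — in particular identifying which of the three spheres in the link $\sph(M_{0,4})$ of a non-separating sphere reglue to separating versus non-separating spheres, and verifying that the non-separating spheres span a single bi-infinite path (equivalently, that \Cref{fig:M12SphereGraph} is as drawn). Once that structure is in hand, both assertions are short; the only mildly tedious point is the finite case analysis in the last paragraph when the relevant pendant leaf happens to lie in $Y$.
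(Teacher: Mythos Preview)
Your proof is correct and follows essentially the same approach as the paper: both establish the caterpillar structure of $\sph(M_{1,2})$ and then argue from it. One point worth noting is that for geometric rigidity the paper simply writes ``follows directly from \Cref{thm:IvanovRigidity0},'' whereas you explicitly show that any locally injective simplicial self-map is already an automorphism (via the restriction to the bi-infinite line $L$) before invoking \Cref{thm:IvanovRigidity0}; since geometric rigidity concerns locally injective maps and not just automorphisms, your extra step is the right thing to do and fills a small gap the paper leaves implicit. For the second claim your endpoint/leaf-swapping argument is a minor reorganization of the paper's, which instead locates a non-separating vertex $v\in X$ adjacent to some $w$ of valence~$1$ in $X$ and swaps the separating/non-separating type of $w$; your final ``short finite case check'' when $c_\beta\notin Y$ is routine and goes through exactly as you indicate.
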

\begin{proof}
    That $\sph(M_{1,2})$ is geometrically rigid follows directly from \Cref{thm:IvanovRigidity0}. To show that it has no finite rigid sets, we will show that $\sph(M_{1,2})$ is isomorphic to the graph in \Cref{fig:M12SphereGraph}, and deduce that this graph has no finite rigid sets.
    \par 
    Given a non-separating sphere $\a$ of $M_{1,2}$, there are exactly three spheres disjoint from $\a$. 
To see why, note that $M_{1,2} \setminus \mathrm{nbd}(\a) \cong M_{0,4}$, which has exactly three distinct spheres, say $\a_1, \a_2$, and $\b$.  See \Cref{fig:M12Rigid.png}.
 Observe that $\a_1, \a_2$ and $b$ are disjoint from $a$, so in $\sph(M_{1,2})$, $a$ is a trivalent vertex. 
    The spheres $\a_1$ and $\a_2$ are non-separating,
    and $b$ is separating.  The complementary components of $b$ are a pair of pants and a one-holed torus.  A pair of pants contains no essential spheres, and a one-holed torus contains a single homotopy class of spheres.  Consequently, any essential sphere in $M_{1,2}$ disjoint from $a$ must intersect $b$.  Therefore, $b$ has valence one in the sphere complex.
    Further, this separating sphere is only disjoint from $S$.
    
    \begin{figure}[ht!]
        \centering
        \begin{overpic}[width=11cm]{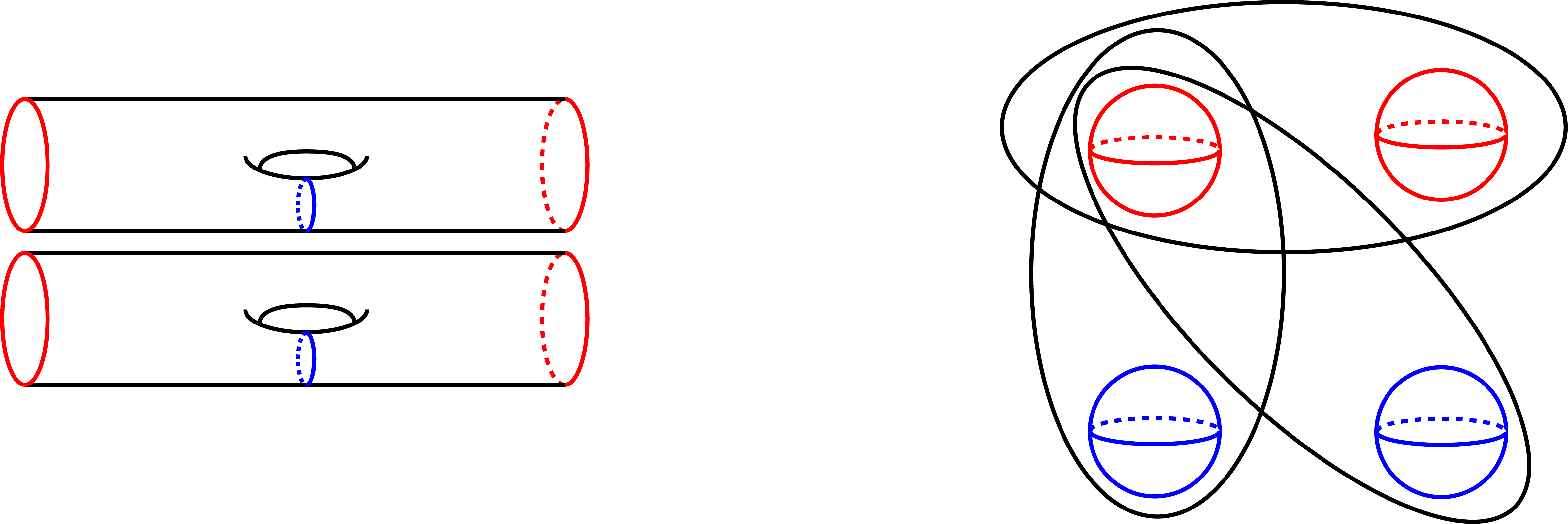}
            \put(-4,17){\color{red}{$B_1$}}
            \put(38,17){\color{red}{$B_2$}}
            \put(18.5,6){\color{blue}{$\a$}}
            \put(50,17){$\longrightarrow$}
            \put(101,27){$\b$}
            \put(98,4){$\a_2$}
            \put(64,4){$\a_1$}
            \put(72,7){\color{blue}{$\a^-$}}
            \put(90,7){\color{blue}{$\a^+$}}
            \put(72,25.1){\color{red}{$B_1$}}
            \put(90,26.4){\color{red}{$B_2$}}
        \end{overpic}
        \caption{On the left, $M_{1,2}$ is pictured with boundary components $B_1$ and $B_2$ and a non-separating sphere $a$. To visualize the spheres $\a_1, \a_2$, and $\b$, it is useful to cut along $\a$, as seen on the right.  }
        \label{fig:M12Rigid.png}
    \end{figure}

    \par 
    For completeness, we show explicitly that $\sph(M_{1,2})$ is an infinite graph. Let $h$ be a homeomorphism which pushes one of the boundary components of $M_{1,2}$ around the manifold once, and let $\a$ be a fixed non-separating sphere. Then $h^n(\a)$ is not homotopic to $\a$ for any $n\geq 1$. If it were, they would have lifts in the universal cover $\widetilde{M}_{1,2}$ which are homotopic. But it is clear that the decompositions of the boundary components of $\widetilde{M}_{1,2}$ induced by any lifts of $\a$ and $h^n(\a)$ must differ, and thus they cannot be homotopic. See \Cref{fig:M12UnivCover}.
    
    \begin{figure}[ht!]
        \centering
        \begin{overpic}[width=8cm]{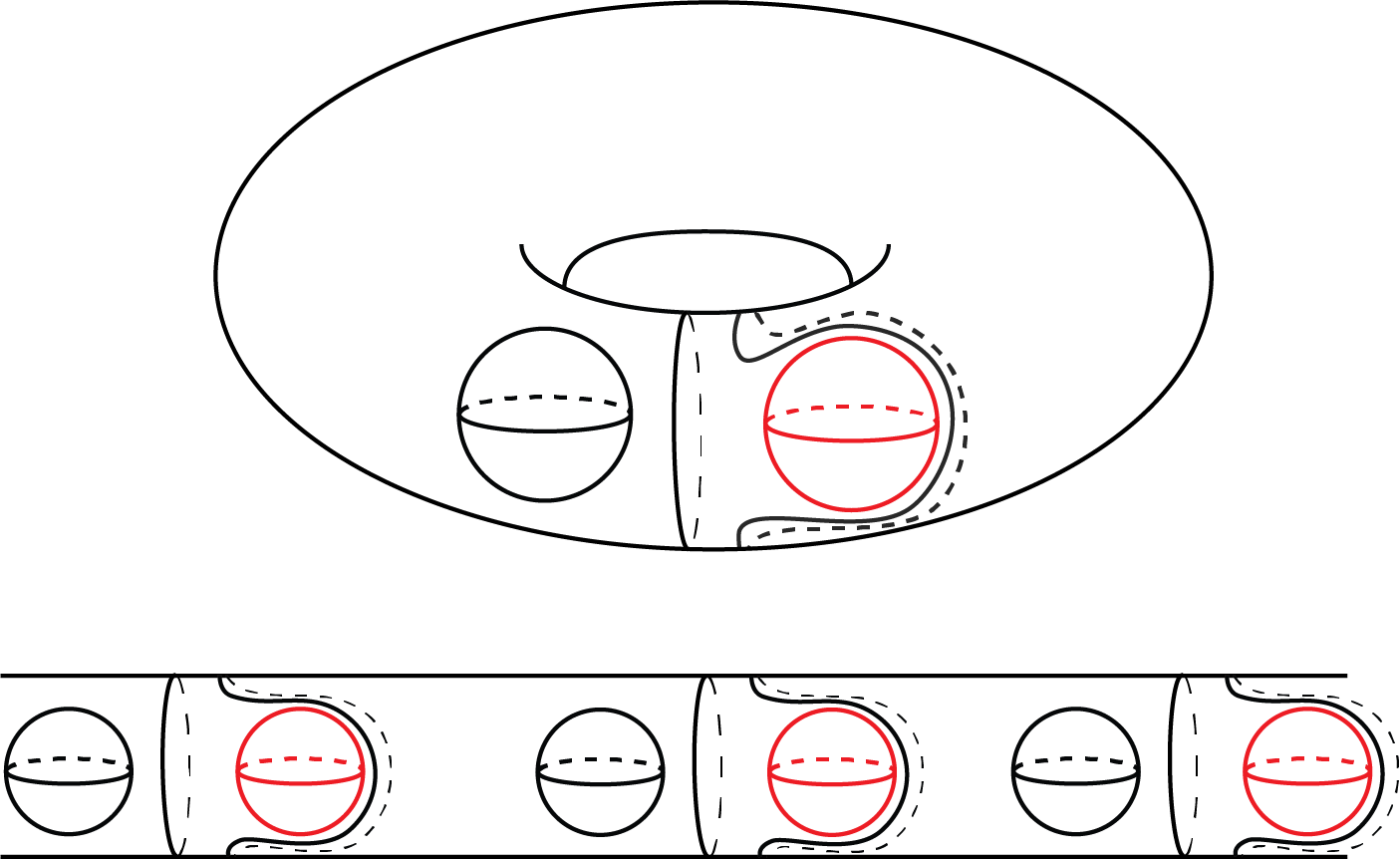}
        \put(46,18){$\a$}
        \put(55,18){$h(\a)$}
        \end{overpic}
        \caption{The spheres $\a$ and $h(\a)$ cannot be homotopic as their none of the lifts in the universal cover are. A similar picture works for $\a$ and $h^n(\a)$ for all $n$.}
        \label{fig:M12UnivCover}
    \end{figure}

    \par
    Thus, $\sph(M_{1,2})$ is an infinite graph consisting of a collection of trivalent vertices, each of which is connected to two other trivalent vertices and one valence $1$ vertex. Such a graph is isomorphic to the real line with a vertex at each integer point with a another edge attached at each vertex connecting to a valence $1$ vertex, as in \Cref{fig:M12SphereGraph}.   
    \par 
    Suppose $X$ is a finite rigid subgraph for $\sph(M_{1,2})$. Then $X$ must be larger than a point, or else one could send a non-separating vertex to a separating vertex or vice versa. It must also be connected, or else one could embed $X$ in many ways by fixing the image of one component and letting another vary, and all but finitely many of these maps cannot be induced by an automorphism of $\sph(M_{1,2})$. Thus assume $X$ is connected. By the structure of $\sph(M_{1,2})$ there must be some vertex $v\in X^{(0)}$ which is a non-separating sphere which is connected by an edge to a vertex $w\in X^{(0)}$ which is valence $1$ in $X$. Then we can embed $X$ into $\sph(M_{1,2})$ so that $v$ is sent to itself, and $w$ is sent to a separating sphere if it is non-separating, and to a non-separating sphere if it is separating. This is a contradiction, so $\sph(M_{1,2})$ has no finite rigid sets.
\end{proof}
    \begin{figure}[ht!]
        \centering
        \begin{overpic}[width=6cm]{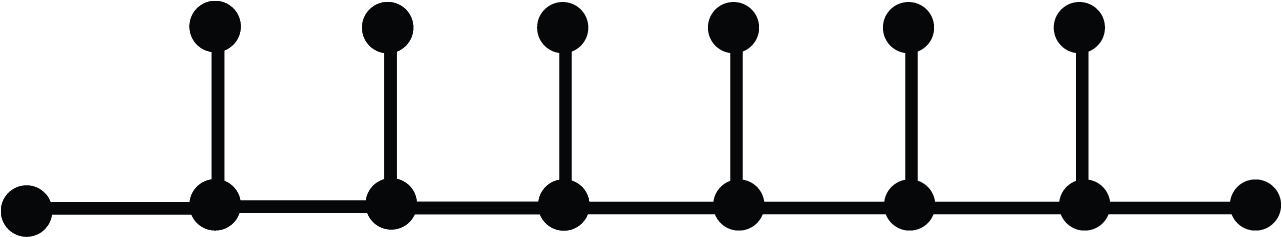}
            \put(-8,2){\dots}
            \put(102,2){\dots}
        \end{overpic}
        \caption{The graph $\sph(M_{1,2})$.}
        \label{fig:M12SphereGraph}
    \end{figure}

 We ask the following question for the final remaining cases.
\begin{question}
    Do $\sph(M_{1,3})$ and $\sph(M_{2,1})$ have finite rigid sets?
\end{question}

\begingroup
\DeclareEmphSequence{\itshape} 
\bibliographystyle{alpha}
\bibliography{main}
\endgroup
\end{document}